\let\old@float@makebox\float@makebox
\renewcommand{\float@makebox}[1]{
  \color@vbox\normalcolor
    \old@float@makebox{#1}
  \color@endbox}
\newcommand{\independent}{\mbox{${}\perp\mkern-11mu\perp{}$}}
\newcommand{\notindependent}{\mbox{${}\not\!\perp\mkern-11mu\perp{}$}}
\DeclareMathOperator*{\argmin}{argmin}
\newcommand{\mbb}{\boldsymbol}
\newcommand{\prob}{{\mathbb P}}
\newcommand{\pr}{{\mathbb P}}
\newcommand{\R}{{\mathbb R}}
\newcommand{\E}{{\mathbb E}}
\newcommand{\fhat}{\widehat{f}}
\newcommand{\ghat}{\widehat{g}}
\newcommand{\mhat}{\widehat{m}}
\newcommand{\hhat}{\widehat{h}}
\newcommand{\vhat}{\widehat{v}}
\newcommand{\sign}{\mathrm{sgn}}
\newcommand{\var}{{\mathrm{Var}}}
\newcommand{\Var}{{\mathrm{Var}}}
\newcommand{\cov}{{\mathrm{Cov}}}  
\newcommand{\corr}{{\mathrm{Corr}}}  
\newcommand{\tr}{{\mathrm{tr}}}
\newcommand{\given}{\,|\,}
\newcommand{\sgn}{\mathrm{sgn}}
\newcommand{\ceil}[1]{\left\lceil #1 \right\rceil}
\newcommand{\ind}{\mathbbm{1}}
\newcommand{\RN}[1]{
  \textup{\uppercase\expandafter{\romannumeral#1}}
}
\newcommand{\op}{\mathrm{op}}
\newcommand{\convD}{\overset{d}{\rightarrow}}
\newcommand{\target}{\tau}
\newtheorem{theorem}{Theorem}
\newtheorem{proposition}[theorem]{Proposition}
\newtheorem{assumption}{Assumption}
\newtheorem{lemma}[theorem]{Lemma}
\newtheorem{definition}[theorem]{Definition}
\newtheorem{corollary}[theorem]{Corollary}
\title{The Projected Covariance Measure for assumption-lean variable significance testing}
\date{\today}
\author{Anton Rask Lundborg$^{\dagger \diamond}$,
Ilmun Kim$^{\dagger \star}$,
Rajen D. Shah$^{\dagger}$ and
Richard J. Samworth$^{\dagger}$}
\affil{\normalsize{
	\textit{$^{\dagger}$Statistical Laboratory, University of Cambridge, United Kingdom} \\
	\textit{$^{\diamond}$Department of Mathematical Sciences, University of Copenhagen, Denmark} \\
	\textit{$^{\star}$Department of Statistics and Data Science, Department of Applied Statistics, Yonsei University, South Korea} 
}}
\begin{document}

\maketitle

\begin{abstract}
	Testing the significance of a variable or group of variables $X$ for predicting a response~$Y$, given additional covariates $Z$, is a ubiquitous task in statistics. A simple but common approach is to specify a linear model, and then test whether the regression coefficient for $X$ is non-zero. However, when the model is misspecified, the test may have poor power, for example when $X$~is involved in complex interactions, or lead to many false rejections.  In this work we study the problem of testing the model-free null of conditional mean independence, i.e.\ that the conditional mean of $Y$ given $X$ and $Z$ does not depend on $X$. We propose a simple and general framework that can leverage flexible nonparametric or machine learning methods, such as additive models or random forests, to yield both robust error control and high power. The procedure involves using these methods to perform regressions, first to estimate a form of projection of $Y$ on $X$ and $Z$ using one half of the data, and then to estimate the expected conditional covariance between this projection and $Y$ on the remaining half of the data. While the approach is general, we show that a version of our procedure using spline regression achieves what we show is the minimax optimal rate in this nonparametric testing problem. Numerical experiments demonstrate the effectiveness of our approach both in terms of maintaining Type I error control, and power, compared to several existing approaches.
\end{abstract}

\section{Introduction}
Understanding the relationship between a response and associated predictors is one of the most common problems faced by data analysts across many diverse areas of science and industry. Often a crucial step in this task is to determine which variables or groups of variables are important in this relationship. To fix ideas, consider data formed of independent copies of a triple $(X, Y, Z)$, where $Y \in \R$ is our response, and suppose we wish to assess the significance of a group of predictors $X \in \R^{d_X}$ after adjusting for confounding variables $Z \in \R^{d_Z}$; we will consider a more general setting later in this paper where $X$ and $Z$ can be potentially non-Euclidean. One simple but popular way of addressing this problem is to fit a linear regression model $Y = X^{\top}\beta + Z^{\top}\gamma + \varepsilon$, where we assume that the random error $\varepsilon$ satisfies $\E (\varepsilon \given X, Z)=0$, and perform an $F$-test for the significance of~$X$ (i.e.~test the null hypothesis that $\beta = 0$).  However, in the case that the linear model is not a sufficiently good approximation of the ground truth, this can result in wrongly declaring $X$ to be important or unimportant, and other significance tests based on parametric models suffer from similar issues. The fact that regressions based on parametric models are typically greatly outperformed by modern machine learning methods such as deep learning~\citep{goodfellow2016deep} and random forests~\citep{breiman2001random} in regression competitions such as those hosted by Kaggle~\citep{bojer2021kaggle}, suggests that such parametric models giving poor approximations to the truth is the norm rather than the exception, at least in contemporary datasets of interest.

In this work we consider the model-free null hypothesis of conditional mean independence, that is $\E(Y \given X, Z) = \E(Y \given Z)$; in words, $X$ does not feature in the regression function of $Y$ on $X$ and $Z$. It is interesting to compare this to the conditional independence null $Y \independent X \given Z$, which has attracted much attention in recent years. The latter asks not just for the regression function to be expressed as a function of $Z$ alone, but in fact for the entire conditional distribution of $Y$ given $(X, Z)$ to equal the conditional distribution of $Y$ given~$Z$. Any valid test of conditional mean independence may be used as a test for conditional independence as its size is no larger than its size over the larger null hypothesis of conditional mean independence. The two nulls in fact coincide when $Y$ is binary, but more generally there are important differences. One attractive property of the conditional mean independence null is that the alternative of conditional mean dependence may be characterised by the property that $X$ can improve the prediction of $Y$ in a mean-squared error sense, given knowledge of $Z$. For example, consider the setting where $X$ is a binary treatment variable,~$Z$ contains all pre-treatment confounders and $Y$ is the observed outcome. Under assumptions (including the absence of unmeasured confounders) that are standard in the causal inference literature \citep{Neyman23,Rubin74}, conditional mean dependence is equivalent to the existence of a subgroup average treatment effect, that is a (measurable) subset $\mathcal{A} \subseteq \R^{d_Z}$ where $\E\{\E(Y \given Z, X=1) \given Z \in \mathcal{A}\} > \E\{\E(Y \given Z, X=0) \given Z \in \mathcal{A}\}$.  On the other hand, rejection of the conditional independence null does not in general have an immediate interpretation in terms of its predictive implications.  

Despite the attractions of conditional mean independence, an important issue is that this property is not testable without further restrictions on the null hypothesis: if $(X, Y, Z)$ have a density that is absolutely continuous with respect to Lebesgue measure, then the power of any test at any alternative is at most its size. This comes as a direct consequence of the untestability of the smaller conditional independence null \citep{shah2020hardness}. The conclusion is that in order to test conditional mean independence, one must further constrain the null hypothesis in some way.

Given the success of machine learning methods in prediction problems, a natural and convenient way to specify these constraints is based on restricting the set of nulls to those where user-chosen regression methods can estimate certain conditional expectations sufficiently well.  One strategy, as adopted in the \emph{Generalised Covariance Measure} (GCM) of \citet{shah2020hardness}, involves, in the case where $X$ is univariate, regressing each of $X$ and $Y$ on $Z$, computing the covariance between the resulting residuals and estimating a normalised version of $\E \{\cov(X, Y \given Z)\}$, a quantity that is zero under conditional independence.  A drawback of this approach, however, is that it has no power against alternatives to conditional mean independence where $\E \{\cov(X, Y \given Z)\} =0$.

To gain greater power, \citet{shah2020hardness} suggest to apply the above with $X$ replaced by each component of $\bigl(\phi_1(X, Z), \ldots, \phi_m(X, Z)\bigr)$, where $\phi_1,\ldots, \phi_m : \R^{d_X \times d_Z} \to \R$ are a fixed user-chosen collection of transformations of the data. One may then base a final test on the maximum absolute value of the resulting test statistics. It is however not clear how one should choose these transformations, and if $m$ is large, or indeed $d_X$ is large and we use the strategy above but with the $\phi_j$ simply extracting the $j$th component of $X$, then performing all the regressions involved may be impractical.  A related approach to improve the power properties of the GCM is introduced by  \citet{scheidegger2021weighted}, who propose a carefully-weighted version of the GCM that, under conditions, can have power against alternatives where we do not have $\cov(X, Y \given Z) = 0$ almost surely; see also \citet{fernandez2022general}. Nevertheless, it is perfectly possible to have $\cov(X, Y \given Z) = 0$ under conditional mean dependence, and here even the weighted GCM would be powerless: for example, consider the simple setting where $(X, Z, \varepsilon) \sim N_3(0, \mbb{I})$ and $Y=X^2 + \varepsilon$.  In this case, $\cov(X, Y \given Z) = \cov(X, Y) = 0$ despite $X$ clearly being important for the prediction of $Y$.
It is therefore of great interest to develop methods for testing conditional mean independence whose validity, as in the case of the GCM and its weighted version, relies primarily on the predictive properties of user-chosen regression methods, but which have power against much wider classes of alternatives.

While there has a great deal of research effort on the problem of conditional independence testing in recent years (we review the contributions most relevant to our work here in Section~\ref{sec:review}), there has been comparatively little on testing conditional mean independence. One compelling approach is based on an equivalent way of stating the null hypothesis: defining 
\begin{align}
    \target &:= \E\bigl[ \bigl\{ \E(Y \given X,Z) - \E(Y \given Z) \bigr\}^2 \bigr], \label{eq:tau}
\end{align}
we have that $\target=0$ if and only if $Y$ is conditionally mean independent of $X$ given $Z$. This suggests a potential strategy for assessing conditional mean independence via the estimation of $\target$. Such an approach was adopted by \citet{williamson2019nonparametric}, who employed a plug-in estimator of $\tau$, and showed that, under conditions, it yields a semiparametric efficient estimator, provided that $\target > 0$.  However, as highlighted by \citet{williamson2019nonparametric}, under the null where $\target=0$, semiparametric approaches such as this face a fundamental difficulty as the influence function is identically zero, and as a consequence the test statistic has a degenerate distribution.

To circumvent this issue, \citet{williamson2020unified} and independently \citet{dai2021}, utilise an alternative representation of the target parameter $\target =\E[ \{Y - \E(Y \given Z)\}^2] - \E [\{Y - \E(Y \given X,Z)\}^2]$ and propose a testing procedure via sample splitting, where estimation of 
$\E[\{Y - \E(Y \given Z)\}^2]$ and $\E[\{Y - \E(Y \given X,Z)\}^2]$ is done on independent splits of the data. This restores the asymptotic normality of the test statistic under the null, but estimating these population quantities separately comes with a significant power loss.
Indeed, even in simple parametric settings, each of the population level quantities $\E[\{Y - \E(Y \given Z)\}^2]$ and $\E[\{Y - \E(Y \given X,Z)\}^2]$ can only be estimated at a $1/\sqrt{n}$ rate, and so the difference of the two estimates each coming from independent samples would also only converge to the true difference $\tau$ at a $1/\sqrt{n}$ rate. As a result, the test becomes asymptotically powerless if $\sqrt{n} \target \rightarrow 0$, even for a parametric linear model where the optimal testing rate is known to be of order $n^{-1}$. Moreover, the asymptotic normality fails when $Y$ is (close to) independent of $(X,Z)$, which raises concerns about uniform validity of the test. See Section~\ref{Section: a discussion of Williamson et al} of the supplementary material for details on these issues.

\subsection{Outline of our approach and contributions} \label{sec:outline}
In view of the considerations above, the goal of this paper is to propose a new framework for testing conditional mean independence that has the following properties:
	\begin{itemize}
		\item \textbf{Flexible Type I error control.} The user should be able to leverage flexible regression methods to ensure validity of the test uniformly over classes of distributions where these methods perform sufficiently well.
		\item \textbf{Rate-optimal power in diverse settings.} The test should have minimax rate-optimal power in both simple parametric models, as well as challenging nonparametric settings, when used with appropriate regression methods.
		\item \textbf{Computationally practical.} The test should involve performing only a small number of regressions.
	\end{itemize}
Our approach is based on the following alternative characterisation of conditional mean independence: $Y$ is conditionally mean independent of $X$ given $Z$ if and only if 
\begin{equation} \label{eq:ortho}
\E\bigl[\bigl\{ Y - \E(Y\given Z) \bigr\} f(X, Z)\bigr] = \E\bigl[\cov\bigl(Y, f(X, Z) \given Z\bigr)\bigr]=0
\end{equation}
for all functions $f$ such that $\E\bigl(f(X, Z)^2\bigr) < \infty$. In words, the residuals $Y - \E(Y\given Z)$ from regressing $Y$ on $Z$ alone are uncorrelated with any square-integrable function of $X$ and $Z$. On the other hand, under an alternative, these residuals should not be pure noise but contain some `signal' that can be exposed via an appropriate $f$ such that the left-hand side of \eqref{eq:ortho} is strictly positive.

To motivate our specific strategy, consider an oracular test statistic that uses knowledge of the conditional expectation $\E(Y \given Z)$: given
independent copies $(X_i, Y_i, Z_i)_{i=1}^n$ of $(X, Y, Z)$ and a function $f$, the random variables~$L_{i}^* := \{Y_i - \E(Y_i\given Z_i)\} f(X_i,Z_i)$ for $i=1,\ldots,n$ are independent and identically distributed, with zero mean under the null.
Writing $\widetilde{L}_{i}^* := \{Y_i - \E(Y_i \given X_i, Z_i)\} f(X_i,Z_i)$, we have that under regularity conditions, the studentised statistic
	\begin{align} \label{Eq: oracle statistic}
		T^* := \frac{\frac{1}{\sqrt{n}} \sum_{i=1}^{n} L_{i}^*}{\sqrt{\frac{1}{n} \sum_{i=1}^{n} \widetilde{L}_{i}^{* 2}}}
	\end{align}
converges to a standard normal distribution under the null, and may thus form the basis of a test. Note that since $\widetilde{L}_{i}^* = L_{i}^*$ under the null, we may alternatively studentise the test statistic using the empirical standard deviation of the $L_{i}^*$; however this version simplifies the derivation to follow.
Different choices of $f$ would lead to different power properties under an alternative. Ideally, we want to maximise the value of the test statistic under an alternative, so we would like $\E (L^*_i) / \sqrt{\Var(\widetilde{L}_i^{*})}$ to be as large as possible. It may be shown (see Proposition~\ref{Proposition: optimal f} in Section~\ref{Section: misc results} of the supplementary material) that this is uniquely maximised, up to an arbitrary positive scaling, by choosing $f(X, Z) = h(X, Z) / v(X, Z)$, where $h(X, Z) := \E(Y \given X, Z) - \E(Y \given Z)$ and $v(X, Z) := \Var(Y \given X, Z)$.  We therefore see that the optimal $f$ is a version of the projection $h$ of $Y$ onto the space of square-integrable functions of $(X, Z)$ that are orthogonal to functions of $Z$, inversely weighted by the conditional variance $v$.

The considerations above suggest the following approach: use one portion of the data to obtain an estimate $\fhat$ of the projection $f$, and then use the remaining data to evaluate a test statistic of the form \eqref{Eq: oracle statistic}, with the unknown conditional expectations there replaced with appropriate regression estimates.  This forms the basis of our proposed test statistic, which we call the \emph{Projected Covariance Measure} (PCM). In fact, it turns out to be advantageous to modify somewhat the basic blueprint described above, for instance by subtracting from $\fhat(X,Z)$ an estimate of its conditional expectation given $Z$, to reduce bias; a complete description of our methodology is given in Section~\ref{Section: Projected covariance measure}.

One important issue to be addressed is the fact that under the null, $h$ is the zero function, and as a consequence, both the numerator and denominator of $T^*$ are zero. This is not immediately problematic for the oracular statistic $T^*$, as one can always decide not to reject the null when the numerator is precisely $0$. However, it might appear to be potentially disastrous for an empirical version of $T^*$, where any bias terms in the numerator could be inflated by division with a denominator that is close to zero. One of our main contributions in this work is to show that by formulating our PCM test statistic appropriately, it has an asymptotic standard Gaussian limit in settings ranging from low- and high-dimensional linear models to fully nonparametric settings. Moreover, we demonstrate empirically that this limiting behaviour can be expected to hold more generally when machine learning methods
such as random forests \citep{breiman2001random} are used for the regressions involved.

The rest of the paper is organised as follows. After reviewing some related literature in Section~\ref{sec:review}, we present in Section~\ref{Section: Projected covariance measure} a full description of the PCM test in Algorithm~\ref{Algorithm: PCM}.  Since, as described above, this is a randomised procedure, we also introduce a derandomised variant in Algorithm~\ref{Algorithm: PCM multi} that we recommend for practical use.  In Section~\ref{Section: Linear models}, we examine the simplest instantiation of our general framework and study testing in the context of low-dimensional linear models. An important revelation of this analysis is that in contrast to the equally general testing frameworks of \citet{williamson2020unified} and \citet{dai2021}, our approach has power against local alternatives where $\tau$ is of order $n^{-1}$. We go on to show that, under conditions, the PCM maintains Type I error control in high-dimensional linear models, even when using an essentially arbitrary machine learning method to estimate the projection~$f$.  In Section~\ref{Section: A general theory}, we present a general theory of the PCM; this theory reveals that both Type I and II errors are controlled, as long as prediction errors of the user-chosen regression procedures are sufficiently small. 
We show in Section~\ref{Section: Series estimators} how our general conditions for Type I error control may be satisfied in a fully nonparametric regression setting when using series estimators for the relevant regressions. A modification of the PCM that incorporates an additional sample split for theoretical tractability enjoys what we show to be the minimax rate optimal separation rate, in terms of having power converging to 1 over classes of alternatives for which $\tau$ in~\eqref{eq:tau} satisfies a lower bound.

In Section~\ref{sec:numerical}, we conduct several simulation experiments that demonstrate the effectiveness of the PCM when used with generalised additive model-based regressions \citep{wood2017} and random forests, in terms of both Type I error control and power. We conclude with a discussion in Section~\ref{sec:discuss} outlining potential future research directions suggested by our work.

All of the sections and results in supplementary material \citep{lundborg2024projected} are preceded by an `S'.  In Sections~\ref{Section: Proofs} and \ref{Section: Auxiliary lemmas}, we include the proofs of all our main results and related auxiliary lemmas. In Section~\ref{Section: linear williamson theoretical comparison}, we provide a detailed discussion of the test proposed by \cite{williamson2020unified}, contrasting it with our method both theoretically and empirically. Section~\ref{Section: Splines} provides a self-contained description of spline regression and related results that we use for our analysis in Section~\ref{Section: Series estimators}.  In Section~\ref{Section: full linear analysis}, we give a more detailed analysis of our results for linear projections in Section~\ref{Section: Linear models}; in particular we derive an exact asymptotic power function of our test. 
Section~\ref{Section: additive models binary} contains the results from additional numerical experiments beyond those included in Section~\ref{sec:numerical}.

\subsection{Literature review} \label{sec:review}
There is a relatively small body of literature that is explicitly concerned with conditional mean independence. Early developments on this topic include the work of \citet{fan1996consistent}, \citet{lavergne2000nonparametric} and \citet{ait2001goodness} from the econometrics community. 
\citet{jin2018testing} propose an approach for testing conditional mean independence in cases where $\mathbb{E}(Y \given Z)$ is a linear function of $Z$, based on the martingale difference divergence proposed by \cite{shao2014martingale}.

Recent years have witnessed an increasing use of machine learning (ML) tools for statistical inference. For example, \cite{chernozhukov2018double} introduce an ML-driven approach for estimating causal parameters in the presence of complex nuisance parameters. \citet{shah2018goodness} and \citet{jankova2020goodness} propose methods for goodness-of-fit testing in high-dimensional (generalised) linear models that involve detecting remaining signal in residuals using ML methods.  More closely related to this work,  \citet{williamson2020unified}, and \citet{dai2021}, propose model-free methods for assessing conditional mean independence that can take advantage of existing ML algorithms.  \citet{williamson2020unified} derive a semiparametrically efficient estimator $\tau$, but recognise the difficulty of testing the null hypothesis that $\tau = 0$ caused by the fact that the efficient influence function is identically zero under the null. This means that their sample-splitting approach lacks validity when $(X,Y,Z)$ are independent, and moreover it turns out that the test may require larger values of $\tau$ than necessary in order to achieve power; see Section~\ref{Section: Linear models linear projection} for a more detailed discussion.  \citet{dai2021} alleviate the Type I error issue by adding noise to their test statistic, but this comes at a further price in terms of power, as pointed out by \citet{verdinelli2021decorrelated}.  
\cite{cai2022model} also propose model-free tests of conditional mean independence; one of their test statistics has a similar form to the one of \cite{williamson2019nonparametric} that compares the predictive performance of two regression models, and they use a permutation approach to calculate a $p$-value.  Another related work is that of \citet{zhang2020floodgate}, who provide a method for constructing confidence intervals for $\target$ in the case where the conditional distribution of $X$ given $Z$ is (almost) known; see also \citet{candes2016panning} and \citet{berrett2020conditional}, who employ similar assumptions in the context of testing conditional independence.

Many existing tests, including ours, determine their critical values based on asymptotic theory derived under the null.  However, most work (implicitly) targets pointwise Type I error control that holds only each fixed null.  This type of pointwise analysis leaves room for the existence of a sequence of null distributions for which the Type I error can be made arbitrarily large.
A classical example is the fact that the $t$-test that has pointwise asymptotic size $\alpha$ for the class of distributions with finite variance, has uniform asymptotic size 1 for the same class of distributions~\citep{romano2004non}.  While it is straightforward to introduce, for instance, moment conditions to restore uniform size control in that problem, we argue that the issue is even more pertinent in the context of testing conditional (mean) independence as there are no canonical choices of restrictions to the null that can yield this form of error control.  In this work, 
we therefore put great emphasis on uniform Type I error control over classes of distributions in order to present more practically-relevant error guarantees.  This uniform analysis is in line with recent work on conditional independence testing such as
\citet{shah2020hardness}, \citet{petersen2021quantile}, \citet{lundborg2021conditional}, \citet{scheidegger2021weighted} and \citet{neykov2021minimax}.

Our work builds on a classical technique, namely sample splitting, that involves partitioning the data into disjoint subsamples for different purposes: roughly speaking, a portion of the data is used for seeking a good direction that potentially contains a high signal and the other portion is used for conducting a test based on the data projected along the given direction. \cite{cox1975note} provides one of the earliest applications of  sample splitting to testing problems. Since then, many inference procedures have been developed by leveraging a similar technique to perform variable selection in high-dimensional models~\citep{wasserman2009high,meinshausen2009p,meinshausen2010stability,shah2013variable}, inference after model selection~\citep{rinaldo2019bootstrapping}, changepoint detection \citep{wang2018high} and inference based on maximum likelihood estimators~\citep{wasserman2020universal}, to name just a few.
In a similar vein, \citet{kim2020dimension} introduce splitting-based procedures that address an issue of degenerate $U$-statistics for high-dimensional inference.
While our main focus is on testing, sample splitting has also been considered for estimation problems, where it typically works as a device to reduce a bias and thus help to obtain a fast (often optimal) convergence rate~\citep{chernozhukov2018double,newey2018cross,wang2020debiased}. Some parts of our work are motivated by \citet{newey2018cross}, who propose cross-fit estimators of functionals involving conditional expectations.

\subsection{Preliminaries and notation}
\label{Section:Notation}
Throughout this paper, we adopt the convention that $0/0 := 0$ and let $\mathrm{sgn}(\cdot)$ denote the sign function on $\mathbb{R}$, with the convention that $\mathrm{sgn}(0) := 0$. We let $x \wedge y := \min(x, y)$ and for $n \in \mathbb{N}$, let $[n]:=\{1, \dots, n\}$. For two sequences $(a_n)$ and $(b_n)$, we write $a_n \asymp b_n$ and $a_n \lesssim b_n$ to mean that there exist $c,C > 0$ such that $0 < c \leq |a_n/b_n| \leq C < \infty$ for all $n$, and $a_n \leq Cb_n$ for all $n$ respectively. For a vector $x \in \mathbb{R}^n$ and $p \in [1,\infty]$, we denote its $\ell_p$ norm by $\|x\|_p$.
The operator norm of a matrix $\mbb{A} \in \mathbb{R}^{n \times m}$ is denoted by $\|\mbb{A}\|_{\mathrm{op}}$ and the maximum and minimum eigenvalues of a symmetric matrix $\mbb{B} \in \mathbb{R}^{n \times n}$ are denoted by $\lambda_{\max}(\mbb{B})$ and $\lambda_{\min}(\mbb{B})$, respectively. We use the notation $z_\alpha$ to denote the $\alpha$th quantile of the standard normal distribution, whose distribution function is denoted by $\Phi$.

In order to present our uniform results on testing, we require some conventions for probabilistic notation used in what follows. Let $(\Omega, \mathcal{F})$ be a measurable space equipped with a family of probability measures $(\mathbb{P}_P)_{P \in \mathcal{P}}$ where $\mathcal{P}$ is a collection of distributions on a Euclidean space. We will permit the family $\mathcal{P}$ to depend on $n$, to allow for settings where the number of parameters grows with $n$, but will typically suppress this in the notation.  Given a family of sequences of random variables $(X_{P,n})_{P \in \mathcal{P},n \in \mathbb{N}}$ on ($\Omega,\mathcal{F}$) whose distributions are determined by $P \in \mathcal{P}$, we write $X_{P,n} = o_\mathcal{P}(1)$ if $\sup_{P \in \mathcal{P}} \prob_P(|X_{P,n}| > \epsilon) \rightarrow 0$ for every $\epsilon >0$. Similarly, we write $X_{P,n} = O_\mathcal{P}(1)$ if, for any $\epsilon>0$, there exist $M_\epsilon,N_\epsilon>0$ such that $\sup_{n \geq N_\epsilon} \sup_{P \in \mathcal{P}} \prob_P(|X_{P,n}| > M_\epsilon) < \epsilon$. In addition, for another family of sequences of random variables $(Y_{P,n})_{P \in \mathcal{P},n \in \mathbb{N}}$, we write $X_{P,n} = o_\mathcal{P}(Y_{{P,n}})$ if there exists $R_{P,n}$ with $X_{P,n} = Y_{{P,n}} R_{P,n}$ and $R_{P,n} = o_{\mathcal{P}}(1)$; likewise, we write $X_{P,n} = O_{\mathcal{P}}(Y_{P,n})$ if $R_{P,n} = O_\mathcal{P}(1)$ in this representation. 
We say that $(X_{P, n})_{P \in \mathcal{P}, n \in \mathbb{N}}$ converges uniformly in distribution to random variable $X$ with distribution function $F$ if for all continuity points $x$ of $F$, we have
\[
\lim_{n \to \infty}   \sup_{P \in \mathcal{P}} \bigl| \prob_P(X_{P, n} \leq x) - F(x) \bigr| = 0 .
\]

We will denote different independent datasets by $\mathcal{D}_1, \mathcal{D}_2,\ldots$, each containing $n$ independent observations. We will frequently abuse notation and write conditional expectations conditioning on a random function, e.g. $\E_P\bigl(\fhat(X, Z) \given \fhat, Z\bigr)$ where $\fhat$ is a function produced by some regression estimator. By this we mean formally that we condition on the sample used to construct the regression estimator and any additional randomness involved in the computation of the regression function.  We let $(X, Y, Z)$ be random variables in $\mathcal{X} \times \mathbb{R} \times \mathcal{Z}$, where $\mathcal{X}$ and $\mathcal{Z}$ are measurable spaces, although we will at times think of $\mathcal{X}$ and $\mathcal{Z}$ being specific $d_X$- and $d_Z$-dimensional Euclidean spaces, respectively.

\section{Projected covariance measure} \label{Section: Projected covariance measure}
In this section, we outline our PCM methodology in detail.  We first provide further motivation in Section~\ref{sec:motivation}, before presenting our final algorithm in Section~\ref{sec:alg}.  Given that our approach involves sample splitting, it is convenient to assume here and also throughout Sections~\ref{Section: Linear models} and \ref{Section: A general theory} that we have $2n$ independent and identically distributed observations $(X_i, Y_i, Z_i)_{i=1}^{2n}$ rather than the conventional $n$ observations.

\subsection{Motivation} \label{sec:motivation}
Recall that the approach sketched in Section~\ref{sec:outline} involves first computing an estimate $\fhat$ of the weighted projection 
\[
f(X, Z) = \frac{h(X, Z)}{v(X, Z)} = \frac{\E(Y \given X, Z) - \E(Y \given Z)}{\Var(Y \given X, Z)}
\]
using one portion of the data, say $\mathcal{D}_2 := (X_i, Y_i, Z_i)_{i=n+1}^{2n}$ (we refer to the remaining data as $\mathcal{D}_1$). We discuss how to construct the estimate $\fhat$ in Section~\ref{sec:alg}. Next, given an estimate $\mhat(\cdot)$ of $m(\cdot) := \E(Y \given Z = \cdot)$, the oracular test statistic \eqref{Eq: oracle statistic} suggests a numerator of our test statistic of the form
\begin{equation} \label{eq:basic_test_stat}
\frac{1}{\sqrt{n}} \sum_{i=1}^n \{Y_i - \mhat(Z_i)\} \fhat(X_i, Z_i).
\end{equation}
We would like this to have mean close to zero under the null; however it is well-known \citep{chernozhukov2018double} that when using a nonparametric estimator $\mhat$, the quantity above may carry a substantial bias and we should instead consider an orthogonalised version of the form
\[
\frac{1}{\sqrt{n}} \sum_{i=1}^n L_i \qquad \text{with} \qquad L_i:=\{Y_i - \mhat(Z_i)\} \{\fhat(X_i, Z_i) - \mhat_{\fhat}(Z_i)\},
\]
where $\mhat_{\fhat}$ is an estimate of $m_{\fhat}(\cdot) := \E(\fhat(X, Z) \given Z = \cdot, \fhat)$. Importantly, the bias term can then be controlled by a product of the mean squared prediction error (MSPE) of $\mhat$,
\begin{equation} \label{eq:in-sample_err}
	\frac{1}{n} \sum_{i=1}^n \{\mhat(Z_i)  - m(Z_i)\}^2,
\end{equation}
and that of $\mhat_{\fhat}$, a quantity that may be substantially smaller than the MSPE of $\mhat$ alone (which would drive the bias in \eqref{eq:basic_test_stat}). 

Turning to the denominator of our test statistic, instead of studentising by a quantity requiring an estimate of $\E(Y \given X, Z)$ as suggested by \eqref{Eq: oracle statistic}, it is practically more convenient to normalise using the empirical standard deviation of $L_1,\ldots,L_n$.  Thus we propose to take as our test statistic
\begin{equation} \label{eq:test_stat}
	T := \frac{\frac{1}{\sqrt{n}} \sum_{i=1}^n L_i}{\sqrt{\frac{1}{n} \sum_{i=1}^n L_i^2 - \bigl(\frac{1}{n} \sum_{i=1}^n L_i \bigr)^2}}.
\end{equation}
For local alternatives, both versions are near-identical and so any differences in power properties should be very slight, as we have also observed empirically.

We choose in practice to train $\mhat$ and $\mhat_{\fhat}$ on $\mathcal{D}_1$ rather than $\mathcal{D}_2$. The errors such as \eqref{eq:in-sample_err} that are required to be controlled are then \emph{in-sample errors}, that is the regression methods are trained on the same data on which they are evaluated, so the regression methods need not extrapolate to unseen data points, for example. While from a theoretical perspective in-sample errors and out-of-sample errors are often thought of similarly, in finite samples, these can behave differently: for example Figure~\ref{fig:gam regression errors} demonstrates that when using additive models (computed using the \texttt{R} package \texttt{mgcv} \citep{wood2017})  to estimate $m$ in a setup considered in Section~\ref{Section: additive models}, out-of-sample errors can be appreciably larger with non-negligible probability.

\begin{figure}
	\centering
	\includegraphics[scale=0.44]{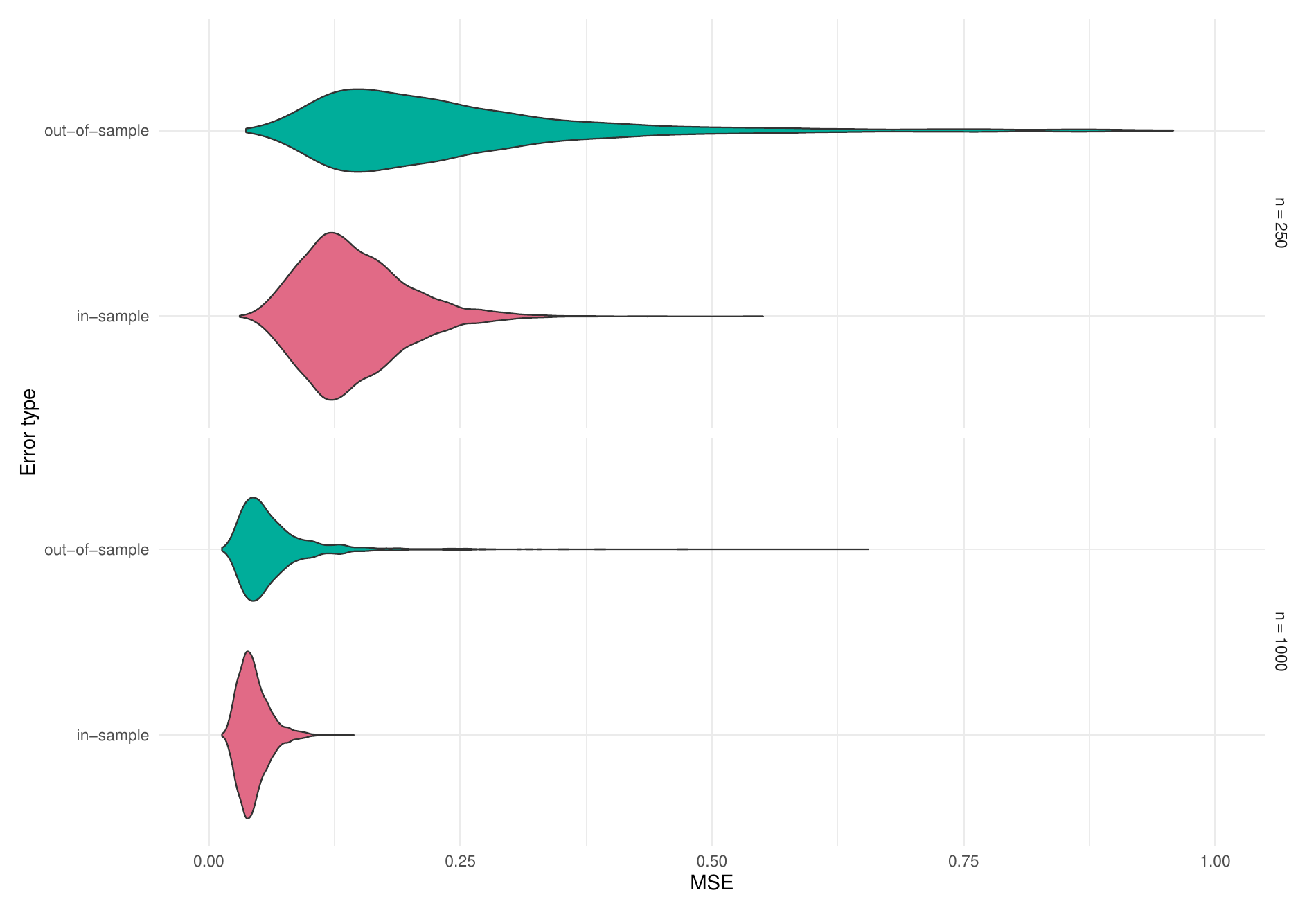}
	\caption{In-sample and out-of-sample errors for the model where $(Z_1, \dots, Z_7) \sim N_7(0, \mbb I)$, $Y=\sin(2\pi Z_1) + \varepsilon$ with $\varepsilon \sim N(0 ,1)$ independently of $Z_1, \dots, Z_7$, and regressions are performed using \texttt{mgcv}; see Section~\ref{Section: additive models} for more details on this setup.}
	\label{fig:gam regression errors}
\end{figure}

As the PCM may be thought of as the GCM applied to a transformed $X$, we would hope to obtain a standard Gaussian limit for $T$ as in the case of the regular GCM test statistic. Given that the transformation is designed to result in large values of $T$ under an alternative, we would perform a one-sided test by rejecting
when $T$ exceeds the appropriate normal quantile. Unfortunately however, the theory that guarantees asymptotic validity of
the GCM test statistic does not apply in our case:
it would require $\Var(\{Y - m(Z)\}\{\fhat(X, Z) - m_{\fhat}(Z)\} \given \fhat)$, i.e.\ the (square of the) target of the denominator, to be bounded away from zero under the null. But $f$ is identically $0$ under the null, so $\fhat$ and hence the above variance, and also both numerator and denominator of our test statistic, should all converge to $0$.

To see why we can expect
a standard Gaussian limit for our test statistic
despite this apparent degeneracy, consider a linear model setting where $(X, Y, Z) \in \R \times \R \times \R^d$ are related through
\begin{equation} \label{eq:lm_simple}
	Y = \beta X + Z^{\top} \mbb\gamma + \varepsilon \qquad \text{and} \qquad X = Z^{\top}\mbb\eta + \xi,
\end{equation}
with $\beta = 0$ and $\E (\varepsilon \given Z) = \E(\xi \given Z) = 0$. If we form estimates $\hhat$ and $\mhat$ using ordinary least squares, and for simplicity set $\vhat \equiv 1$ when forming $\fhat$, then $\fhat(x, z) = \hhat(x, z)$ takes the form $\widehat{\beta} x + z^{\top}\widetilde{\mbb\delta}$ for some $(\widehat{\beta}, \widetilde{\mbb\delta}) \in \R \times \R^d$, where both $\widehat{\beta}$ and $\|\widetilde{\mbb\delta}\|_2$ are
$O_P(n^{-1/2})$.

Let us write $\widehat{\mbb\gamma}$ and $\widehat{\mbb\eta}$ for the regression estimates of $\mbb\gamma$ and $\mbb\eta$ respectively. 
The next step of our procedure involves regressing each of $(Y_i)_{i=1}^n$ and $\bigl(\fhat(X_i, Z_i)\bigr)_{i=1}^n$ onto $(Z_i)_{i=1}^n$.  The residuals from the latter regression take the form $\widehat{\beta}\{Z_i^{\top}(\mbb\eta - \widehat{\mbb\eta}) + \xi_i\}$, so in our case
\[
L_i = \widehat{\beta} \{Z_i^{\top}(\mbb\gamma - \widehat{\mbb\gamma}) + \varepsilon_i\} \{Z_i^{\top}(\mbb\eta - \widehat{\mbb\eta}) + \xi_i\}.
\]
Thus, although $L_i$ and hence its standard deviation would be $O_P(n^{-1/2})$ due to the factor of $\widehat{\beta}$, writing $L_i' := L_i / |\widehat{\beta}|$, we see that our test statistic is of the form $\sgn(\widehat{\beta}) T'$, where $T'$ is a version of $T$ in~\eqref{eq:test_stat} with $L_i$ replaced by $L_i'$. But $L_i'$ is an order 1 quantity (in contrast of $L_i$), so under mild conditions $n^{-1/2}\sum_{i=1}^n L_i'$ will have a non-degenerate Gaussian limit, yielding a standard Gaussian limit for $T'$. As $\widehat{\beta}$ is independent of $T'$, having been constructed on $\mathcal{D}_2$, the final test statistic $T$ will also converge to a standard Gaussian.

While this argument provides a heuristic justification for the asymptotic validity of our proposed test under a simple linear model, there remain challenges in extending the basic intuition of this example to more general settings. In the above, it was possible to isolate the randomness from $\fhat$ simply via the sign of $\widehat{\beta}$, which helps bypass the $0/0$ issue. However, it is by no means straightforward to deal with the limits of the form $0/0$ in a nonparametric setting where $\fhat$ is entangled with other sources of randomness in a complicated way. Moreover, in nonparametric settings one needs to put more effort into ensuring that the convergence rates of $\fhat$, $\mhat$ and $\mhat_{\fhat}$ are fast enough that the bias term is asymptotically negligible. In this process, we are obliged to handle a nested regression problem that has rarely been touched in the literature, with a few exceptions~\citep[e.g.][]{kennedy2020optimal}.

\subsection{PCM algorithm} \label{sec:alg}
Our PCM approach developed in Section~\ref{sec:motivation} is set out in Algorithm~\ref{Algorithm: PCM}, with some recommendations for the constructions of $\hhat$ and $\vhat$ that we discuss in Sections~\ref{sec:hhat} and \ref{sec:vhat} below.
In Section~\ref{sec:multi}, we describe a derandomised variant of the PCM.

\begin{algorithm}\raggedright \caption{Projected Covariance Measure: single sample split} \label{Algorithm: PCM}
	\textbf{Input}: Data $(X_i,Y_i,Z_i)_{i=1}^{2n}$, significance level $\alpha \in (0,1)$,
	partition of $[2n] = \mathcal{I}_1 \cup \mathcal{I}_2$ into index sets $\mathcal{I}_1$ and $\mathcal{I}_2$, each of size $n$.\\
	\textbf{Options}: Regression methods for each of the regressions.\\
	\textbf{Define}: $\mathcal{D}_j = (X_i, Y_i, Z_i)_{i \in \mathcal{I}_j}$ for $j \in [2]$.
	\vskip .3em
	\begin{algorithmic}[1]
		\State \textit{Form $\hhat$.}
		\begin{enumerate}[(i)]
			\item Regress $Y$ onto $(X, Z)$ using $\mathcal{D}_2$ to give fitted regression function $\ghat$.
			\item If $\ghat$ can be modified so that all components involving only $Z$ are set to $0$, let $\widetilde{g}$ be this modified version of $\ghat$. Alternatively, set $\widetilde{g} := \ghat$.
			\item Regress $\widetilde{g}(X, Z)$ onto $Z$ using $\mathcal{D}_2$ to give fitted regression function $\widetilde{m}$, and then set $\widetilde{h}(x, z) := \widetilde{g}(x, z) - \widetilde{m}(z)$.
			\item Compute
						\[
							\widehat{\rho} := \frac{1}{n} \sum_{i \in \mathcal{I}_2} \bigl\{Y_i - \ghat(X_i, Z_i) + \widetilde{g}(X_i, Z_i) - \widetilde{m}(Z_i) \bigr\}\widetilde{h}(X_i, Z_i),
						\]
						and set $\hhat(x, z) := \sgn(\widehat{\rho}) \widetilde{h}(x, z)$,
		\end{enumerate}
		\State \textit{Form $\vhat$.}
		\begin{enumerate}[(i)]
			\item Regress $\{Y-\ghat(X,Z)\}^2$ onto $(X, Z)$ using $\mathcal{D}_2$ to give $\widetilde{v}$.
			\item Define $a: [0, \infty) \to [0, \infty]$ by 
			\[
			a(c) := \frac{1}{n} \sum_{i \in \mathcal{I}_2} \frac{\{Y_i-\ghat(X_i, Z_i)\}^2}{\max\{\widetilde{v}(X_i, Z_i), 0\} +c}.
			\]
			If $a(0) \leq 1$, set $\hat{c}:=0$; otherwise find $\hat{c}$ by solving $a(c) = 1$. Set $\vhat(x, z) := \max\{\widetilde{v}(x, z), 0\} + \hat{c}$.
		\end{enumerate}
		\State \textit{Compute test statistic.}
		\begin{enumerate}[(i)]
			\item Set $\fhat(x, z) := \hhat(x, z)/\vhat(x, z)$ and regress $\fhat(X, Z)$ onto $Z$ using $\mathcal{D}_1$, giving $\mhat_{\fhat}$.
			\item Regress $Y$ onto $Z$ using $\mathcal{D}_1$ to give $\mhat$.
			\item For $i \in \mathcal{I}_1$ set $L_i := \{Y_i - \mhat(Z_i)\}\{\fhat(X_i, Z_i) - \mhat_{\fhat}(Z_i) \} $ and let
			\[
			T := \frac{\frac{1}{\sqrt{n}} \sum_{i \in \mathcal{I}_1} L_i}{\sqrt{\frac{1}{n} \sum_{i \in \mathcal{I}_1} L_i^2 - \bigl(\frac{1}{n} \sum_{i \in \mathcal{I}_1} L_i \bigr)^2}}.
			\]
		\end{enumerate}
		\State \textit{Reject $H_0$ if $T > z_{1-\alpha}$.}
	\end{algorithmic}
\end{algorithm}

\subsubsection{Choice of \texorpdfstring{$\hhat$}{\hat{h}}} \label{sec:hhat}
	We would like $\hhat(X, Z)$ to be close to $h(X, Z) = \E(Y \given X, Z) - \E(Y \given Z)$ in order to maximise the power of the procedure. 	
	There are several ways of estimating $h$, with perhaps the most obvious being simply to take the difference of the estimated regression functions $\widehat{g}$ and $\check{m}$ from regressing $Y$ on each of $(X, Z)$ and $Z$. An alternative approach is based on observing that $h(X, Z) = g(X, Z) - \E\bigl(g(X, Z) \given Z\bigr)$ where $g(X, Z) := \E(Y \given X, Z)$. This suggests subtracting not $\check{m}$ but the output of regressing $\ghat(X, Z)$ onto $Z$. An advantage of this latter approach is that we are free to subtract any function $r$ of $Z$ from $\ghat(X, Z)$ prior to this second regression onto $Z$, as we also have $h(X, Z) = g(X, Z) - r(Z) - \E\bigl(g(X, Z) - r(Z) \given Z\bigr)$. Thus for example if $\ghat(x, z) = \ghat_x(x) + \ghat_z(z)$, then we may form an estimate of $h(X, Z)$ as the residuals from regressing $\ghat_x(X)$ onto $Z$. This second regression can then focus on removing any $Z$ signal in $ \ghat_x(X)$, rather than also having to cancel out $\ghat_z(Z)$.
	We do not make the claim that this always makes a large improvement on the first approach, and indeed for certain regression methods such as ordinary least squares (OLS), both approaches are identical and the `cancellation' is automatic. Nevertheless, we find the approach of Step 1 of Algorithm~\ref{Algorithm: PCM} to be a sensible default choice.
	
	In Step 1(iv) we make a final modification to the estimate thus constructed by potentially flipping its sign. The rationale for this is as follows: under an alternative, we have that $\E [\{Y - \E(Y \given Z)\} h(X, Z)] = \tau > 0$. As a basic check then, we can see if an empirical version of this inequality, with $\hhat$ taking place of $h$ and an estimate of $\E(Y \given Z)$ replacing the population quantity, holds; if not, we can at least flip the sign of $\hhat$. This does not require performing any further regressions to estimate $\E(Y \given Z)$: noting the identity $\E(Y \given Z) = \E\{ \E(Y \given X, Z)-r(Z) \given Z\} + r(Z)$, observe that $\widetilde{m}$ in Step~1(iii) is an estimate of the first of these quantities with $r(Z) = \widehat{g}(X, Z) - \widetilde{g}(X, Z)$, where $\widetilde{g}$ is defined in Step~1(ii). When using OLS for each of the regressions, $\widehat{\rho}$ is guaranteed to be non-negative, so no sign flip is performed.
	
	In high-dimensional settings, we would typically use a sparsity-inducing regression method such as the Lasso \citep{tibshirani1996regression}. Considering the simple case where $X$ is univariate, this can result in the coefficient for $X$ being set exactly to zero, and so the recommended construction of $\hhat$ given above would simply produce the zero function. While not a problem for Type I error control, as our convention (see Section~\ref{Section:Notation}) is not to reject the null when $L_i=0$ for all $i$, it is wasteful in terms of power and a better approach here would be to leave the coefficient for $X$ unpenalised. More generally for multivariate $X$, we can additionally regress on the first principal component of $X$ for example, and leave this unpenalised.

\subsubsection{Choice of \texorpdfstring{$\vhat$}{\hat{v}}} \label{sec:vhat}
A natural way to form $\vhat$ is to regress the square of the residuals from regressing $Y$ onto $(X, Z)$, and this is what we recommend in Step~2(i) of Algorithm~\ref{Algorithm: PCM} to produce $\widetilde{v}$. An issue is that while $v$ is clearly non-negative, and expected to be positive everywhere, $\widetilde{v}$ may in fact be negative.  Equally problematic is the possibility that $\widetilde{v}$ is very close to $0$ at some $(X_i, Z_i)$, as then taking $\vhat = \widetilde{v}$, we would have $\fhat(X_i, Z_i)$ very large and hence $\fhat(X_i, Z_i) - \mhat_{\fhat}(Z_i)$ and $L_i$ may be greatly inflated and dominate the test statistic. To mitigate these problems, we modify $\widetilde{v}$ by taking the positive part of our initial estimate, and then adding a non-negative constant $\hat{c}$. This constant is chosen such that $a(\hat{c})$ (see Step~2(ii) of Algorithm~\ref{Algorithm: PCM}) is at most $1$, the rationale coming from the population level identity $\E [\{Y - \E(Y \given X, Z)\}^2 / v(X, Z)] =1$. We also note that estimation of the conditional variance $v$ is not critical for good power properties. For example,
in Section~\ref{Section: Series estimators} we show that simply setting $\vhat \equiv 1$ delivers minimax rate optimal power in a fully nonparametric setting; 
however the power properties may improve empirically by a constant factor; see Section~\ref{Section: Linear models linear projection}.

\subsubsection{Derandomisation} \label{sec:multi}
The single sample split in Algorithm~\ref{Algorithm: PCM} crucially ensures independence between $\fhat$ and the remaining data $\mathcal{D}_1$, but has the consequence of introducing unwanted additional randomness into the test statistic. A drawback of randomised procedures such as this is that different random seeds may deliver different conclusions. To mitigate this issue, as is very common for methods involving sample splitting, it is recommended to derandomise Algorithm~\ref{Algorithm: PCM} by averaging the test statistics $T^{(1)},\ldots,T^{(B)}$ constructed from multiple random sample splits, as summarised in Algorithm~\ref{Algorithm: PCM multi}. Letting $\varphi$ denote the density function of $N(0,1)$, the choice $q=\alpha^{-1}\varphi(z_{1-\alpha})$ for the threshold for the averaged test statistic $\bar{T}$ is guaranteed to give an asymptotic Type I error of at most $\alpha$ (see Corollary~\ref{Corollary: average of Z values} in Section~\ref{Section: misc results}); when $\alpha = 0.05$, the corresponding threshold is $1.255 z_{0.95}$. However, our empirical experience is that the resulting test is typically very conservative as it is designed to maintain validity even in pathological cases.
\citet{guo2023rank} give a data-driven choice of $q$ that yields an asymptotic size that approaches a chosen level, along with theoretical guarantees; the procedure however is more computationally involved. Following \citet{wang2020debiased}, we therefore use the choice $q=z_{1-\alpha}$ in all of our simulations. This is still expected to be conservative
as by Jensen's inequality, $\bar{T}$ is less than or equal to $T$ in the convex ordering, so for example $\Var(\bar{T}) \leq \Var(T)$; however it gives a reasonable compromise between simplicity and statistical power.  We refer the reader to \citet{guo2023rank} and references therein for alternative derandomisation schemes that may be used here.
\setcounter{algorithm}{0}
\renewcommand{\thealgorithm}{\arabic{algorithm}$^{\mathrm{DR}}$}
\begin{algorithm} \raggedright  \caption{Projected Covariance Measure: \textbf{D}e\textbf{R}andomised} \label{Algorithm: PCM multi}
	\textbf{Input}: Data $(X_i,Y_i,Z_i)_{i=1}^{2n}$, significance level $\alpha \in (0,1)$, number of splits $B$. \\
	\textbf{Options}: Regression methods for each of the regressions; threshold $q$ (default choice $z_{1-\alpha}$).
	\vskip .3em
	\begin{algorithmic}[1]
		\State Form complementary pairs of index sets $\{(\mathcal{I}_1^{(b)}, \mathcal{I}_2^{(b)}): b \in [B]\}$ each of size $n$, where $\mathcal{I}_1^{(b)} \cup \mathcal{I}_2^{(b)} = [2n]$.
		\State For $b \in [B]$, apply Algorithm~\ref{Algorithm: PCM} with index sets $\mathcal{I}_1^{(b)}, \mathcal{I}_2^{(b)}$ to produce test statistic $T^{(b)}$.
		\State Define $\bar{T} := \sum_{b=1}^B T^{(b)} / B$ and reject $H_0$ if $\bar{T} > q$.
	\end{algorithmic}
\end{algorithm}
\renewcommand{\thealgorithm}{\arabic{algorithm}}

\section{Linear models} \label{Section: Linear models}
In this section we study our PCM methodology in the context of a linear model for $Y$ on $X$ and $Z$. We begin with the simplest version of this setup, where we assume that $g(x, z) := \E(Y \given X=x, Z=z)$ is a linear function that we estimate using ordinary least squares. This is not the sort of challenging setting where we would envision applying the PCM in practice, as clearly a $t$-test (modified to account for potential heteroscedasticity) would suffice to test for the significance of $X$. We nevertheless present it to show that in contrast to the general methodologies put forward by \citet{williamson2020unified} and \citet{dai2021}, here our method achieves the same (optimal) separation rate in terms of power as that of the $t$-test.  In Section~\ref{sec:gen_proj} below, we show that for both low- and high-dimensional $Z$, we retain Type I error control even under an arbitrary model for $X$ and when using an essentially arbitrary estimated projection $\fhat$.

\subsection{Linear projection function} \label{Section: Linear models linear projection}
We consider a family $\mathcal{P}$ of joint distributions $P$ of $(X,Y,Z) \in \mathbb{R} \times \mathbb{R} \times \mathbb{R}^d$ satisfying the linear model
\begin{align} \label{Eq: linear model with univariate X}
	Y = \beta_P X + \mbb{\gamma}_P^\top Z + \zeta_P,
\end{align}
where $\beta_P \in \mathbb{R}$ and $\mbb{\gamma}_P \in \mathbb{R}^{d}$ are regression coefficients and $\zeta_P$ is a random noise term with $\E_P(\zeta_P \given X,Z) = 0$.  Here, $Y$ and $X$ are conditionally independent given $Z$ if and only if $\beta_P = 0$.  We further impose the following moment conditions on $\mathcal{P}$.

\begin{assumption} \label{Assumption: OLS}  \leavevmode \normalfont
	\begin{enumerate}[(a)]
		\item There exist $C, \delta > 0$ such that 
		\[
		\sup_{P \in \mathcal{P}} \max\bigl\{ \E_P(\| Z \|_{\infty}^{4+\delta}), \E_P(|Y|^{4+\delta}), \E_P(|X|^{4+\delta}) \bigr\} \leq C.
		\]
		\item $\E_P(ZZ^\top) \in \mathbb{R}^{d \times d}$ is invertible, and writing $\mbb{\eta}_P := \E_P(ZZ^\top)^{-1}\E_P(XZ)$, $\xi_P := X - \mbb{\eta}_P^\top Z$, $\mbb{\Theta}_P := \E_P(ZZ^\top \xi_P^2)$, and $\mbb{\Sigma}_P^{XZ} := \E_P(W W^\top)$ where $W := (X,Z) \in \mathbb{R} \times \mathbb{R}^d$, there exists $c>0$ such that 
		\[
		\inf_{P \in \mathcal{P}} \min\{\var_P(\zeta_P \given X,Z),\var_P(\xi_P), \lambda_{\min}(\mbb{\Sigma}_P^{XZ}), \lambda_{\min}(\mbb{\Theta}_P) \} \geq c.
		\]
	\end{enumerate}
\end{assumption}

\begin{proposition} \label{Proposition: OLS power}
  Let $T$ be computed using Algorithm~\ref{Algorithm: PCM} but with $\vhat \equiv 1$ and where OLS is used for all remaining regressions. Let $\mathcal{P}$ denote a family of distributions satisfying Assumption~\ref{Assumption: OLS} and \eqref{Eq: linear model with univariate X} and define $\mathcal{P}_1(\kappa) := \{ P \in \mathcal{P} : |\beta_P| \geq \kappa / \sqrt{n}\}$.
	Given any $\alpha  \in  (0, 1)$, we have
	\[
	\lim_{\kappa \rightarrow \infty} \lim_{n \to \infty} \inf_{P \in \mathcal{P}_1(\kappa)} \pr_P(T >z_{1-\alpha}) = 1.
	\]
\end{proposition}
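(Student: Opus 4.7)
The plan is to exploit the OLS structure to compute $T$ explicitly and factor out the randomness inherited from $\mathcal{D}_2$ through a single scalar, the OLS coefficient $\widehat\beta^{(2)}$ of $X$ in the fit of $Y$ on $(X,Z)$ using $\mathcal{D}_2$. Writing $\widehat{\mbb\gamma}^{(2)}$ and $\widehat{\mbb\eta}^{(2)}$ for the corresponding OLS coefficients on $\mathcal{D}_2$, Steps~1(i)--(iii) of Algorithm~\ref{Algorithm: PCM} give $\ghat(x,z)=\widehat\beta^{(2)}x+\widehat{\mbb\gamma}^{(2)\top}z$, $\widetilde{g}(x,z)=\widehat\beta^{(2)}x$, $\widetilde{m}(z)=\widehat\beta^{(2)}\widehat{\mbb\eta}^{(2)\top}z$, and $\widetilde{h}(x,z)=\widehat\beta^{(2)}(x-\widehat{\mbb\eta}^{(2)\top}z)$. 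Since $\widetilde{h}(X_i,Z_i)$ lies in the column span of $(X_i,Z_i)_{i\in\mathcal{I}_2}$, the orthogonality of the OLS residuals $Y_i-\ghat(X_i,Z_i)$ to this span makes the cross term in Step~1(iv) vanish, so $\widehat\rho=n^{-1}\sum_{i\in\mathcal{I}_2}\widetilde{h}(X_i,Z_i)^2\geq 0$ and hence $\hhat=\widetilde{h}$; with $\vhat\equiv 1$, also $\fhat=\widetilde{h}$. A direct computation with the OLS normal equations in Step~3 then shows that the residuals from regressing $\fhat(X,Z)$ onto $Z$ using $\mathcal{D}_1$ simplify to $\widehat\beta^{(2)}\widehat\xi_i^{(1)}$, where $\widehat\xi_i^{(1)}:=X_i-\widehat{\mbb\eta}^{(1)\top}Z_i$ is built purely from $\mathcal{D}_1$. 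Writing $\widetilde{Y}_i$ for the residuals from regressing $Y$ onto $Z$ in $\mathcal{D}_1$, we obtain $L_i=\widehat\beta^{(2)}\widetilde{Y}_i\widehat\xi_i^{(1)}$, and pulling $(\widehat\beta^{(2)})^2$ out of the variance in the denominator yields
\[
T=\sgn(\widehat\beta^{(2)})\,T',
\]
where $T'$ is the analogous studentised statistic built from $L_i':=\widetilde{Y}_i\widehat\xi_i^{(1)}$ and depends only on $\mathcal{D}_1$.

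I would then analyse $T'$ as a GCM-style statistic under the linear model~\eqref{Eq: linear model with univariate X}. Using the OLS normal equations $\sum_{i\in\mathcal{I}_1}\widetilde{Y}_iZ_i=0$ and $\sum_{i\in\mathcal{I}_1}\widehat\xi_i^{(1)}Z_i=0$, together with $\sum_{i\in\mathcal{I}_1}X_i\widehat\xi_i^{(1)}=\sum_{i\in\mathcal{I}_1}(\widehat\xi_i^{(1)})^2$, I decompose the numerator as
\[
\frac{1}{\sqrt{n}}\sum_{i\in\mathcal{I}_1}\widetilde{Y}_i\widehat\xi_i^{(1)}=\beta_P\sqrt{n}\cdot\frac{1}{n}\sum_{i\in\mathcal{I}_1}(\widehat\xi_i^{(1)})^2+\frac{1}{\sqrt{n}}\sum_{i\in\mathcal{I}_1}\zeta_i\widehat\xi_i^{(1)}.
\]
Standard uniform OLS theory, supported by Assumption~\ref{Assumption: OLS}, gives $\widehat{\mbb\eta}^{(1)}-\mbb\eta_P=O_\mathcal{P}(n^{-1/2})$, and hence $n^{-1}\sum_{i\in\mathcal{I}_1}(\widehat\xi_i^{(1)})^2=\E_P(\xi_P^2)+o_\mathcal{P}(1)$, a quantity bounded below by $c>0$. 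Expanding $\widehat\xi_i^{(1)}-\xi_i=-(\widehat{\mbb\eta}^{(1)}-\mbb\eta_P)^\top Z_i$ and invoking a uniform Lindeberg--Feller CLT (justified by the $(4+\delta)$-moment bounds), the stochastic term $n^{-1/2}\sum_{i\in\mathcal{I}_1}\zeta_i\widehat\xi_i^{(1)}$ converges uniformly in distribution to $N\bigl(0,\E_P(\zeta_P^2\xi_P^2)\bigr)$. A parallel law-of-large-numbers argument shows that the sample variance in the denominator of $T'$ is bounded away from $0$ in probability, uniformly on $\mathcal{P}_1(\kappa)$, using $\var_P(\zeta_P\mid X,Z)\geq c$ and $\E_P(\xi_P^2)\geq c$.

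Combining, under $P\in\mathcal{P}_1(\kappa)$ we obtain $T'=\bigl\{\beta_P\sqrt{n}\cdot v_n+N_n\bigr\}/s_n$ with $v_n=\E_P(\xi_P^2)+o_\mathcal{P}(1)$ bounded below, $N_n$ uniformly approximately $N(0,\E_P(\zeta_P^2\xi_P^2))$, and $s_n$ bounded above and below. Uniform OLS consistency gives $\widehat\beta^{(2)}-\beta_P=O_\mathcal{P}(n^{-1/2})$, so once $\kappa$ is sufficiently large the event $\{\sgn(\widehat\beta^{(2)})=\sgn(\beta_P)\}$ has probability tending to $1$ uniformly on $\mathcal{P}_1(\kappa)$. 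On this event $\sgn(\widehat\beta^{(2)})\beta_P\sqrt{n}\geq\kappa$, and therefore $T\geq c'\kappa+O_\mathcal{P}(1)$ for a constant $c'>0$ depending only on $\mathcal{P}$. Sending $n\to\infty$ before $\kappa\to\infty$ then yields $\inf_{P\in\mathcal{P}_1(\kappa)}\pr_P(T>z_{1-\alpha})\to 1$. The principal technical obstacle is not the algebra but the \emph{uniform} control throughout: each pointwise OLS CLT and law of large numbers must be upgraded to a $\mathcal{P}$-uniform statement by combining Assumption~\ref{Assumption: OLS}(b)'s minimum-eigenvalue conditions with (a)'s $(4+\delta)$-moment bounds, which supply both uniform rates for $\widehat\beta^{(2)}$, $\widehat{\mbb\eta}^{(1)}$ and the $Y$-on-$Z$ OLS coefficient in $\mathcal{D}_1$, and a uniform Lindeberg--Feller CLT for the driving noise term $n^{-1/2}\sum\zeta_i\xi_i$.
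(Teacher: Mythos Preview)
Your proposal is correct and rests on the same structural insight as the paper, namely that the $\mathcal{D}_2$-randomness enters only through $\sgn(\widehat\beta^{(2)})$, so that $T=\sgn(\widehat\beta^{(2)})\,T'$ with $T'$ a $\mathcal{D}_1$-only GCM-type statistic. The paper, however, does not argue directly from your decomposition of the numerator of $T'$; instead it first establishes a general intermediate result (Proposition~\ref{Proposition: asymptotic power expression} in Appendix~\ref{Section: full linear analysis}) that gives the \emph{exact} asymptotic power curve
\[
\psi_{P,\alpha,n}=\Phi\!\Bigl(\tfrac{\sqrt{n}\beta_P}{\sigma_{\beta_P}}\Bigr)\Phi\!\Bigl(z_\alpha+\tfrac{\sqrt{n}\beta_P\sigma_{P,\xi}^2}{\sigma_{P,\varepsilon\xi}}\Bigr)+\Phi\!\Bigl(-\tfrac{\sqrt{n}\beta_P}{\sigma_{\beta_P}}\Bigr)\Phi\!\Bigl(z_\alpha-\tfrac{\sqrt{n}\beta_P\sigma_{P,\xi}^2}{\sigma_{P,\varepsilon\xi}}\Bigr),
\]
by verifying a list of high-level uniform conditions (Assumption~\ref{Assumption: univariate X}) on $\widehat\beta$, $\mbb{\widehat\eta}$, $\mbb{\widehat\theta}$ via a uniform OLS lemma (Lemma~\ref{Lemma: linear regression}). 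The desired $\kappa\to\infty$ limit then drops out by monotonicity of $\psi_{P,\alpha,n}$ in $|\beta_P|$. Your route is more elementary and avoids the bookkeeping of checking \eqref{Eq: assumption 1 univariate-X linear model}--\eqref{Eq: assumption 7 univariate-X linear model}; the paper's route costs more but yields more, since the exact power expression is used elsewhere to discuss the splitting ratio and to compare with the GCM and the \citet{williamson2020unified} test. One small point worth making explicit in your write-up: the claim that $s_n$ is bounded \emph{above} uniformly relies on $|\beta_P|$ itself being bounded over $\mathcal{P}$, which is not assumed directly but follows from Assumption~\ref{Assumption: OLS} via $\beta_P=\cov_P(Y,\xi_P)/\var_P(\xi_P)$ together with the moment bounds in~(a) and the lower bound on $\var_P(\xi_P)$ in~(b).
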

In this setting, $\tau_P = \beta_P^2\mathbb{E}_P\bigl(\mathrm{Var}_P(X \given Z)\bigr)$. 
Proposition~\ref{Proposition: OLS power} gives the reassuring conclusion that in the simplest of settings, our general PCM framework, when used with appropriately chosen regression methods, can match up to a constant the power properties of a $t$-test tailored to this setting.  In fact it turns out that the context is simple enough for us to derive an asymptotic power expression for our test.  We present such an analysis in Section~\ref{Section: full linear analysis} for a version of our test that uses $n_1$ and $n_2$ (with $n_1 + n_2 = 2n$) observations in $\mathcal{D}_1$ and $\mathcal{D}_2$ respectively, rather than the equal split that we consider here. This shows that the optimal splitting ratio depends on the unknown signal strength, and therefore supports a default choice of $n_1=n_2=n$ for simplicity. We also provide a simulation study in Section~\ref{Section: linear model sim} where we compare the local power properties of the PCM, the \citet{williamson2020unified} test and the $F$-test with a robust variance estimator.

\subsection{A general estimated projection} \label{sec:gen_proj}
We next consider a situation where the model is unspecified under the alternative, whereas $Y$ has a linear relationship with $Z$ under the null of conditional mean independence.  In this case, it is reasonable to employ a flexible regression method, such as neural networks or random forests, to estimate the projection $f$. 
Our goal here is to identify conditions on estimators, including $\fhat$, under which the proposed test controls the Type I error.  It turns out that, given a specified null model, the problem of testing whether $X$ is significant is closely connected to goodness-of-fit testing for the null model, and we are able to exploit this connection to study the asymptotic Type~I error of the proposed test. 

Consider first the case of low-dimensional $Z$. Let $\mathcal{P}_0$ denote a family of distributions of $(X, Y, Z)$ under the null where $Z \in \mathbb{R}^d$ has an arbitrary distribution and suppose that $Y = \mbb{\gamma}_P^\top Z + \varepsilon_P$, where $\E_P(\varepsilon_P \given X,Z) = 0$.  Then $m_P(z) = \mbb{\gamma}_P^\top Z$ and it is reasonable to use a linear regression model for $\mhat$.  
We will suppose that the regressions yielding $\mhat$ and $\mhat_{\fhat}$ in Algorithm~\ref{Algorithm: PCM} are performed using OLS, whereas we will leave the regression choices involved in the construction of $\fhat$ arbitrary. We make the following assumptions on $\mathcal{P}_0$ to ensure uniform asymptotic normality of the test statistic. 
\begin{assumption} \label{Assumption: a general projection}  \leavevmode \normalfont
	\begin{enumerate}[(a)]
		\item There exist $\delta \in (0,2]$, $c,C>0$ such that $\E_P(\varepsilon_P^2 \given X, Z) \geq c$ and ${\E_P(|\varepsilon_P|^{2+\delta} \given X,Z) \leq C}$ for all $P \in \mathcal{P}_{0}$.  
		\item For $i \in [n]$, let  $u_{n,i}:= \fhat(X_i,Z_i) - \mhat_{\fhat}(Z_i)$ and $\upsilon_{n,i} := u_{n,i}/\bigl(\sum_{i'=1}^{n}u_{n,i'}^2 \bigr)^{1/2}$. Assume that $\max_{i \in [n]} |\upsilon_{n,i}| = o_{\mathcal{P}_0}(1)$ and $\sum_{i=1}^{n} \upsilon_{n,i}^2= 1 + o_{\mathcal{P}_0}(1)$.
		\item Letting $\mbb{\widehat{\gamma}}$ denote the coefficient from the $\mhat$ regression, assume that $\max_{i \in [n]} \|Z_i\|_\infty \cdot \| \mbb{\widehat{\gamma}}  - \mbb{\gamma}_P\|_1 = o_{\mathcal{P}_0}(1)$.  
	\end{enumerate}	
\end{assumption}
Assumption~\ref{Assumption: a general projection}(a) concerns conditional moments of $\varepsilon$, and is used to establish the asymptotic normality of a suitably normalised version of the numerator of $T$. 
In contrast to prior work on goodness-of-fit testing, e.g.~\cite{jankova2020goodness},
we do not assume that the conditional variance of $\varepsilon$ is constant.  Assumption~\ref{Assumption: a general projection}(b) asks for no individual  $|\upsilon_{n,i}|$ to be significantly larger than the others, and, for large enough~$n$, that at least one of $\{u_{n,i}:i \in [n]\}$ is non-zero for all $P \in \mathcal{P}_0$, so $\fhat(X, Z)$ is not constant in $X$. The latter condition is important for establishing the asymptotic normality of our test statistic, but is not crucial for Type I error control.  Indeed, when $u_{n,i} = 0$ for all $i \in [n]$, the test statistic is zero, and we do not reject the null.  Finally, in settings where, for example each $Z_i$ has a uniformly bounded $(2+\delta)$th moment for some $\delta > 0$, we have $\max_{i \in [n]} \|Z_i\|_\infty \lesssim n^{1/(2+\delta)}$ and $\| \mbb{\widehat{\gamma}}  - \mbb{\gamma}_P\|_1 \lesssim n^{-1/2}$ with high probability, and in that case Assumption~\ref{Assumption: a general projection}(c) is satisfied.

\begin{proposition} \label{Proposition: Low-dimensional Z} Let $T$ be computed using Algorithm~\ref{Algorithm: PCM}, where OLS is used to construct both $\mhat$ and $\mhat_{\fhat}$.    Suppose in the above setting that Assumption~\ref{Assumption: a general projection} holds and that all OLS estimators exist almost surely. Then the test statistic  converges to $N(0,1)$ uniformly over $\mathcal{P}_{0}$; i.e.,
	\begin{align*}
		\sup_{P \in \mathcal{P}_{0}} \sup_{t \in \mathbb{R}} \big| \prob_{P}(T \leq t) - \Phi(t) \big| \rightarrow 0.
	\end{align*}  
\end{proposition}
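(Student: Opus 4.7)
The plan is to exploit the orthogonality properties of OLS to simplify the test statistic exactly on the numerator and to obtain self-normalisation in the denominator. Under $P \in \mathcal{P}_0$, write $\varepsilon_i := Y_i - \mbb{\gamma}_P^\top Z_i$ and $\Delta_i := (\mbb{\widehat{\gamma}} - \mbb{\gamma}_P)^\top Z_i$, where $\mbb{\widehat{\gamma}}$ is the OLS estimator from $\mathcal{D}_1$, so that $L_i = u_{n,i}(\varepsilon_i - \Delta_i)$. Because $\mhat_{\fhat}$ is the OLS fit of $\fhat(X_j, Z_j)$ onto $Z_j$ over $j \in \mathcal{I}_1$, the normal equations give $\sum_{i \in \mathcal{I}_1} u_{n,i} Z_i = \mbb{0}$, so the $\Delta_i$ contribution to the numerator cancels \emph{exactly}: $\sum_i L_i = \sum_i u_{n,i} \varepsilon_i$. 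Dividing by $S_n := (\sum_i u_{n,i}^2)^{1/2}$, the rescaled numerator equals $N_n := \sum_i \upsilon_{n,i}\varepsilon_i$.

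To establish a CLT for $N_n$, I would condition on $\mathcal{G}_n := \sigma\bigl(\mathcal{D}_2, (X_j, Z_j)_{j \in \mathcal{I}_1}\bigr)$. Because $\mhat_{\fhat}$ depends on $\mathcal{D}_1$ only through the design (never through the $Y_j$'s), each of $u_{n,i}, \upsilon_{n,i}$ and $\mbb{\widehat{\gamma}}$ is $\mathcal{G}_n$-measurable, while the $\varepsilon_i$ remain conditionally independent with mean zero, conditional variance $\sigma_i^2 := \E_P(\varepsilon_i^2 \given X_i, Z_i) \in [c, C]$ and uniformly bounded $(2+\delta)$th conditional moment by Assumption~\ref{Assumption: a general projection}(a). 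Writing $V_n^2 := \sum_i \upsilon_{n,i}^2 \sigma_i^2 \geq c(1 + o_{\mathcal{P}_0}(1))$, a Berry--Esseen bound (Katz--Petrov) with $2 + \min(\delta,1)$ conditional moments yields
\[
\sup_{t \in \mathbb{R}} \bigl| \prob_P(N_n/V_n \leq t \given \mathcal{G}_n) - \Phi(t) \bigr| \lesssim \max_i |\upsilon_{n,i}|^{\min(\delta,1)},
\]
which is $o_{\mathcal{P}_0}(1)$ by Assumption~\ref{Assumption: a general projection}(b); integrating out gives $N_n/V_n \indist N(0,1)$ uniformly over $\mathcal{P}_0$.

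For the denominator, expanding $L_i^2$ gives
\[
\frac{\sum_i L_i^2}{S_n^2} = \sum_i \upsilon_{n,i}^2 \varepsilon_i^2 - 2\sum_i \upsilon_{n,i}^2 \varepsilon_i \Delta_i + \sum_i \upsilon_{n,i}^2 \Delta_i^2.
\]
Assumption~\ref{Assumption: a general projection}(c) provides $\max_i |\Delta_i| \leq \|\mbb{\widehat{\gamma}} - \mbb{\gamma}_P\|_1 \max_i \|Z_i\|_\infty = o_{\mathcal{P}_0}(1)$, so that combined with Cauchy--Schwarz and $\sum_i \upsilon_{n,i}^2 = 1 + o_{\mathcal{P}_0}(1)$, both correction terms are $o_{\mathcal{P}_0}(1)$. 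The leading term has conditional mean $V_n^2$, and a von Bahr--Esseen inequality applied to the conditionally independent mean-zero summands $\upsilon_{n,i}^2 (\varepsilon_i^2 - \sigma_i^2)$ yields
\[
\E_P \Bigl( \bigl| \sum_i \upsilon_{n,i}^2 (\varepsilon_i^2 - \sigma_i^2)\bigr|^{1+\delta/2} \bigm| \mathcal{G}_n \Bigr) \lesssim \sum_i \upsilon_{n,i}^{2+\delta} \leq \max_i |\upsilon_{n,i}|^\delta \cdot \sum_i \upsilon_{n,i}^2 = o_{\mathcal{P}_0}(1),
\]
so $\sum_i \upsilon_{n,i}^2 \varepsilon_i^2 = V_n^2 + o_{\mathcal{P}_0}(1)$. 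Since $N_n = O_{\mathcal{P}_0}(1)$ forces $N_n^2/n = o_{\mathcal{P}_0}(1)$, we conclude $S_n^{-2} \sum_i L_i^2 - N_n^2/n = V_n^2 + o_{\mathcal{P}_0}(1)$.

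A uniform application of Slutsky's theorem---valid because $V_n$ is eventually bounded away from zero uniformly---then gives $T_{\mathrm{OLS}} = (N_n/V_n)/\sqrt{1 + o_{\mathcal{P}_0}(1)} \indist N(0,1)$ uniformly over $\mathcal{P}_0$. The principal technical obstacle is threading uniformity through every limiting step: the Berry--Esseen constant must depend only on $\delta$ and $C$ (not on $P$), which requires a Katz--Petrov-type bound with universal constants, and every $o_{\mathcal{P}_0}(1)$ conclusion must be anchored directly in the uniform hypotheses in Assumption~\ref{Assumption: a general projection}. A secondary delicacy is the conditioning choice: it is essential that $\mhat_{\fhat}$ never uses $(Y_j)_{j \in \mathcal{I}_1}$, which is what permits $\mathcal{G}_n$ to absorb all the $\upsilon_{n,i}$-randomness while leaving the $\varepsilon_i$ conditionally independent with the assumed moment structure intact.
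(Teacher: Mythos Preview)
Your proof is correct and follows essentially the same route as the paper: exploit OLS orthogonality to kill the numerator bias exactly (the paper writes this via the projection identity $\mbb{\fhat}^\top(\mbb{I}-\mbb{P})\mbb{Y} = \mbb{\fhat}^\top(\mbb{I}-\mbb{P})\mbb{\varepsilon}$, you via $\sum_i u_{n,i}Z_i = \mbb{0}$), apply a conditional CLT to $\sum_i \upsilon_{n,i}\varepsilon_i$, and decompose the denominator into a leading variance term plus $o_{\mathcal{P}_0}(1)$ corrections controlled by Assumption~\ref{Assumption: a general projection}(c). One minor slip: $\mbb{\widehat{\gamma}}$ is \emph{not} $\mathcal{G}_n$-measurable since it depends on $(Y_j)_{j \in \mathcal{I}_1}$, but you never actually use that claim---only the $\mathcal{G}_n$-measurability of $\upsilon_{n,i}$ is needed, and that holds.
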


Under the conditions of Proposition~\ref{Proposition: Low-dimensional Z}, the test that rejects the null when $T > z_{1-\alpha}$ is uniformly asymptotically of size $\alpha$.  We also note that the only requirement imposed on the projection~$\fhat$ is that it satisfies Assumption~\ref{Assumption: a general projection}(b).  \cite{jankova2020goodness} also consider this condition, providing supporting empirical evidence in general, and introducing a specific procedure to guarantee that the condition holds. 

We now extend these ideas and the setting described above Assumption~\ref{Assumption: a general projection} to the case where the dimension of $Z$ is potentially larger than the sample size. Here, the least squares estimator is not necessarily well-defined, so we construct $\mhat$ and $\mhat_{\fhat}$ using the Lasso or one of its variants. Letting $\mbb{\widehat{\gamma}}$ denote the coefficients from the $\mhat$ regression, the motivation for this comes from the decomposition
\begin{align}
\label{Eq:deltabias}
	\sum_{i=1}^{n} (Y_i - \mbb{\widehat{\gamma}}^\top Z_i) u_{n,i}  =  \sum_{i=1}^{n} \varepsilon_{P, i} u_{n,i}  - \delta_\mathrm{bias},
\end{align}
where $\delta_\mathrm{bias} := \sum_{i=1}^{n} (\mbb{\widehat{\gamma}} - \mbb{\gamma}_P)^\top Z_iu_{n,i}$.  This bias term is no longer exactly zero as for the least squares estimators considered in Proposition~\ref{Proposition: Low-dimensional Z}, but H\"{o}lder's inequality will nevertheless guarantee that it is sufficiently small for our purposes as long as
\begin{align} \label{Eq: condition on bias}
	\|\mbb{\widehat{\gamma}}  - \mbb{\gamma}_P\|_1 \cdot \bigg\| \sum_{i=1}^{n} Z_i \upsilon_{n,i}  \bigg\|_{\infty} = o_{\mathcal{P}_0}(1).
\end{align}
The next proposition is the analogue for high-dimensional $Z$ of Proposition~\ref{Proposition: Low-dimensional Z}.
\begin{proposition} \label{Proposition: High-dimensional Z}
  Let $T$ be computed using Algorithm~\ref{Algorithm: PCM}, where Lasso regressions are used to construct both $\mhat$ and $\mhat_{\fhat}$.	Suppose in the above setting that Assumption~\ref{Assumption: a general projection} and condition~(\ref{Eq: condition on bias}) hold. Then 
	\begin{align*}
		\sup_{P \in \mathcal{P}_{0}} \sup_{t \in \mathbb{R}} \big| \prob_{P}(T \leq t) - \Phi(t) \big| \rightarrow 0.
	\end{align*}  
\end{proposition}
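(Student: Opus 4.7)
My plan is to follow the proof of Proposition~\ref{Proposition: Low-dimensional Z}, the only substantive modification being that the exact orthogonality between the OLS residuals $u_{n,i}$ and the covariates $Z_i$ (which forced the bias to vanish in the low-dimensional case) is now replaced by the quantitative control provided by~\eqref{Eq: condition on bias}. Set $S^2 := \sum_{i \in \mathcal{I}_1} u_{n,i}^2$. Since $T_{\mathrm{Lasso}}$ is invariant under the rescaling $L_i \mapsto L_i/(S/\sqrt n)$, after rescaling the numerator becomes $\sum_{i \in \mathcal{I}_1} \varepsilon_{P,i} \upsilon_{n,i} - \delta'$, where
\begin{equation*}
\delta' := (\mbb{\widehat{\gamma}} - \mbb{\gamma}_P)^{\top} \sum_{i \in \mathcal{I}_1} Z_i \upsilon_{n,i}.
\end{equation*}
H\"{o}lder's inequality combined with~\eqref{Eq: condition on bias} gives $|\delta'| \leq \|\mbb{\widehat{\gamma}} - \mbb{\gamma}_P\|_1 \bigl\|\sum_i Z_i \upsilon_{n,i}\bigr\|_\infty = o_{\mathcal{P}_0}(1)$, reducing the analysis of the numerator to that of the oracle sum $\sum_i \varepsilon_{P,i}\upsilon_{n,i}$.

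Next I establish the distributional limit of this oracle sum. Let $\mathcal G$ denote the $\sigma$-algebra generated by $\mathcal D_2$ together with $(X_i, Z_i)_{i \in \mathcal{I}_1}$, so that $u_{n,i}$ and $Z_i$ are $\mathcal G$-measurable while $\varepsilon_{P,i}$ are conditionally independent with mean zero and $(2+\delta)$th conditional moment bounded by Assumption~\ref{Assumption: a general projection}(a). Writing $\sigma_i^2 := \E_P(\varepsilon_{P,i}^2 \mid X_i, Z_i)$ and $V_n^2 := \sum_i \sigma_i^2 \upsilon_{n,i}^2$, Assumption~\ref{Assumption: a general projection}(a)(b) ensures that $V_n^2$ lies in a compact subinterval of $(0, \infty)$ with probability tending to $1$. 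The conditional Lyapunov ratio of order $2+\delta$ satisfies
\begin{equation*}
V_n^{-(2+\delta)} \sum_{i \in \mathcal{I}_1} \E_P\bigl(|\varepsilon_{P,i}|^{2+\delta} \bigm| \mathcal G\bigr)\, |\upsilon_{n,i}|^{2+\delta} \;\lesssim\; \Bigl(\max_i |\upsilon_{n,i}|\Bigr)^{\delta} \sum_i \upsilon_{n,i}^2,
\end{equation*}
which is $o_{\mathcal P_0}(1)$ by Assumption~\ref{Assumption: a general projection}(b), so a conditional Lyapunov CLT yields $V_n^{-1}\sum_i \varepsilon_{P,i}\upsilon_{n,i} \indist N(0,1)$.

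Third, I control the studentising denominator. After the same rescaling its square equals
\begin{equation*}
\sum_i \varepsilon_{P,i}^2 \upsilon_{n,i}^2 - 2\sum_i \varepsilon_{P,i}(\mbb{\widehat{\gamma}} - \mbb{\gamma}_P)^{\top} Z_i \upsilon_{n,i}^2 + \sum_i \bigl((\mbb{\widehat{\gamma}} - \mbb{\gamma}_P)^{\top} Z_i\bigr)^2 \upsilon_{n,i}^2 - \frac{1}{n}\Bigl(\sum_i \varepsilon_{P,i}\upsilon_{n,i} - \delta'\Bigr)^2.
\end{equation*}
The third term is bounded by $\bigl(\|\mbb{\widehat{\gamma}} - \mbb{\gamma}_P\|_1 \max_i \|Z_i\|_\infty\bigr)^2 \sum_i \upsilon_{n,i}^2 = o_{\mathcal P_0}(1)$ by Assumption~\ref{Assumption: a general projection}(c); the cross term is then $o_{\mathcal P_0}(1)$ by Cauchy–Schwarz combined with the first term being $O_{\mathcal P_0}(1)$; and the centring term is $O_{\mathcal P_0}(1)/n = o_{\mathcal P_0}(1)$. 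For the leading term, the von Bahr–Esseen inequality of order $p := 1+\delta/2 \in (1,2)$, applied conditionally on $\mathcal G$ to the independent mean-zero summands $(\varepsilon_{P,i}^2 - \sigma_i^2)\upsilon_{n,i}^2$, yields a conditional $L^p$ bound of order $(\max_i|\upsilon_{n,i}|)^\delta \sum_i \upsilon_{n,i}^2 = o_{\mathcal P_0}(1)$, so $\sum_i \varepsilon_{P,i}^2 \upsilon_{n,i}^2 = V_n^2 + o_{\mathcal P_0}(1)$. Slutsky's lemma then combines everything to give $T_{\mathrm{Lasso}} \indist N(0,1)$.

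The \emph{main obstacle} is to make this argument genuinely uniform over $\mathcal P_0$, since the conditional variance $V_n^2$ and all of the $o_{\mathcal P_0}(1)$ and $O_{\mathcal P_0}(1)$ bounds above depend on $P$ through random quantities. A clean route is the standard subsequence argument: supposing uniform convergence failed, one extracts $(P_n) \subset \mathcal P_0$ and a continuity point $t$ of $\Phi$ along which $|\prob_{P_n}(T_{\mathrm{Lasso}} \leq t) - \Phi(t)|$ stays bounded below, then applies the pathwise bounds of the three preceding paragraphs along this subsequence and upgrades the conditional CLT to an unconditional one via dominated convergence of characteristic functions, recovering $T_{\mathrm{Lasso}} \indist N(0,1)$ along $(P_n)$ and contradicting the hypothesised failure.
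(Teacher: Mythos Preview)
Your proposal is correct and follows essentially the same route as the paper: decompose the numerator into the oracle sum $\sum_i \varepsilon_{P,i}\upsilon_{n,i}$ plus the bias $\delta'$, kill the bias via H\"older and~\eqref{Eq: condition on bias}, apply a conditional Lyapunov CLT to the oracle sum, and handle the denominator through the same expansion used for Proposition~\ref{Proposition: Low-dimensional Z} combined with Assumption~\ref{Assumption: a general projection}(c). The only cosmetic differences are that the paper normalises by $\nu = \bigl(n^{-1}\sum_i \sigma_i^2 u_{n,i}^2\bigr)^{1/2}$ rather than $S/\sqrt n$ (so the conditional variance is $1$ rather than $V_n^2$), invokes its in-house Lemmas~\ref{Lemma: conditional clt} and~\ref{Lemma: conditional lln} in place of your direct Lyapunov/von Bahr--Esseen arguments, and packages the subsequence uniformity step inside those lemmas rather than spelling it out at the end.
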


In order to ensure that condition~(\ref{Eq: condition on bias}) holds, one can use the square-root Lasso~\citep{belloni2011square, sun2012scaled}, as suggested by \cite{jankova2020goodness}. In particular, for $\lambda_{\mathrm{sq}} >0$, we set $\mhat_{\fhat}(z) = \mbb{\widehat{\eta}}_{\mathrm{sq}}^\top z$ where
\begin{align*}
	\mbb{\widehat{\eta}}_{\mathrm{sq}} := \argmin_{\mbb{\eta} \in \mathbb{R}^d} \bigg\{ \biggl( \frac{1}{n} \sum_{i=1}^n \bigl( \fhat(X_i,Z_i) - \mbb{\eta}^\top Z_i \bigr)^2 \biggr)^{1/2} + \lambda_{\mathrm{sq}} \| \mbb{\eta}\|_1 \bigg\}.
\end{align*}
With this choice of $\mbb{\widehat{\eta}}_{\mathrm{sq}}$ and by letting $\lambda_{\mathrm{sq}} = c_{\mathrm{sq}}\sqrt{(\log d_Z)/n}$ for some constant $c_{\mathrm{sq}} > 0$, the Karush--Kuhn--Tucker conditions for the square-root Lasso guarantee that $\| \sum_{i=1}^{n} Z_i \upsilon_{n,i} \|_{\infty} \leq c_{\mathrm{sq}} \sqrt{\log d_Z}$. Furthermore, under appropriate conditions, the Lasso estimator~$\mbb{\widehat{\gamma}}$ has an error bound $\|\mbb{\widehat{\gamma}}  - \mbb{\gamma}_P\|_1 \lesssim s \sqrt{(\log d_Z)/n}$ with high probability, where $s$ denotes the number of non-zero coefficients of $\mbb{\gamma}_P$~\citep[e.g.~Corollary 6.2 of][]{buhlmann2011statistics}. Therefore, in this setting, condition~(\ref{Eq: condition on bias}) is satisfied provided that $s (\log d_Z) / \sqrt{n} \rightarrow 0$.

\section{General theory} \label{Section: A general theory}
In this section, we present general conditions ensuring uniform asymptotic validity and power of the test, primarily by imposing assumptions on the performance of the regressions involved. 
The results here hold for Algorithm~\ref{Algorithm: PCM} under various different possible additional assumptions, including that the estimator $\mhat$ satisfies a certain stability condition.  Alternatively, they also hold without this requirement for Algorithm~\ref{Algorithm: PCM theory}, where we employ additional sample splitting to ensure that $\mhat$ and $\mhat_{\fhat}$ are computed on independent data $\mathcal{D}_3$ and~$\mathcal{D}_4$.

\begin{algorithm}\raggedright \caption{Projected Covariance Measure: theoretical version} \label{Algorithm: PCM theory}
	\textbf{Input}: Data $(X_i,Y_i,Z_i)_{i=1}^{4n}$, significance level $\alpha \in (0,1)$,
	partition of $[4n] = \mathcal{I}_1 \cup \mathcal{I}_2 \cup \mathcal{I}_3 \cup \mathcal{I}_4$ into index sets $\mathcal{I}_1$, $\mathcal{I}_2$, $\mathcal{I}_3$ and $\mathcal{I}_4$, each of size $n$.\\
	\textbf{Options}: Regression methods for each of the regressions.\\
	\textbf{Define}: $\mathcal{D}_j = (X_i, Y_i, Z_i)_{i \in \mathcal{I}_j}$ for $j \in [4]$.
	\vskip .3em
	\begin{algorithmic}
 \State Perform Algorithm~\ref{Algorithm: PCM} but in steps 3 (i) and 3 (ii), perform the regressions on $\mathcal{D}_3$ and $\mathcal{D}_4$, respectively, instead of on $\mathcal{D}_1$.
	\end{algorithmic}
\end{algorithm}

Algorithm~\ref{Algorithm: PCM theory} could be made more data-efficient by exchanging the roles of $\mathcal{D}_1$, $\mathcal{D}_3$ and $\mathcal{D}_4$ and averaging the resulting test statistics, a process known as cross-fitting \citep{chernozhukov2018double}. However, for the reasons discussed in Section~\ref{sec:motivation} we do not recommend Algorithm~\ref{Algorithm: PCM theory} in practice and thus do not pursue this idea further. With the exception of Theorem~\ref{Theorem: Power of Spline} in Section~\ref{Section: Series estimators}, our results for Algorithm~\ref{Algorithm: PCM theory} remain valid if $\mathcal{D}_3 = \mathcal{D}_4$ but we have omitted this special case for brevity.

The following quantities relating to the performances of the regression methods $\mhat$ and $\mhat_{\fhat}$ will play a key role in our results. Let us introduce
\begin{equation}
	\label{eq: residuals}
	\varepsilon_{P,i}:=Y_i - m_P(Z_i), \qquad \xi_{P,i}:= \fhat(X_i,Z_i) - m_{P, \fhat}(Z_i),
\end{equation}
for $i \in [n]$, and an analogous version of \eqref{eq: residuals} without a subscript $i$.  Further, define
\begin{equation}
		\label{Eq: conditional variance}
	\sigma_P^2 := \var_{P}\bigl(\xi_P \given \fhat \bigr),
\end{equation}
as well as
\begin{equation}
	\mathcal{E}_{P, 1} := \frac{1}{n} \sum_{i=1}^{n} \{m_P(Z_i) - \mhat(Z_i)\}^2  \quad \text{and} \quad \mathcal{E}_{P, 2} := \frac{1}{n\sigma_P^2 } \sum_{i=1}^{n} \{m_{P, \fhat}(Z_i) - \mhat_{\fhat}(Z_i)\}^2.
\end{equation}
The second MSPE $\mathcal{E}_{P,2}$ in the display above is normalised by the variance of the errors $\xi_{P, i}$ featuring in the corresponding regression. Under the null, we expect this variance to be small as $\fhat$ is then estimating a zero function, and consequently $\mathcal{E}_{P, 2}$ may be inflated relative to the unnormalised version of this quantity. On the other hand, as $\fhat$ is small, we can expect that the unnormalised MSPE is particularly small. For example, writing $\mathcal{P}$ for the simple null linear model considered in \eqref{eq:lm_simple}, we would have
\[
\frac{1}{n}\sum_{i=1}^{n} \{m_{P, \fhat}(Z_i) - \mhat_{\fhat}(Z_i)\}^2 = O_{\mathcal{P}}(n^{-2}) \qquad \text{and} \qquad 1/\sigma_P^2= O_{\mathcal{P}}(n),
\]
giving $\mathcal{E}_{P, 2} = O_{\mathcal{P}}(n^{-1})$. 

Throughout this section we will impose the following assumption:
  \begin{assumption}
    \label{Assumption: general procedure}
  Assume that the class of distributions $\mathcal{P}$ of $(X, Y, Z)$ on $\mathcal{X} \times \mathbb{R} \times \mathcal{Z}$ satisfies:
  \begin{enumerate}[(a)]
    \item $\mathcal{E}_{P, 1} = o_{\mathcal{P}}(1)$, $\mathcal{E}_{P, 2} = o_{\mathcal{P}}(1)$ and their product satisfies $\mathcal{E}_{P, 1} \mathcal{E}_{P, 2} = o_{\mathcal{P}}(n^{-1})$.
    \item The weighted MSPE satisfies
    \[
      \frac{1}{n \sigma_P^2} \sum_{i=1}^{n}  \{m_P(Z_i) - \mhat(Z_i)\}^2 \xi_{P,i}^2 = o_{\mathcal{P}}(1).
    \]
    \item There exists $C > 0$ such that $\sup_{P \in \mathcal{P}} \Var_P(Y \given X, Z) \leq C$. 
  \end{enumerate}
  \end{assumption}
Assumption~\ref{Assumption: general procedure}(a) should be regarded as the primary restriction on $\mathcal{P}$, and along with Assumption~\ref{Assumption: general procedure}(b), relates directly to the performance of the user-chosen regression methods involved in the construction of the PCM. As alluded to above, in a simple linear model setting, even under the null we can expect $\mathcal{E}_{P, 1} \mathcal{E}_{P,2} = O_{\mathcal{P}}(n^{-2})$, which certainly satisfies the condition. The rate requirement on the product of MSPEs is however sufficiently slow to also accommodate nonparametric models.  It is well-known that an optimal convergence rate in terms of the squared prediction error under H\"{o}lder smoothness~$s$ (Definition~\ref{Definition: Holder class}) is $n^{-2s/(2s + d_Z)}$~\citep[e.g.,][]{nemirovski2000topics,gyorfi2002distribution}. If we assume that $m$ and $\sigma_P^{-1} m_{\fhat}$ have H\"{o}lder smoothness $s$, and that $\mhat_{\fhat}$ is scale equivariant as in Theorem~\ref{Theorem: general power result} below, then Assumption~\ref{Assumption: general procedure}(a) is satisfied provided that $s \ge d_Z/2$.  We note that the deterministic condition $\sup_{P \in \mathcal{P}} \{\E_P(\mathcal{E}_{P,1}) \E_P(\mathcal{E}_{P,2})\} = o(n^{-1})$ is sufficient to guarantee the product error condition in Assumption~\ref{Assumption: general procedure}(a), as can be verified via Markov's inequality and the Cauchy--Schwarz inequality. If in addition $\Var_P(\xi_{P} \given Z, \fhat) \leq C \sigma_P^2$, then Assumption~\ref{Assumption: general procedure}(b) is guaranteed to hold.  Assumption~\ref{Assumption: general procedure}(c) is standard in nonparametric regression (and is obviously satisfied when $Y$ is bounded, for instance).  

\subsection{Type I error control} \label{Section: Type I error for a general procedure}
The following result pertains to Type~I error control of the PCM test. One condition under which Algorithm~\ref{Algorithm: PCM} guarantees Type~I error control is where the estimator $\mhat$ does not change all that much on average over the training observations when individual data points are removed. To this end, we say that $\mhat$ is \emph{sufficiently stable} if conditions~(ii) and~(iii) of Proposition~\ref{Prop: stability} in Section~\ref{Section: Auxiliary lemmas} hold.  Similar stability conditions have recently been used by, e.g.~\citet{soloff2023bagging} and \citet{chen2022debiased}, and have been verified for classes of regularised empirical risk minimisation \citep{bousquet2002stability}, stochastic gradient descent \citep{hardt2016train} and bagged estimators \citep{chen2022debiased}, for example.
\begin{theorem}
\label{Theorem: General Type I error control}
Let $\mathcal{P}_0$ denote a class of null distributions, i.e.~that satisfy $\E_P(Y \given X, Z) = \E_P(Y \given Z)$ for $P \in \mathcal{P}_0$, and suppose that Assumption~\ref{Assumption: general procedure} holds for $\mathcal{P}_0$. Suppose in addition that
  \begin{enumerate}[(a)]
    \item $\sup_{P \in \mathcal{P}_{0}}\prob_{P}(\sigma_P^2 = 0) = o(1)$;
    \item there exists $\delta \in (0,2]$ such that $\E_P \bigl(| \varepsilon_{P} \xi_{P} |^{2+\delta} \given \fhat \bigr) / \sigma_P^{2+\delta} = o_{\mathcal{P}_0}(n^{\delta/2})$;
    \item there exists $c >0$ such that $\inf_{P \in \mathcal{P}_0}\E_P(\varepsilon_P^2 \given X, Z) \geq c$;
  \end{enumerate}
  and any of the following hold: 
  \begin{enumerate}[(i)]
      \item $T$ is computed using Algorithm~\ref{Algorithm: PCM theory};
      \item $T$ is computed using Algorithm~\ref{Algorithm: PCM} and $Y \independent X \given Z$ for all $P \in \mathcal{P}_0$;
      \item $T$ is computed using Algorithm~\ref{Algorithm: PCM} and $\mhat$ is a linear smoother;
      \item $T$ is computed using Algorithm~\ref{Algorithm: PCM} and $\mhat$ is a sufficiently stable estimator.
  \end{enumerate}
  Then 
  \begin{align*}
    \sup_{P \in \mathcal{P}_{0}} \sup_{t \in \mathbb{R}} \bigl| \prob_{P}(T \leq t) - \Phi(t) \bigr| \rightarrow 0.
   \end{align*}
\end{theorem}
The proof of Theorem~\ref{Theorem: General Type I error control} can be found in Section~\ref{Section: Proof of General Type I error control}, which formalises the brief explanation of asymptotic normality laid out in Section~\ref{sec:motivation}. 

Assumption~(a) of Theorem~\ref{Theorem: General Type I error control} asks that asymptotically $\fhat(X, Z)$ is not exactly constant in $X$ (even though we may expect it not to vary too much with $X$, for the reasons explained in the discussion in Section~\ref{sec:motivation}). 
Assumption~(b) is a conditional Lyapunov condition, and is used to apply the central limit theorem for triangular arrays.
A sufficient condition for this to hold when $\delta < 2$ is that $\sup_{P \in \mathcal{P}_0} \E_P (|\varepsilon_P|^{(8+4\delta)/(2-\delta)}) < \infty$ and $\E_P (\xi_P^{4} \given \widehat{f}) / \sigma_P^{4} = o_{\mathcal{P}_0}(n^{2\delta/(2+\delta)})$, which can be verified by H\"{o}lder's inequality with conjugate exponents $p=4/(2-\delta)$ and $q=4/(2+\delta)$. The latter condition holds under an $L_2$--$L_4$ norm equivalence \citep[e.g.][]{mendelson2020robust} for $\xi_P$, and is certainly satisfied if $\xi_P$ is log-concave conditional on $\widehat{f}$~\citep[Thm.~5.22]{lovasz2007geometry}.  Assumption~(c) is mild, and ensures in particular that $Y$ is not completely determined by $X$ and $Z$.

Theorem~\ref{Theorem: General Type I error control} indicates that the asymptotic normality of $T$ (hence the validity of the PCM test) is largely determined by the predictive performance of regression models used in the construction of the test statistic.  Moreover our numerical results in Sections~\ref{sec:numerical} and Section~\ref{Section: additive models binary} demonstrate that Type I error when applying Algorithm~\ref{Algorithm: PCM} continues to be well-controlled even in settings where $X \notindependent Y \given Z$, $\widehat{m}$ is not a linear smoother and where it is unclear whether $\widehat{m}$ is sufficiently stable.

\subsection{Power properties} \label{sec:power}

The following is our main result on the power of the PCM test.
\begin{theorem}
	\label{Theorem: general power result}
    Suppose that Assumption~\ref{Assumption: general procedure} holds over a class of alternative distributions $\mathcal{P}_1$. Let $(\epsilon_n)_{n \in \mathbb{N}}$ be a positive sequence and let
  \[
  \mathcal{P}_1(\epsilon_n) := \{P \in \mathcal{P}_1 : \tau_P \geq \epsilon_n\}.
  \]
   Assume that 
  \begin{enumerate}[(a)]
    \item $\mhat_{\fhat}$ is scale equivariant in the sense that $\mhat_{a \cdot \fhat}(Z) = a \cdot \mhat_{\fhat}(Z)$ for all $a > 0$;
    \item $\sup_{P \in \mathcal{P}_1} h_P(X, Z) \leq C$;
    \item $\epsilon_n \cdot n \to \infty$;
    \item There exists $\rho > 0$ such that
    \begin{equation*}
      \sup_{P \in \mathcal{P}_1(\epsilon_n)} \pr_P\Bigl( \corr_P\bigl(h_P(X, Z), \xi_P \given \fhat \bigr)  \leq \rho\Bigr)  = o(1).
    \end{equation*}
  \end{enumerate}
  Suppose further that either
  \begin{enumerate}[(i)]
    \item $T$ is computed using Algorithm~\ref{Algorithm: PCM theory};
    \item $T$ is computed using Algorithm~\ref{Algorithm: PCM} and $\mhat$ is sufficiently stable.
  \end{enumerate}
Then for any $\alpha \in (0, 1)$,
\[
	\inf_{P \in \mathcal{P}_1(\epsilon_n)}\prob_P (T > z_{1-\alpha}) \rightarrow 1.
\]
\end{theorem}
In addition to the primary restrictions on the MSPEs imposed by Assumption~\ref{Assumption: general procedure}, we now ask in part~(d) of Theorem~\ref{Theorem: general power result} for $\xi_P$, the population residual from regressing our estimated projection $\fhat$ onto $Z$, to be positively correlated with $h_P$ with high probability (there is no need for the correlation to approach $1$).  To interpret this, it is helpful to consider a stronger version of this condition with $\fhat(X, Z)$ replacing $\xi_P$; this results in a stronger condition because $\E_P\bigl(h_P(X, Z) \xi_P \given \fhat \bigr) = \E_P\bigl(h_P(X, Z) \fhat(X, Z) \given \fhat\bigr)$ and $\E (\xi_P^2 \given \fhat) = \E_P\bigl[\Var_P \bigl(\fhat(X, Z) \given Z, \fhat\bigr) \given \fhat\bigr] \leq \E\bigl( \fhat(X, Z)^2 \given \fhat\bigr)$.  This stronger assumption still permits $\fhat$ to be an inconsistent estimator of the true $f_P(X, Z) = h_P(X, Z) / \Var_P(Y \given X, Z)$ in that it only requires them to be positively correlated, with probability approaching one.  The flexibility afforded by this assumption relies on the regression method $\mhat_{\fhat}$ being scale equivariant in the sense of the assumption in part~(a) of Theorem~\ref{Theorem: general power result}; this is a mild condition, that is satisfied by many regression methods.  Nevertheless, in Section~\ref{Section: Series estimators} below, we will see that condition~(d) in Theorem~\ref{Theorem: general power result} provides the main constraint on the rate at which $\epsilon_n$ can converge to zero.  Finally, we observe by a union bound that the conclusion of Theorem~\ref{Theorem: general power result} also holds for the derandomised version of the PCM in Algorithm~\ref{Algorithm: PCM multi}.

\section{Series estimators} \label{Section: Series estimators}
Following the theory in the previous section for a general regression method, we now provide more concrete results for a specific class of linear smoothers, namely spline estimators.  In particular, our interest is to identify conditions under which our test is uniformly asymptotically valid and attains rate-optimal power in a nonparametric setting. Throughout this section, we assume that $(X, Z) \in [0,1]^{d_X} \times [0, 1]^{d_Z}$ and set $d:=d_X+d_Z$.
In Section~\ref{Section: Splines}, we give a self-contained description of spline spaces and their tensor product B-spline bases, containing all the results that we require for our analysis. Given a spline order $r \in \mathbb{N}$ (i.e.~degree $r-1$) and $N \in \mathbb{N}_0$ equi-spaced interior knots in each dimension, we denote by~$\mathcal{S}_{r,N}^{d_Z}$ the corresponding spline space on $[0,1]^{d_Z}$, and by $\mbb{\phi}^Z$ its $d_Z$-tensor B-spline basis, which consists of $K_Z := (N + r)^{d_Z}$ basis functions.  Writing $\mathcal{S}_{r,N}^{d_X}$ for the corresponding spline space on $[0,1]^{d_X}$ with $d_X$-tensor B-spline basis $\mbb{\phi}^X$, having $K_X := (N+r)^{d_X}$ basis functions, we can define the $d$-tensor product basis $\mbb{\phi}(x,z) := \mbb{\phi}^X(x) \otimes \mbb{\phi}^Z(z)$ for $\mathcal{S}_{r,N}^d$, where $\mbb{u} \otimes \mbb{v} := \mathrm{vec}(\mbb{u}\mbb{v}^\top)$, having $K_{XZ}:=K_XK_Z$ basis functions.  Further, we let~$\mbb{\psi}$ denote the tensor product B-spline basis for $\mathcal{S}_{2r-1,N}^{d_Z}$, and write $\widetilde{K}_Z:=(N+2r-1)^{d_Z}$; the higher order of the spline basis functions that make up $\mbb{\psi}$ affords better approximation properties that turn out to be useful for our theory.
In what follows, when we write that a variable is regressed on a spline basis, we mean that OLS is used for the regression.  In Theorems~\ref{Theorem: Asymptotic normality of Spline} and \ref{Theorem: Power of Spline} below, we use the terminology that $T$ is \emph{computed using spline regressions} if the regressions (following the notation of Algorithm~\ref{Algorithm: PCM}) are as follows:
\begin{itemize}
\item $\vhat \equiv 1$ for simplicity;
\item $\ghat$ is formed by regressing $Y$ onto $\mbb{\phi}(X, Z)$;
\item $\widetilde{m}$ is formed by regressing $\ghat(X, Z)$ onto $\mbb{\phi}^Z(Z)$, or equivalently by regressing $Y$ onto $\mbb{\phi}^Z(Z)$;
\item $\mhat_{\fhat}$ and $\mhat$ are formed by regressing $\fhat(X, Z)$ and $Y$, respectively, on $\mbb{\psi}(Z)$.
\end{itemize}
In addition, we also omit discussion of the sign correction step (Algorithm~\ref{Algorithm: PCM}, Step 1(iv)), since $\widehat{\rho}$ is always non-negative for the estimators given above.  

In Theorem~\ref{Theorem: Asymptotic normality of Spline} in Section~\ref{Section: Type I error control for spline regression} below, we demonstrate that the PCM computed using spline regressions (Algorithm~\ref{Algorithm: PCM} or Algorithm~\ref{Algorithm: PCM theory}) enjoys uniform asymptotic Type I error control under appropriate regularity conditions, while Theorem~\ref{Theorem: Power of Spline} and Proposition~\ref{Proposition: Lower bound} in Section~\ref{Section: power analysis using spline regression} reveal that the PCM (using Algorithm~\ref{Algorithm: PCM theory}) achieves the optimal testing rate for this problem.

\subsection{Type I error control} \label{Section: Type I error control for spline regression}
We start by stating our main distributional assumptions, which rely on the definitions of H\"older spaces $\mathcal{H}_s^d$ and H\"older norms $\|\cdot\|_{\mathcal{H}_s}$ given in Definition~\ref{Definition: Holder class}.
\begin{assumption} \label{Assumption: Nonparametric models} \normalfont 
	Let $\mathcal{P}$ be a class of distributions of $(X, Y, Z)$ on $[0, 1]^{d_X} \times \mathbb{R} \times [0, 1]^{d_Z}$, and for $P \in \mathcal{P}$, let
	$g_P(x,z) := \E_P(Y \given X=x,Z=z)$. Assume that there exist $C \geq 1$ and $c \in (0,1]$ with the following properties:
	\begin{enumerate}[(a)]
		\item For each $P \in \mathcal{P}$, we have $\E_P(\varepsilon_P^2 \given X, Z) \geq c$ and there exists $\delta \in (0,2]$ such that $\E_P(|\varepsilon_P|^{2+\delta} \given X, Z) \leq C$.
		\item For each $P \in \mathcal{P}$, the marginal distribution of $(X,Z)$ is absolutely continuous with respect to Lebesgue measure on $[0,1]^d$, with  density $p_P$ satisfying $\sup_{(x,z) \in [0,1]^d} p_P(x,z) \leq C$ and $\inf_{(x,z) \in [0, 1]^d} p_P(x,z) \geq c$.
		\item Let $s \in (0,r]$ and let $p_{X|Z, P}(\cdot \given z)$ denote the conditional density of $X$ given $Z=z$. Assume that for all $P \in \mathcal{P}$, we have $p_{X|Z, P}(x \given \cdot) \in \mathcal{H}^{d_Z}_s$ for all $x \in [0,1]^{d_X}$, and that $m_P \in \mathcal{H}^{d_Z}_s$ and $g_P \in \mathcal{H}^{d}_s$, with 
		\[
			\max \biggl\{ \sup_{x \in [0, 1]^{d_X}}\|p_{X|Z, P}(x, \cdot)\|_{\mathcal{H}_s}  , \|m_P\|_{\mathcal{H}_s}, \|g_P \|_{\mathcal{H}_s} \biggr\} \leq C.
		\]
	\end{enumerate}
\end{assumption}
Assumption~\ref{Assumption: Nonparametric models} is closely related to other assumptions commonly used in spline regression \citep[e.g.][]{belloni2015some,ichimura2015influence,newey2018cross}.  In order to state our Type I error control result for spline regressions, it will be convenient to define the projection $\mbb{\Pi}:\mathbb{R}^{K_{XZ}} \rightarrow \mathbb{R}^{K_{XZ}}$ by $\mbb{\Pi}(\mbb{x}) \equiv  \mbb{\Pi}(x_1,\ldots,x_{K_{XZ}}) := \mbb{x} - \mbb{1} \otimes \bar{\mbb{x}}$, with $\bar{\mbb{x}} = (\bar{x}_1,\ldots,\bar{x}_{K_Z})$ given by $\bar{x}_k := K_X^{-1}\sum_{\ell=1}^{K_X} x_{(k-1)K_X + \ell}$ for $k \in [K_Z]$.  Let $\mbb{\widehat{\beta}}_{XZ} \in \mathbb{R}^{K_{XZ}}$ denote the coefficients obtained when forming the $\ghat$ regression and similarly let $\mbb{\widehat{\beta}}_Z \in \mathbb{R}^{K_Z}$ denote the coefficients from the $\widetilde{m}$ regression. Using the fact that $\mbb{\phi}^X$ forms a partition of unity (Proposition~\ref{Proposition: b-spline properties}(a)), it follows that if we write $\widehat{\mbb{\beta}} := \mbb{\widehat{\beta}}_{XZ} -  \mbb{1} \otimes \mbb{\widehat{\beta}}_{Z}$, where $\mbb{1} \in \mathbb{R}^{K_X}$ denotes a vector of ones, then
$\fhat(x, z) = \widehat{\mbb{\beta}}^\top \mbb{\phi}(x, z)$. 

\begin{theorem} \label{Theorem: Asymptotic normality of Spline}
	Suppose that Assumption~\ref{Assumption: Nonparametric models} holds for a class of null distributions $\mathcal{P}_0$, i.e.~a class of distributions that also satisfies $\E_{P}(Y \given X,Z) = \E_{P}(Y\given Z)$ for every $P \in \mathcal{P}_0$. Assume that $\sup_{P \in \mathcal{P}_{0}} \prob_{P}(\|\mbb{\Pi}\mbb{\widehat{\beta}}\|_{\infty} = 0) = o(1)$ and that $\mbb{\Lambda}_P := \E_P\bigl\{\cov_P\bigl(\mbb{\phi}(X, Z) \given Z\bigr)\bigr\}$ satisfies 
	\begin{equation}
	\label{Eq:lambdamin}
	\tilde{\lambda}_{\min}(\mbb{\Lambda}_P) := \min_{\mbb{x}\in \mathbb{R}^{K_{XZ}}: \mbb{\Pi}\mbb{x} = \mbb{x}, \| \mbb{x} \|_2 = 1} \mbb{x}^\top \mbb{\Lambda}_P \mbb{x} \geq  \frac{c}{K_{XZ}},
	\end{equation}
	for each $P \in \mathcal{P}_0$, where $c \in (0,1]$ is taken from Assumption~\ref{Assumption: Nonparametric models}.  Finally, suppose that  
	\begin{equation} 
			n  K_{XZ} \biggl\{\widetilde{K}_Z^{-2 s/d_Z} + \frac{\widetilde{K}_Z}{n} \biggr\}^2 \rightarrow 0 \label{Eq: condition for the normality of spline} 
\end{equation}
and
\begin{equation}			\frac{K_{XZ}^{1+2/\delta}}{n} \rightarrow 0 \label{Eq: condition for the normality of spline 2}
	\end{equation}
where $\delta$ is taken from Assumption~\ref{Assumption: Nonparametric models}.  If $T$ is computed using spline regressions according to either Algorithm~\ref{Algorithm: PCM} or Algorithm~\ref{Algorithm: PCM theory}, then
	\begin{align*}
		\sup_{P \in \mathcal{P}_{0}} \sup_{t \in \mathbb{R}} \bigl| \prob_{P}(T \leq t) - \Phi(t) \bigr| \rightarrow 0.
	\end{align*}
\end{theorem}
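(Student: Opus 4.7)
The plan is to reduce to Theorem~\ref{Theorem: General Type I error control} by verifying its Assumption~\ref{Assumption: Type I error for a general procedure}(a)--(d) for the spline-based estimators, exploiting throughout the four-fold independence of $\mathcal{D}_1,\mathcal{D}_2,\mathcal{D}_3,\mathcal{D}_4$.

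The algebraic starting point is the partition-of-unity property of $\mbb{\phi}^X$ (Proposition~\ref{Proposition: b-spline properties}), which gives $(\mbb{1}\otimes\mbb{v})^\top\mbb{\phi}(X,Z) = \mbb{v}^\top\mbb{\phi}^Z(Z)$ for any $\mbb{v}\in\mathbb{R}^{K_Z}$, so that $\mbb{\phi}(X,Z) - \E_P(\mbb{\phi}(X,Z)\given Z)$ is orthogonal (with probability one) to every vector of the form $\mbb{1}\otimes\mbb{v}$. Hence $\xi_P = (\mbb{\Pi}\widehat{\mbb{\beta}})^\top\bigl\{\mbb{\phi}(X,Z) - \E_P(\mbb{\phi}(X,Z)\given Z)\bigr\}$ and
\[
\sigma_P^2 \;=\; (\mbb{\Pi}\widehat{\mbb{\beta}})^\top\mbb{\Lambda}_P(\mbb{\Pi}\widehat{\mbb{\beta}}) \;\geq\; \frac{c}{K_{XZ}}\,\|\mbb{\Pi}\widehat{\mbb{\beta}}\|_2^2
\]
by~\eqref{Eq:lambdamin}. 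Part~(a) of Assumption~\ref{Assumption: Type I error for a general procedure} then follows directly from the hypothesis $\sup_{P\in\mathcal{P}_0}\prob_P(\|\mbb{\Pi}\widehat{\mbb{\beta}}\|_\infty = 0) = o(1)$.

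The bulk of the work lies in verifying (b) and (c). Under the null, $m_P\in\mathcal{H}_s^{d_Z}$ with bounded Hölder norm, and $\mhat$ is an OLS spline fit with basis $\mbb{\psi}$ on the independent sample $\mathcal{D}_4$; standard results from Appendix~\ref{Section: Splines} give the out-of-sample rate $\mathcal{E}_{P,1} = O_{\mathcal{P}_0}\bigl(\widetilde{K}_Z^{-2s/d_Z} + \widetilde{K}_Z/n\bigr)$. For $\mathcal{E}_{P,2}$, I would use the representation
\[
m_{P,\fhat}(z) \;=\; (\mbb{\Pi}\widehat{\mbb{\beta}})^\top\E_P\bigl(\mbb{\phi}(X,Z)\given Z = z\bigr),
\]
and observe that each coordinate of the conditional expectation vector inherits Hölder-$s$ smoothness from $p_{X|Z,P}(x\given\cdot)$ via Assumption~\ref{Assumption: Nonparametric models}(c). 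A spline approximation bound applied coordinate-wise, combined with the OLS variance bound on $\mathcal{D}_3$, yields an unnormalised out-of-sample MSPE of order $\|\mbb{\Pi}\widehat{\mbb{\beta}}\|_2^2\bigl(\widetilde{K}_Z^{-2s/d_Z} + \widetilde{K}_Z/n\bigr)$. Dividing by the lower bound on $\sigma_P^2$ cancels the factor $\|\mbb{\Pi}\widehat{\mbb{\beta}}\|_2^2$ and produces $\mathcal{E}_{P,2} = O_{\mathcal{P}_0}\bigl(K_{XZ}(\widetilde{K}_Z^{-2s/d_Z} + \widetilde{K}_Z/n)\bigr)$, so that $\mathcal{E}_{P,1}\mathcal{E}_{P,2} = o_{\mathcal{P}_0}(n^{-1})$ exactly under~\eqref{Eq: condition for the normality of spline}. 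The two weighted-MSPE statements in (c) follow from a similar argument after conditioning on $\mhat$, $\mhat_{\fhat}$ and $\fhat$, using the pointwise bound $|\xi_P|\lesssim \|\mbb{\Pi}\widehat{\mbb{\beta}}\|_2$ together with the conditional moment bounds on $\varepsilon_P$ in Assumption~\ref{Assumption: Nonparametric models}(a).

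For the conditional Lyapunov condition~(d), I would combine the fact that only a fixed number of tensor B-spline basis functions are non-zero at any given point, so $\|\mbb{\phi}(X,Z)\|_2$ is bounded by a universal constant and consequently $|\xi_P|\lesssim \|\mbb{\Pi}\widehat{\mbb{\beta}}\|_2$ almost surely, with the $(2+\delta)$th moment bound on $\varepsilon_P$ and the lower bound on $\sigma_P$ to obtain
\[
\frac{\E_P\bigl(|\varepsilon_P\xi_P|^{2+\delta}\given\fhat\bigr)}{\sigma_P^{2+\delta}} \;\lesssim\; K_{XZ}^{(2+\delta)/2},
\]
which is $o_{\mathcal{P}_0}(n^{\delta/2})$ precisely when~\eqref{Eq: condition for the normality of spline 2} holds. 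The hard part is the approximation step in (b): because $\fhat$ is random with coefficients depending on $\mathcal{D}_2$, and the normalisation by the potentially very small $\sigma_P^2$ inflates prediction errors by a factor of $K_{XZ}$, we must carry the $\|\mbb{\Pi}\widehat{\mbb{\beta}}\|_2^2$ factor through the spline approximation bound so that it cancels exactly against $\sigma_P^2 \gtrsim \|\mbb{\Pi}\widehat{\mbb{\beta}}\|_2^2/K_{XZ}$, leaving only the intrinsic rate $\widetilde{K}_Z^{-2s/d_Z} + \widetilde{K}_Z/n$. This forces us to work with the coordinate-wise representation of $m_{P,\fhat}$ above, deriving smoothness from the conditional density rather than from any (absent) smoothness of $\fhat$ itself, and to obtain bounds that hold uniformly over realisations of $\widehat{\mbb{\beta}}$ and over $P\in\mathcal{P}_0$.
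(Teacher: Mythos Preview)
Your overall architecture matches the paper's: reduce to Theorem~\ref{Theorem: General Type I error control} by verifying Assumption~\ref{Assumption: Type I error for a general procedure}(a)--(d). Parts~(a) and~(d) are handled correctly, and your use of $\|\mbb{\Pi}\widehat{\mbb{\beta}}\|_2$ in place of the paper's $\|\mbb{\Pi}\widehat{\mbb{\beta}}\|_\infty$ is harmless there, since $\|\cdot\|_\infty\le\|\cdot\|_2$ and the ratio cancels against the lower bound on $\sigma_P^2$ in the same way.

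There is, however, a genuine gap in your treatment of the bias part of $\mathcal{E}_{P,2}$. You write $m_{P,\fhat}(z)=(\mbb{\Pi}\widehat{\mbb{\beta}})^\top\E_P\bigl(\mbb{\phi}(X,Z)\,\big|\,Z=z\bigr)$ and assert that ``each coordinate of the conditional expectation vector inherits H\"older-$s$ smoothness''. This is false: the $K_{XZ}$ coordinates factor as $\E_P(\phi_{\ell_1}^X(X)\,|\,Z=z)\cdot\phi_{\ell_2}^Z(z)$, and while the first factor is H\"older-$s$ (via Lemma~\ref{Lemma: Regression of basis functions} and Assumption~\ref{Assumption: Nonparametric models}(c)), the second factor $\phi_{\ell_2}^Z$ is only $C^{r-2}$ at interior knots, hence not H\"older-$s$ when $s\in(r-1,r]$. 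A naive coordinate-wise approximation bound therefore does not yield the rate $\widetilde{K}_Z^{-s/d_Z}$ you claim. This is precisely why the paper regresses onto the higher-order basis $\mbb{\psi}$ for $\mathcal{S}_{2r-1,N}^{d_Z}$ rather than $\mbb{\phi}^Z$: grouping the sum by the $Z$-index gives
\[
m_{P,\fhat}(z)=\sum_{\ell_2=1}^{K_Z}\phi_{\ell_2}^Z(z)\,h_{\ell_2}(z),\qquad h_{\ell_2}(z):=\sum_{\ell_1=1}^{K_X}(\mbb{\Pi}\widehat{\mbb{\beta}})_{(\ell_2-1)K_X+\ell_1}\,\E_P\bigl(\phi_{\ell_1}^X(X)\,\big|\,Z=z\bigr),
\]
where each $h_{\ell_2}\in\mathcal{H}_s^{d_Z}$ with $\|h_{\ell_2}\|_{\mathcal{H}_s}\le C\|\mbb{\Pi}\widehat{\mbb{\beta}}\|_\infty$. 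The product structure $\phi_{\ell_2}^Z\cdot(\text{H\"older})$ is then handled by Lemma~\ref{Lemma: spline on spline approximation}, which shows that such sums are well approximated in $\mathcal{S}_{2r-1,N}^{d_Z}$ with error $\lesssim\|\mbb{\Pi}\widehat{\mbb{\beta}}\|_\infty\widetilde{K}_Z^{-s/d_Z}$; this, together with the variance bound, is packaged as Proposition~\ref{Proposition: spline on spline regression}. Along the way one also needs $\|\mbb{\Pi}\widehat{\mbb{\beta}}\|_\infty=O_{\mathcal{P}_0}(1)$ (established via the $L_2(P)$-best approximant of $g_P$ and Corollary~\ref{Corollary: spline regression}), a step your sketch omits. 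Once this structural point is in place, the rest of your argument for (b) and (c) goes through essentially as written.
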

The proof of Theorem~\ref{Theorem: Asymptotic normality of Spline} amounts to the verification of the conditions of Theorem~\ref{Theorem: General Type I error control}.  In addition to Assumption~\ref{Assumption: Nonparametric models}, Theorem~\ref{Theorem: Asymptotic normality of Spline} imposes several additional conditions. The assumption that $\sup_{P \in \mathcal{P}_{0}} \prob_{P}(\|\mbb{\Pi}\mbb{\widehat{\beta}}\|_{\infty} = 0) = o(1)$ simply avoids degeneracy of the test statistic and is used to show that assumption~(a) of Theorem~\ref{Theorem: General Type I error control} is satisfied.  If this condition is not satisfied, then since we defined $0/0 := 0$ in our test statistic, it can be shown that 
\[
\liminf_{n \rightarrow \infty} \inf_{P \in \mathcal{P}_{0}} \prob_{P}(|
T| \leq t) \geq \Phi(t) - \Phi(-t)
\]
for all $t \geq 0$ (i.e.~$|T|$ is asymptotically stochastically dominated by the absolute value of a standard Gaussian random variable), so the test retains uniform asymptotic Type I error control provided that $\alpha \leq 1/2$.

Condition~\eqref{Eq:lambdamin} can be regarded as a restricted minimum eigenvalue condition; for $\mbb{x} \in \mathbb{R}^{K_{XZ}}$ with $\mbb{\Pi}\mbb{x} = 0$, we have that $\mbb{x}^\top \mbb{\Lambda}_P \mbb{x} = 0$, but it turns out that we are able to restrict attention to the orthogonal complement of this subspace.  Motivation for the form of this condition is provided by the fact that, writing $\mbb{\Sigma}_P := \E_P\bigl( \mbb{\phi}(X,Z) \mbb{\phi}(X,Z)^\top \bigr) \in \mathbb{R}^{K_{XZ} \times K_{XZ}}$, we have
\[
\tilde{\lambda}_{\min}(\mbb{\Lambda}_P) \leq  \tilde{\lambda}_{\min}(\mbb{\Sigma}_P) \leq \lambda_{\max}(\mbb{\Sigma}_P) \leq C 2^d K_{XZ}^{-1}
\]
by Proposition~\ref{Proposition: b-spline properties}(d).  Moreover, Lemma~\ref{Lemma:lambdaminForIndependentXZ} in Section~\ref{Section: misc results} shows that the assumption holds when $X$ and $Z$ are independent.  

Condition~\eqref{Eq: condition for the normality of spline} is used to show that parts~(a) and~(b) of Assumption~\ref{Assumption: general procedure} are satisfied while Condition~\eqref{Eq: condition for the normality of spline 2} is employed to verify assumption~(b) of Theorem~\ref{Theorem: General Type I error control}.  These conditions control the interplay between the growth rate of the number of basis functions, the smoothness $s$ of the regression functions and conditional densities and $\delta$. When choosing the knot spacing to minimise the mean-squared error of the involved regressions, we would choose $\widetilde{K}_Z$ and $K_Z$ of order $n^{d_Z/(2s+d_Z)}$ and $K_X$ of order $n^{d_X/(2s+d_Z)}$. Thus for \eqref{Eq: condition for the normality of spline} to hold, we need $s > d_Z + d_X/2$ and for \eqref{Eq: condition for the normality of spline 2} to hold, we need $\delta > 2(d_X+d_Z)/(2s-d_X)$.  Both conditions could be weakened, at the expense of additional notational complexity, by choosing different knot spacings $N_X$ and $N_Z$ for the $d_X$- and $d_Z$-tensor B-spline bases $\mbb{\phi}^X$ and $\mbb{\phi}^Z$ for our spline spaces $\mathcal{S}_{r,N_X}^{d_X}$ and $\mathcal{S}_{r,N_Z}^{d_Z}$.  Indeed, by taking $N_X$, and hence $K_X$, to be of constant order, while retaining the original choices of $K_Z$ and $\widetilde{K}_Z$, we see that \eqref{Eq: condition for the normality of spline} holds when $s > d_Z$ and \eqref{Eq: condition for the normality of spline 2} holds when $\delta > d_Z/s$ (so it would suffice for Assumption~\ref{Assumption: Nonparametric models}(a) to hold with $\delta = 1$, provided again that $s > d_Z$).

\subsection{Power and minimax lower bound} \label{Section: power analysis using spline regression}

Up until this point, when analysing Algorithm~\ref{Algorithm: PCM theory} it was not crucial that $\mhat$ and $\mhat_{\fhat}$ were formed on separate auxiliary samples. However, this turns out to be helpful in demonstrating the optimality of our test.
To provide insight into the benefits of forming these estimators on separate samples, consider two generic spline estimators $\widehat{g}_1$ and $\widehat{g}_2$ of unknown functions $g_1$ and $g_2$, respectively. Suppose that we would like to choose $\widehat{g}_1$ and $\widehat{g}_2$ to minimise the empirical cross-product error on observations $Z_1,\ldots,Z_n$ that are independent of $\widehat{g}_1$ and $\widehat{g}_2$, given by
\begin{align*}
	\mathcal{E}_{\mathrm{cross}}:= \frac{1}{n} \sum_{i=1}^n \{\widehat{g}_1(Z_i) - g_1(Z_i)\} \{\widehat{g}_2(Z_i) - g_2(Z_i)\}. 
\end{align*}  
A naive way of approaching this problem would be to construct $\widehat{g}_1$ and $\widehat{g}_2$ on the same dataset and to choose the number of spline functions so as to minimise the mean-squared error of each of $\widehat{g}_1$ and $\widehat{g}_2$. The Cauchy--Schwarz inequality then guarantees that the cross-product error is small as long as the mean-squared errors are small. However, this indirect approach returns a potentially suboptimal rate of convergence due to its ``own observation'' bias, which arises from using the same datasets to form $\widehat{g}_1$ and $\widehat{g}_2$. When employing separate auxiliary samples to construct $\widehat{g}_1$ and $\widehat{g}_2$, we can eliminate this bias; thus a more refined analysis of terms like $\mathcal{E}_{\mathrm{cross}}$ that does not directly employ the Cauchy--Schwarz inequality can result in faster convergence rates; see for instance Proposition~\ref{Proposition: product error}.
Our main result in this section is as follows:

\begin{theorem} \label{Theorem: Power of Spline}
	Let $\mathcal{P}$ be a class of distributions satisfying Assumption~\ref{Assumption: Nonparametric models}, and let $\mathcal{P}_1(\epsilon_n) :=  \{P \in \mathcal{P}: \target_P \geq \epsilon_n \}$, where 
	\begin{align} \label{Eq: minimum separation rate in nonparametric model}
		\epsilon_n \cdot n^{\frac{4s}{4s + d}}   \rightarrow \infty.
	\end{align}
	Further, assume that the tuning parameters are chosen such that $K_X \asymp n^{\frac{2d_X}{4s + d}}$ and $K_Z \asymp \widetilde{K}_Z \asymp n^{\frac{2d_Z}{4s + d}}$ and that $r \geq s \geq 3d/4$.  If $T$ is computed using spline regressions according to Algorithm~\ref{Algorithm: PCM theory}, then
	\begin{align*}
		\inf_{P \in \mathcal{P}_1(\epsilon_n)}\prob_P (T > z_{1-\alpha}) \rightarrow 1.
	\end{align*}
\end{theorem}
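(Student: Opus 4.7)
The plan is to apply the general power result, Theorem~\ref{Theorem: general power result}, by verifying each hypothesis of Assumption~\ref{Assumption: general power assumption} for our spline-based PCM.  The scale-equivariance~\eqref{eq:scale_equiv} of $\mhat_{\fhat}$ is immediate since it arises from an ordinary least squares projection, and the uniform boundedness of $\Var_P(Y \given X, Z)$ and of $h_P = g_P - m_P$ follows from parts~(a) and~(c) of Assumption~\ref{Assumption: Nonparametric models}, since $g_P, m_P \in \mathcal{H}_s^d$ with uniformly bounded H\"older norms.  It therefore suffices to bound the two mean-squared prediction errors $\mathcal{E}_{P,1}$ and $\mathcal{E}_{P,2}$, and to establish the correlation lower bound in part~(c).

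For~(a), I would invoke the bias--variance decomposition for B-spline least squares regression developed in Appendix~\ref{Section: Splines}, applied to the regression of $Y$ on $\mbb{\psi}(Z)$ on $\mathcal{D}_4$.  Using $m_P \in \mathcal{H}_s^{d_Z}$ and $\widetilde{K}_Z \asymp n^{2d_Z/(4s+d)}$, this yields $\mathcal{E}_{P,1} = O_{\mathcal{P}_1(\epsilon_n)}(n^{-\beta_1})$ with $\beta_1 := \min\{4s/(4s+d),\ (4s+d_X-d_Z)/(4s+d)\} > 0$, where the second exponent is positive because $s \geq 3d/4$.  For~(b), the regression of $\fhat$ on $\mbb{\psi}(Z)$ on $\mathcal{D}_3$ targets $m_{P,\fhat}(z) = \widehat{\mbb{\beta}}^\top \int \mbb{\phi}(x, z)\, p_{X|Z, P}(x \given z)\,dx$, which inherits smoothness $s$ in $z$ from Assumption~\ref{Assumption: Nonparametric models}(c).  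Its unnormalised MSE decomposes into a bias squared of order $\widetilde{K}_Z^{-2s/d_Z}\|\fhat\|_{L^2(P)}^2$ and variance of order $\sigma_P^2 \widetilde{K}_Z/n$; dividing by $\sigma_P^2$ and exploiting $\sigma_P^2 \asymp \|\fhat\|_{L^2(P)}^2 \asymp \tau_P$ (established below) then gives $\mathcal{E}_{P,2} = O_{\mathcal{P}_1(\epsilon_n)}(n^{-\beta_2})$ with $\beta_1 + \beta_2 \geq 4s/(4s+d)$, so \eqref{Eq: minimum separation rate in nonparametric model} implies \eqref{Eq:power rate condition}.

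I expect the main obstacle to be~(c), together with the comparison $\sigma_P^2 \asymp \tau_P$ used above.  Let $h_P^*$ denote the $L^2(P)$-projection of $h_P$ onto the tensor-product spline space spanned by $\mbb{\phi}$; then $\|h_P - h_P^*\|_{L^2(P)}^2 = O(K_{XZ}^{-2s/d}) = O(n^{-4s/(4s+d)}) = o(\epsilon_n)$ by \eqref{Eq: minimum separation rate in nonparametric model}, so $\|h_P^*\|_{L^2(P)}^2 = \tau_P\{1+o(1)\}$.  Combining $\E_P[h_P(X, Z) \given Z] = 0$ with the partition-of-unity property of the B-spline basis (Proposition~\ref{Proposition: b-spline properties}), one checks that $\E_P[h_P(X, Z)\, \xi_P \given \fhat] = \langle h_P, \fhat\rangle_{L^2(P)} = \langle h_P^*, \fhat\rangle_{L^2(P)}$.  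Writing $\fhat = h_P^* + (\fhat - h_P^*)$, where $\fhat - h_P^*$ lies in the spline space and captures the sampling noise from $\mathcal{D}_2$, one has $\langle h_P^*, \fhat\rangle_{L^2(P)} = \|h_P^*\|_{L^2(P)}^2 + \langle h_P^*, \fhat - h_P^*\rangle_{L^2(P)}$.  The crux is that a direct Cauchy--Schwarz bound on the cross-term is too loose, since $\|\fhat - h_P^*\|_{L^2(P)}^2 = O_{\mathcal{P}_1(\epsilon_n)}(K_{XZ}/n)$ may itself exceed $\tau_P$; instead, one exploits the approximate mean-zero property of the OLS noise contribution to argue that $\Var\bigl(\langle h_P^*, \fhat - h_P^*\rangle_{L^2(P)}\bigr) = O\bigl(\|h_P^*\|_{L^2(P)}^2/n\bigr) = o(\tau_P^2)$, yielding $\langle h_P^*, \fhat\rangle_{L^2(P)} = \tau_P\{1+o_{\mathcal{P}_1(\epsilon_n)}(1)\}$.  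A parallel decomposition of $\sigma_P^2 = \E_P[\xi_P^2 \given \fhat]$ gives $\sigma_P^2 = \tau_P\{1+o_{\mathcal{P}_1(\epsilon_n)}(1)\}$, which simultaneously verifies~(c) with $\rho$ arbitrarily close to $1$ and supplies the comparison $\sigma_P^2 \asymp \tau_P$ used above; the conclusion then follows from Theorem~\ref{Theorem: general power result}.
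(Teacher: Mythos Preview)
Your plan to derive Theorem~\ref{Theorem: Power of Spline} by verifying Assumption~\ref{Assumption: general power assumption} and invoking Theorem~\ref{Theorem: general power result} has a genuine gap, and the paper does \emph{not} proceed this way.

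The breaking point is your claim that $\sigma_P^2 = \tau_P\{1+o_{\mathcal{P}_1(\epsilon_n)}(1)\}$.  Writing $\xi_P = h_P + \bigl(r - \E_P[r\mid Z]\bigr)$ with $r := \fhat - h_P$, you correctly argue that $\langle h_P, r\rangle_{L^2(P)} = o_{\mathcal{P}_1(\epsilon_n)}(\tau_P)$ via a variance calculation, but the remaining square term satisfies only $\|r - \E_P[r\mid Z]\|_{L^2(P)}^2 \leq \|r\|_{L^2(P)}^2 = O_{\mathcal{P}}(K_{XZ}/n) = O_{\mathcal{P}}\bigl(n^{-(4s-d)/(4s+d)}\bigr)$.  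Since $(4s-d)/(4s+d) < 4s/(4s+d)$, the theorem's hypothesis $\epsilon_n n^{4s/(4s+d)} \to \infty$ does \emph{not} force $K_{XZ}/n = o(\epsilon_n)$, so $\sigma_P^2$ can be dominated by the OLS noise $K_{XZ}/n \gg \tau_P$.  In that regime $\corr_P(h_P,\xi_P\mid\fhat) \asymp \sqrt{\tau_P n/K_{XZ}} \to 0$, so Assumption~\ref{Assumption: general power assumption}(c) fails.  Your bias claim for $\mathcal{E}_{P,2}$ suffers the same defect: the H\"older norm of $m_{P,\fhat}$ scales with $\|\mbb{\Pi}\widehat{\mbb{\beta}}\|_\infty = O_{\mathcal{P}}(1)$ (Lemma~\ref{Lemma: Regression of basis functions}, Proposition~\ref{Proposition: spline on spline regression}), not with $\|\fhat\|_{L^2(P)}$, so after dividing by $\sigma_P^2$ you cannot extract a rate $n^{-\beta_2}$ independent of $\tau_P$.

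The paper's proof avoids Theorem~\ref{Theorem: general power result} entirely and instead establishes directly that $n^{-1}\sum_i L_i = \tau_P\{1+o_{\mathcal{P}_1(\epsilon_n)}(1)\}$ and $s_n/\sqrt{n} = o_{\mathcal{P}_1(\epsilon_n)}(\tau_P)$, via an eight-term decomposition of $n^{-1}\sum_i L_i$.  The crucial difference is that the cross-product of regression errors (the analogue of $\mathrm{VI}_n$ in the proof of Theorem~\ref{Theorem: general power result}) is bounded not by Cauchy--Schwarz but by Proposition~\ref{Proposition: product error}, and the product of $h_P$ with the $(X,Z)$-regression error by Proposition~\ref{proposition: error times smooth function}.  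Both results exploit the additional sample splitting ($\mhat$ on $\mathcal{D}_4$, $\mhat_{\fhat}$ on $\mathcal{D}_3$) to replace the Cauchy--Schwarz product $\mathcal{E}_{P,1}^{1/2}\mathcal{E}_{P,2}^{1/2}$ by a genuinely faster rate $O(K^{-2\zeta} + K^{1/2}/n)$ --- this is precisely the $\widehat{\theta}_{\mathrm{cross}}$ phenomenon discussed around~\eqref{Eq:thetahatcross}.  Without these sharper cross-fitting bounds, the minimax separation $\epsilon_n \gg n^{-4s/(4s+d)}$ is not attainable.
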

Theorem~\ref{Theorem: Power of Spline} reveals that when using spline regressions and Algorithm~\ref{Algorithm: PCM theory}, the PCM has uniform asymptotic power 1 over a class of alternatives that are sufficiently separated from the null, as defined by $\mathcal{P}_1(\epsilon_n)$.

We remark that in Theorem~\ref{Theorem: Power of Spline}, we have operated in the context of a known smoothness parameter $s$ for theoretical purposes.  It is possible to construct more involved tests that adapt to unknown smoothness levels following the strategy of \citet{lepskii1991asymptotically} and \citet{ingster2000adaptive}, but we do not pursue this direction further.

The separation rate~(\ref{Eq: minimum separation rate in nonparametric model}) cannot be improved further from a minimax perspective, as illustrated by the following lower bound result.
\begin{proposition} \label{Proposition: Lower bound}
    	Consider a class of distributions, denoted by $\mathcal{P}$, that satisfy Assumption~\ref{Assumption: Nonparametric models}, let $\mathcal{P}_1(\epsilon_n) :=  \{P \in \mathcal{P}: \target_P \geq \epsilon_n \}$ and let $\alpha \in (0,1/2)$.  There exists $c > 0$ such that if $\limsup_{n \rightarrow \infty} \epsilon_n \cdot n^{\frac{4s}{4s + d}} < c$, then any test $\phi$ having uniform asymptotic size at most $\alpha$ satisfies
	\begin{align*}
		\limsup_{n \rightarrow \infty} \inf_{P \in \mathcal{P}_1(\epsilon_n)} \prob_P(\phi = 1) \leq \alpha + 1/2.
	\end{align*}
\end{proposition}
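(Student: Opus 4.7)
The plan is to use the classical Ingster--Suslina finite mixture reduction: construct a single null $P_0 \in \mathcal{P}$ with $\tau_{P_0} = 0$ and an exponentially large family $\{P_\theta : \theta \in \{-1,+1\}^M\} \subset \mathcal{P}_1(\epsilon_n)$ of alternatives whose uniform mixture is indistinguishable from $P_0$ in total variation. Since $\tau_P = \|h_P\|_{L^2(P_{XZ})}^2$ is a squared $L^2$ functional of an $s$-smooth function of $d$ variables, we expect precisely the Ingster rate $n^{-4s/(4s+d)}$ to be tight.

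I would take $P_0 \in \mathcal{P}$ with $(X,Z) \sim \mathrm{Unif}([0,1]^d)$ and $Y \mid (X,Z) \sim N(0,\sigma^2)$ independent of $(X,Z)$, where $\sigma^2$ is chosen in the range forced by Assumption~\ref{Assumption: Nonparametric models}(a). Next, fix a $C^\infty$ bump $\phi_0$ supported in $[0,1]^d$ with the vanishing-partial-mean property $\int_{[0,1]^{d_X}}\phi_0(u,v)\,du = 0$ for every $v$; this is easily arranged via a tensor product $\phi_0(u,v) = \psi_X(u)\psi_Z(v)$ with $\int \psi_X = 0$. For $m \in \mathbb{N}$ to be chosen, partition $[0,1]^d$ into $M = m^d$ sub-cubes $Q_k$ of side $1/m$, and set $\phi_k(x,z) := m^{-s}\phi_0(m(x,z)-w_k)$, where $w_k$ places $\mathrm{supp}(\phi_k)$ inside $Q_k$. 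For $\theta \in \{-1,+1\}^M$, define $h_\theta := \gamma \sum_k \theta_k \phi_k$ with $\gamma^2 := \epsilon_n m^{2s}/\|\phi_0\|_{L^2}^2$, and let $P_\theta$ have $(X,Z)$ uniform and $Y = h_\theta(X,Z)+\varepsilon$ with $\varepsilon \sim N(0,\sigma^2)$. The $m^{-s}$ amplitude cancels the $s$-order-derivative blow-up from contracting by $m$, and disjointness of supports keeps cross-bump Hölder interactions under control, so $\|h_\theta\|_{\mathcal{H}_s^d}=O(\gamma)$, bounded as $\epsilon_n \to 0$; the partial-mean property yields $\E_{P_\theta}(Y \mid Z)=0$, hence $h_{P_\theta}=h_\theta$ and $\tau_{P_\theta} = \|h_\theta\|_{L^2}^2 = \epsilon_n$ exactly, so $P_\theta \in \mathcal{P}_1(\epsilon_n)$.

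Placing the uniform prior $\pi$ on $\{-1,+1\}^M$ and writing $\bar P := \int P_\theta\,d\pi(\theta)$, the standard testing-to-TV reduction shows that every test $\phi$ satisfying $\limsup_n \E_{P_0}^{\otimes n}(\phi) \leq \alpha$ has
\[
\int \E_{P_\theta}^{\otimes n}(\phi)\,d\pi(\theta) \leq \alpha + \tfrac{1}{2}\sqrt{\chi^2(\bar P^{\otimes n}, P_0^{\otimes n})} + o(1).
\]
A direct Gaussian-likelihood-ratio computation plus i.i.d.\ tensorisation yields
\[
\chi^2(\bar P^{\otimes n}, P_0^{\otimes n})+1 = \E_{\theta,\theta' \sim \pi}\biggl[\biggl(\int_{[0,1]^d} e^{\sigma^{-2} h_\theta h_{\theta'}}\,dx\,dz \biggr)^{\!\!n}\biggr].
\]
Linearising $e^u \leq 1 + u + u^2 e^{|u|}$ (valid since $\|h_\theta\|_\infty = O(\gamma m^{-s}) = o(1)$), using the disjoint-support identity $\int h_\theta h_{\theta'}\,dx\,dz = \|\phi_0\|_{L^2}^2\gamma^2 m^{-2s-d}\sum_k \theta_k\theta_k'$, then $1+t\leq e^t$ and $\cosh(a) \leq e^{a^2/2}$ applied to each of the $M$ independent Rademacher-type factors, I would obtain
\[
\chi^2(\bar P^{\otimes n}, P_0^{\otimes n})+1 \leq \exp\bigl(C n^2 \gamma^4 m^{-4s-d}\bigr) = \exp\bigl(C' n^2 \epsilon_n^2 m^{-d}\bigr).
\]
Choosing $m \asymp n^{2/(4s+d)}$ makes the exponent a constant multiple of $(\epsilon_n n^{4s/(4s+d)})^2$; by selecting the constant $c$ in the hypothesis small enough, the exponent stays below $\log(5/4)$, giving $\chi^2 \leq 1/4$ and hence TV at most $1/2$. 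Combined with the preceding reduction this bounds $\inf_{P \in \mathcal{P}_1(\epsilon_n)}\E_P^{\otimes n}(\phi) \leq \int \E_{P_\theta}^{\otimes n}(\phi)\,d\pi(\theta) \leq \alpha + 1/2 + o(1)$, which is the required conclusion upon taking $\limsup$.

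The main obstacle is verifying the Hölder-norm control of $h_\theta$ for general fractional smoothness $s \in (0,r]$: a sum of disjointly and well-separated rescaled bumps has its $\mathcal{H}_s^d$-seminorm controlled by that of a single bump (up to an absolute constant) precisely because any two points either lie in the same cube or far enough apart to exploit the $|\cdot|^s$ denominator, but this requires careful case-splitting. Beyond that, the proof is a routine but fiddly Ingster bookkeeping of constants in a Gaussian chi-squared computation, with no conceptual novelty.
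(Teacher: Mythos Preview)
Your strategy is exactly the Ingster-type mixture reduction the paper uses, and the choice of resolution $m \asymp n^{2/(4s+d)}$, the $L^2$ bookkeeping giving $\tau_{P_\theta}=\epsilon_n$, and the H\"older control via disjoint rescaled bumps all match the paper's construction. The one substantive difference is the noise law: you take $Y\mid (X,Z)\sim N(h_\theta,\sigma^2)$, whereas the paper takes $Y\in\{-1,+1\}$ with $\E(Y\mid X,Z)=g_{n,\mbb\eta}(X,Z)$.

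This difference is not cosmetic. With Rademacher $Y$, the paper gets the \emph{exact} identity $\int q_{\mbb\eta}q_{\mbb\eta'}\,dP_0 = 1+\gamma_n^2\,\mbb\eta^\top\mbb\eta'$, so only the term $\exp\bigl(n^2\gamma_n^4\rho_n^{d}/2\bigr)$ appears in the $\chi^2$ bound, for every $s\in(0,r]$. In your Gaussian case, the per-observation inner product is $\int_{[0,1]^d}\exp(\sigma^{-2}h_\theta h_{\theta'})$, and your linearisation $e^u\le 1+u+u^2e^{|u|}$ produces, after $1+t\le e^t$, a factor $\exp\bigl(n\!\int u^2 e^{|u|}\bigr)$ in addition to the Rademacher-averaged $\cosh$ product. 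Since $\int u^2 e^{|u|}\asymp \sigma^{-4}\gamma^4 m^{-4s}$ is sign-independent, this extra factor is $\exp\bigl(O(n\gamma^4 m^{-4s})\bigr)=\exp\bigl(O(n\epsilon_n^2)\bigr)$, which you omitted from your displayed bound. With $\epsilon_n\asymp n^{-4s/(4s+d)}$ one has $n\epsilon_n^2\asymp n^{(d-4s)/(4s+d)}$; this tends to~$0$ only when $s>d/4$, and diverges when $s<d/4$. Since Assumption~\ref{Assumption: Nonparametric models}(c) allows any $s\in(0,r]$, your $\chi^2$ bound as written fails in the low-smoothness regime. (One can check this is not an artefact of the crude linearisation: writing $\int e^{\sigma^{-2}h_\theta h_{\theta'}}=B+m^{-d}SD$ with $B=\int\cosh(\delta\phi_0^2)\ge 1+c\delta^2$ shows that the baseline $B^n$ already blows up.)

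The fix is easy and is precisely what the paper does: replace the Gaussian conditional law of $Y$ by a bounded one (e.g.\ Rademacher with mean $h_\theta$), which makes the likelihood ratio affine in $h_\theta$ and kills the second-order term exactly. With that change, your sketch goes through for all $s\in(0,r]$; for $s>d/4$ your Gaussian version also works once you record that $n\epsilon_n^2\to 0$.
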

Proposition~\ref{Proposition: Lower bound} complements Theorem~\ref{Theorem: Power of Spline} by showing that when $\target_P$ is a small constant multiple of $n^{-\frac{4s}{4s+d}}$, no test can achieve uniform consistency under H\"{o}lder smoothness. 
The proof of Proposition~\ref{Proposition: Lower bound}, which can be found in Section~\ref{Section: Proof of Proposition: Lower bound}, follows a fairly standard argument \citep[e.g.][]{ingster1987minimax,arias2018remember} that bounds the $\chi^2$-divergence from a fixed null distribution to a mixture of distributions in the alternative class~$\mathcal{P}_1(\epsilon_n)$.

It can be shown that under Assumption~\ref{Assumption: Nonparametric models}, it is possible to satisfy the conditions of Assumption~\ref{Assumption: general procedure} using spline regressions with $K_X, K_Z$ and $\widetilde{K}_Z$, as well as $r$ and $s$, as in Theorem~\ref{Theorem: Power of Spline}. Therefore, Theorem~\ref{Theorem: Power of Spline} and Proposition~\ref{Proposition: Lower bound} reveal that assumption~(d) of Theorem~\ref{Theorem: general power result} can be the primary restriction on the separation rate $\epsilon_n$.

Despite the theoretical benefits described above, using Algorithm~\ref{Algorithm: PCM theory} instead of Algorithm~\ref{Algorithm: PCM} may degrade the practical performance of the algorithm, especially in small-sample scenarios. We therefore recommend using the PCM as in Algorithm~\ref{Algorithm: PCM} (in conjunction with Algorithm~\ref{Algorithm: PCM multi} for derandomisation), and its finite-sample performance is investigated in the next section.

\section{Numerical experiments} \label{sec:numerical}
In this section, we present the results of several simulation experiments that investigate the empirical performances of both the recommended derandomised version of the PCM (see Algorithm~\ref{Algorithm: PCM multi}) with $B=6$ splits, and the version in Algorithm~\ref{Algorithm: PCM}. We compare our tests to various conditional (mean) independence tests in the literature listed below.
\begin{itemize}
	\item[\texttt{gam}] The test based on the default $p$-value for a smooth when fitting a generalised additive model (GAM) using the \texttt{mgcv}-package in R \citep{wood2013p,wood2017}.
	\item[\texttt{wgsc}] The test resulting from applying the approach described in \citet[][Algorithm 3]{williamson2020unified} and employing sample splitting and cross-fitting as implemented in the \texttt{cv\_vim} function from the \texttt{vimp}-package in R (with $K=2$, resulting in $4$ folds).
	\item[\texttt{kci}] The \emph{kernel conditional independence test} \citep{zhang2012kernel} as implemented in the \texttt{KCI} function of the \texttt{CondIndTests} R package \citep{heinze2018}; we use the Bayesian hyperparameter tuning option for sample sizes of at most $500$ for computational stability reasons, and otherwise we use the default parameters.
	\item[\texttt{gcm}] The \emph{Generalised Covariance Measure} (GCM) as described in \citet{shah2020hardness}.
	\item[\texttt{wgcm.fix}]
	The `fixed weight function' variant of the \emph{Weighted Generalised Covariance Measure} (wGCM) \citep{scheidegger2021weighted} as implemented in the \texttt{wgcm.fix} function of the \texttt{weightedGCM} R package; we use $\texttt{weight.num}=7$ as in the simulations of the original paper.
	\item[\texttt{wgcm.est}] The `estimated weight function' variant of the wGCM as implemented in the \texttt{wgcm.est} function of the \texttt{weightedGCM} R-package.
\end{itemize}

In all of our numerical simulations, rejection rates were estimated based on 2500 repetitions. The code for all of our experiments (including those in the supplementary material) is available on GitHub: \url{https://github.com/ARLundborg/pcm_code/}.

\subsection{Additive models}
\label{Section: additive models}
We first investigate Type I error control in settings where both $\E(Y \given Z)$ and $\E(X \given Z)$ are additive functions, and $Z \sim N_7(0, \mbb I)$. For the methods, including the PCM, requiring choices of regression procedures, we use a generalised additive model fitted using \texttt{mgcv}. We employ default parameters for the generalised additive models (as given in the \texttt{smooth.terms} and \texttt{gam} functions in the \texttt{mgcv} package) except that we choose $\lfloor (N-1)/d \rfloor$ basis functions (where $N$ and $d$ are the number of observations and predictors on which the model is trained, respectively).  Since this is the largest number of basis functions per coordinate that can be taken without overparametrisation, this mitigates the risk of oversmoothing; as the fits are penalised, there is little risk of overfitting \citep{wood2017}.  For the $\widetilde{v}$ regression in the PCM, we apply a generalised additive model with logarithmic link function. We consider null settings consisting of $n \in \{250, 500, 1000\}$ independent and identically distributed copies of $(X, Y, Z)$ where  
\[
X = \sin(2\pi Z_1) + 0.1 \xi \quad \text{and} \quad Y = \sin(2\pi Z_1) + \varepsilon.
\]
and errors $\varepsilon$ and $\xi$ are independent $N(0, 1)$ random variables, independent of $Z$. Such a setup is challenging for Type I error control as $X$ and $Y$ are highly correlated yet are conditionally independent given $Z$.  Indeed we see from the left panel of Figure~\ref{fig:gam comparison} that several of the tests are anti-conservative, most notably \texttt{kci} and \texttt{gam}, which we omit from further comparisons as their power properties would be hard to interpret given the high rejection rates under the null. The \texttt{wgcm.est} test is also somewhat anti-conservative, but considerably less so.  In contrast, the derandomised PCM is conservative here. This is to be expected as the calibration following the derandomisation involved in its construction (Section~\ref{sec:multi}) is typically conservative; the single split version appears to have rejection rates close to the nominal 5\% mark for large $n$, as suggested by our theory. 

We investigate the power properties of the PCM in the following settings, where as before, $\varepsilon$ and $\xi$ are independent and independent of $Z$, and moreover $\varepsilon \sim N(0, 1)$.
\begin{enumerate}
	\item $\xi \sim N(0, 1)$, $X = \sin(2\pi Z_1) + \xi$ and $Y = \sin(2\pi Z_1)  + 0.2X^2 + \varepsilon$.
	\item $\xi+1 \sim \textrm{Exp}(1)$, $	X = \sin(2\pi Z_1) - \sin(2\pi Z_1) \xi$ and $Y = \sin(2\pi Z_1)  + 0.4X^2 + \varepsilon$.
	\item $\xi \sim N(0, 1)$, $X = \sin(2\pi Z_1) + \xi$ and $Y = \sin(2\pi Z_1) + 0.4 X^2 Z_2 + \varepsilon$.
\end{enumerate}
The settings are chosen such that in setting 1:~$\E\bigl(\cov(X,Y \given Z)\bigr) = 0$ but $\cov(X, Y \given Z) \neq 0$, in setting 2:~$\cov(X, Y \given Z) = 0$ but $\target \neq 0$ and in setting 3:~like setting 1, $\E\bigl(\cov(X,Y \given Z)\bigr) = 0$ and $\cov(X, Y \given Z) \neq 0$, but there is only an interaction effect. 

From the right-hand panels of Figure~\ref{fig:gam comparison}, we see that the PCM and \texttt{wgsc} exhibit good power in settings 1 and 2. 
\texttt{wgcm.est} also shows appreciable power in setting 1, though as expected has little power in setting 2 where $\cov(X, Y \given Z) = 0$.  It is initially somewhat surprising that the PCM has some power in setting~3 as the alternative is not an additive model, so the $\ghat$ regression is not able to correctly learn $g$. However, due to the estimation of $\vhat$, we do in fact see some power for sufficiently large $n$.

\begin{figure}
    \centering
    \includegraphics[scale=0.44]{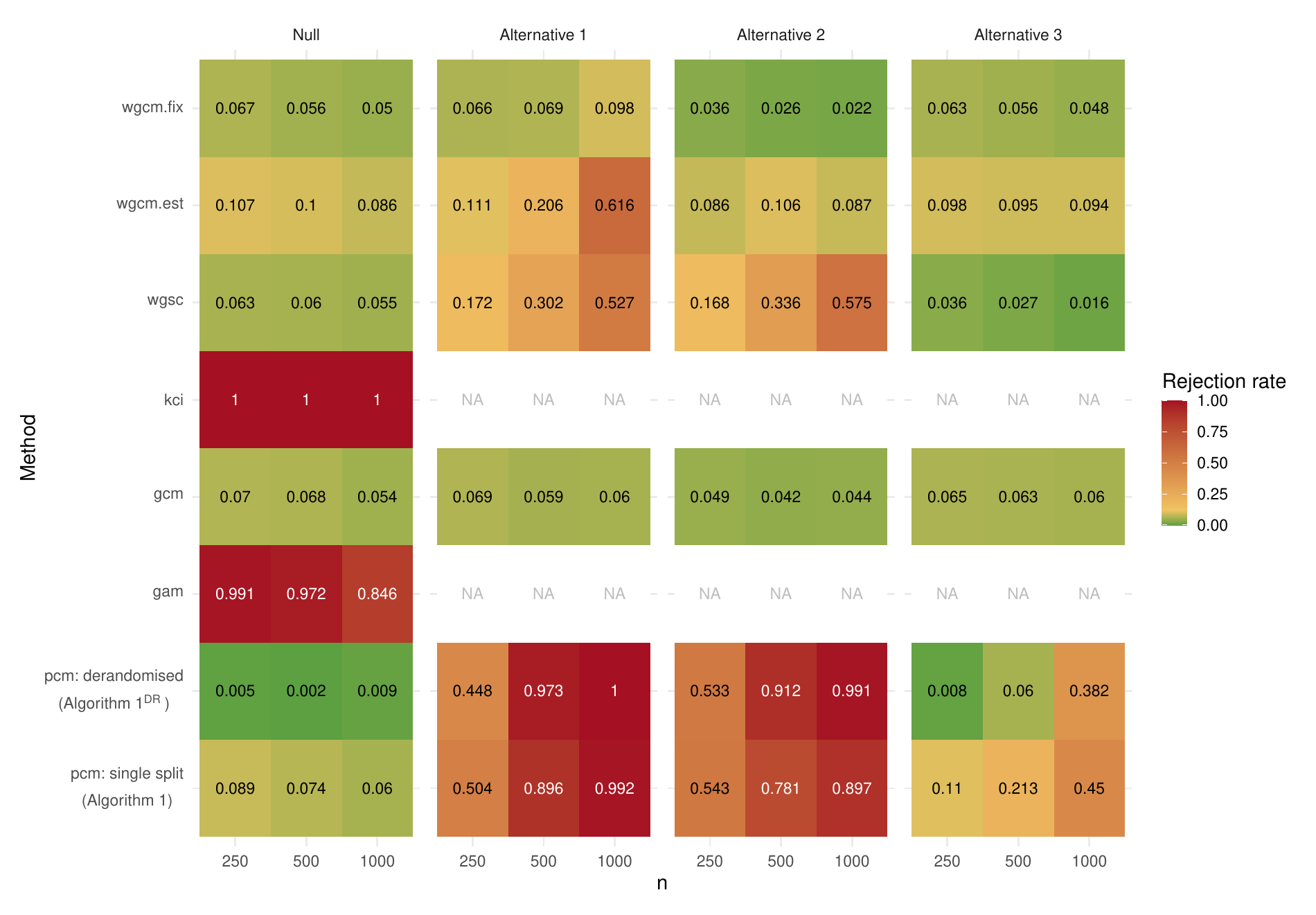}
    \caption{Rejection rates in the various settings considered in Section~\ref{Section: additive models} looking at additive models for nominal 5\%-level tests.  Note that Alternative~3 has only an interaction effect, so we cannot expect methods that fit additive models to have power.}
    \label{fig:gam comparison}
\end{figure}

\subsection{Non-additive models}
\label{Section: non-additive models}
In this section, we consider settings where the regression functions are non-additive and involve complex interactions.  We use random forests \citep{breiman2001random} implemented in the \texttt{ranger} R package \citep{wright2017ranger} as our regression procedure for the methods considered.  We use $500$ trees and set \texttt{mtry} equal to the number of predictors, with other tuning parameters set to be the defaults for the \texttt{ranger} function; the choice of \texttt{mtry} was made as this tended to give the smallest prediction errors in our preliminary experiments.

We consider null settings consisting of $n \in \{10^4, 2\cdot 10^4, 4 \cdot 10^4\}$ independent and identically distributed copies of $(X, Y, Z)$ where $Z \sim N_7(0, \mbb I)$ as before,
\[
X = \sin(\pi Z_1)(1+Z_3) + \xi \quad \text{and} \quad Y=\sin(\pi Z_1)(1+Z_3) + v(X)\varepsilon
\]
with $\varepsilon$ and $\xi$ independent $N(0, 1)$ random variables independent of $Z$, and  $v(X) := 0.5 + \ind_{\{X > 0\}}$ giving heteroscedastic errors for the $Y$ regression model.
The larger sample sizes considered here reflect the difficulty of estimating the more complicated regression functions in these examples.
Note that here we do not have $X \independent Y \given Z$, but the conditional mean independence $\E(Y \given X, Z) = \E(Y \given Z)$ does hold.  The results are presented in Figure~\ref{fig:ranger comparison}.  We see that the derandomised version of the PCM maintains Type I error control, and is in fact slightly conservative.  The \texttt{wgsc} test is highly conservative.  All other approaches considered appear to be anti-conservative to varying degrees, although this behaviour does appear to improve with increasing sample size.

Next we consider the following alternative settings, where as in Section~\ref{Section: additive models}, setting~1 has $\E\bigl(\cov(X, Y \given Z)\bigr)=0$, setting~2 has $\cov(X, Y \given Z) = 0$, and setting 3 has $\E\bigl(\cov(X, Y \given Z)\bigr)=0$ while also involving a pure interaction effect:
\begin{enumerate}
	\item $\xi \sim N(0, 1)$, $X = \sin(\pi Z_1)(1+Z_3) + \xi$ and $Y = \sin(\pi Z_1)(1+Z_3)  + 0.04X^2 + v(X)\varepsilon$;
	\item $\xi \sim \textrm{Exp}(1)$, $	X = \sin(\pi Z_1)(1+Z_3) - \sin(\pi Z_1) (\xi-1)$ and $Y = \sin(\pi Z_1)(1+Z_3)  + 0.04X^2 + v(X)\varepsilon$;
	\item $\xi \sim N(0, 1)$, $X = \sin(\pi Z_1)(1+Z_3) + \xi$ and $Y = \sin(\pi Z_1)(1+Z_3) + 0.04 X^2 Z_2 + v(X)\varepsilon$.
\end{enumerate}
Among the methods considered, here only the PCM appear to have good power across the settings considered.  Despite the fact that the derandomised PCM is conservative, it still obtains greater power in some settings than the slightly anti-conservative single-split version.  The \texttt{wgcm.est} has reasonable power in setting~1, though this should be interpreted with some care given that Type I error is not well controlled in the null settings. However in setting~2, \texttt{wgcm.est} is powerless as expected.

\begin{figure}
    \centering
    \includegraphics[scale=0.44]{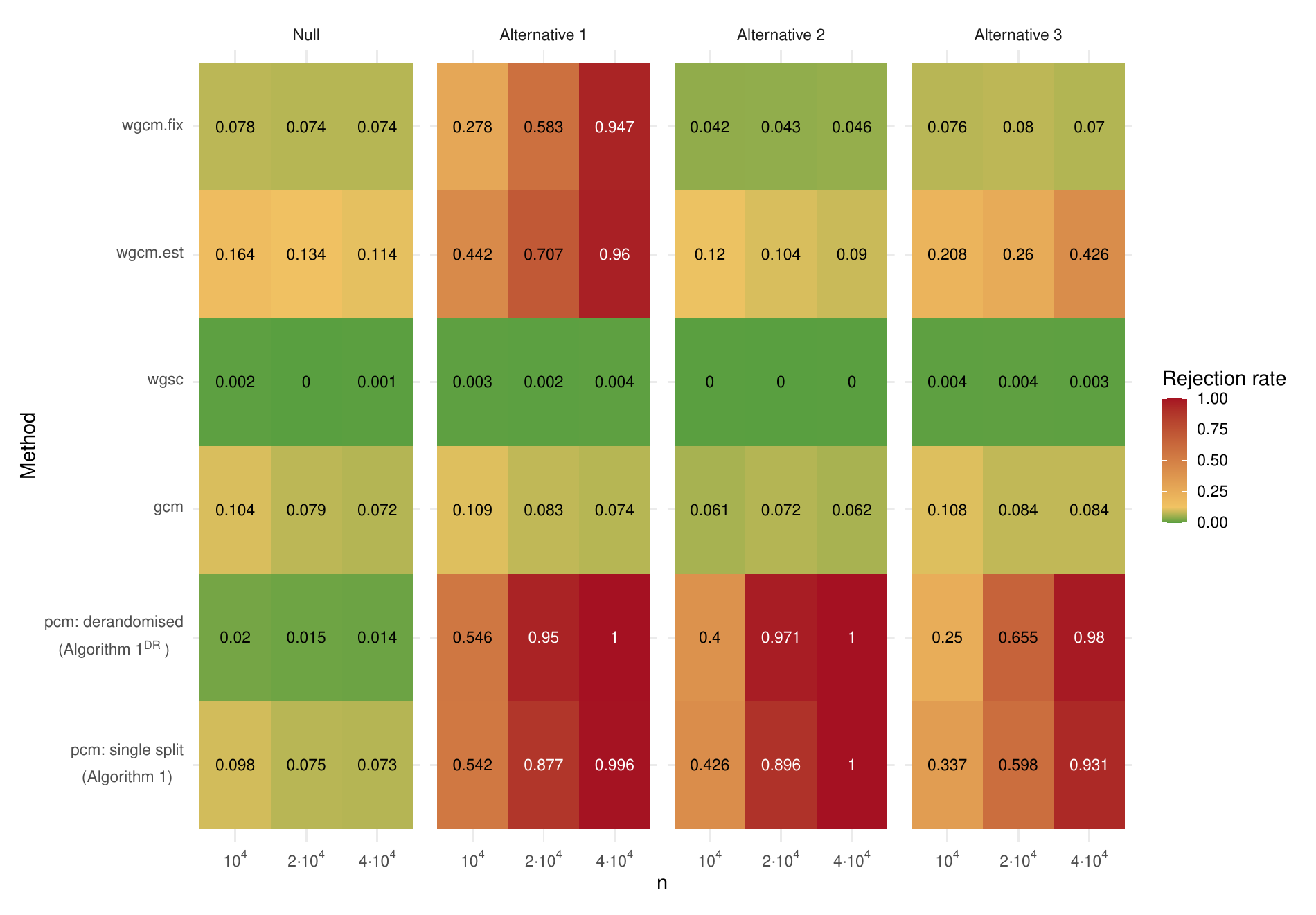}
    \caption{Rejection rates in the various settings considered in Section~\ref{Section: non-additive models} on non-additive models for nominal 5\%-level tests.}
    \label{fig:ranger comparison}
\end{figure}

\section{Conclusion} \label{sec:discuss}
In this work we have introduced a general test statistic called the PCM for testing conditional mean independence that: (a) can leverage machine learning methods to yield provable uniform Type I error control across a class of null distributions where these methods have sufficiently good predictive ability; and (b) when used in conjunction with appropriate regression methods attains rate-optimal power in both the parametric setting of the linear model and fully nonparametric settings. We believe the PCM fills an important gap in the data analyst's range of existing tools, which are unable simultaneously to achieve these desiderata. However, our work also offers several avenues for further work, some of which we mention below.

\paragraph*{Verifying the general assumptions for other regression methods} We have verified Assumption~\ref{Assumption: general procedure} for linear regression in linear model settings, and nonparametric series estimators in fully nonparametric settings.  Since we used the penalised regression splines of \texttt{mgcv} in several of our numerical experiments, it would be interesting to see for what classes $\mathcal{P}$ of distributions Assumption~\ref{Assumption: general procedure} is satisfied in that context. Similarly, it would be very interesting to ask the same question of random forests, which perform very well in our simulations; however this is likely to be challenging given the complex nature of the random forest procedure.

\paragraph*{Conditional independence testing}
Although the problem of testing conditional independence has been studied more intensively than that of testing conditional mean independence, there do not exist many practical conditional independence tests that achieve the two desiderata mentioned at the beginning of this section.  One starting point for constructing such a test may be the fact that the conditional independence null $Y \independent X \given Z$ may be viewed as the intersection of conditional mean independence nulls $\E\bigl(w(Y) \given X, Z\bigr) = \E\bigl(w(Y) \given Z\bigr)$ where function $w$ ranges over all monotone functions, for example. It might therefore be interesting to investigate procedures that seek two `projections': mappings $(X, Z) \to \fhat(X, Z)$ and also $Y \mapsto \widehat{w}(Y)$, after which one may apply the GCM. 

\paragraph*{Confidence intervals}
We have focused on the problem of testing conditional mean independence, but the problem of deriving confidence intervals for a parameter such as $\tau$ that is $0$ under our null is equally interesting. The pioneering work of \citet{williamson2019nonparametric} proposes an asymptotically optimal approach for this in the case where $\tau$ is bounded away from $0$. It would be interesting if the PCM could be used in conjunction with the proposal of \citet{williamson2019nonparametric} to extend the latter to yield confidence intervals with uniform coverage for all $\tau$.

\section*{Acknowledgements}
\sloppy ARL was supported by research grant 0069071 from the Novo Nordisk Fonden.  IK, RDS and RJS were supported by EPSRC programme grant EP/N031938/1; RJS was also supported by European Research Council Advanced grant 101019498. IK was partly supported by the National Research Foundation of Korea (2022R1A4A1033384), and the Korea government (MSIT) (RS-2023-00211073).  The authors are grateful to the anonymous reviewers for their constructive feedback, which helped to improve the paper.

\bibliographystyle{apalike}
\bibliography{reference}

\appendix
\renewcommand{\appendixpagename}{Supplementary material}

\setcounter{section}{0}
\setcounter{equation}{0}
\setcounter{theorem}{0}
\setcounter{assumption}{0}
\setcounter{figure}{0}
\def\theequation{S\arabic{equation}}
\def\thesection{S\arabic{section}}
\def\thetheorem{S\arabic{theorem}}
\def\thefigure{S\arabic{figure}}
\def\theassumption{S\arabic{assumption}}
\newpage 
\appendixpage

In Sections~\ref{Section: Proofs} and \ref{Section: Auxiliary lemmas} of the supplementary material, we include the proofs of all of our main results and related auxiliary lemmas. In Section~\ref{Section: a discussion of Williamson et al}, we present a detailed discussion of the test proposed by \cite{williamson2020unified}. In Section~\ref{Section: Splines}, we give a self-contained description of spline regression and related results that we use for our analysis in Section~\ref{Section: Series estimators}. Section~\ref{Section: full linear analysis} contains an analysis of the linear projections in Section~\ref{Section: Linear models} under more general assumptions. We also derive an exact asymptotic power function of our test in this setting.
Section~\ref{Section: additive models binary} contains the results from additional numerical experiments beyond those included in Section~\ref{sec:numerical}.

Throughout the supplementary material, for a positive semi-definite matrix $\mbb{A}$, we write $\mbb{A}^{-1}$ for its generalised inverse (i.e.~its Moore--Penrose pseudo-inverse).

\section{Proofs}
\label{Section: Proofs}
In our proofs we often suppress the dependence of quantities on $P$ for ease of notation.

\subsection{Proof of Proposition~\ref{Proposition: OLS power}}
There is no loss of generality in assuming that $\alpha \leq 1/2$, because for $\alpha > 1/2$, we have $\mathbb{P}(T > z_{1-\alpha}) \geq \mathbb{P}(T > z_{1/2}) = \mathbb{P}(T > 0)$.  We start by checking the assumptions of Lemma~\ref{Lemma: linear regression} for the regressions of $Y$ on $X$ and $Z$ (of which $\widehat{\beta}$ is one component), $Y$ on $Z$ (yielding $\mbb{\widehat{\theta}}$) and $X$ on $Z$ (yielding $\mbb{\widehat{\eta}}$). Let $\delta' := \delta/2$. 

Recalling that $W = (X, Z)$, we see that condition~(i) of Lemma~\ref{Lemma: linear regression} is satisfied for the $Y$ on $X$ and $Z$ regression by our assumption on $\mbb{\Sigma}^{XZ}$ and the fact that 
\[
	\lambda_{\min}\bigl(\E(WW^\top \zeta^2)\bigr) \geq c 	\lambda_{\min}(\mbb{\Sigma}^{XZ})
\]
by our assumption that $\Var(Y \given X, Z) \geq c$.  Condition~(ii) is satisfied with $\delta = \delta'$ by the Cauchy--Schwarz inequality and Jensen's inequality.

By Assumption~\ref{Assumption: OLS}, condition~(i) of Lemma~\ref{Lemma: linear regression} is satisfied for the $X$ on $Z$ regression. To see that condition~(ii) is satisfied with $\delta = \delta'$, we note that by the Cauchy--Schwarz inequality it suffices to check that $\E(|\xi |^{4+\delta})$ is bounded over $\mathcal{P}$. Letting $\mbb{\Sigma}:= \E(ZZ^\top)$, we have
\[
	\E(|\xi |^{4+\delta}) \leq 2^{3+\delta}\Bigl( \E(|X|^{4+\delta}) + \lambda_{\min}(\mbb{\Sigma})^{-(4+\delta)} \| \E(XZ) \|_2^{4+\delta} \E(\|Z\|_2^{4+\delta}) \Bigr)
\]
which is bounded under Assumption~\ref{Assumption: OLS}.

To see that condition~(i) of Lemma~\ref{Lemma: linear regression} is satisfied for the $Y$ on $Z$ regression, define $\mbb{\theta} := \E(ZZ^\top)^{-1}\E(YZ) \in \R^d$ and note that 
\begin{align*}
	\E\bigl(Z Z^\top (Y-\mbb{\theta}^\top Z)^2\bigr) &= \E\bigl(Z Z^\top (\zeta + \beta X- \beta \mbb{\eta}^\top Z)^2\bigr) \\
	&= \E(Z Z^\top \zeta^2) + \beta^2\E\bigl(Z Z^\top (X- \mbb{\eta}^\top Z)^2\bigr),
\end{align*}
so the minimum eigenvalue of $\E\bigl(Z Z^\top (Y-\mbb{\theta}^\top Z)^2\bigr)$ is bounded below by $c\lambda_{\min}(\mbb{\Sigma})$.  Condition~(ii) follows by similar arguments as those for the $X$ on $Z$ regression.  We therefore deduce from Lemma~\ref{Lemma: linear regression} that~\eqref{Eq: assumption 1 univariate-X linear model} holds with $\sigma_{\beta}^2$ given by the $(1,1)$th entry of the matrix $(\mbb{\Sigma}^{XZ})^{-1}\mathbb{E}(WW^\top \zeta^2)(\mbb{\Sigma}^{XZ})^{-1}$, and that
\begin{equation}
	\label{Eq: OLS error bounds}
	\sqrt{n} \| \mbb{\widehat{\eta}} - \mbb{\eta} \|_2 = O_{\mathcal{P}}(1) \quad \text{and} \quad \sqrt{n} \| \mbb{\widehat{\theta}} - \mbb{\theta} \|_2 = O_{\mathcal{P}}(1).
\end{equation}

We now verify that the remaining parts of Assumption~\ref{Assumption: univariate X} are satisfied.  First, 
\[
	\sup_{P \in \mathcal{P}} \E\biggl( \Bigl\| \frac{1}{n} \sum_{i=1}^n Z_i \xi_i \Bigr\|_2^2 \biggr) = \frac{1}{n} \sup_{P \in \mathcal{P}} \E\bigl( \|Z\xi\|_2^2 \bigr) \leq \frac{1}{n}\sup_{P \in \mathcal{P}} \E(\|Z\|_2^2) \cdot \sup_{P \in \mathcal{P}} \E(\xi^2) \to 0,
\]
so by Lemma~\ref{Lemma: convergence in probability from convergence of conditional expectations} and \eqref{Eq: OLS error bounds} we have that \eqref{Eq: assumption 2 univariate-X linear model} holds.  Similar arguments show that \eqref{Eq: assumption 3 univariate-X linear model} is satisfied.

Next,~\eqref{Eq: OLS error bounds} shows that 
\[
	\sqrt{n} \| \mbb{\widehat{\eta}} - \mbb{\eta} \|_2 \| \mbb{\widehat{\theta}} - \mbb{\theta} \|_2 = O_\mathcal{P}(n^{-1/2}) = o_{\mathcal{P}}(1).
\]
Moreover, by~\eqref{Eq: linear regression sigma convergence} in the proof of Lemma~\ref{Lemma: linear regression} and Assumption~\ref{Assumption: OLS}, we have 
\[
	\Bigl\| \frac{1}{n} \sum_{i=1}^n Z_i Z_i^\top \Bigr\|_{\mathrm{op}} \leq \Bigl\| \frac{1}{n} \sum_{i=1}^n Z_i Z_i^\top -  \mbb{\Sigma} \Bigr\|_{\mathrm{op}} + \| \mbb{\Sigma}\|_{\mathrm{op}} = O_{\mathcal{P}}(1),
\]
so~\eqref{Eq: assumption 4 univariate-X linear model} is satisfied. The remaining conditions hold by similar arguments using the moment bounds established earlier, \eqref{Eq: OLS error bounds} and Lemma~\ref{Lemma: conditional lln}.

To verify the remaining conditions of Proposition~\ref{Proposition: asymptotic power expression}, we note that
\[
\var(\varepsilon \xi) \geq \E\bigl\{\var(\varepsilon \xi \given X,Z)\bigr\} = \E\bigl\{\xi^2 \var(Y \given X,Z)\bigr\} \geq c^2.
\]
Finally, the moment bound condition in Proposition~\ref{Proposition: asymptotic power expression} follows by Cauchy--Schwarz and the arguments above.  The conclusion now follows from Proposition~\ref{Proposition: asymptotic power expression} together with the fact that $\psi_{\alpha,n}$ is an increasing function of $|\beta|$, so that 
\[
	\psi_{\alpha,n} \geq \Phi\biggl( \frac{\kappa}{\sigma_{\beta}} \biggr) \cdot \Phi\biggl( z_\alpha + \frac{\kappa \sigma_{\xi}^2}{\sigma_{\varepsilon\xi}} \biggr) + \Phi\biggl( - \frac{\kappa}{\sigma_{\beta}} \biggr) \cdot \Phi\biggl( z_\alpha - \frac{\kappa \sigma_{\xi}^2}{\sigma_{\varepsilon\xi}} \biggr) \rightarrow 1
\]
as $\kappa \rightarrow \infty$, as required.

\subsection{Proof of Proposition~\ref{Proposition: Low-dimensional Z}}
Throughout this proof we work on the event that at least one $u_{n,1},\ldots,u_{n,n}$ is non-zero which is a set of uniform asymptotic probability $1$ by Assumption~\ref{Assumption: a general projection}(b).  Let $\mbb{Z}:=(Z_1^\top,\ldots,Z_{n}^\top)^\top$, $\mbb{P}:= \mbb{Z} (\mbb{Z}^\top \mbb{Z})^{-1} \mbb{Z}^\top$, $\mbb{Y}:=(Y_1,\ldots,Y_{n})^\top$,  $\mbb{\varepsilon} := (\varepsilon_1,\ldots,\varepsilon_{n})^\top$, $\mbb{\fhat}:=\bigl(\fhat(X_1, Z_1), \dots, \fhat(X_n, Z_n) \bigr)^\top$ and $\mbb{I}$ denote the $d \times d$ identity matrix.  Since $\mbb{P}$ is a matrix representing an orthogonal projection such that $\mbb{Z}^{\top} (\mbb{I} - \mbb{P})$ is a zero vector, we have 
\begin{align*}
	\sum_{i=1}^{n} \{Y_i - \mbb{\widehat{\gamma}}^\top Z_i\}  \{\fhat(X_i,Z_i) - \mhat_{\fhat}(Z_i) \}&= \mbb{\fhat}^\top (\mbb{I}-\mbb{P}) \mbb{Y}  = \mbb{\fhat}^\top (\mbb{I}-\mbb{P}) \mbb{\varepsilon}\\
	&=  \sum_{i=1}^{n} \varepsilon_i \{ \fhat(X_i,Z_i) - \mhat_{\fhat}(Z_i) \}.
\end{align*}
Based on the above identity, we have that
\begin{align*}
	T = \frac{\frac{1}{\sqrt{n}\nu}\sum_{i=1}^{n} \varepsilon_i u_{n,i}}{\sqrt{\frac{1}{n \nu^2}\sum_{i=1}^{n} (Y_i - \mbb{\widehat{\gamma}}^\top Z_i)^2u_{n,i}^2  - \bigl( \frac{1}{n \nu} \sum_{i=1}^{n} \varepsilon_i u_{n,i} \bigr)^2}},
\end{align*}
where 
\[
\nu := \sqrt{\frac{1}{n} \sum_{i=1}^{n} \E(\varepsilon_i^2 \given X_i, Z_i) u_{n,i}^2} > 0.
\]
Let $\mathcal{F}_{n}$ denote the $\sigma$-algebra generated by $\fhat$ and $\bigl((X_i, Z_i)\bigr)_{i=1}^n$. Form the triangular array 
\[
W_{n, i} := \frac{\varepsilon_i u_{n,i}}{\nu}
\]
for $n \in \mathbb{N}$ and $i \in [n]$, and note that this  satisfies the first three assumptions of Lemma~\ref{Lemma: conditional clt}, since $u_{n,i}$ is measurable with respect to $\mathcal{F}_n$.  Finally, the fourth assumption of this lemma is also satisfied, because 
\begin{align*}
\frac{1}{n^{1+\delta/2}} &\sum_{i=1}^n \E(|W_{n, i}|^{2+\delta} \given \mathcal{F}_n) = \frac{1}{n^{1+\delta/2} \nu^{2+\delta}} \sum_{i=1}^n \E(|\varepsilon_i|^{2+\delta} \given X_i, Z_i) |u_{n,i}|^{2+\delta}\\
 &\leq \frac{C}{c^{1+\delta/2}} \sum_{i=1}^n  |\upsilon_{n,i}|^{2+\delta} \leq \frac{C}{c^{1+\delta/2}} \biggl( \sum_{i=1}^n  |\upsilon_{n,i}|^2 \biggr) \max_{i \in [n]} |\upsilon_{n,i}|^\delta = o_{\mathcal{P}_0}(1)
\end{align*}
by Assumptions~\ref{Assumption: a general projection}(a) and~(b) and Lemmas~\ref{Lemma: product of o and O} and~\ref{Lemma: uniform convergence in probability under continuous transformation}. Lemma~\ref{Lemma: conditional clt} thus yields that the numerator of $T$ is uniformly asymptotically standard Gaussian.

For the denominator of $T$, the uniform version of Slutsky's theorem \citep[][Theorem 6.3]{bengs2019uniform} yields that $\frac{1}{n \nu} \sum_{i=1}^{n} \varepsilon_i u_{n,i} = o_{\mathcal{P}_0}(1)$. 
For the first term in the denominator of $T$, we consider the decomposition
\begin{align*}
	\frac{1}{n \nu^2} \sum_{i=1}^{n} (Y_i - \mbb{\widehat{\gamma}}^\top Z_i)^2u_{n,i}^2  = \underbrace{\frac{1}{n \nu^2}\sum_{i=1}^{n} \varepsilon_i^2u_{n,i}^2}_{\RN{1}_n} &+ \underbrace{\frac{1}{n \nu^2}\sum_{i=1}^{n} \{(\mbb{\widehat{\gamma}}  - \mbb{\gamma})^\top Z_i\}^2u_{n,i}^2}_{\RN{2}_n}\\
	&- \underbrace{ \frac{2}{n \nu^2}\sum_{i=1}^{n} (\mbb{\widehat{\gamma}}  - \mbb{\gamma})^\top Z_i \varepsilon_i u_{n,i}^2}_{\RN{3}_n}.
\end{align*}
Similarly to our previous argument, define the triangular array $\tilde{W}_{n, i} := W_{n,i}^2$ for $n \in \mathbb{N}$ and $i \in [n]$, and note that  $(\tilde{W}_{n, i})_{n \in \mathbb{N},i \in [n]}$ satisfies the conditions of Lemma~\ref{Lemma: conditional lln} with $\mu_n=1$ in that result, so $\RN{1}_n = 1 + o_{\mathcal{P}_0}(1)$.  Now, by H\"{o}lder's inequality,
\begin{align*}
	|\RN{2}_n| \leq \frac{1}{n \nu^2} \|\mbb{\widehat{\gamma}} -  \mbb{\gamma} \|_1^2 \sum_{i=1}^{n} \|Z_i\|_\infty^2 u_{n,i}^2 \leq \frac{1}{c}  \max_{i \in [n]} \|Z_i\|_\infty^2  \|\mbb{\widehat{\gamma}} -  \mbb{\gamma} \|_1^2 \sum_{i=1}^n \upsilon_{n,i}^2 = o_\mathcal{P}(1),
\end{align*}
by Assumption~\ref{Assumption: a general projection} and Lemma~\ref{Lemma: product of o and O}. Finally, the Cauchy--Schwarz inequality yields that 
\begin{align*}
	|\RN{3}_n| &\leq 2 \sqrt{\frac{1}{n\nu^2}\sum_{i=1}^{n} \varepsilon_i^2 u_{n,i}^2} \cdot \sqrt{\frac{1}{n \nu^2}\sum_{i=1}^{n} \{(\mbb{\widehat{\gamma}}  - \mbb{\gamma})^\top Z_i\}^2u_{n,i}^2} = 2 \sqrt{\RN{1}_n} \cdot \sqrt{\RN{2}_n} = o_\mathcal{P}(1)
\end{align*}
by Lemmas~\ref{Lemma: product of o and O} and \ref{Lemma: uniform convergence in probability under continuous transformation}.  The result follows by the uniform version of Slutsky's theorem.

\subsection{Proof of Proposition~\ref{Proposition: High-dimensional Z}}
As in the proof of Proposition~\ref{Proposition: Low-dimensional Z}, we work on the event that at least one $u_{n,1},\ldots,u_{n,n}$ is non-zero, which is a set of uniform asymptotic probability $1$ by Assumption~\ref{Assumption: a general projection}(b).  Recall the definitions of $\nu$ from the proof of Proposition~\ref{Proposition: Low-dimensional Z}, and  $\delta_{\mathrm{bias}}$ from just after~\eqref{Eq:deltabias}.  Our test statistic can be written as
\begin{align*}
	T = \frac{\frac{1}{\sqrt{n}\nu}\sum_{i=1}^{n} \varepsilon_i u_{n,i} - \frac{1}{\sqrt{n}\nu}\delta_\mathrm{bias}}{\sqrt{ \frac{1}{n s_{n}^2}\sum_{i=1}^{n} (Y_i - \mbb{\widehat{\gamma}}^\top Z_i)^2u_{n,i}^2  - \bigl(\frac{1}{n \nu}\sum_{i=1}^{n} \varepsilon_i u_{n,i} + \frac{1}{n \nu}\delta_\mathrm{bias} \bigr)^2}}.
\end{align*}
Following the proof of Proposition~\ref{Proposition: Low-dimensional Z}, we know that $\frac{1}{\sqrt{n}\nu}\sum_{i=1}^{n} \varepsilon_i u_{n,i}$ converges uniformly to $N(0,1)$. Further, by Assumption~\ref{Assumption: a general projection}(a) and Hölder's inequality, we have
\[
\biggl| \frac{1}{\sqrt{n}\nu}\delta_\mathrm{bias} \biggr| \leq \frac{1}{c^{1/2}} \biggl| \sum_{i=1}^{n} (\mbb{\widehat{\gamma}} - \mbb{\gamma})^\top Z_i \upsilon_{n,i} \biggr| \leq \frac{1}{c^{1/2}} \| \mbb{\widehat{\gamma}} - \mbb{\gamma} \|_1 \biggl\| \sum_{i=1}^{n} Z_i \upsilon_{n,i} \biggr\|_\infty = o_{\mathcal{P}_0}(1)
\]
under condition~(\ref{Eq: condition on bias}). A uniform version of Slutsky's theorem \citep[][Theorem 6.3]{bengs2019uniform} now yields that the numerator of $T$ is uniformly asymptotically standard Gaussian. We can repeat the arguments of Proposition~\ref{Proposition: Low-dimensional Z} to show that the denominator is $1+o_{\mathcal{P}_0}(1)$ under Assumption~\ref{Assumption: a general projection}, so the uniform version of Slutsky's theorem yields the desired result.

\subsection{Proof of Theorem~\ref{Theorem: General Type I error control}} \label{Section: Proof of General Type I error control}
We first prove the result under (i), (ii) or (iii).  For distinct $i,j \in [n]$, let 
$R_{ij} := \E\bigl(M_{i}M_{j} \given (X_{i'}, Z_{i'})_{i'=1}^n\bigr) - \E\bigl(M_{i}M_{j} \given (Z_{i'})_{i'=1}^n \bigr)$, where $M_i := m(Z_i)-\mhat(Z_i)$, and assume that
\begin{equation}
	\label{Eq: type I error abstract condition}
\frac{1}{n \sigma_n^2} \sum_{i \neq j} \bigl| \E\bigl( R_{ij} \xi_{i} \xi_{j} \given (Z_{i'})_{i'=1}^n, \fhat \bigr) \bigr| = o_{\mathcal{P}_0}(1).
\end{equation}
In Proposition~\ref{Proposition: general type 1 error d sufficient}, we show that this condition is satisfied if either (i), (ii) or (iii) hold.  Define $\nu^2 := \var(\varepsilon \xi \given \widehat{f})$ and note that $\nu^2 \geq c \sigma^2$ by assumption~(c). Throughout this proof we work on the event $\Omega_0 := \{\sigma > 0\}$, which satisfies $\mathbb{P}(\Omega_0^c) = o_{\mathcal{P}_0}(1)$ by assumption~(a).
Define
\[
T^{(\mathrm{N})} :=  \frac{\frac{1}{\sqrt{n}} \sum_{i=1}^{n} L_{i}}{\nu} \quad \text{and} \quad T^{(\mathrm{D})} :=   \frac{\sqrt{\frac{1}{n} \sum_{i=1}^{n} L_{i}^2 - \bigl(\frac{1}{n} \sum_{i=1}^{n} L_{i} \bigr)^2 }}{\nu},
\]
so that 
$T = T^{(\mathrm{N})} / T^{(\mathrm{D})}$. We will show that $T^{(\mathrm{N})}$ converges uniformly in distribution to $N(0, 1)$ and $|(T^{(\mathrm{D})})^2 - 1| = o_{\mathcal{P}_0}(1)$, which yields the desired result by combining Lemma~\ref{Lemma: uniform convergence in probability under continuous transformation} and the uniform version of Slutsky's lemma \citep[][Theorem 6.3]{bengs2019uniform}. 

Define $\widetilde{M}_i := m_{\fhat}(Z_i)-\mhat_{\fhat}(Z_i)$ for $i \in [n]$ and note that
\begin{equation}
	\label{Eq: type 1 numerator decomposition}
T^{(\mathrm{N})} = \underbrace{\frac{1}{\sqrt{n} \nu} \sum_{i=1}^{n}  M_i \widetilde{M}_{i}}_{a_n} + \underbrace{\frac{1}{\sqrt{n}\nu} \sum_{i=1}^{n} \widetilde{M}_{i} \varepsilon_i }_{b_n} + \underbrace{\frac{1}{\sqrt{n}\nu} \sum_{i=1}^{n} M_i \xi_{i}}_{c_n} + \underbrace{\frac{1}{\sqrt{n}\nu} \sum_{i=1}^n \varepsilon_i \xi_{i} }_{U_n}.
\end{equation}
By the Cauchy--Schwarz inequality,
\[
a_n \leq \sqrt{\frac{1}{c n} \biggl( \sum_{i=1}^{n} M_i^2 \biggr) \biggl( \frac{1}{\sigma^2} \sum_{i=1}^{n} \widetilde{M}_{i}^2 \biggr) } = o_{\mathcal{P}_0}(1),
\]
by Assumption~\ref{Assumption: general procedure}(b).

To see that $b_n = o_{\mathcal{P}_0}(1)$, we note that 
\begin{equation}
	\label{eq: b_n^2}
	b_n^2 = \frac{1}{n \nu^2} \sum_{i=1}^{n}  \widetilde{M}_{i}^2 \varepsilon_i^2  + \frac{1}{n \nu^2} \sum_{i \neq j} \widetilde{M}_{i} \widetilde{M}_{j} \varepsilon_i \varepsilon_j. 
\end{equation}
By Assumption~\ref{Assumption: general procedure}(a) and (c) and assumption (c) of the theorem, the first term in \eqref{eq: b_n^2} satisfies
\[
  \E\biggl( \frac{1}{n \nu^2} \sum_{i=1}^{n}  \widetilde{M}_{i}^2 \varepsilon_i^2 \biggm| (X_{i'},Z_{i'})_{i'=1}^n, \fhat , \mhat_{\fhat} \biggr) \leq \frac{C}{n c \sigma^2} \sum_{i=1}^{n}  \widetilde{M}_{i}^2 = o_{\mathcal{P}_0}(1)
\]
so the same is true unconditionally by Lemma~\ref{Lemma: convergence in probability from convergence of conditional expectations}.  Now, for $i\neq j$,
\begin{align*}
  \E(Y_i Y_j \given X_i, X_j, Z_i, Z_j) = \E(Y_i \given X_i,Z_i)\E(Y_j \given X_j, Z_j) = m(Z_i)m(Z_j),
\end{align*}
using the fact that $m(Z) = \E(Y \given Z) = \E(Y \given X, Z)$ under $\mathcal{P}_0$.  Hence,
\begin{align*}
\E(\varepsilon_i \varepsilon_j \given X_i, X_j, Z_i, Z_j) &= \E\bigl\{\bigl(Y_i - m(Z_i)\bigr)\bigl(Y_j-m(Z_j)\bigr) \given X_i, X_j, Z_i, Z_j\bigr\}\\
 &= \E(Y_i Y_j \given X_i, X_j, Z_i, Z_j) - m(Z_i) m(Z_j) = 0.
\end{align*}
It follows that the second term in \eqref{eq: b_n^2} satisfies
\begin{align*}
\E\biggl( \frac{1}{n \nu^2} \sum_{i \neq j} \widetilde{M}_{i} \widetilde{M}_{j} \varepsilon_i \varepsilon_j &\biggm| (X_{i'},Z_{i'})_{i'=1}^n, \fhat , \mhat_{\fhat} \biggr) \\
&= \frac{1}{n \nu^2} \sum_{i\neq j} \widetilde{M}_{i} \widetilde{M}_{j} \E(\varepsilon_i \varepsilon_j \given X_i, X_j, Z_i, Z_j) = 0,
\end{align*}
and we deduce by Lemmas~\ref{Lemma: convergence in probability from convergence of conditional expectations} and~\ref{Lemma: uniform convergence in probability under continuous transformation} that $b_n = o_{\mathcal{P}_0}(1)$.

To see that $c_n = o_{\mathcal{P}_0}(1)$, we proceed as above and write
\begin{align*}
	c_n^2  =  \frac{1}{n \nu^2} \sum_{i=1}^{n}  M_i^2 \xi_{i}^2 +  \frac{1}{n \nu^2} \sum_{i \neq j} M_iM_j \xi_{i} \xi_{j},
\end{align*}
where we note that the first term is $o_{\mathcal{P}_0}(1)$ by Assumption~\ref{Assumption: general procedure}(b).  Moreover,  
\begin{align*}
	\E\Bigl( \frac{1}{n \nu^2} \sum_{i \neq j} M_iM_j \xi_{i} \xi_{j} &\biggm| (Z_{i'})_{i'=1}^n, \fhat \Bigr) \\
	& =\frac{1}{n \nu^2} \sum_{i \neq j} \E\bigl\{\E \bigl(M_iM_j \given (X_{i'
	}, Z_{i'})_{i'=1}^n\bigr) \xi_{i} \xi_{j} \given (Z_{i'})_{i'=1}^n, \fhat \bigr\}  \\
	&=  \frac{1}{n \nu^2} \sum_{i \neq j}\E(R_{ij} \xi_{i} \xi_{j} \given (Z_{i'})_{i=1}^n, \fhat),
\end{align*}
where the last equality holds since 
\begin{align*}
\E\bigl\{ \E\bigl(M_iM_j \given (Z_{i'})_{i'=1}^n\bigr)   \xi_{i} \xi_{j} \given (Z_{i'})_{i'=1}^n, \fhat \bigr\}  &=  \E\bigl(M_iM_j \given (Z_{i'})_{i'=1}^n\bigr) \E( \xi_{i} \given Z_i, \fhat) \E( \xi_{j} \given Z_j, \fhat) \\
&= 0.
\end{align*}
Continuing, we have by \eqref{Eq: type I error abstract condition} that
\begin{align*}
	\frac{1}{n \nu^2} \sum_{i \neq j} \E\bigl( R_{ij} \xi_{i} \xi_{j} \given (Z_{i'})_{i'=1}^n, \fhat\bigr) \leq &   \frac{1}{c n \sigma^2} \sum_{i \neq j}  \big|\E( R_{ij} \xi_{n,i} \xi_{j} \given (Z_{i'})_{i'=1}^n, \fhat)\big| = o_{\mathcal{P}_0}(1).
\end{align*}
Therefore, by Lemmas~\ref{Lemma: convergence in probability from convergence of conditional expectations} and~\ref{Lemma: uniform convergence in probability under continuous transformation} we conclude that $c_n = o_{\mathcal{P}_0}(1)$ as desired.

To deal with the final term, we define the triangular array
$W_{n, i} := \nu^{-1} \varepsilon_i \xi_{i} $ for $n \in \mathbb{N}$ and $i \in [n]$,  and note that $W_{n, i}$ satisfies all the conditions of Lemma~\ref{Lemma: conditional clt} by assumptions~(b) and~(c) (here we condition on $\fhat$ in applying this result). Hence, $U_n = n^{-1/2}\sum_{i=1} W_{n,i}$, and therefore $T^{(\mathrm{N})}$, converges uniformly in distribution to $N(0,1)$.

We now show that $|(T^{(\mathrm{D})})^2-1| = o_{\mathcal{P}_0}(1)$, from which the desired result follows from Lemma~\ref{Lemma: uniform convergence in probability under continuous transformation}. Note that 
\begin{equation}
	\label{Eq: type 1 denominator decomposition 1}
(T^{(\mathrm{D})})^2 = \underbrace{\frac{1}{n\nu^2} \sum_{i=1}^{n} L_{i}^2}_{p_n} - \biggl(\underbrace{ \frac{1}{\sqrt{n} \nu} \sum_{i=1}^{n} L_{i}}_{q_n} \biggr)^2
\end{equation}
and that $q_n = \frac{1}{\sqrt{n}} T^{(\mathrm{N})}$. We have just shown that $T^{(\mathrm{N})} = O_{\mathcal{P}_0}(1)$, so $q_n = o_{\mathcal{P}_0}(1)$ and we are therefore done if we can show that $|p_n - 1| = o_{\mathcal{P}_0}(1)$. Now
\begin{equation}
	\label{Eq: type 1 denominator decomposition 2}
\begin{aligned}
p_n &= \underbrace{\frac{1}{n \nu^2} \sum_{i=1}^{n} \varepsilon_i^2\xi_{i}^2}_{\RN{1}_n}  + \underbrace{\frac{1}{n \nu^2} \sum_{i=1}^{n}  M_i^2 \widetilde{M}_{i}^2 }_{\RN{2}_n} + \underbrace{\frac{4}{n \nu^2} \sum_{i=1}^{n}   M_i \widetilde{M}_{i} \varepsilon_i \xi_{i}}_{\RN{3}_n} + \underbrace{\frac{1}{n \nu^2} \sum_{i=1}^{n}  \widetilde{M}_{i}^2 \varepsilon_i^2}_{\RN{4}_n^\varepsilon}\\ &\hspace{2cm}+ \underbrace{\frac{1}{n \nu^2} \sum_{i=1}^{n}  M_i^2 \xi_{i}^2}_{\RN{4}_n^\xi} 
+ \underbrace{\frac{2}{n \nu^2} \sum_{i=1}^{n} \widetilde{M}_{i}^2 M_i \varepsilon_i}_{\RN{5}_n^\varepsilon} +  \underbrace{\frac{2}{n \nu^2} \sum_{i=1}^{n}  M_i^2 \widetilde{M}_{ i}\xi_{i}}_{\RN{5}_n^\xi}\\
&\hspace{2cm}+ \underbrace{\frac{2}{n \nu^2} \sum_{i=1}^{n}  \widetilde{M}_{i}\xi_{i}\varepsilon_i^2}_{\RN{6}_n^\varepsilon} + \underbrace{\frac{2}{n \nu^2} \sum_{i=1}^{n}  M_i \varepsilon_i\xi_{i}^2}_{\RN{6}_n^\xi}.
\end{aligned}
\end{equation}
Consider the triangular array
$\tilde{W}_{n, i}:= W_{n, i}^2$ for $n \in \mathbb{N}$ and $i \in [n]$, and note that it satisfies all the conditions of Lemma~\ref{Lemma: conditional lln} by assumptions~(b) and~(c) with $\mu_n = 1$ (again conditioning on $\fhat$ in that result), so $|\RN{1}_n-1| = o_{\mathcal{P}_0}(1)$. It remains to show that the remaining terms are $o_{\mathcal{P}_0}(1)$. Now
\[
0 \leq \RN{2}_n \leq \frac{1}{c n} \biggl( \sum_{i=1}^{n} M_{i}^2 \biggr) \biggl( \frac{1}{\sigma^2} \sum_{i=1}^{n} \widetilde{M}_i^2 \biggr) = o_{\mathcal{P}_0}(1)
\]
by Assumption~\ref{Assumption: general procedure}(a). By the Cauchy--Schwarz inequality,
\[
|\RN{3}_n| \leq 4  \biggl( \frac{1}{n \nu^2} \sum_{i=1}^{n} \varepsilon_i^2\xi_i^2  \biggr)^{1/2}  \biggl( \frac{1}{n \nu^2} \sum_{i=1}^{n} M_i^2 \widetilde{M}_i^2 \biggr)^{1/2} = 4 \RN{1}_n^{1/2} \RN{2}_n^{1/2} = o_{\mathcal{P}_0}(1)
\]
by the above and Lemma~\ref{Lemma: uniform convergence in probability under continuous transformation}. Now  
\[
|\RN{4}_n^\varepsilon| \leq   \frac{1}{n \nu^2} \sum_{i=1}^{n} \widetilde{M}_{i}^2 \varepsilon_i^2 = o_{\mathcal{P}_0}(1)
\]
by our argument for the first term in \eqref{eq: b_n^2}.  A similar argument shows that $\RN{4}_n^\xi = o_{\mathcal{P}_0}(1)$ by Assumption~\ref{Assumption: general procedure}(b).  By the triangle inequality and the Cauchy--Schwarz inequality, 
\[
|\RN{5}_n^\varepsilon| \leq 2  \biggl(\frac{1}{n \nu^2} \sum_{i=1}^{n}M_i^2  \widetilde{M}_{i}^2  \biggr)^{1/2} \biggl(\frac{1}{n \nu^2} \sum_{i=1}^{n} \widetilde{M}_{i}^2 \varepsilon_i^2\biggr)^{1/2} = 2 \RN{2}_n^{1/2} (\RN{4}_n^\varepsilon)^{1/2} = o_{\mathcal{P}_0}(1)
\]
by the above and Lemma~\ref{Lemma: uniform convergence in probability under continuous transformation}. A similar argument can be used for $\RN{5}_n^\xi$. Finally, again by the triangle inequality and the Cauchy--Schwarz inequality,
\[
|\RN{6}_n^\varepsilon| \leq 2 \biggl(\frac{1}{n \nu^2} \sum_{i=1}^{n} \varepsilon_i^2\xi_{i}^2 \biggr)^{1/2} \biggl(\frac{1}{n \nu^2} \sum_{i=1}^{n} \widetilde{M}_{ i}^2 \varepsilon_i^2\biggr)^{1/2} = 2 \RN{1}_n^{1/2} (\RN{4}_n^\varepsilon)^{1/2} = o_{\mathcal{P}_0}(1)
\]
by the above and Lemma~\ref{Lemma: uniform convergence in probability under continuous transformation}; $\RN{6}_n^\xi$ is handled similarly. This proves the desired result under (i), (ii) or (iii).

To prove the result under (iv), we note that only the argument for the $c_n$ term needs modification. That $c_n = o_{\mathcal{P}_0}(1)$ under the assumption that $\mhat$ is sufficiently stable follows immediately from Proposition~\ref{Prop: stability} and Assumption~\ref{Assumption: general procedure}(b), with $\xi_i$ in that result taken to be $\xi_i /\sigma$ here.

\subsection{Proof of Theorem~\ref{Theorem: general power result}}
\label{Section: Proof of power result}

We first prove the result under assumption~(i).  Without loss of generality, we may assume that $\alpha \in (0,1/2)$, so that $z_{1-\alpha} > 0$.  Let $s$ denote the denominator in the definition of $T$. Suppose there exists $c > 0$ such that
\begin{align}
	\label{Eq: power L claim}
	\sup_{P \in \mathcal{P}_1(\epsilon_n)} \pr \Bigl( \frac{1}{n} \sum_{i=1}^n L_{ i} \leq c \tau \Bigr) \to 0,\\
	\label{Eq: power s claim}
	\frac{s}{\sqrt{n}} = o_{\mathcal{P}_1(\epsilon_n)}(\tau).
\end{align}
Note that, since $0/0 := 0$ and $\tau > 0$, we have that
\begin{align*}
	\pr(T \leq z_{1-\alpha}) &= \pr\biggl( \frac{1}{n} \sum_{i=1}^n L_{ i} \leq z_{1-\alpha} \frac{s}{\sqrt{n}} \biggr)\\  &\leq \pr\biggl( \frac{1}{n} \sum_{i=1}^n L_{i} \leq c \tau \biggr) + \pr\biggl( z_{1-\alpha} \frac{s}{\sqrt{n} \tau} \geq c \biggr).
\end{align*}
Thus, from \eqref{Eq: power L claim} and \eqref{Eq: power s claim}, 
\[
\sup_{P \in \mathcal{P}_1(\epsilon_n)} \pr(T \leq z_{1-\alpha}) \leq \sup_{P \in \mathcal{P}_1(\epsilon_n)} \Biggl\{ \pr\biggl( \frac{1}{n} \sum_{i=1}^n L_{i} \leq c \tau \biggr) + \pr\biggl( z_{1-\alpha} \frac{s_{n}}{\sqrt{n} \tau} \geq c \biggr) \Biggr\} \to 0
\]
and hence the result will follow if we can prove \eqref{Eq: power L claim} and \eqref{Eq: power s claim}.

Observe that if we define 
\begin{equation}
	\label{Eq: definition of check f}
	\check{f}(X, Z) := \frac{\tau^{1/2}}{\sigma} \fhat(X, Z)	
\end{equation}
and let $\check{T}$ denote the test using $\check{f}$ in place of $\fhat$, then $T = \check{T}$, since we have assumed in~(a) that $m_{\fhat}$ is scale equivariant. It follows that we may put $\check{f}$ in place of $\fhat$ and assume without loss of generality that 
\[
	\sigma^2 = \E\bigl(\xi^2 \given \fhat \bigr)  = \tau.	
\]
For both claims \eqref{Eq: power L claim} and \eqref{Eq: power s claim}, we therefore work with $\check{f}$ instead of $\fhat$, $\check{\xi} := \check{f}(X, Z) - \E(\check{f}(X, Z) \given \check{f}, Z)$ instead of $\xi$ and similarly $\check{\xi}_i$ instead of $\xi_i$ for $i \in [n]$.

To prove \eqref{Eq: power L claim}, we write $Y_i = m(Z_i) + h(X_i,Z_i) + \zeta_i$, and have
\begin{align*}
\frac{1}{n} \sum_{i=1}^n L_{i} &= \underbrace{\frac{1}{n} \sum_{i=1}^n h(X_i, Z_i) \check{\xi}_i}_{\RN{1}_n} + \underbrace{\frac{1}{n} \sum_{i=1}^n \zeta_i \check{\xi}_i}_{\RN{2}_n}\\
&+ \underbrace{\frac{1}{n} \sum_{i=1}^n \{m(Z_i) - \mhat(Z_i) \}\check{\xi}_i}_{\RN{3}_n} + \underbrace{\frac{1}{n} \sum_{i=1}^n h(X_i, Z_i) \{m_{\check{f}}(Z_i) - \mhat_{\check{f}}(Z_i)\}}_{\RN{4}_n}\\
&+ \underbrace{\frac{1}{n} \sum_{i=1}^n \zeta_i \{m_{\check{f}}(Z_i) - \mhat_{\check{f}}(Z_i)\}}_{\RN{5}_n} + \underbrace{\frac{1}{n} \sum_{i=1}^n  \{m(Z_i) - \mhat(Z_i)\}\{m_{\check{f}}(Z_i) - \mhat_{\check{f}}(Z_i) \}}_{\RN{6}_n}.
\end{align*}
Defining the triangular array $W_{n, i} := h(X_i, Z_i) \check{\xi}_i/\tau$ for $i \in [n]$, note by assumption~(b) that
\[
    \sum_{i=1}^n \E\bigl(|W_{n, i}|^2 \given \check{f} \bigr) = \frac{    n \E\bigl(h(X, Z)^2 \check{\xi}^2 \given \check{f} \bigr) }{\tau^2} \leq \frac{ C^2   n }{\tau}.
\]
Therefore, defining $\mu_n := \E(\RN{1}_n \given \check{f})$ (the numerator of $\mathrm{Corr}(h(X, Z), \check{\xi} \given \check{f})$), assumption~(ii) of Lemma~\ref{Lemma: conditional lln} is satisfied with $\delta = 1$ on $\mathcal{P}_1(\epsilon_n)$ by assumption (c). We deduce that
\[
\sup_{P \in \mathcal{P}_1(\epsilon_n)} \pr(|\RN{1}_n - \mu_n| \geq \eta \tau ) = o(1)
\]
for any $\eta > 0$. 

To deal with the $\RN{2}_n$ term, we first note that for $i \neq j$, 
\[
	\E(\zeta_i \zeta_j \given X_i, Z_i, X_j, Z_j) = \E(\zeta_i \given X_i, Z_i) \E(\zeta_j \given X_j, Z_j) = 0.
\]
Hence, using the fact that $\E(\zeta_i^2 \given X_i, Z_i) = \Var(Y_i \given X_i, Z_i) \leq C$ for all $i \in [n]$ by Assumption~\ref{Assumption: general procedure}(c), we have 
\begin{align*}
\E\bigl(|\RN{2}_n| \given \check{f}, (X_i, Z_i)_{i=1}^n \bigr) &\leq \frac{1}{n} \biggl(\sum_{i=1}^n \E(\zeta_i^2 \given X_i, Z_i ) \check{\xi}_i^2 \biggr)^{1/2}\\
&\leq \frac{C^{1/2}}{n^{1/2}} \biggl(\frac{1}{n} \sum_{i=1}^n  \check{\xi}_i^2 \biggr)^{1/2},
\end{align*}
and therefore 
\[
	\E\bigl(|\RN{2}_n| \given \check{f}\bigr) \leq \frac{C^{1/2}}{n^{1/2}} \tau^{1/2}.
\]
We conclude by Lemma~\ref{Lemma: unconditionalisation via Markov} that $\RN{2}_n = O_{\mathcal{P}_1(\epsilon_n)}(n^{-1/2}\tau^{1/2})$. To deal with $\RN{3}_n$, we first write
\[
  \RN{3}_n^2 = \frac{1}{n^2} \sum_{i=1}^n \check{\xi}_i^2 \{m(Z_i) - \mhat(Z_i) \}^2 + \frac{1}{n^2} \sum_{i \neq j} \check{\xi}_i \check{\xi}_j \{m(Z_i) - \mhat(Z_i) \}\{m(Z_j) - \mhat(Z_j) \}
\]
and note that the first term is $o_{\mathcal{P}_1(\epsilon_n)}(\tau n^{-1})$ by Assumption~\ref{Assumption: general procedure}(b). For the second term, note similarly to $\RN{2}_n$ that 
\begin{equation}
	\label{Eq:tilde f crossterm}
	\E(\check{\xi}_i \check{\xi}_j \given \check{f}, Z_i, Z_j) = \E(\check{\xi}_i \given \check{f},Z_i) \E(\check{\xi}_j \given \check{f},Z_j) = 0.
\end{equation}
Thus, since $\mhat$ is formed on auxiliary data in Algorithm~\ref{Algorithm: PCM theory},
\begin{align*}
	\E\biggl( \frac{1}{n \nu^2} \sum_{i \neq j} \check{\xi}_i \check{\xi}_j \given \check{f}, Z_i, Z_j &\biggm| \check{f}, (Z_{i'})_{i'=1}^n, \mhat \biggr) = 0.
\end{align*}
We deduce by Lemma~\ref{Lemma: convergence in probability from convergence of conditional expectations} that $\RN{3}_n = o_{\mathcal{P}_1(\epsilon_n)}(\tau^{1/2} n^{-1/2})$.  Next, we note that 
\begin{equation}
	\label{Eq: m_check bound}
	\frac{1}{n} \sum_{i=1}^n  \{m_{\check{f}}(Z_i) - \mhat_{\check{f}}(Z_i) \}^2 =  \frac{\tau}{n \sigma^2} \sum_{i=1}^n  \{m_{\fhat}(Z_i) - \mhat_{\fhat}(Z_i) \}^2 = o_{\mathcal{P}_1(\epsilon_n)}(\tau),
\end{equation}
by Assumption~\ref{Assumption: general procedure}(a).  By the Cauchy--Schwarz inequality, Markov's inequality for the first factor and \eqref{Eq: m_check bound} for the second factor, we have 
\[
  |\RN{4}_n| \leq  \biggl(\frac{1}{n} \sum_{i=1}^n h(X_i, Z_i)^2  \biggr)^{1/2} \biggl( \frac{1}{n}\sum_{i=1}^n \{m_{\check{f}}(Z_i) - \mhat_{\check{f}}(Z_i) \}^2 \biggr)^{1/2} = o_{\mathcal{P}_1(\epsilon_n)}(\tau).
\]
For the $\RN{5}_n$ term, we note that by similar arguments as above,
\begin{align} 
\E\bigl(|\RN{5}_n| \given \check{f}, (X_i, Z_i)_{i=1}^n \bigr) 
\leq \frac{C^{1/2}}{n^{1/2}} \biggl(\frac{1}{n} \sum_{i=1}^n  \{m_{\check{f}}(Z_i) - \mhat_{\check{f}}(Z_i) \}^2 \biggr)^{1/2} = o_{\mathcal{P}_1(\epsilon_n)}(\tau^{1/2}n^{-1/2}).
\end{align}
Hence by Lemma~\ref{Lemma: unconditionalisation via Markov}, we deduce that $\RN{5}_n =  o_{\mathcal{P}_1(\epsilon_n)}(\tau^{1/2}n^{-1/2})$. 
For the final term, by the Cauchy--Schwarz inequality,
\begin{align*}
	|\RN{6}_n| &\leq \biggl(\frac{1}{n} \sum_{i=1}^n  \{m(Z_i)- \mhat(Z_i)\}^2 \biggr)^{1/2}	 \biggl(\frac{1}{n} \sum_{i=1}^n  \{m_{\check{f}}(Z_i) - \mhat_{\check{f}}(Z_i) \}^2 \biggr)^{1/2}\\
	&= o_{\mathcal{P}_1(\epsilon_n)}(\tau^{1/2} n^{-1/2})
\end{align*}
using Assumptions~\ref{Assumption: general procedure}(a) and \eqref{Eq: m_check bound}. Letting $R_n := \RN{2}_n + \RN{3}_n + \RN{4}_n + \RN{5}_n + \RN{6}_n$, we have therefore shown that $R_n = o_{\mathcal{P}_1(\epsilon_n)}(\tau)$ by assumption (c). We conclude that
\begin{align*}
	\sup_{P \in \mathcal{P}_1(\epsilon_n)} \pr \Bigl( \frac{1}{n} \sum_{i=1}^n L_{n, i} \leq \rho \tau / 3 \Bigr) \leq 	\sup_{P \in \mathcal{P}_1(\epsilon_n)}\pr ( \mu_n \leq \rho \tau )  &+  \sup_{P \in \mathcal{P}_1(\epsilon_n)}\pr ( |\RN{1}_n - \mu_n| \geq \rho \tau / 3 ) \\
	&+ \sup_{P \in \mathcal{P}_1(\epsilon_n)}\pr (|R_n| \geq \rho\tau /3 ),
\end{align*}
so \eqref{Eq: power L claim} is satisfied with $c:= \rho/3$ by assumption (d).

To see that \eqref{Eq: power s claim} holds, note that 
\begin{align*}
	\frac{s_n}{n^{1/2}} &\leq \biggl( \frac{1}{n^2} \sum_{i=1}^n L_{i}^2 \biggr)^{1/2} \leq 5^{1/2} \biggl[ \underbrace{\biggl( \frac{1}{n^2} \sum_{i=1}^n h(X_i, Z_i)^2 \check{\xi}_i^2 \biggr)^{1/2}}_{\widetilde{\RN{1}}_n}\\
	&+ \underbrace{\biggl(\frac{1}{n^2} \sum_{i=1}^n \zeta_i^2 \check{\xi}_i^2\biggr)^{1/2}}_{\widetilde{\RN{2}}_n}
	+ \underbrace{\biggl(\frac{1}{n^2} \sum_{i=1}^n \{m(Z_i) - \mhat(Z_i) \}^2 \check{\xi}_i^2 \biggr)^{1/2}}_{\widetilde{\RN{3}}_n}\\
	&+ \underbrace{\biggl(\frac{1}{n^2} \sum_{i=1}^n h(X_i, Z_i)^2\{m_{\check{f}}(Z_i) - \mhat_{\check{f}}(Z_i) \}^2\biggr)^{1/2}}_{\widetilde{\RN{4}}_n}\\
	&+ \underbrace{\biggl(\frac{1}{n^2} \sum_{i=1}^n \zeta_i^2\{m_{\check{f}}(Z_i) - \mhat_{\check{f}}(Z_i) \}^2\biggr)^{1/2}}_{\widetilde{\RN{5}}_n}\\
	&+ \underbrace{\biggl(\frac{1}{n^2} \sum_{i=1}^n   \{m(Z_i) - \mhat(Z_i)\}^2\{m_{\check{f}}(Z_i) - \mhat_{\check{f}}(Z_i) \}^2\biggr)^{1/2}}_{\widetilde{\RN{6}}_n}\biggr].
\end{align*}
Now 
\[
	\E\bigl(\widetilde{\RN{1}}_n \given \check{f} \bigr) \leq \frac{C}{n^{1/2}} \biggl( \frac{1}{n} \sum_{i=1}^n \E\bigl(\check{\xi}_i^2 \given \check{f} \bigr) \biggr)^{1/2} = \frac{C}{n^{1/2}} \tau^{1/2},
\]
so by Lemma~\ref{Lemma: unconditionalisation via Markov} we see that $\widetilde{\RN{1}}_n = O_{\mathcal{P}_1(\epsilon_n)}(n^{-1/2} \tau^{1/2})$. The remaining terms are of the same uniform stochastic order as the  corresponding terms without tildes using the bounds above. Thus,  \eqref{Eq: power s claim} is satisfied by assumption (c), and the result follows.

If we assume as in (ii) that the test statistic is computed via Algorithm~\ref{Algorithm: PCM} and that $\mhat$ is sufficiently stable, then the only changes to the above proof are when dealing with the $\RN{3}_n$ term. 
 But this remains $o_{\mathcal{P}_1(\epsilon_n)}(\tau^{1/2}n^{-1/2})$ by the stability assumptions on $\mhat$ and Proposition~\ref{Prop: stability}.

\subsection{Proof of Theorem~\ref{Theorem: Asymptotic normality of Spline}} 
It suffices to verify that Assumption~\ref{Assumption: general procedure} holds and to check assumptions (a)-(c) of Theorem~\ref{Theorem: General Type I error control} as the conclusion then follows from Theorem~\ref{Theorem: General Type I error control}. 

To see that assumption (a) of Theorem~\ref{Theorem: General Type I error control} holds, note that by Proposition~\ref{Proposition: tensor product projection} with $\fhat$ in place of $f$ in that result,
\begin{align}
	\label{Eq: eps pi beta}
	\xi &:= \fhat(X,Z) - \E\bigl\{\fhat(X,Z) \given Z,\fhat\bigr\} = (\mbb{\Pi}\mbb{\widehat{\beta}})^\top \bigl\{\mbb{\phi}(X, Z)-\E\bigl(\mbb{\phi}(X, Z) \given Z\bigr) \bigr\}.
\end{align}
Thus, from the definition in~\eqref{Eq: conditional variance},
\begin{align}
	\label{Eq: lower bound for the variance}
	\sigma^2 = \E(\xi^2 \given \fhat)
	= (\mbb{\Pi}\mbb{\widehat{\beta}})^\top \mbb{\Lambda} (\mbb{\Pi}\mbb{\widehat{\beta}}) \geq  \tilde{\lambda}_{\min}(\mbb{\Lambda}) \| \mbb{\Pi}\mbb{\widehat{\beta}}\|_2^2 \geq c K^{-1}_{XZ} \| \mbb{\Pi}\mbb{\widehat{\beta}}\|_2^2,
\end{align}
where the last inequality holds by our assumption \eqref{Eq:lambdamin}.  Hence,
\[
\sup_{P \in \mathcal{P}_0} \mathbb{P}(\sigma^2 = 0) = \sup_{P \in \mathcal{P}_{0}} \prob_{P}(\|\mbb{\Pi}\mbb{\widehat{\beta}}\|_{\infty} = 0) = o(1),
\]
as desired.

Next, for any $\eta \geq 1$, we have by~\eqref{Eq: eps pi beta} that
\begin{equation}
	\label{Eq: cond exp eps^r}
\E\bigl(|\xi|^\eta \given Z, \fhat\bigr) \leq 2^\eta \E\bigl( \bigl| (\mbb{\Pi} \mbb{\widehat{\beta}})^\top \mbb{\phi}(X, Z) \bigr|^\eta \given Z, \fhat \bigr) \leq 2^\eta \| \mbb{\Pi} \mbb{\widehat{\beta}} \|_\infty^\eta
\end{equation}
by Hölder's inequality and Proposition~\ref{Proposition: b-spline properties}(a).  
By~\eqref{Eq: lower bound for the variance}, Assumption~\ref{Assumption: Nonparametric models}(a),~\eqref{Eq: cond exp eps^r} with $\eta=2+\delta$ and~\eqref{Eq: condition for the normality of spline 2}, we have
\[
\frac{\E(|\xi \varepsilon|^{2+\delta} \given \fhat)}{\sigma^{2+\delta}} \leq \frac{2^{2+\delta}C}{c^{2+\delta}} \frac{\| \mbb{\Pi} \mbb{\widehat{\beta}} \|_\infty^{2+\delta}}{\| \mbb{\Pi} \mbb{\widehat{\beta}} \|_2^{2+\delta}} K_{XZ}^{1+\delta/2} \leq \frac{2^{2+\delta}C}{c^{2+\delta}} K_{XZ}^{1+\delta/2} = o(n^{\delta/2}),
\]
so assumption (b) of Theorem~\ref{Theorem: General Type I error control} is satisfied.  Assumption~(c) of Theorem~\ref{Theorem: General Type I error control} is also a hypothesis of the result we seek to prove (see Assumption~\ref{Assumption: Nonparametric models}), so there is nothing to check.

To see that Assumption~\ref{Assumption: general procedure}(a) holds, we first define $M_i := m(Z_i)-\mhat(Z_i)$ and $\widetilde{M}_i := m_{\fhat}(Z_i)-\mhat_{\fhat}(Z_i)$ for $i \in \mathcal{D}_1$. By Corollary~\ref{Corollary: spline regression} and \eqref{Eq: condition for the normality of spline},  
\begin{equation}
\label{Eq: Ns}
\mathcal{E}_{P, 1} = \frac{1}{n} \sum_{i=1}^n M_i^2 = O_{\mathcal{P}_0}\bigl(\widetilde{K}_Z^{-2s/d_Z} + \widetilde{K}_Z/n\bigr) = o_{\mathcal{P}_0}(1).
\end{equation}
Now, suppose that $g^\dagger = \mbb{\beta}_{XZ}^\top \mbb{\phi}$ is the $L_2(P)$-best approximant of $g$ over $\mathcal{S}_{r,L}^d$.  Then, by Propositions~\ref{Proposition: tensor product projection} and \ref{Proposition: b-spline properties}(b),
\begin{equation}
\label{Eq:PiBeta0}
\begin{aligned}
\| \mbb{\Pi} \mbb{\widehat{\beta}} \|_{\infty} &=\| \mbb{\Pi} \mbb{\widehat{\beta}}_{XZ} \|_{\infty} \leq 2 \| \mbb{\widehat{\beta}}_{XZ} \|_{\infty} \\
&\leq   2 \| \mbb{\widehat{\beta}}_{XZ} - \mbb{\beta}_{XZ} \|_{\infty} + 2c_s(r)^{-d} \|  g - g^\dagger\|_{\infty} + 2c_s(r)^{-d} \| g\|_{\infty}.
\end{aligned}
\end{equation}
Hence, by Corollary~\ref{Corollary: spline regression},  Propositions~\ref{Proposition: spline approximation} and \ref{Proposition: spline least squares approximation} and Assumption~\ref{Assumption: Nonparametric models},  
\begin{equation}
	\label{Eq: pi beta bound}
\| \mbb{\Pi} \mbb{\widehat{\beta}} \|_{\infty} = O_{\mathcal{P}_0}(K_{XZ} n^{-1/2} + 1) = O_{\mathcal{P}_0}(1),
\end{equation}
where the last equality uses~\eqref{Eq: condition for the normality of spline 2}.  Now $\widetilde{m}(Z)$ is in the span of $\mbb{\psi}(Z)$, so the residuals $m_{\fhat} - \mhat_{\fhat}$ are identical to those resulting from a $\ghat(X, Z)$ on $\mbb{\psi}(Z)$ regression.  Thus, by Proposition~\ref{Proposition: spline on spline regression}, \eqref{Eq: lower bound for the variance} and \eqref{Eq: condition for the normality of spline},  
\begin{equation}
\label{Eq: Ms}
\mathcal{E}_{P, 2} = \frac{1}{n \sigma^2} \sum_{i=1}^n \widetilde{M}_{i}^2 =  O_{\mathcal{P}_0}\biggl(\frac{\| \mbb{\Pi} \mbb{\widehat{\beta}} \|_\infty^2}{\| \mbb{\Pi} \mbb{\widehat{\beta}} \|_2^2} K_{XZ} \{\widetilde{K}_Z^{-2s/d_Z} + \widetilde{K}_Z/n\} \biggr) = o_{\mathcal{P}_0}(1). 
\end{equation}
Combining~\eqref{Eq: Ns} and~\eqref{Eq: Ms}, we have 
\[
  \mathcal{E}_{P, 1}  \mathcal{E}_{P, 2} = O_{\mathcal{P}_0}\biggl(\frac{\| \mbb{\Pi} \mbb{\widehat{\beta}} \|_\infty^2}{\| \mbb{\Pi} \mbb{\widehat{\beta}} \|_2^2} K_{XZ} \{\widetilde{K}_Z^{-2s/d_Z} + \widetilde{K}_Z/n\}^2 \biggr) = o_{\mathcal{P}_0}(n^{-1}),
\]
by~\eqref{Eq: condition for the normality of spline}, so Assumption~\ref{Assumption: general procedure}(a) holds.

To verify Assumption~\ref{Assumption: general procedure}(b), note by Lemma~\ref{Lemma: unconditionalisation via Markov} that, since $\E(\varepsilon^2 \given X, Z) \leq \E(\varepsilon^{2+\delta} \given X, Z)^{2/(2+\delta)} \leq C^{2/(2+\delta)}$, we have
\begin{align*}
\E \Biggl( \frac{1}{\sigma^2}  &\frac{1}{n} \sum_{i=1}^n \widetilde{M}_{i}^2 \varepsilon_{i}^2 \biggm| \fhat, (X_{i'}, Z_{i'})_{i'=1}^n, \mhat_{\fhat} \Biggr) = \frac{1}{\sigma^2}  \biggl\{ \frac{1}{n} \sum_{i=1}^n \widetilde{M}_{i}^2 \E(\varepsilon_i^2 \given X_i, Z_i) \biggr\}\\
&= O_{\mathcal{P}_0}\biggl(\frac{\| \mbb{\Pi} \mbb{\widehat{\beta}} \|_\infty^2}{\| \mbb{\Pi} \mbb{\widehat{\beta}} \|_2^2} K_{XZ} \{ \widetilde{K}_Z^{-2s/d_Z} +  \widetilde{K}_Z/n\} \biggr) = o_{\mathcal{P}_0}(K_{XZ}^{1/2}n^{-1/2}) = o_{\mathcal{P}_0}(1)
\end{align*}
by~\eqref{Eq: condition for the normality of spline} and~\eqref{Eq: condition for the normality of spline 2}.  Assumption~\ref{Assumption: general procedure}(c) is also the hypothesis of Assumption~\ref{Assumption: Nonparametric models}(a), and this completes the proof.

\subsection{Proof of Theorem~\ref{Theorem: Power of Spline}}

Without loss of generality, we may assume that $\alpha \in (0,1/2)$, so that $z_{1-\alpha} > 0$.  Let $s_n$ denote the denominator in the definition of $T$.  Suppose we can show that
\begin{align}
	\label{Eq: spline power L claim}
	\frac{1}{n} \sum_{i=1}^n L_{i} &= \tau(1+R_n), \quad \text{where $R_n=o_{\mathcal{P}_1(\epsilon_n)}(1)$}\\
	\label{Eq: spline power s claim}
	\frac{s}{\sqrt{n}} & = \tau U_{n}, \quad \text{where $U_n=o_{\mathcal{P}_1(\epsilon_n)}(1)$}.
\end{align}
Note that, since $0/0 := 0$ and $\tau > 0$, we have from \eqref{Eq: spline power L claim} and \eqref{Eq: spline power s claim} that
\begin{align*}
	\pr(T \leq z_{1-\alpha}) &= \pr\biggl( \frac{1}{n} \sum_{i=1}^n L_{ i} \leq z_{1-\alpha} \frac{s}{\sqrt{n}} \biggr)  = \pr(z_{1-\alpha}U_n-R_n \geq 1) \\
	&\leq \pr\biggl(|U_n| \geq \frac{1}{2 z_{1-\alpha}}\biggr) + \pr\biggl(|R_n| \geq \frac{1}{2}\biggr).
\end{align*}
Thus, from \eqref{Eq: spline power L claim} and \eqref{Eq: spline power s claim}, 
\[
\sup_{P \in \mathcal{P}_1(\epsilon_n)} \pr(T \leq z_{1-\alpha}) \leq \sup_{P \in \mathcal{P}_1(\epsilon_n)} \pr\biggl(|U_n| \geq \frac{1}{2 z_{1-\alpha}}\biggr) + \sup_{P \in \mathcal{P}_1(\epsilon_n)}\pr\biggl(|R_n| \geq \frac{1}{2}\biggr) \to 0
\]
and hence the result will follow if we can prove \eqref{Eq: spline power L claim} and \eqref{Eq: spline power s claim}.

To see that \eqref{Eq: spline power L claim} holds, we write
\begin{align*}
	&\frac{1}{n} \sum_{i=1}^n L_{i} = \underbrace{\frac{1}{n} \sum_{i=1}^n \varepsilon_i f(X_i, Z_i)}_{\RN{1}_n}  + \underbrace{\frac{1}{n} \sum_{i=1}^n \varepsilon_i \{m(Z_i) - \widetilde{m}(Z_i)\}}_{\RN{2}_n}\\
	&+  \underbrace{\frac{1}{n} \sum_{i=1}^n f(X_i, Z_i) \{\ghat(X_i, Z_i) - g(X_i, Z_i)\}}_{\RN{3}_n}\\
	&+ \underbrace{\frac{1}{n} \sum_{i=1}^n \{Y_i - g(X_i, Z_i)\} \{\ghat(X_i, Z_i) - g(X_i, Z_i)\}}_{\RN{4}_n} - \underbrace{\frac{1}{n} \sum_{i=1}^n \varepsilon_i m_{\fhat}(Z_i)}_{\RN{5}_n}\\
	&+ \underbrace{\frac{1}{n} \sum_{i=1}^n \varepsilon_i \{m_{\fhat}(Z_i) - \mhat_{\fhat}(Z_i)\}}_{\RN{6}_n} + \underbrace{\frac{1}{n} \sum_{i=1}^n \{m(Z_i)-\mhat(Z_i)\}\{m_{\fhat}(Z_i) - \mhat_{\fhat}(Z_i)\}}_{\RN{7}_n}\\
	&+ \underbrace{\frac{1}{n} \sum_{i=1}^n \{m(Z_i)-\mhat(Z_i)\}\{\fhat(X_i, Z_i) - m_{\fhat}(Z_i)\}}_{\RN{8}_n}.
\end{align*}
Using the fact that $\bigl(\varepsilon_i f(X_i, Z_i)\bigr)_{i=1}^n$ are independent and identically distributed with mean $\tau$, we have that
\begin{equation}
	\label{Eq: spline power rn1 bound}
	\begin{aligned}
&\E\biggl(\biggl|\frac{1}{n} \sum_{i=1}^n \varepsilon_i f(X_i, Z_i)-\tau\biggr|\biggr) \leq \E \biggl\{ \biggl( \frac{1}{n} \sum_{i=1}^n \varepsilon_i f(X_i, Z_i)-\tau\biggr)^2 \biggr\}^{1/2}\\
&= \frac{1}{n^{1/2}}
 \bigl\{\Var\bigl(\varepsilon f(X, Z)\bigr)\bigr\}^{1/2} \leq \frac{1}{n^{1/2}}\bigl\{\E\bigl(\varepsilon^2 f(X, Z)^2\bigr)\bigr\}^{1/2} \leq \Bigl(\frac{C^{2/(2+\delta)} \tau}{n}\Bigr)^{1/2},
	\end{aligned}
\end{equation}
so $\RN{1}_n - \tau = O_{\mathcal{P}}\bigl(\tau^{1/2}/n^{1/2}\bigr)$, by Lemma~\ref{Lemma: unconditionalisation via Markov}.
Now note that for $i \neq j$,
\begin{equation}
	\label{Eq: cond exp xi given z spline proof}
	\E(\varepsilon_i \varepsilon_j \given Z_i, Z_j) = \E(\varepsilon_i \given Z_i)\E(\varepsilon_j \given Z_j) = 0.
\end{equation}
Hence, by Assumption~\ref{Assumption: Nonparametric models}(a), we have that 
\begin{align*}
	\E\bigl(|\RN{2}_n| \given \widetilde{m}, (Z_i)_{i=1}^n\bigr) &\leq \frac{1}{n}  \biggl( \sum_{i=1}^n \E(\varepsilon_i^2 \given Z_i) \{m(Z_i) - \widetilde{m}(Z_i)\}^2 \biggr)^{1/2}\\
	&\leq \frac{C^{1/(2+\delta)}}{n^{1/2}} \biggl( \frac{1}{n} \sum_{i=1}^n \{m(Z_i) - \widetilde{m}(Z_i)\}^2 \biggr)^{1/2}.
\end{align*}
Thus, by Corollary~\ref{Corollary: spline regression} and Lemma~\ref{Lemma: unconditionalisation via Markov},
\[
\RN{2}_n = O_{\mathcal{P}}\bigl(K_Z^{-s/d_Z} n^{-1/2} + K_Z^{1/2} n^{-1}\bigr) = O_{\mathcal{P}}\bigl(n^{-\frac{(4s+d/2)}{4s+d}} + n^{-\frac{(4s + d_X)}{4s+d}}\bigr) = O_{\mathcal{P}}\bigl(n^{-\frac{4s}{4s+d}}\bigr).
\]
Since $\tau = \E\bigl(f(X, Z)^2\bigr)$, we have by Proposition~\ref{proposition: error times smooth function} that
\begin{align*}
	\RN{3}_n &= O_{\mathcal{P}}\bigl(K_{XZ}^{-2s/d} + K_{XZ}^{-(s/d-1/2)} n^{-1} + \tau^{1/2} n^{-1/2} \{1+ K_{XZ}^{-(s/d-1/2)}\}\bigr)\\
	&= O_{\mathcal{P}}\bigl(n^{-\frac{4s}{4s+d}} + \tau^{1/2}n^{-1/2}\bigr).
\end{align*}
Next, by Assumption~\ref{Assumption: Nonparametric models}(a) and (c), 
\begin{align*}
\E\bigl(\{Y - g(X, Z)\}^2 \given X, Z\bigr) &= \E\bigl(\{Y - m(Z)\}^2 \given X, Z\bigr) - 2m(Z)^2 + f(X, Z)^2\\
&\leq C^{2/(2+\delta)} + 4C^2
\end{align*}
and for $i \neq j$,
\begin{align*}
&\E\bigl(\{Y_i - g(X_i, Z_i)\}\{Y_j - g(X_j, Z_j)\}\given X_i, Z_i,X_j,Z_j\bigr) \\
&= \E\bigl(\{Y_i - g(X_i, Z_i)\}\given X_i,Z_i\bigr)\E\bigl(\{Y_j - g(X_j, Z_j)\}\given X_j,Z_j\bigr) = 0. 
\end{align*}
Therefore, 
\begin{align*}
\E\bigl(&|\RN{4}_n| \given \ghat, (X_i, Z_i)_{i=1}^n\bigr)\\
&\leq \frac{1}{n} \biggl( \sum_{i=1}^n \E\bigl(\{Y_i - g(X_i, Z_i)\}^2 \given X_i, Z_i\bigr) \{\ghat(X_i, Z_i) - g(X_i, Z_i)\}^2 \biggr)^{1/2}\\
&\leq \frac{(C^{2/(2+\delta)} + 4C^2)^{1/2}}{n^{1/2}} \biggl(\frac{1}{n} \sum_{i=1}^n \{\ghat(X_i, Z_i) - g(X_i, Z_i)\}^2 \biggr)^{1/2}.
\end{align*}
By Corollary~\ref{Corollary: spline regression} and Lemma~\ref{Lemma: unconditionalisation via Markov} we thus have
\[
\RN{4}_n = O_{\mathcal{P}}\bigl(K_{XZ}^{-s/d} n^{-1/2} + K_{XZ}^{1/2} n^{-1} \bigr) = O_{\mathcal{P}}\bigl(n^{-\frac{(4s+d/2)}{4s+d}} + n^{-\frac{4s}{4s+d}}\bigr) = O_{\mathcal{P}}\bigl(n^{-\frac{4s}{4s+d}}\bigr).
\]
Now, using \eqref{Eq: cond exp xi given z spline proof} and the fact that $m_{\fhat}(Z) = \E\bigl(\fhat(X, Z) - f(X, Z) \given Z, \fhat\bigr)$, we have 
\begin{align*}
&\E\bigl(|\RN{5}_n| \given \fhat, (Z_i)_{i=1}^n\bigr) \leq \frac{C^{1/(2+\delta)}}{n^{1/2}} \biggl( \frac{1}{n} \sum_{i=1}^n \E\bigl(\{\fhat(X_i, Z_i)- f(X_i, Z_i)\}^2 \given Z_i, \fhat \bigr) \biggr)^{1/2}\\
&\leq \frac{2^{1/2}C^{1/(2+\delta)}}{n^{1/2}} \biggl( \frac{1}{n} \sum_{i=1}^n \E\bigl(\{\ghat(X_i, Z_i)- g(X_i, Z_i)\}^2 \given Z_i, \fhat \bigr)\\
&\hspace{5cm}+ \frac{1}{n} \sum_{i=1}^n \E\bigl(\{\widetilde{m}(Z_i) - m(Z_i)\}^2 \given Z_i, \fhat \bigr) \biggr)^{1/2}.
\end{align*}
By Corollary~\ref{Corollary: spline regression} and Lemma~\ref{Lemma: unconditionalisation via Markov}, we deduce that
\[
\RN{5}_n =O_{\mathcal{P}}\bigl(K_{XZ}^{-s/d} n^{-1/2} + K_{XZ}^{1/2} n^{-1} + K_Z^{-s/d_Z} n^{-1/2} + K_Z^{1/2} n^{-1} \bigr) = O_{\mathcal{P}}\bigl(n^{-\frac{4s}{4s+d}}\bigr).
\]
As in the proof of Theorem~\ref{Theorem: Asymptotic normality of Spline}, $\widetilde{m}(Z)$ is in the span of $\mbb{\psi}(Z)$, the residuals $m_{\fhat} - \mhat_{\fhat}$ are identical to those resulting from a $\ghat(X, Z)$ on $\mbb{\psi}(Z)$ regression.  Moreover, by~\eqref{Eq: pi beta bound},
\begin{equation}
\label{Eq:PiBeta2}
\| \mbb{\Pi} \mbb{\widehat{\beta}} \|_{\infty} = O_{\mathcal{P}}(K_{XZ} n^{-1/2} + 1) = O_{\mathcal{P}}(1),
\end{equation}
where the final equality uses the fact that $s \geq 3d/4$.  We deduce by Proposition~\ref{Proposition: spline on spline regression} that
\begin{equation}
    \label{Eq: spline power fhat on Z regression}
    \frac{1}{n} \sum_{i=1}^n \{m_{\fhat}(Z_i) - \mhat_{\fhat}(Z_i)\}^2 = O_{\mathcal{P}}(\widetilde{K}_Z^{-2s/d_Z} + \widetilde{K}_Z n^{-1}).
\end{equation}
By a similar argument as for the $\RN{2}_n$ term, but conditioning on $\fhat$ and $\mhat_{\fhat}$ instead of $\widetilde{m}$ and applying \eqref{Eq: spline power fhat on Z regression}, we conclude that 
\[
\RN{6}_n =  O_{\mathcal{P}}\bigl(n^{-\frac{4s}{4s+d}}\bigr).
\]

We now intend to apply Proposition~\ref{Proposition: product error} to the $\RN{7}_n$ term with $(X, Y, Z) = \bigl(\fhat(X, Z), Y, Z\bigr)$. By~\eqref{Eq:PiBeta2}, we can choose $\sigma_n^2 = \max(\| \mbb{\Pi} \mbb{\widehat{\beta}} \|_{\infty}^2, C^2)$ to fulfill Assumption~(iii) of that result, and Assumption~(ii) is satisfied by Assumption~\ref{Assumption: Nonparametric models}(b). Assumption~(i) is satisfied with $\zeta_f = \zeta_g = s/d_Z$ by Propositions~\ref{Proposition: spline approximation} and \ref{Proposition: spline least squares approximation}, Lemma~\ref{Lemma: spline on spline approximation}, \eqref{Eq:PiBeta2} and Assumption~\ref{Assumption: Nonparametric models}(c). We therefore have by Proposition~\ref{Proposition: product error} that
\begin{align*}
\RN{7}_n &= O_{\mathcal{P}}\bigl(\widetilde{K}_Z^{-2s/d_Z} + \widetilde{K}_Z^{1/2} n^{-1} + \widetilde{K}_Z^{2-s/d_Z} \log(\widetilde{K}_Z)n^{-2}\bigr)\\
 &= O_{\mathcal{P}}(n^{-\frac{4s}{4s+d}} + n^{-\frac{4s+d_X}{4s+d}} + \log(n) n^{-\frac{(10s+2d-4d_Z)}{4s+d}}) = O_{\mathcal{P}}\bigl(n^{-\frac{4s}{4s+d}}\bigr)
\end{align*}
using that $s \geq 3d/4$.  For the final error term, similar to previous terms, for $i \neq j$,
\[
\E\bigl(\{\fhat(X_i, Z_i) - m_{\fhat}(Z_i)\}\{\fhat(X_j, Z_j) - m_{\fhat}(Z_j)\} \given \fhat, Z_i,Z_j\bigr) = 0, 
\]
so, by Hölder's inequality and the triangle inequality, 
\begin{align*}
\E\bigl(&|\RN{8}_n| \given \fhat, \mhat, (Z_i)_{i=1}^n\bigr)\\
&\leq \frac{1}{n} \biggl( \sum_{i=1}^n \E\bigl(\{\fhat(X_i, Z_i) - m_{\fhat}(Z_i)\}^2 \given \fhat, Z_i\bigr) \{\mhat(Z_i)-m(Z_i)\}^2 \biggr)^{1/2}\\
& \leq \frac{2\|\ghat\|_\infty}{n^{1/2}} \biggl( \frac{1}{n} \sum_{i=1}^n \{\mhat(Z_i)- m(Z_i)\}^2 \biggr)^{1/2}.
\end{align*} 
Combining Proposition~\ref{Proposition: b-spline properties}(b), the argument leading to~\eqref{Eq:PiBeta0}, and \eqref{Eq:PiBeta2} yields that $\|\ghat\|_\infty \leq \|\mbb{\widehat{\beta}}_{XZ}\|_\infty = O_{\mathcal{P}}(1)$.  We can therefore apply Corollary~\ref{Corollary: spline regression} and Lemma~\ref{Lemma: unconditionalisation via Markov} as for $\RN{2}_n$ to conclude that
\[
\RN{8}_n = O_{\mathcal{P}}(n^{-\frac{4s}{4s+d}}).
\]
Combining these bounds, we have 
\[
\frac{1}{n} \sum_{i=1}^n L_{i} = \tau(1+R_n),
\]
where
\[
R_n = O_{\mathcal{P}}\bigl(\tau^{-1} n^{-\frac{4s}{4s+d}} + \tau^{-1/2} n^{-1/2}\bigr).
\]
It follows that
\[
R_n = O_{\mathcal{P}_1(\epsilon_n)}\bigl(\epsilon_n^{-1} n^{-\frac{4s}{4s+d}} + \epsilon_n^{-1/2}n^{-1/2}\bigr),
\]
so by Lemma~\ref{Lemma: relationship between big op and small op} and \eqref{Eq: minimum separation rate in nonparametric model}, \eqref{Eq: spline power L claim} holds.

To see that \eqref{Eq: spline power s claim} holds, note that 
\begin{align*}
	\frac{s_n}{n^{1/2}} &\leq \biggl(\frac{1}{n^2} \sum_{i=1}^n L_{i}^2 \biggr)^{1/2} \leq 8^{1/2} \biggl[\underbrace{\biggl(\frac{1}{n^2} \sum_{i=1}^n \varepsilon_i^2 f(X_i, Z_i)^2 \biggr)^{1/2}}_{\widetilde{\RN{1}}_n}\\
	&\hspace{1cm}+ \underbrace{\biggl( \frac{1}{n^2} \sum_{i=1}^n \varepsilon_i^2 \{m(Z_i) - \widetilde{m}(Z_i)\}^2 \biggr)^{1/2}}_{\widetilde{\RN{2}}_n} \\
	&\hspace{1cm}+  \underbrace{\biggl(\frac{1}{n^2} \sum_{i=1}^n f(X_i, Z_i)^2 \{\ghat(X_i, Z_i) - g(X_i, Z_i)\}^2 \biggr)^{1/2}}_{\widetilde{\RN{3}}_n}\\
	&\hspace{1cm}+ \underbrace{\biggl(\frac{1}{n^2} \sum_{i=1}^n \{Y_i - g(X_i, Z_i)\}^2 \{\ghat(X_i, Z_i) - g(X_i, Z_i)\}^2 \biggr)^{1/2}}_{\widetilde{\RN{4}}_n}\\
	&\hspace{1cm}+ \underbrace{\biggl(\frac{1}{n^2} \sum_{i=1}^n \varepsilon_i^2 m_{\fhat}(Z_i)^2 \biggr)^{1/2}}_{\widetilde{\RN{5}}_n} + \underbrace{\biggl(\frac{1}{n^2} \sum_{i=1}^n \varepsilon_i^2 \{m_{\fhat}(Z_i) - \mhat_{\fhat}(Z_i)\}^2 \biggr)^{1/2}}_{\widetilde{\RN{6}}_n} \\
	&\hspace{1cm}+ \underbrace{\biggl(\frac{1}{n^2} \sum_{i=1}^n \{m(Z_i)-\mhat(Z_i)\}^2\{m_{\fhat}(Z_i) - \mhat_{\fhat}(Z_i)\}^2\biggr)^{1/2}}_{\widetilde{\RN{7}}_n}\\
	&\hspace{1cm}+ \underbrace{\biggl(\frac{1}{n^2} \sum_{i=1}^n \{m(Z_i)-\mhat(Z_i)\}^2\{\fhat(X_i, Z_i) - m_{\fhat}(Z_i)\}^2\biggr)^{1/2}}_{\widetilde{\RN{8}}_n}\biggr].
\end{align*}
Combining the bound in \eqref{Eq: spline power rn1 bound} with Lemma~\ref{Lemma: unconditionalisation via Markov} yields that $\widetilde{\RN{1}}_n = O_{\mathcal{P}}(\tau^{1/2}/n^{1/2})$.  All other terms except $\widetilde{\RN{7}}_n$ are of the same uniform stochastic order as the same expressions for the corresponding terms without tildes.  For the final term, then, 
\[
\widetilde{\RN{7}}_n \leq n^{-1/2} \| \mhat - m \|_\infty \biggl( \frac{1}{n} \sum_{i=1}^n \{m_{\fhat}(Z_i) - \mhat_{\fhat}(Z_i)\}^2\biggr)^{1/2}.
\]
Now $\mhat = \mbb{\widehat{\gamma}}^\top \mbb{\psi}$, and we can let $m^\dagger = \mbb{\gamma}^\top \mbb{\psi}$ denote the $L_2(P)$-best approximant of $m$ over $\mathcal{S}_{2r-1,L}^d$.  Then by Proposition~\ref{Proposition: b-spline properties}(b), Corollary~\ref{Corollary: spline regression}, the fact that $s \geq 3d/4$, Propositions~\ref{Proposition: spline approximation} and \ref{Proposition: spline least squares approximation}, we have
\begin{align*}
\| \mhat - m \|_\infty \leq \| \mbb{\widehat{\gamma}} - \mbb{\gamma} \|_{\infty} + \| m^\dagger - m\|_{\infty} &= O_{\mathcal{P}}(\widetilde{K}_Zn^{-1/2} + \widetilde{K}_Z^{-s/d_Z})\\
&= O_{\mathcal{P}}\bigl(n^{-\frac{2d_X}{4s+d}} + n^{-\frac{2s}{4s+d}}\bigr) = o_{\mathcal{P}}(1),
\end{align*}
so 
\[
\widetilde{\RN{7}}_n = o_{\mathcal{P}}\bigl(n^{-\frac{4s}{4s+d}}\bigr)
\]
by~\eqref{Eq: spline power fhat on Z regression}.  We conclude that $s_n/n^{1/2} = \tau U_n$, where
\[
U_n = O_{\mathcal{P}}(\tau^{-1/2} n^{-1/2} + \tau^{-1} n^{-\frac{4s}{4s+d}}) = o_{\mathcal{P}_1(\epsilon_n)}(1),
\]
and hence \eqref{Eq: spline power s claim} is satisfied.  This completes the proof.

\subsection{Proof of Proposition~\ref{Proposition: Lower bound}} \label{Section: Proof of Proposition: Lower bound}

Recall that given two probability measures $\mu$ and $\nu$ on a measurable space $(E,\mathcal{E})$ such that $\mu$ has density $p$ with respect to $\nu$, we define the $\chi^2$-divergence from $\nu$ to $\mu$ by
\[
\chi^2(\mu,\nu) := \int_E p^2 \, d\nu - 1.
\]
Let $\mathcal{A} := [0,1]^{d_X} \times \{-1, 1\} \times [0, 1]^{d_Z}$, and let $P_0 \in \mathcal{P}$ denote a fixed null distribution supported on $\mathcal{A}$.  Further, for each $n \in \mathbb{N}$, let $\mathcal{Q}_n \subseteq \mathcal{P}_1(\epsilon_n)$ denote a finite family of alternative distributions supported on $\mathcal{A}$.  Suppose that $Q \in \mathcal{Q}_n$ has density $q_Q:\mathcal{A} \to [0, \infty)$ with respect to $P_0$ and define $P_0^n := P_0^{\otimes n}$ and 
\[
P_1^n := \frac{1}{|\mathcal{Q}_n|} \sum_{Q \in \mathcal{Q}_n} Q^{\otimes n},
\]
where $\otimes n$ denotes the $n$-fold product of a measure with itself. Suppose that
\begin{equation}
	\label{Eq: lower bound requirement}
\limsup_{n \to \infty} \chi^2(P_1^n, P_0^n) \leq 1.
\end{equation}
Now, for all $n \in \mathbb{N}$ and tests $\phi$,
\begin{align*}
\inf_{P \in \mathcal{P}_1(\epsilon_n)} \prob_{P}(\phi = 1) &\leq \min_{Q \in \mathcal{Q}_n} \prob_{Q}(\phi = 1) \leq \frac{1}{|\mathcal{Q}_n|} \sum_{Q \in \mathcal{Q}_n} \prob_{Q}(\phi = 1)\\
&= \int_{\mathcal{A}^n} \phi \, \mathrm{d}P_1^n \leq   \prob_{P_0}(\phi = 1) + d_{\mathrm{TV}}(P_0^n, P_1^n).
\end{align*}
Defining $q_Q^{\otimes n}(x_1,y_1,z_1,\ldots,x_n,y_n,z_n) := \prod_{i=1}^n q_Q(x_i,y_i,z_i)$, we have by Jensen's inequality that 
\begin{align*}
d_{\mathrm{TV}}(P_0^n, P_1^n)^2 &= \frac{1}{4} \biggl( \int_{\mathcal{A}^n} \biggl| \frac{1}{|\mathcal{Q}_n|} \sum_{Q \in \mathcal{Q}_n} q_Q^{\otimes n} - 1 \biggr| \, \mathrm{d}P_0^n \biggr)^2\\
&\leq \frac{1}{4} \biggl\{ \int_{\mathcal{A}^n} \biggl( \frac{1}{|\mathcal{Q}_n|} \sum_{Q \in \mathcal{Q}_n} q_Q^{\otimes n} - 1 \biggr)^2 \, \mathrm{d}P_0^n \biggr\} = \frac{1}{4}\chi^2(P_1^n,P_0^n).
\end{align*}
Thus
\[
\limsup_{n \to \infty} \inf_{P \in \mathcal{P}_1(\epsilon_n)} \prob_{P}(\phi = 1) \leq \alpha + \frac{1}{2}
\]
for all asymptotically valid tests $\phi$, by \eqref{Eq: lower bound requirement}. 

We now construct $P_0$ and $\mathcal{Q}_n$ such that \eqref{Eq: lower bound requirement} holds. We let $P_0$ denote the uniform distribution on $\mathcal{A}$.  Then
\begin{align*}
	g_{P_0}(x,z) = \E_{P_0}(Y \given X=x,Z=z) = \E_{P_0}(Y) = 0 = \E_{P_0}(Y \given Z=z) = m_{P_0}(z),
\end{align*}
so $\target_{P_0} = 0$.  We also note that $g$ and $m$ are constant functions and so $g, m \in \mathcal{H}_s$ with $\|m\|_{\mathcal{H}_s} = \|g\|_{\mathcal{H}_s} = 0$. It is immediate from similar arguments that the remaining conditions of Assumption~\ref{Assumption: Nonparametric models} are satisfied for $P_0$, so $P_0 \in \mathcal{P}$.

We now aim to construct $\mathcal{Q}_n$.   To this end, define the bump function $K:[0,1/2] \rightarrow [0,\infty)$ by $K(x) := e^{-\frac{1}{x \cdot (1/2-x)^2}}$, let $I_0 := \bigl(\int_0^{1/2} K(u)^2 \, \mathrm{d}u\bigr)^{1/2} \in (0,\infty)$ and define $v: \mathbb{R} \to \mathbb{R}$ by $v(x) := \frac{1}{\sqrt{2}I_0} \cdot K(x)\mathbbm{1}_{\{x \in [0,1/2]\}} - \frac{1}{\sqrt{2}I_0} \cdot K(x-1/2)\mathbbm{1}_{\{x \in [1/2,1]\}}$ and $v(x) := 0$ for $x \in \mathbb{R} \setminus [0,1]$, so that $v$ is infinitely differentiable with $v(0) = v(1) = 0$, $\int_0^1 v(x) \, \mathrm{d}x = 0$ and $\int_0^1 v(x)^2 \, \mathrm{d}x = 1$.  Now define $h: \mathbb{R}^d \to \mathbb{R}$ by $h(x_1, \dots, x_d) := \prod_{j=1}^d v(x_j)$ and note that $h$ is $0$ outside $[0, 1]^d$, $h$ is infinitely differentiable, $\int_{\mathbb{R}^d} h^2(x_1,\ldots,x_d) \, \mathrm{d}x_1\ldots \mathrm{d}x_d = 1$ and $\int_0^1 h(x_1, \dots, x_j, \dots, x_d) \, \mathrm{d}x_j = 0$ for $j \in [d]$.

Define $\rho_n := \big\lfloor{n^{\frac{2}{4s+d}}}\big\rfloor$ and, for $j \in [\rho_n]^d$, define $h_{n,j}:\mathbb{R}^{d_X+d_Z} \rightarrow \mathbb{R}$ by $h_{n,j}(x, z) := \rho_n^{d/2} h\bigl(\rho_n \cdot (x, z) - j + 1\bigr)$, so that $(h_{n,j})_{j \in [\rho_n]^d}$ have disjoint support, $\| h_{n, j} \|_2 = 1$ and $\|h_{n,j}\|_\infty = \rho_n^{d/2}\|h\|_\infty$.  Let $\gamma_n := c^{1/2} n^{-\frac{2s+d}{4s+d}}$, where $c \in \bigl(0,\rho_n^{-d}\|h\|_\infty^{-2}\bigr)$ will be specified later.  For $\mbb{\eta} := (\eta_j)_{j \in [\rho_n]^d} \in \{-1,1\}^{\rho_n^{d}}$, define $g_{n, \mbb{\eta}}:\mathbb{R}^{d_X+d_Z} \rightarrow (-1,1)$ by
\begin{align*}
	g_{n, \mbb{\eta}}(x, z) :=\gamma_n \sum_{j \in [\rho_n]^{d}} \eta_j h_{n,j}(x, z).
\end{align*}   
To see that $g_{n, \mbb{\eta}} \in \mathcal{H}_s^d$, we first note that for any multi-index $\mbb{\alpha} \in \mathbb{N}^d_0$ with $|\mbb{\alpha}| \leq s$, we have
\begin{equation}
\label{Eq: minimax alternative holder 1}
\|D^{\mbb{\alpha}} g_{n, \mbb{\eta}} \|_\infty =  \gamma_n \rho_n^{d/2 + |\mbb{\alpha}|} \| D^{\mbb{\alpha}} h \|_\infty \leq c^{1/2} \max_{\mbb{\widetilde{\alpha}} \in \mathbb{N}_0^d: |\mbb{\widetilde{\alpha}}| \leq s}\| D^{\mbb{\widetilde{\alpha}}} h \|_\infty.
\end{equation}
Now fix $(x, z), (x', z') \in \mathbb{R}^{d_X + d_Z}$; let $j \in [\rho_n]^d$ denote the unique index such that $h_{n,j}(x, z) \neq 0$ if it exists, and otherwise arbitrarily set $j=(1,\ldots,1)^d$.  Similarly, let $j' \in [\rho_n]^d$ denote the unique index such that $h_{n,j'}(x', z') \neq 0$ if it exists, and otherwise set $j'=(1,\ldots,1)^d$.  Then for any $\mbb{\alpha} \in \mathbb{N}_0^d$ with $|\mbb{\alpha}| = \ceil{s} - 1 =: s_0$, we have 
\begin{align}
\label{Eq: minimax alternative holder 2}
&|D^{\mbb{\alpha}} g_{n, \mbb{\eta}}(x, z) - D^{\mbb{\alpha}} g_{n, \mbb{\eta}}(x', z')| \nonumber\\ 
&\leq \gamma_n \rho_n^{d/2 +s_0} \bigl\{ \bigl| D^{\mbb{\alpha}} h\bigl(\rho_n \cdot (x, z) - j + 1 \bigr) - D^{\mbb{\alpha}} h\bigl(\rho_n \cdot (x', z') - j + 1\bigr) \bigr| \nonumber \\
&\hspace{2cm} + \bigl| D^{\mbb{\alpha}} h\bigl(\rho_n \cdot (x, z) - j' + 1\bigr) - D^{\mbb{\alpha}} h\bigl(\rho_n \cdot (x', z') - j' + 1\bigr) \bigr| \bigr\} \nonumber\\
&\leq \gamma_n \rho_n^{d/2 +s_0} \min\biggl(4 \| D^{\mbb{\alpha}}h \|_\infty, 2 \max_{\mbb{\widetilde{\alpha}} \in \mathbb{N}_0^d: |\mbb{\widetilde{\alpha}}| = s_0+1} \| D^{\mbb{\widetilde{\alpha}}}h \|_\infty \rho_n  \|(x, z) - (x', z') \|_1 \biggr) \nonumber\\
&\leq  \gamma_n \rho_n^{d/2 +s_0} \max\biggl(4 \| D^{\mbb{\alpha}}h \|_\infty, 2 \max_{\mbb{\widetilde{\alpha}} \in \mathbb{N}_0^d: |\mbb{\widetilde{\alpha}}| \leq s_0+1} \| D^{\mbb{\widetilde{\alpha}}}h \|_\infty \biggr) \min\bigl(1,  \rho_n  \|(x, z) - (x', z') \|_1 \bigr) \nonumber\\
&\leq c^{1/2} d^{1/2} \max\biggl(4 \| D^{\mbb{\alpha}}h \|_\infty, 2 \max_{\mbb{\widetilde{\alpha}} \in \mathbb{N}_0^d: |\mbb{\widetilde{\alpha}}| \leq s_0+1} \| D^{\mbb{\widetilde{\alpha}}}h \|_\infty \biggr)   \|(x, z) - (x', z') \|^{s-s_0},
\end{align}
where the final inequality uses the fact that $\min(1, t)^y \leq t^y$ for any $t  > 0$ and $y \in (0, 1)$.
Using \eqref{Eq: minimax alternative holder 1} and \eqref{Eq: minimax alternative holder 2} and reducing $c > 0$ such that 
\[
c^{1/2} < \max\biggl(4 \| D^{\mbb{\alpha}} \|_\infty, 2 \max_{\mbb{\alpha} \in \mathbb{N}_0^d: |\mbb{\alpha}| \leq s_0+1} \| D^{\mbb{\alpha}} \|_\infty, \max_{\mbb{\alpha} \in \mathbb{N}_0^d: |\mbb{\alpha}| \leq s}\| D^{\mbb{\alpha}} h \|_\infty \biggr)^{-1} \frac{C}{d^{1/2}},
\]
if necessary, we ensure that $g_{n, \mbb{\eta}} \in \mathcal{H}_s^{d_X+d_Z}$ with $\|g_{n, \mbb{\eta}} \|_{\mathcal{H}_s} \leq C$.

Define now $Q_{n, \mbb{\eta}}$ such that $(X, Z)$ is uniform on $[0, 1]^{d_X} \times [0, 1]^{d_Z}$ and $Y$ is Rademacher with
\[
	\E_{Q_{n, \mbb{\eta}}}(Y \given X=x, Z=z) = g_{n, \mbb{\eta}}(x, z).
\]
Note that by construction
\begin{align*}
	m_{n,\mbb{\eta}}(z) := \int_{[0,1]^{d_X}} g_{n, \mbb{\eta}}(x,z) \, \mathrm{d}x = 0 \quad \text{for any $\mbb{\eta} \in \{-1,1\}^{\rho_n^d}$},
\end{align*}
so $m_{n,\mbb{\eta}} \in \mathcal{H}_s^{d_Z}$ with $\|m_{n,\mbb{\eta}} \|_{\mathcal{H}_s} = 0$. Further,
\begin{align*}
	\target_{Q_{n, \mbb{\eta}}}= \E_{Q_{n, \mbb{\eta}}}\bigl[\bigl\{g_{n, \mbb{\eta}}(X,Z) - m(Z)\bigr\}^2\bigr] = \gamma_n^2 \rho_n^{d} \leq c^2 n^{-\frac{4s}{4s + d}},
\end{align*}
and we deduce from the definition of $\epsilon_n$ that $Q_{n, \mbb{\eta}} \in \mathcal{P}_1(\epsilon_n)$ for sufficiently large $n$.  We let $\mathcal{Q}_n := \bigl\{Q_{n, \mbb{\eta}}: \mbb{\eta} \in \{-1, 1\}^{\rho_n^d}\bigr\}$.

To see that \eqref{Eq: lower bound requirement} is satisfied, we note that
\begin{align*}
\chi^2(P_1^n, P_0^n) &=  - 1 + \frac{1}{|\mathcal{Q}_n|^2} \sum_{Q, Q' \in \mathcal{Q}_n} \int_{\mathcal{A}^n} q_Q^{\otimes n} q_{Q'}^{\otimes n} \, \mathrm{d}P_0^n\\
&= - 1 + \frac{1}{|\mathcal{Q}_n|^2} \sum_{\mbb{\eta}, \mbb{\eta}' \in \{-1,1\}^{\rho_n^d}} \biggl( \int_{\mathcal{A}} q_{Q_{n, \mbb{\eta}}} q_{Q_{n, \mbb{\eta}'}} \, \mathrm{d}P_0 \biggr)^n.
\end{align*}
so it suffices to show that $\limsup$ of the second term is at most $2$ as $n \rightarrow \infty$.  But
\begin{align*}
q_{Q_{n,\mbb{\eta}}}(x, y, z)
&= \{1 + g_{n, \mbb{\eta}}(x, z)\}^{(1+y)/2} \{1 - g_{n, \mbb{\eta}}(x, z)\}^{(1-y)/2},
\end{align*}
so, for $Q_{n,\mbb{\eta}}, Q_{n,\mbb{\eta}'} \in \mathcal{Q}_n$, we have 
\begin{align*}
\int_{\mathcal{A}} q_{Q_{n,\mbb{\eta}}} q_{Q_{n,\mbb{\eta}'}} \, \mathrm{d}P_0 &= \frac{1}{2}\E_{P_0}\bigl(q_{Q_{n, \mbb{\eta}}}(X, 1, Z) q_{Q_{n, \mbb{\eta}'}}(X, 1, Z) \given Y=1\bigr)\\
&\hspace{1cm}+ \frac{1}{2}\E_{P_0}\bigl(q_{Q_{n, \mbb{\eta}}}(X, -1, Z) q_{Q_{n, \mbb{\eta}'}}(X, -1, Z) \given Y=-1\bigr)\\
&= \frac{1}{2} \E_{P_0}\bigl(  \bigl\{1 + g_{n, \mbb{\eta}}(X, Z)\bigr\} \bigl\{1 + g_{n, \mbb{\eta}'}(X, Z)\bigr\}\bigr)\\
&\hspace{1cm}+ \frac{1}{2}\E_{P_0}\bigl(\bigl\{1 - g_{n, \mbb{\eta}}(X, Z)\bigr\} \bigl\{1 - g_{n, \mbb{\eta}'}(X, Z)\bigr\}\bigr)\\
&= 1 + \E_{P_0}\bigl(g_{n, \mbb{\eta}}(X, Z)g_{n, \mbb{\eta}'}(X, Z)\bigr) \\
&= 1 + \gamma_n^2 \sum_{j, j' \in [\rho_n]^{d}} \eta_j \eta_{j'}' \int_{[0,1]^{d_X+d_Z}} h_{n,j}(x, z)h_{n,j'}(x, z) \, \mathrm{d}x \, \mathrm{d}z\\
&= 1 + \gamma_n^2 \mbb{\eta}^\top \mbb{\eta}'.
\end{align*}
Let $\mbb{U} = (U_1,\ldots,U_{\rho_n^d})$ and $\mbb{W} = (W_1,\ldots,W_{\rho_n^d})$ be independent random vectors, each with independent Rademacher components.  Then
\begin{align*}
\frac{1}{|\mathcal{Q}_n|^2} &\sum_{Q, Q' \in \mathcal{Q}_n} \biggl( \int_{\mathcal{A}} q_Q q_{Q'} \, \mathrm{d}P_0 \biggr)^n =  \frac{1}{2^{2 \rho_n^d}} \sum_{\mbb{\eta}, \mbb{\eta}' \in \{-1, 1\}^{\rho_n^d}} (1 + \gamma_n^2\mbb{\eta}^\top \mbb{\eta}')^n\\
&\leq \frac{1}{2^{2 \rho_n^d}} \sum_{\mbb{\eta}, \mbb{\eta}' \in \{-1, 1\}^{\rho_n^d}} e^{n\gamma_n^2\mbb{\eta}^\top \mbb{\eta}'} = \mathbb{E}(e^{n\gamma_n^2 \mbb{U}^\top \mbb{W}}) = \prod_{j=1}^{\rho_n^d} \mathbb{E}(e^{n\gamma_n^2U_jW_j})\\
&= \cosh(n\gamma_n^2)^{\rho_n^d} \leq e^{n^2\gamma_n^4 \rho_n^d/2}.
\end{align*}
Thus, 
\[
	\limsup_{n \to \infty} \frac{1}{|\mathcal{Q}_n|^2} \sum_{Q, Q' \in \mathcal{Q}_n} \biggl( \int_{\mathcal{A}} q_Q q_{Q'} \, \mathrm{d}P_0 \biggr)^n \leq \limsup_{n \to \infty} e^{n^2\gamma_n^4 \rho_n^d/2} \leq \exp(c^2/2).
\]
Taking $c \leq \sqrt{2\log 2}$, we have proved~\eqref{Eq: lower bound requirement} for $P_0$ and $\mathcal{Q}_{n}$, and the result follows.

\section{Auxiliary lemmas}
\label{Section: Auxiliary lemmas}
\subsection{Uniform convergence results}
Recall the `uniformly small in probability' notation $o_\mathcal{P}(1)$ defined in Section~\ref{Section:Notation}.  As above, we sometimes omit the subscript $P$ from quantities depending on $P$ to simplify the presentation. In what follows we collect several technical lemmas that are used in the proofs in Section~\ref{Section: Proofs} of the supplementary material.

\begin{lemma}  \label{Lemma: bounded convergence theorem}
	Let $(X_n)_{n \in \mathbb{N}}$ be a sequence of real-valued random variables. Let $C > 0$ and suppose that $|X_{n}| \leq C$ for all $n \in \mathbb{N}$ and $X_n = o_\mathcal{P}(1)$. Then $\sup_{P \in \mathcal{P}}\E_P(|X_n|) = o(1)$. 
	\begin{proof}
		For any given $\epsilon >0$, 
		\begin{align*}
			|X_n| = |X_n| \ind_{\{|X_n| > \epsilon\}} +  |X_n| \ind_{\{|X_n| \leq \epsilon\}}	\leq  C \ind_{\{|X_n| > \epsilon\}} +  \epsilon.
		\end{align*}
	By the assumption that $X_n = o_\mathcal{P}(1)$, we can choose $N \in \mathbb{N}$ such that $\sup_{P \in \mathcal{P}} \mathbb{P}_P(|X_n| > \epsilon) < \epsilon/C$ for $n \geq N$. It follows that for $n \geq N$,
	\[
	\sup_{P \in \mathcal{P}}\E_P(|X_n|) \leq C \sup_{P \in \mathcal{P}} \prob_P(|X_n| > \epsilon) + \epsilon < 2 \epsilon.
	\]
	Since $\epsilon > 0$ was arbitrary, the result follows.
	\end{proof}
\end{lemma}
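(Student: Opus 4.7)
The plan is to exploit a standard two-set decomposition of $|X_n|$ at a user-chosen threshold $\epsilon > 0$, using the uniform boundedness $|X_n| \leq C$ on the tail event and the threshold value itself on the complementary event. This reduces control of the expectation to control of a single tail probability, uniformly over $\mathcal{P}$, which is exactly what the hypothesis $X_n = o_\mathcal{P}(1)$ supplies.

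Concretely, I would fix $\epsilon > 0$ and write
\[
|X_n| \;=\; |X_n| \mathbbm{1}_{\{|X_n| > \epsilon\}} + |X_n|\mathbbm{1}_{\{|X_n| \leq \epsilon\}} \;\leq\; C\, \mathbbm{1}_{\{|X_n| > \epsilon\}} + \epsilon,
\]
the first inequality using $|X_n| \leq C$ on the tail event and $|X_n| \leq \epsilon$ on its complement. Taking expectation under $P$ and then the supremum over $P \in \mathcal{P}$ gives
\[
\sup_{P \in \mathcal{P}} \E_P(|X_n|) \;\leq\; C \sup_{P \in \mathcal{P}} \prob_P(|X_n| > \epsilon) + \epsilon.
\]

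Next, by the assumption $X_n = o_\mathcal{P}(1)$, we may choose $N \in \mathbb{N}$ such that $\sup_{P \in \mathcal{P}} \prob_P(|X_n| > \epsilon) < \epsilon/C$ for all $n \geq N$. Substituting into the preceding display yields $\sup_{P \in \mathcal{P}} \E_P(|X_n|) < 2\epsilon$ for all $n \geq N$. Since $\epsilon > 0$ was arbitrary, this gives $\sup_{P \in \mathcal{P}} \E_P(|X_n|) = o(1)$, as required.

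There is no real obstacle: the result is a uniform-in-$P$ version of the bounded convergence theorem, and the proof is essentially the classical one, with care only to pass the supremum over $\mathcal{P}$ inside the probability (which is valid because the inequality $\E_P(|X_n|) \leq C\prob_P(|X_n| > \epsilon) + \epsilon$ holds pointwise in $P$).
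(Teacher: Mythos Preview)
Your proof is correct and follows essentially the same argument as the paper: the same $\epsilon$-threshold decomposition, the same choice of $N$ via $\sup_{P \in \mathcal{P}} \prob_P(|X_n| > \epsilon) < \epsilon/C$, and the same final bound of $2\epsilon$.
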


The following lemma derives uniform stochastic boundedness of a sequence $(X_n)$ based on a conditional moment condition.
\begin{lemma} \label{Lemma: unconditionalisation via Markov}
	Let $(X_n)_{n \in \mathbb{N}}$ be a sequence of real-valued random variables on $(\Omega, \mathcal{F})$ and let $(\mathcal{F}_n)_{n \in \mathbb{N}}$ be a sequence of sub-$\sigma$-algebras of $\mathcal{F}$. For a positive sequence $(a_n)_{n \in \mathbb{N}}$, possibly depending on $P$, suppose that $\E_P(|X_n| \given \mathcal{F}_n) = O_{\mathcal{P}}(a_n)$. Then $X_n = O_{\mathcal{P}}(a_n)$.
	\begin{proof}
		By hypothesis, given $\epsilon > 0$, there exist $M_{\epsilon} > 0$, $N_\epsilon \in \mathbb{N}$, both depending only on $\epsilon$, such that 
		\begin{align} \label{Eq: argument based on Op}
			\sup_{n \geq N_\epsilon} \sup_{P \in \mathcal{P}}  \prob_{P}(\mathcal{A}_{n,P}) \leq \frac{\epsilon}{2},
		\end{align}  
		where $\mathcal{A}_{n,P} := \bigl\{ \E_P(|X_n| \given  \mathcal{F}_n) \geq M_\epsilon a_n \bigr\}$. Then, by Markov's inequality, for any $K_\epsilon >0$,
		\begin{align*}
			\sup_{n \geq N_\epsilon} \sup_{P \in \mathcal{P}} \prob_{P}( |X_n| &\geq K_\epsilon a_n) =  \sup_{n \geq N_\epsilon} \sup_{P \in \mathcal{P}} \prob_{P}\biggl( \frac{|X_n|}{a_n} \wedge K_\epsilon \geq K_\epsilon\biggr) \\
			&\leq  \frac{1}{K_\epsilon} \sup_{n \geq N_\epsilon} \sup_{P \in \mathcal{P}} \E_P\biggl(\frac{|X_n|}{a_n} \wedge K_\epsilon\biggr) \\
			&\overset{\mathrm{(i)}}{\leq} \frac{1}{K_\epsilon} \sup_{n \geq N_\epsilon} \sup_{P \in \mathcal{P}} \E_P\biggl(\frac{\E_P(|X_n|\given \mathcal{F}_n)}{a_n} \wedge K_\epsilon\biggr) \\
			&\leq \frac{1}{K_\epsilon} \sup_{n \geq N_\epsilon} \sup_{P \in \mathcal{P}} \E_P\biggl\{\biggl(\frac{\E_P(|X_n|\given \mathcal{F}_n)}{a_n} \wedge K_\epsilon\biggr)\ind_{\mathcal{A}_{n,P}}\biggr\}  \\
			& \hspace{1cm}+  \frac{1}{K_\epsilon} \sup_{n \geq N_\epsilon} \sup_{P \in \mathcal{P}} \E_P\biggl\{\biggl(\frac{\E_P(|X_n|\given \mathcal{F}_n)}{a_n} \wedge K_\epsilon\biggr)\ind_{\mathcal{A}_{n,P}^c}\biggr\} \\
			&\leq \sup_{n \geq N_\epsilon} \sup_{P \in \mathcal{P}} \prob_P(\mathcal{A}_{n,P})  + \frac{M_\epsilon}{K_\epsilon} ~\overset{\mathrm{(ii)}}{\leq}~ \frac{\epsilon}{2} + \frac{M_\epsilon}{K_\epsilon},  
		\end{align*}
		where step~(i) uses conditional Jensen's inequality and step~(ii) uses the inequality~(\ref{Eq: argument based on Op}). Then the desired result follows by taking $K_\epsilon \geq 2M_\epsilon/\epsilon$.  
	\end{proof}
\end{lemma}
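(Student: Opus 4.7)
The statement to prove is the elementary transfer lemma: a uniform conditional bound $\E_P(|X_n|\mid \mathcal{F}_n) = O_{\mathcal{P}}(a_n)$ implies the same unconditional bound $X_n = O_{\mathcal{P}}(a_n)$. The plan is to combine a conditional Markov inequality with a truncation argument based on whether the conditional expectation is large relative to $a_n$. The complication, relative to a textbook Markov argument, is that our bound on $\E_P(|X_n|\mid \mathcal{F}_n)/a_n$ is only stochastic-in-probability, not deterministic, so one cannot directly take expectations and divide.

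Concretely, fix $\epsilon > 0$. By the hypothesis, there exist $M_\epsilon > 0$ and $N_\epsilon \in \mathbb{N}$, depending only on $\epsilon$, such that the events
\[
\mathcal{A}_{n,P} := \bigl\{\E_P(|X_n| \mid \mathcal{F}_n) \geq M_\epsilon\, a_n\bigr\}
\]
satisfy $\sup_{n \geq N_\epsilon}\sup_{P \in \mathcal{P}} \prob_P(\mathcal{A}_{n,P}) \leq \epsilon/2$. For any $K_\epsilon > 0$, I would then decompose
\[
\prob_P\bigl(|X_n| \geq K_\epsilon a_n\bigr) \leq \prob_P(\mathcal{A}_{n,P}) + \E_P\bigl(\ind_{\{|X_n| \geq K_\epsilon a_n\}} \ind_{\mathcal{A}_{n,P}^c}\bigr),
\]
and handle the second term by conditioning on $\mathcal{F}_n$ and applying the conditional Markov inequality to $|X_n|/(K_\epsilon a_n)$, truncated above by $1$. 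On $\mathcal{A}_{n,P}^c$ the conditional expectation is at most $M_\epsilon a_n$, so this term is at most $M_\epsilon/K_\epsilon$. Choosing $K_\epsilon = 2 M_\epsilon/\epsilon$ gives $\sup_{n \geq N_\epsilon}\sup_P \prob_P(|X_n| \geq K_\epsilon a_n) \leq \epsilon$, which is exactly the definition of $X_n = O_{\mathcal{P}}(a_n)$.

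The only genuinely delicate point — and the step I expect to need the most care to articulate — is the decomposition via $\mathcal{A}_{n,P}^c$: without it, one would be tempted to write $\prob_P(|X_n| \geq K_\epsilon a_n) \leq \E_P\bigl(\E_P(|X_n|\mid \mathcal{F}_n)\bigr)/(K_\epsilon a_n)$, which requires a bound on the unconditional mean of $\E_P(|X_n|\mid \mathcal{F}_n)/a_n$ that the stochastic-boundedness hypothesis does not supply. The truncation by $\min(\cdot, K_\epsilon)$ (or equivalently restricting to $\mathcal{A}_{n,P}^c$) is what converts a probabilistic bound on the conditional expectation into a probabilistic bound on $X_n$ itself, and is the conceptual heart of the argument. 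The rest is bookkeeping.
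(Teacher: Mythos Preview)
Your proposal is correct and follows essentially the same approach as the paper: define $\mathcal{A}_{n,P}$ as the event where the conditional expectation exceeds $M_\epsilon a_n$, bound its probability by $\epsilon/2$ via the hypothesis, apply conditional Markov on the complement to get $M_\epsilon/K_\epsilon$, and choose $K_\epsilon = 2M_\epsilon/\epsilon$. The only cosmetic difference is that the paper first truncates $|X_n|/a_n$ by $\wedge K_\epsilon$, applies Markov and then conditional Jensen before splitting on $\mathcal{A}_{n,P}$, whereas you split first and apply conditional Markov directly on $\mathcal{A}_{n,P}^c$; both routes arrive at the identical bound $\epsilon/2 + M_\epsilon/K_\epsilon$.
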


\begin{lemma} \label{Lemma: product of o and O}
	Let $(X_n)_{n \in \mathbb{N}}$ and $(Y_n)_{n \in \mathbb{N}}$ be sequences of real-valued random variables.  If $X_n = o_\mathcal{P}(1)$ and $Y_n = O_\mathcal{P}(1)$ then $X_nY_n = o_\mathcal{P}(1)$. 
	\begin{proof}
		Let $\epsilon > 0$ be given. Then for any $M > 0$
		\[
			\sup_{P \in \mathcal{P}} \prob_P(|X_n Y_n| > \epsilon) \leq \sup_{P \in \mathcal{P}} \prob_P(|X_n| > \epsilon/M) + \sup_{P \in \mathcal{P}} \prob_P(|Y_n| > M).
		\]
		Choose $M > 0$ and $n_0 \in \mathbb{N}$ large enough that $\sup_{P \in \mathcal{P}} \prob_P(|Y_n| > M)  \leq \epsilon/2$ for all $n \geq n_0$.  By increasing $n_0$ if necessary, we can ensure that  $\sup_{P \in \mathcal{P}} \prob_P(|X_n| > \epsilon/M) \leq \epsilon/2$ for all $n \geq n_0$.  The result follows.
	\end{proof}
\end{lemma}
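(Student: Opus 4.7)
The plan is to fix an arbitrary $\epsilon > 0$ and exhibit an $n_0 \in \mathbb{N}$ such that $\sup_{P \in \mathcal{P}} \prob_P(|X_n Y_n| > \epsilon) \leq \epsilon$ for all $n \geq n_0$. The key elementary observation is the deterministic inclusion
\[
\{|X_n Y_n| > \epsilon\} \subseteq \{|X_n| > \epsilon/M\} \cup \{|Y_n| > M\}
\]
valid for any $M > 0$, since on the complementary event both $|X_n| \leq \epsilon/M$ and $|Y_n| \leq M$ hold, forcing $|X_nY_n| \leq \epsilon$. A union bound then splits the target probability into one piece controlled by the $O_\mathcal{P}(1)$ assumption on $Y_n$ and another controlled by the $o_\mathcal{P}(1)$ assumption on $X_n$.

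The first step is to invoke $Y_n = O_\mathcal{P}(1)$: by definition of the uniform $O_\mathcal{P}$ notation in Section~\ref{Section:Notation}, for our fixed $\epsilon$ there exist $M_\epsilon > 0$ and $N_1 \in \mathbb{N}$, both independent of $P$, such that $\sup_{n \geq N_1} \sup_{P \in \mathcal{P}} \prob_P(|Y_n| > M_\epsilon) \leq \epsilon/2$. Crucially, the constant $M_\epsilon$ can now be treated as a fixed deterministic number when applying the assumption on $X_n$.

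The second step is to invoke $X_n = o_\mathcal{P}(1)$ at the level $\epsilon/M_\epsilon$: there exists $N_2 \in \mathbb{N}$ such that $\sup_{n \geq N_2} \sup_{P \in \mathcal{P}} \prob_P(|X_n| > \epsilon/M_\epsilon) \leq \epsilon/2$. Setting $n_0 := \max(N_1, N_2)$ and combining both bounds via the union bound above yields $\sup_{P \in \mathcal{P}} \prob_P(|X_n Y_n| > \epsilon) \leq \epsilon$ for all $n \geq n_0$, which is the desired conclusion since $\epsilon > 0$ was arbitrary.

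There is no real obstacle here; the only point requiring mild care is the order of quantifiers, namely that $M_\epsilon$ must be chosen uniformly in $P$ before the $o_\mathcal{P}(1)$ bound on $X_n$ is applied. This uniformity is immediate from the definition of $O_\mathcal{P}(1)$ adopted in the paper, so the argument goes through exactly as in the classical non-uniform analogue.
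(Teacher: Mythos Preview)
Your proof is correct and follows essentially the same approach as the paper's: both use the inclusion $\{|X_nY_n|>\epsilon\}\subseteq\{|X_n|>\epsilon/M\}\cup\{|Y_n|>M\}$, first fix $M$ via the $O_\mathcal{P}(1)$ assumption on $Y_n$, and then choose $n_0$ large enough for the $o_\mathcal{P}(1)$ assumption on $X_n$ to handle the remaining term. Your write-up is in fact slightly more explicit about the order of quantifiers, but there is no substantive difference.
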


\begin{lemma} \label{Lemma: relationship between big op and small op}
	Let $(X_n)_{n \in \mathbb{N}}$ and $(R_n)_{n \in \mathbb{N}}$ be sequences of real-valued random variables. Suppose that $R_n > 0$ for all $n \in \mathbb{N}$, $X_n = O_\mathcal{P}(R_n)$ and $R_n = o_\mathcal{P}(1)$. Then $X_n = o_\mathcal{P}(1)$.
	\begin{proof}
		By hypothesis, for any $\epsilon>0$, there exist constants $M_\epsilon > 0$ and $N_\epsilon \in \mathbb{N}$, both depending only on $\epsilon$, such that
		\begin{align*}
			\sup_{n \geq N_\epsilon} \sup_{P \in \mathcal{P}} \pr_P(R_n > \epsilon/M_\epsilon) \leq \epsilon/2 \quad \text{and} \quad \sup_{n \geq N_\epsilon}\sup_{P \in \mathcal{P}} \prob_{P} (|X_n| > R_n M_\epsilon) \leq \epsilon/2.
		\end{align*}
		Therefore,
		\begin{align*}
			 \sup_{n \geq N_\epsilon} \sup_{P \in \mathcal{P}} \prob_P(|X_n| > \epsilon) &\leq \sup_{n \geq N_\epsilon} \sup_{P \in \mathcal{P}} \prob_P\bigl(|X_n| > \epsilon, R_n > \epsilon /M_\epsilon \bigr) \\
			 &\hspace{2cm}+ \sup_{n \geq N_\epsilon}\sup_{P \in \mathcal{P}} \prob_P \bigl(|X_n| > \epsilon, R_n \leq  \epsilon /M_\epsilon \bigr) \\
			 &\leq \sup_{n \geq N_\epsilon}\sup_{P \in \mathcal{P}} \prob_P(R_n > \epsilon/M_\epsilon) +  \sup_{n \geq N_\epsilon} \sup_{P \in \mathcal{P}} \prob_{P} (|X_n| > R_n M_\epsilon) \leq \epsilon,
		\end{align*}
		as required. 
	\end{proof}
\end{lemma}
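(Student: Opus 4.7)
The plan is to reduce to the two standard uniform-stochastic-order properties by splitting the event $\{|X_n| > \epsilon\}$ according to whether or not $|X_n|/R_n$ is large.  Concretely, fix $\epsilon > 0$ and $\eta > 0$; I want to bound $\sup_P \pr_P(|X_n| > \epsilon)$.  For any threshold $M > 0$, on the event $\{|X_n| \leq R_n M\}$ the event $\{|X_n| > \epsilon\}$ forces $R_n > \epsilon / M$.  Hence
\[
\sup_{P \in \mathcal{P}} \pr_P(|X_n| > \epsilon) \leq \sup_{P \in \mathcal{P}} \pr_P(|X_n| > R_n M) + \sup_{P \in \mathcal{P}} \pr_P(R_n > \epsilon / M).
\]

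Next I unpack the two hypotheses.  The assumption $X_n = O_\mathcal{P}(R_n)$ means, by the convention in Section~\ref{Section:Notation}, that $X_n = R_n \tilde{X}_n$ with $\tilde{X}_n = O_\mathcal{P}(1)$.  So there exist $M_\epsilon, N_\epsilon$ with
\[
\sup_{n \geq N_\epsilon} \sup_{P \in \mathcal{P}} \pr_P(|X_n| > R_n M_\epsilon) = \sup_{n \geq N_\epsilon} \sup_{P \in \mathcal{P}} \pr_P(|\tilde{X}_n| > M_\epsilon) < \epsilon / 2.
\]
Then I apply $R_n = o_\mathcal{P}(1)$ with level $\epsilon/M_\epsilon$ to obtain $N'_\epsilon$ such that $\sup_{n \geq N'_\epsilon} \sup_P \pr_P(R_n > \epsilon/M_\epsilon) < \epsilon/2$.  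Taking $M = M_\epsilon$ in the displayed inequality and $n \geq \max(N_\epsilon, N'_\epsilon)$ yields $\sup_P \pr_P(|X_n| > \epsilon) < \epsilon$, which is exactly $X_n = o_\mathcal{P}(1)$.

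There is really no substantive obstacle here: the argument is a one-line union bound followed by invoking each hypothesis at a suitably chosen threshold.  The only place requiring mild care is keeping the dependence of $M_\epsilon$ on $\epsilon$ straight before applying the $o_\mathcal{P}(1)$ property to $R_n$, so that the second probability is controlled uniformly in $P$ for all sufficiently large $n$.
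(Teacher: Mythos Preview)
Your proposal is correct and follows essentially the same approach as the paper: both proofs split the event $\{|X_n| > \epsilon\}$ according to whether $|X_n| > R_n M_\epsilon$ or $R_n > \epsilon/M_\epsilon$, and then invoke the two hypotheses to bound each piece by $\epsilon/2$. The only cosmetic difference is that the paper chooses a single $N_\epsilon$ up front while you take $\max(N_\epsilon, N'_\epsilon)$, and you make the factorisation $X_n = R_n \tilde X_n$ explicit before extracting the $O_\mathcal{P}(1)$ bound; neither affects the argument.
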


\begin{lemma} \label{Lemma: product rule}
	Let $(X_n)_{n \in \mathbb{N}}$ and $(Y_n)_{n \in \mathbb{N}}$ be sequences of real-valued random variables. For positive sequences $(a_n)_{n \in \mathbb{N}}$, $(b_n)_{n \in \mathbb{N}}$, suppose that $X_n = O_\mathcal{P}(a_n)$ and $Y_n = O_\mathcal{P}(b_n)$. Then $X_n Y_n = O_\mathcal{P}(a_nb_n)$. 
	\begin{proof}
		For any $\epsilon >0$, there exist $N_\epsilon \in \mathbb{N}$ and $M_\epsilon, K_\epsilon >0$, all depending only on $\epsilon$, such that
		\begin{align*}
			\sup_{n \geq N_\epsilon} \sup_{P \in \mathcal{P}} \prob_{P} (|X_n| > a_n M_\epsilon) \leq \epsilon/2 \quad \text{and} \quad \sup_{n \geq N_\epsilon} \sup_{P \in \mathcal{P}} \prob_{P} (|Y_n| > b_n K_\epsilon) \leq \epsilon/2.
		\end{align*}
		Notice that if $|X_nY_n| > a_nb_n M_\epsilon K_\epsilon$, then either $|X_n| > a_n M_\epsilon$ or $|Y_n| > b_n K_\epsilon$. Therefore, by a union bound,
		\begin{align*}
			\sup_{n \geq N_\epsilon}\sup_{P \in \mathcal{P}} \prob_{P} (|X_nY_n| > a_nb_n M_\epsilon K_\epsilon) &\leq \sup_{n \geq N_\epsilon} \sup_{P \in \mathcal{P}} \prob_{P} (|X_n| > a_n M_\epsilon)\\
			&\hspace{1cm}+ \sup_{n \geq N_\epsilon}\sup_{P \in \mathcal{P}} \prob_{P} (|Y_n| > b_n K_\epsilon) \leq \epsilon,
		\end{align*}
		as desired. 
	\end{proof}
\end{lemma}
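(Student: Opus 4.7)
The plan is to unpack the definition of $O_{\mathcal{P}}$ given in Section~\ref{Section:Notation} and then combine the two bounds via a union bound on the event that the product exceeds a large multiple of $a_n b_n$. Concretely, given $\epsilon > 0$, I would first invoke $X_n = O_{\mathcal{P}}(a_n)$ to extract $M_\epsilon, N_\epsilon^{(1)} > 0$ such that $\sup_{n \geq N_\epsilon^{(1)}} \sup_{P \in \mathcal{P}} \prob_P(|X_n| > a_n M_\epsilon) \leq \epsilon/2$, and similarly $Y_n = O_{\mathcal{P}}(b_n)$ to extract $K_\epsilon, N_\epsilon^{(2)} > 0$ with $\sup_{n \geq N_\epsilon^{(2)}} \sup_{P \in \mathcal{P}} \prob_P(|Y_n| > b_n K_\epsilon) \leq \epsilon/2$.

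Next I would observe the deterministic inclusion $\{|X_n Y_n| > a_n b_n M_\epsilon K_\epsilon\} \subseteq \{|X_n| > a_n M_\epsilon\} \cup \{|Y_n| > b_n K_\epsilon\}$, since $a_n, b_n, M_\epsilon, K_\epsilon$ are all positive. A union bound then gives
\[
\sup_{n \geq N_\epsilon} \sup_{P \in \mathcal{P}} \prob_P(|X_n Y_n| > a_n b_n M_\epsilon K_\epsilon) \leq \epsilon,
\]
where $N_\epsilon := \max(N_\epsilon^{(1)}, N_\epsilon^{(2)})$. Setting the effective constant to $M_\epsilon K_\epsilon$ and the effective threshold index to $N_\epsilon$ verifies the definition of $O_{\mathcal{P}}(a_n b_n)$ for the product.

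There is essentially no main obstacle here; the argument is a direct transcription of the classical $O_p$ product rule to the uniform setting, and the only care needed is to make sure the constants $M_\epsilon$ and $K_\epsilon$ are chosen uniformly over $P \in \mathcal{P}$, which is exactly what the definition of $O_{\mathcal{P}}$ provides. The proof is short enough that no further decomposition or auxiliary lemma (such as Lemma~\ref{Lemma: unconditionalisation via Markov} or Lemma~\ref{Lemma: relationship between big op and small op}) is required.
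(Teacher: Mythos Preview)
Your proposal is correct and follows essentially the same approach as the paper's proof: extract the uniform constants from each $O_{\mathcal{P}}$ hypothesis, use the deterministic inclusion $\{|X_nY_n| > a_nb_nM_\epsilon K_\epsilon\} \subseteq \{|X_n| > a_nM_\epsilon\} \cup \{|Y_n| > b_nK_\epsilon\}$, and conclude via a union bound. Your version is marginally more explicit in tracking separate threshold indices $N_\epsilon^{(1)}, N_\epsilon^{(2)}$ and taking their maximum, whereas the paper simply writes a single $N_\epsilon$, but this is a cosmetic difference.
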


\begin{lemma} \label{Lemma: convergence in probability from convergence of conditional expectations}
Let $(X_n)_{n \in \mathbb{N}}$ be a sequence of real-valued random variables on $(\Omega,\mathcal{F})$, and let $(\mathcal{F}_n)_{n \in \mathbb{N}}$ be a sequence of sub-$\sigma$-algebras of $\mathcal{F}$. Suppose that $|X_n| = Y_n + Z_n$. If $Y_n = o_{\mathcal{P}}(1)$ and $\E_P(Z_n \given \mathcal{F}_n) = o_{\mathcal{P}}(1)$, then $X_n = o_\mathcal{P}(1)$.
\end{lemma}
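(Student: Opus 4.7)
The plan is to establish that $\sup_{P \in \mathcal{P}} \prob_P(|X_n| > \epsilon) \to 0$ for every fixed $\epsilon > 0$, which is the statement $X_n = o_\mathcal{P}(1)$. I will read the decomposition $|X_n| = Y_n + Z_n$ as one into non-negative summands, the natural convention when splitting a non-negative quantity and the one under which the hypothesis $\E_P(Z_n \given \mathcal{F}_n) = o_\mathcal{P}(1)$ actually pins down the tail behaviour of $Z_n$; this reading is consistent with the way the lemma is invoked in the proof of Theorem~\ref{Theorem: General Type I error control}.

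The first step is a union bound: for any $\epsilon > 0$,
\[
\prob_P(|X_n| > \epsilon) \leq \prob_P(Y_n > \epsilon/2) + \prob_P(Z_n > \epsilon/2),
\]
and the first summand is $o(1)$ uniformly in $P$ by $Y_n = o_\mathcal{P}(1)$. For the second summand I will feed $Z_n$ into Lemma~\ref{Lemma: unconditionalisation via Markov}. To do so, I upgrade the hypothesis $\E_P(Z_n \given \mathcal{F}_n) = o_\mathcal{P}(1)$ to $\E_P(Z_n \given \mathcal{F}_n) = O_\mathcal{P}(a_n)$ for some deterministic sequence $a_n \downarrow 0$ via a standard diagonal extraction: pick $a_n \to 0$ slowly enough that $\sup_P \prob_P(|\E_P(Z_n \given \mathcal{F}_n)| > a_n) \to 0$. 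Because $Z_n \geq 0$, we have $\E_P(|Z_n| \given \mathcal{F}_n) = \E_P(Z_n \given \mathcal{F}_n)$, so Lemma~\ref{Lemma: unconditionalisation via Markov} yields $Z_n = O_\mathcal{P}(a_n)$, which Lemma~\ref{Lemma: relationship between big op and small op} upgrades to $Z_n = o_\mathcal{P}(1)$. Combining the two pieces gives $\sup_P \prob_P(|X_n| > \epsilon) \to 0$ as required.

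The only nontrivial technical point is this conversion of the $o_\mathcal{P}(1)$ conditional-expectation statement into a form amenable to Lemma~\ref{Lemma: unconditionalisation via Markov}; a direct truncation argument that mirrors the proof of that lemma — conditioning on the event $\{\E_P(Z_n \given \mathcal{F}_n) \leq \eta \epsilon / 4\}$ and applying conditional Markov on its complement, while bounding the probability of the complementary event uniformly in $P$ — is an equally clean alternative that bypasses the explicit construction of $a_n$.
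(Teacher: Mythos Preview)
Your argument is correct under the nonnegativity reading of $Z_n$ that you adopt, but it takes a different route from the paper. The paper works directly with the truncated variable $|X_n|\wedge\epsilon$: a Markov-type bound gives $\prob_P(|X_n|>\epsilon)\leq\epsilon^{-1}\E_P(|X_n|\wedge\epsilon)$; on the event $\{|Y_n|\leq\epsilon^2\}$ one has $|X_n|\wedge\epsilon\leq(\epsilon^2+Z_n)\wedge\epsilon$, and conditional Jensen then pushes the conditional expectation inside the minimum to arrive at the bound $2\epsilon+\sup_P\prob_P\bigl(|\E_P(Z_n\given\mathcal{F}_n)|>\epsilon^2\bigr)$. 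This is self-contained --- no auxiliary lemmas, no rate extraction --- whereas your argument is more modular, outsourcing the conditional-Markov step to Lemma~\ref{Lemma: unconditionalisation via Markov} at the cost of manufacturing the deterministic sequence $a_n$. Your closing parenthetical about a ``direct truncation argument'' is essentially the paper's line of attack.

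One minor correction: in the invocations of this lemma within the proof of Theorem~\ref{Theorem: General Type I error control}, the role of $Z_n$ is played by an off-diagonal cross-sum such as $(n\nu^2)^{-1}\sum_{i\neq j}\widetilde{M}_i\widetilde{M}_j\varepsilon_i\varepsilon_j$, which is \emph{not} sign-constrained, so your claim that the nonnegative reading matches those applications is inaccurate. That said, some lower control on $Z_n$ is genuinely needed for the lemma to hold at all (consider $|X_n|\equiv1$, $Y_n=n\ind_{A_n}$ with $\prob(A_n)=1/n$, $Z_n=1-Y_n$, and trivial $\mathcal{F}_n$: then $Y_n=o_\mathcal{P}(1)$ and $\E(Z_n)=0$, yet $|X_n|\not\to0$). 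This is therefore a subtlety in the stated generality of the lemma rather than a flaw in your proof.
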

\begin{proof}
	Let $\epsilon \in (0,1/2]$ be given. By Markov's inequality,
	\begin{align*}
		\sup_{P \in \mathcal{P}} \mathbb{P}_P(|X_n| \geq \epsilon) &= \sup_{P \in \mathcal{P}} \mathbb{P}_P(|X_n| \wedge \epsilon \geq \epsilon) \leq \frac{1}{\epsilon} \sup_{P \in \mathcal{P}} \E_P(|X_n| \wedge \epsilon ) \\
		&\leq  \frac{1}{\epsilon} \sup_{P \in \mathcal{P}} \E_P\bigl((\epsilon^2 + Z_n) \wedge \epsilon\bigr) +  \sup_{P \in \mathcal{P}} \prob_P(|Y_n| > \epsilon^2).
	\end{align*}
	The second term converges to $0$ by assumption. For the first term, by Jensen's inequality,
	\begin{align*}
		\frac{1}{\epsilon} \sup_{P \in \mathcal{P}} \E_P\bigl((\epsilon^2 + Z_n) \wedge \epsilon\bigr) &= \frac{1}{\epsilon} \sup_{P \in \mathcal{P}} \E_P\Bigl[\E_P\bigl((\epsilon^2 + Z_n) \wedge \epsilon \given \mathcal{F}_n \bigr)\Bigr]\\
		&\leq \frac{1}{\epsilon} \sup_{P \in \mathcal{P}} \E_P\Bigl[ \bigl\{\epsilon^2 + \E_P( Z_n \given \mathcal{F}_n)\bigr\} \wedge \epsilon \Bigr]\\
		&\leq  2\epsilon +  \sup_{P \in \mathcal{P}} \prob_P\bigl(|\E_P(Z_n \given \mathcal{F}_n)| > \epsilon^2\bigr).
	\end{align*}
The result therefore follows by our hypothesis on $\E_P(Z_n \given \mathcal{F}_n)$.
\end{proof}

\begin{lemma} \label{Lemma: uniform convergence in probability under continuous transformation}
Let $(X_n)_{n \in \mathbb{N}}$ be a sequence of real-valued random variables and let $X$ be another such variable. Assume that $|X_n - X| = o_{\mathcal{P}}(1)$ and let $h:\mathbb{R} \rightarrow \mathbb{R}$ be a continuous function. Suppose that at least one of the following conditions hold:
\begin{enumerate}[(i)]
	\item $h$ is uniformly continuous,
	\item $X$ is uniformly tight, that is,
	\[
		\lim_{M \to \infty}\sup_{P \in \mathcal{P}} \mathbb{P}_P(|X| > M) = 0.
	\]
\end{enumerate}
Then $|h(X_n) - h(X)| = o_\mathcal{P}(1)$.
\end{lemma}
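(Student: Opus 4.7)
The plan is to establish the two cases separately, with case~(i) being an almost immediate consequence of uniform continuity and case~(ii) reducing to case~(i) after localising to a compact set using uniform tightness.

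For case~(i), fix $\epsilon > 0$. Since $h$ is uniformly continuous on $\mathbb{R}$, there exists $\delta = \delta(\epsilon) > 0$ such that $|x - y| < \delta$ implies $|h(x) - h(y)| < \epsilon$. The contrapositive gives the pointwise inclusion $\{|h(X_n) - h(X)| \geq \epsilon\} \subseteq \{|X_n - X| \geq \delta\}$, so
\begin{equation*}
\sup_{P \in \mathcal{P}} \mathbb{P}_P\bigl(|h(X_n) - h(X)| \geq \epsilon\bigr) \leq \sup_{P \in \mathcal{P}} \mathbb{P}_P\bigl(|X_n - X| \geq \delta\bigr) \to 0
\end{equation*}
as $n \to \infty$ by the hypothesis $|X_n - X| = o_\mathcal{P}(1)$.

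For case~(ii), fix $\epsilon > 0$. By uniform tightness of $X$, choose $M > 0$ such that $\sup_{P \in \mathcal{P}} \mathbb{P}_P(|X| > M) < \epsilon/2$. Since $h$ is continuous on the compact interval $[-M-1, M+1]$, it is uniformly continuous there, so there exists $\delta \in (0, 1)$ such that for $x, y \in [-M-1, M+1]$ with $|x - y| < \delta$, we have $|h(x) - h(y)| < \epsilon$. On the intersection of events $\{|X| \leq M\} \cap \{|X_n - X| < \delta\}$, both $X$ and $X_n$ lie in $[-M-1, M+1]$ and are within $\delta$ of each other, so $|h(X_n) - h(X)| < \epsilon$. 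Consequently,
\begin{equation*}
\sup_{P \in \mathcal{P}} \mathbb{P}_P\bigl(|h(X_n) - h(X)| \geq \epsilon\bigr) \leq \sup_{P \in \mathcal{P}} \mathbb{P}_P(|X| > M) + \sup_{P \in \mathcal{P}} \mathbb{P}_P\bigl(|X_n - X| \geq \delta\bigr).
\end{equation*}
The first term on the right is bounded by $\epsilon/2$ by choice of $M$, and the second tends to $0$ as $n \to \infty$ by hypothesis; taking $\limsup$ and then letting $\epsilon \downarrow 0$ gives the conclusion.

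No step looks to be a serious obstacle: the only subtlety is recognising that in case~(ii) we need the uniform tightness to produce the compact set $[-M, M]$ on which the \emph{pointwise} continuity of $h$ can be upgraded to uniform continuity by Heine--Cantor, and that the event $\{|X_n - X| < \delta\}$ with $\delta \leq 1$ then confines $X_n$ to a slightly enlarged compact set, allowing the same $\delta$ to serve.
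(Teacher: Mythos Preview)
Your proof is correct and follows essentially the same approach as the paper's: case~(i) is handled identically, and in case~(ii) both arguments localise to a compact set via uniform tightness and then invoke Heine--Cantor. The only cosmetic difference is that you enlarge the interval to $[-M-1,M+1]$ and constrain $\delta\le 1$, whereas the paper works on $[-M,M]$ and instead bounds the bad event by $\mathbb{P}_P(|X|>M-\delta)$; the effect is the same.
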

\begin{proof}
	Let $\epsilon > 0$ be given. We need to show that 
	\[
		\lim_{n \to \infty} \sup_{P \in \mathcal{P}} \mathbb{P}_P(|h(X_n)-h(X)| > \epsilon) = 0.
	\]
	If $h$ is uniformly continuous, then we can find $\delta >0$ such that $|h(x) - h(y)| \leq \epsilon$ whenever $|x-y| \leq \delta$.  Thus,
\[
\sup_{P \in \mathcal{P}} \mathbb{P}_P(|h(X_n)-h(X)| > \epsilon) \leq \sup_{P \in \mathcal{P}} \mathbb{P}_P(|X_n-X| > \delta) \rightarrow 0
\]
as $n \to \infty$.   On the other hand, suppose now that $X$ is uniformly tight and let $M > 0$. Since $h$ is continuous, it is uniformly continuous on $[-M, M]$, so we can choose $\delta > 0$ such that  $|h(x) - h(y)| \leq \epsilon$ whenever $x,y \in [-M,M]$ satisfy $|x-y| \leq \delta$.  Hence, for $M > \delta$,
	\begin{align*}
	\sup_{P \in \mathcal{P}} \mathbb{P}_P(|h(X_n)-h(X)| > \epsilon) &\leq \sup_{P \in \mathcal{P}} \mathbb{P}_P(|X_n-X| > \delta)\\
	&\hspace{1cm}+ \sup_{P \in \mathcal{P}} \mathbb{P}_P(|X_n| \vee |X| > M,|X_n-X| \leq \delta) \\
	&\leq \sup_{P \in \mathcal{P}} \mathbb{P}_P(|X_n-X| > \delta) + \sup_{P \in \mathcal{P}} \mathbb{P}_P(|X| > M-\delta) \rightarrow 0
	\end{align*}
as $n, M \to \infty$.
\end{proof}

\begin{lemma} \label{Lemma: conditional clt}
	Let $(X_{n, i})_{n \in \mathbb{N}, i \in [n]}$ be a triangular array of real-valued random variables and let $(\mathcal{F}_n)_{n \in \mathbb{N}}$ be a filtration on $\mathcal{F}$. Assume that
	\begin{enumerate}[(i)]
		\item $X_{n, 1}, \dots, X_{n, n}$ are conditionally independent given $\mathcal{F}_n$, for each $n \in \mathbb{N}$;
		\item $\mathbb{E}_P(X_{n, i} \given \mathcal{F}_n) = 0$ for all $n \in \mathbb{N}, i \in [n]$;
		\item $\bigl| n^{-1} \sum_{i=1}^n \mathbb{E}_P(X_{n, i}^2 \given \mathcal{F}_n) - 1\bigr| = o_\mathcal{P}(1)$;
		\item there exists $\delta > 0$ such that
		\[
			\frac{1}{n}\sum_{i=1}^n \E_{P}\bigl(|X_{n, i}|^{2+\delta} \given \mathcal{F}_n\bigr) = o_{\mathcal{P}}(n^{\delta/2}).
		\]
	\end{enumerate}
	Then $S_n := n^{-1/2} \sum_{m=1}^n X_{n,m}$ converges uniformly in distribution to $N(0, 1)$, i.e. 
	\[
		\lim_{n \to \infty} \sup_{P \in \mathcal{P}} \sup_{x \in \mathbb{R}} |\mathbb{P}_P(S_n \leq x) - \Phi(x)| = 0.
	\]
	\begin{proof}
We will make the dependence of $X_{n, m}$ and $\mathcal{F}_n$ on $P$ clear by instead writing $X_{P, n, i}$ and $\mathcal{F}_{P, n}$ throughout.  By \citet[][Lemma 1]{kasy2019uniformity} it suffices to show that  
		\[
			\frac{1}{\sqrt{n}} \sum_{i=1}^n X_{P_n, n, i} \overset{d}{\to} N(0, 1)
		\]
		for any sequence $(P_n)_{n \in \mathbb{N}}$ in $\mathcal{P}$. Define the triangular array $W_{n, i} := n^{-1/2} X_{P_n, n, i}$ for $n \in \mathbb{N}$ and $i \in [n]$, and let $\tilde{\mathcal{F}}_{n, i}$ be the smallest $\sigma$-algebra containing $\mathcal{F}_{P_n, n}$ that makes $X_{P_n, n, 1}, \dots, X_{P_n, n, i}$ measurable (and $\tilde{\mathcal{F}}_{n,0} := \mathcal{F}_{P_n,n}$). We claim that $(W_{n, i}, \tilde{\mathcal{F}}_{n, i})$ form a martingale difference array. To see this, observe that $W_{n, i}$ is $\tilde{\mathcal{F}}_{n, i}$-measurable and 
		\begin{align*}
			\E_{P_n}(W_{n, i} \given \tilde{\mathcal{F}}_{n, i-1} ) &= \frac{1}{n^{1/2}}\E_{P_n}(X_{P_n, n, i} \given \mathcal{F}_{P_n, n}, X_{P_n, n, 1}, \dots, X_{P_n, n, i-1} )\\
			&= \frac{1}{n^{1/2}}\E_{P_n}(X_{P_n, n, i} \given \mathcal{F}_{P_n, n}) = 0,
		\end{align*}
		where we have used assumptions (i) and (ii) in the penultimate and final equalities, respectively, and this establishes our claim.  Now
		\[
			\sum_{i=1}^n \E_{P_n}(W_{n, i}^2 \given \tilde{\mathcal{F}}_{n, i-1}) = \frac{1}{n} \sum_{i=1}^n \E_{P_n}(X_{P_n, n, i}^2 \given \mathcal{F}_{P_n, n}) \overset{P}{\to} 1, 
		\]
		by assumptions (i) and (iii), and  
		\[
			\sum_{i=1}^n \E_{P_n}\bigl(|W_{n, i}|^{2+\delta} \given \tilde{\mathcal{F}}_{n, i-1}\bigr) = \frac{1}{n^{1+\delta/2}} \sum_{i=1}^n \E_{P_n}\bigl(|X_{P_n, n, i}|^{2+\delta} \given \mathcal{F}_{P_n, n}\bigr) \overset{P}{\to} 0,
		\]
		by assumptions (i) and (iv). It follows that for any $c > 0$,
		\begin{align*}
		    \sum_{i=1}^n \E_{P_n}\bigl(|W_{n, i}|^2 \ind_{\{|W_{n, i}| > c\}} \given \tilde{\mathcal{F}}_{n, i-1}\bigr) 		    &< 	\frac{1}{c^\delta}	    \sum_{i=1}^n \E_{P_n}\bigl(|W_{n, i}|^{2+\delta} \given \tilde{\mathcal{F}}_{n, i-1}\bigr) \overset{P}{\to} 0,
		\end{align*}
		so the conditional Lindeberg condition is satisfied.  The result therefore follows by the Lindeberg--Feller central limit theorem for martingales \citep[e.g.][Theorem 8.2.4]{durrett2019probability}.
	\end{proof}
\end{lemma}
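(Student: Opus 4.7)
My plan is to reduce the uniform CLT to a sequence-of-measures statement, and then apply the martingale version of the Lindeberg--Feller theorem. The key observation is that the limiting law $N(0,1)$ has a continuous distribution function, so by a standard characterisation of uniform convergence in distribution (e.g.~Lemma~1 of \citet{kasy2019uniformity}), it suffices to show that $S_n = n^{-1/2}\sum_{i=1}^n X_{P_n,n,i}$ converges in distribution to $N(0,1)$ under $P_n$ along an \emph{arbitrary} sequence $(P_n)_{n \in \mathbb{N}} \subseteq \mathcal{P}$. This step is what turns the uniform hypotheses (iii) and (iv) into usable pointwise statements: on the sequence $(P_n)$, the $o_{\mathcal{P}}(1)$ and $o_{\mathcal{P}}(n^{\delta/2})$ bounds imply ordinary convergence in probability of the same quantities evaluated under $P_n$.

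Next, I would fix such a sequence $(P_n)$, set $W_{n,i}:= n^{-1/2}X_{P_n,n,i}$, and define the filtration
\[
\tilde{\mathcal{F}}_{n,0} := \mathcal{F}_{P_n,n}, \qquad \tilde{\mathcal{F}}_{n,i} := \sigma\bigl(\mathcal{F}_{P_n,n}, X_{P_n,n,1}, \ldots, X_{P_n,n,i}\bigr) \quad (i \in [n]).
\]
Assumption~(i) (conditional independence given $\mathcal{F}_n$) gives
\[
\E_{P_n}(X_{P_n,n,i} \mid \tilde{\mathcal{F}}_{n,i-1}) = \E_{P_n}(X_{P_n,n,i} \mid \mathcal{F}_{P_n,n}),
\]
which equals $0$ by~(ii). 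Hence $(W_{n,i},\tilde{\mathcal{F}}_{n,i})$ is a martingale difference array, as required for the martingale CLT.

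Finally, I would verify the two hypotheses of the Lindeberg--Feller martingale CLT (e.g.~Theorem~8.2.4 of Durrett). For the conditional variance condition, conditional independence and~(iii) yield
\[
\sum_{i=1}^n \E_{P_n}(W_{n,i}^2 \mid \tilde{\mathcal{F}}_{n,i-1}) = \frac{1}{n}\sum_{i=1}^n \E_{P_n}(X_{P_n,n,i}^2 \mid \mathcal{F}_{P_n,n}) \xrightarrow{P} 1.
\]
For the conditional Lindeberg condition, for any $c>0$ I would use the crude bound $t^2\mathbf{1}_{\{|t|>c\}} \leq c^{-\delta}|t|^{2+\delta}$ to obtain
\[
\sum_{i=1}^n \E_{P_n}\bigl(W_{n,i}^2 \mathbf{1}_{\{|W_{n,i}|>c\}} \mid \tilde{\mathcal{F}}_{n,i-1}\bigr) \leq \frac{1}{c^\delta n^{1+\delta/2}}\sum_{i=1}^n \E_{P_n}\bigl(|X_{P_n,n,i}|^{2+\delta} \mid \mathcal{F}_{P_n,n}\bigr),
\]
which converges to $0$ in probability by~(iv). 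Applying the martingale CLT then delivers $S_n \xrightarrow{d} N(0,1)$ under $P_n$, completing the reduction.

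The main subtlety is not in any single computation, but in the initial reduction step: one must be careful that the uniformity hypothesis ($o_{\mathcal{P}}$ rather than $o_P$) is exactly what is needed so that, after extracting the subsequence $P_n$ (not itself fixed), the conditions (iii) and~(iv) still yield convergence in probability of the relevant quantities computed under $P_n$. Once this reduction is accepted, the remainder is a textbook application of the martingale Lindeberg--Feller theorem.
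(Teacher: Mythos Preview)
Your proposal is correct and follows essentially the same approach as the paper's proof: both reduce the uniform statement to an arbitrary sequence $(P_n)$ via \citet[][Lemma~1]{kasy2019uniformity}, construct the same martingale difference array $W_{n,i}=n^{-1/2}X_{P_n,n,i}$ with filtration $\tilde{\mathcal F}_{n,i}=\sigma(\mathcal F_{P_n,n},X_{P_n,n,1},\ldots,X_{P_n,n,i})$, verify the conditional variance and Lindeberg conditions using the bound $t^2\mathbf 1_{\{|t|>c\}}\leq c^{-\delta}|t|^{2+\delta}$, and conclude with the martingale Lindeberg--Feller CLT (Durrett, Theorem~8.2.4).
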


\begin{lemma}
	\label{Lemma: conditional lln}
	Let $(X_{n, i})_{n \in \mathbb{N}, i \in [n]}$ be a triangular array of real-valued random variables and let $(\mathcal{F}_n)_{n \in \mathbb{N}}$ be a filtration on $\mathcal{F}$. Assume that
	\begin{enumerate}[(i)]
		\item $X_{n, 1}, \dots, X_{n, n}$ are conditionally independent given $\mathcal{F}_n$ for all $n \in \mathbb{N}$;
		\item there exists $ \delta \in (0,1]$ such that
		\[
			\sum_{i=1}^n \E_{P}\bigl(|X_{n, i}|^{1+\delta} \given \mathcal{F}_n\bigr) = o_{\mathcal{P}}(n^{1+\delta}).
		\]
	\end{enumerate}
	Then $S_n := n^{-1} \sum_{i=1}^n X_{n,i}$ and $\mu_{P, n} := n^{-1} \sum_{i=1}^n \E_P(X_{n,i} \given \mathcal{F}_n)$ satisfy $|S_n - \mu_{P, n}| =  o_\mathcal{P}(1)$; i.e., for any $\epsilon > 0$
	\[
		\lim_{n \to \infty} \sup_{P \in \mathcal{P}} \mathbb{P}_P(|S_n-\mu_{P, n}| > \epsilon) = 0.
	\]
	\begin{proof}
		For $n \in \mathbb{N}$, $i \in [n]$, define $W_{n, i} := X_{n, i} - \mu_{P, n}$. Note that 
		\begin{equation}
			\label{eq: lln decay}
			\begin{aligned}
				\sup_{P \in \mathcal{P}} \sum_{i=1}^n \E_P\bigl(|W_{n, i}|^{1+\delta} \given \mathcal{F}_n\bigr)
				&\leq 2^{\delta} \biggl(\sup_{P \in \mathcal{P}}  \sum_{i=1}^n  \E_P\bigl(|X_{n, i}|^{1+\delta} \given \mathcal{F}_n\bigr) + n |\mu_{P, n}|^{1+\delta} \biggr)\\
				&\leq 2^{\delta+1} \biggl(\sup_{P \in \mathcal{P}}  \sum_{i=1}^n  \E_P\bigl(|X_{n, i}|^{1+\delta} \given \mathcal{F}_n\bigr)\biggr) = o_{\mathcal{P}}(n^{1+\delta}),
			\end{aligned}
		\end{equation}
		by assumption (ii).  We need to show that for any $\epsilon > 0$,
		\[
		\lim_{n \to \infty} \sup_{P \in \mathcal{P}} \mathbb{P}_P \biggl( \biggl| \frac{1}{n} \sum_{i=1}^n  W_{n, i} \biggr| \geq \epsilon \biggr) = 0.
		\]
		Define $W_{n, i}^< := W_{n, i} \ind_{\{ |W_{n, i}| \leq n \}}$ and $W_{n, i}^> := W_{n, i} \ind_{\{ |W_{n, i}| > n \}}$. By the triangle inequality we can write
		\begin{align*}
			\sup_{P \in \mathcal{P}} \mathbb{P}_P \biggl(& \biggl| \frac{1}{n} \sum_{i=1}^n  W_{n, i} \biggr| \geq \epsilon \biggr) \leq \underbrace{\sup_{P \in \mathcal{P}} \mathbb{P}_P \biggl( \biggl| \frac{1}{n} \sum_{i=1}^n  [W_{n, i}^< - \E_P(W_{n, i}^< \given \mathcal{F}_n)] \biggr| \geq \frac{\epsilon}{3} \biggr)}_{\RN{1}_n}\\
			&+ \underbrace{\sup_{P \in \mathcal{P}}\mathbb{P}_P \biggl( \biggl| \frac{1}{n} \sum_{i=1}^n  W_{n, i}^> \biggr| \geq \frac{\epsilon}{3} \biggr)}_{\RN{2}_n} + \underbrace{\sup_{P \in \mathcal{P}}\mathbb{P}_P \biggl( \biggl| \frac{1}{n} \sum_{i=1}^n  \E_P(W_{n, i}^< \given \mathcal{F}_n) \biggr| \geq \frac{\epsilon}{3} \biggr) }_{\RN{3}_n},
		\end{align*}
		and we will treat each term separately. Considering first $\RN{1}_n$, we note that
		\begin{align*}
			\RN{1}_n &= \sup_{P \in \mathcal{P}} \mathbb{P}_P \biggl( \biggl| \frac{1}{n} \sum_{i=1}^n  [W_{n, i}^< - \E_P(W_{n, i}^< \given \mathcal{F}_n)] \biggr| \wedge \frac{\epsilon}{3} \geq \frac{\epsilon}{3} \biggr)\\
			&\leq \frac{9}{\epsilon^2} \sup_{P \in \mathcal{P}} \E_P \biggl( \biggl\{ \frac{1}{n} \sum_{i=1}^n  \bigl[W_{n, i}^< - \E_P(W_{n, i}^< \given \mathcal{F}_n)\bigr] \biggr\}^2 \wedge \frac{\epsilon^2}{9}  \biggr) \\
			&\leq \frac{9}{\epsilon^2}\sup_{P \in \mathcal{P}} \E_P \biggl( \E_P\biggl[ \biggl\{ \frac{1}{n} \sum_{i=1}^n  [W_{n, i}^< - \E_P(W_{n, i}^< \given \mathcal{F}_n)] \biggr\}^2 \biggm| \mathcal{F}_n \biggr] \wedge \frac{\epsilon^2}{9}\biggr),
		\end{align*}
		where we have applied Markov's inequality and the tower property combined with the monotonicity of conditional expectations to move the minimum inside the conditional expectation. By assumption (i), the terms in the sum of squares are conditionally independent, so the cross terms vanish, and we find
		\begin{align*}
			\RN{1}_n &\leq \frac{9}{\epsilon^2} \sup_{P \in \mathcal{P}} \E_P \biggl( \biggl\{\frac{1}{n^2} \sum_{i=1}^n  \Var_P( W_{n, i}^<  \given \mathcal{F}_n )\biggr\} \wedge \frac{\epsilon^2}{9}  \biggr)\\
			&\leq \frac{9}{\epsilon^2}\sup_{P \in \mathcal{P}} \E_P \biggl( \frac{1}{n^2} \sum_{i=1}^n  \E_P\bigl\{ (W_{n, i}^<)^2  \given \mathcal{F}_n \bigr\} \wedge \frac{\epsilon^2}{9}   \biggr).
		\end{align*}
		Now, for $\delta \in (0,1)$, 
		\begin{align*}
			\E_P\bigl\{ &(W_{n, i}^<)^2  \given \mathcal{F}_n  \bigr\} = \int_0^\infty \mathbb{P}_P\bigl((W_{n, i}^<)^2 > t \given \mathcal{F}_n \bigr) \, \mathrm{d}t =  \int_0^\infty 2 y \, \mathbb{P}_P(|W_{n, i}^<| > y \given \mathcal{F}_n) \, \mathrm{d}y\\
			&\leq \int_0^n 2 y \, \mathbb{P}_P(|W_{n, i}| > y \given \mathcal{F}_n) \, \mathrm{d}y = n^2 \int_0^1 2 u \, \mathbb{P}_P(|W_{n, i}| > nu \given \mathcal{F}_n) \, \mathrm{d}u\\
			&\leq n^{1-\delta} \biggl(\int_0^1 2 u^{-\delta} \, \mathrm{d}u \biggr) \E_P(|W_{n, i}|^{1+\delta} \given \mathcal{F}_n) = \frac{2}{1-\delta}n^{1-\delta}\E_P(|W_{n, i}|^{1+\delta} \given \mathcal{F}_n),
		\end{align*}
		where we have used the substitutions $y = \sqrt{t}$ and $u=(1/n)y$, as well as the conditional version of Markov's inequality.  We deduce that for any $\delta \in (0,1]$, 
		\[
		\RN{1}_n \leq \frac{9}{\epsilon^2} \biggl(\frac{2\mathbbm{1}_{\{\delta \in (0,1)\}}}{1-\delta} + \mathbbm{1}_{\{\delta = 1\}}\biggr) \sup_{P \in \mathcal{P}} \E_P \biggl( \frac{1}{n^{1+\delta}} \sum_{i=1}^n  \E_P\bigl\{( |W_{n, i}|^{1+\delta}  \given \mathcal{F}_n  \bigr\} \wedge \frac{\epsilon^2}{9}  \biggr) \rightarrow 0, 
		\]
		by \eqref{eq: lln decay} and Lemma~\ref{Lemma: bounded convergence theorem}.  
		
		We now deal with $\RN{2}_n$ and $\RN{3}_n$ by first noting that, using similar $\epsilon/3$-thresholding as above, we obtain
		\begin{align*}
		\RN{3}_n &= \sup_{P \in \mathcal{P}}\mathbb{P}_P \biggl( \biggl| \frac{1}{n} \sum_{i=1}^n  \E_P(W_{n, i}^< \given \mathcal{F}_n) \wedge \frac{\epsilon}{3} \biggr| \geq \frac{\epsilon}{3} \biggr)\\
		&\leq \frac{3}{\epsilon} \sup_{P \in \mathcal{P}} \mathbb{E}_P \biggl(  \biggl| \frac{1}{n} \sum_{i=1}^n   \E_P(W_{n, i}^< \given \mathcal{F}_n) \biggr|  \wedge \frac{\epsilon}{3}   \biggr) 
		\end{align*}
		by Markov's inequality. Now, by construction, we can write 
		\[
			\frac{1}{n} \sum_{i=1}^n   \E_P(W_{n, i}^< \given \mathcal{F}_n) = -\frac{1}{n} \sum_{i=1}^n   \E_P(W_{n, i}^> \given \mathcal{F}_n),
		\]
		and thus by the triangle inequality,
		\begin{align*}
			\RN{3}_n \leq \frac{3}{\epsilon} \sup_{P \in \mathcal{P}} \mathbb{E}_P \biggl( \biggl[ \frac{1}{n} \sum_{i=1}^n \bigl|  \E_P(W_{n, i}^> \given \mathcal{F}_n) \bigr|  \biggr] \wedge \frac{\epsilon}{3}  \biggr) +  \frac{3}{\epsilon} \sup_{P \in \mathcal{P}} \mathbb{E}_P \Bigl(  |R_n|  \wedge \frac{\epsilon}{3}   \Bigr).
		\end{align*}
		The second term converges to $0$ by Lemma~\ref{Lemma: bounded convergence theorem}, so it remains to show that the first term converges to $0$. Now~$\RN{2}_n$ can be seen to also be upper bounded by the first term by a similar argument to the one given above, so we are done if we can show that the first term converges to $0$. 
		Applying conditional Hölder's inequality \citep[][Theorem 10.1.6]{gut2013probability} followed by conditional Markov's inequality yields
		\begin{align*}
			\frac{3}{\epsilon} \sup_{P \in \mathcal{P}} &\mathbb{E}_P \biggl( \biggl[ \frac{1}{n} \sum_{i=1}^n \E_P\bigl(| W_{n, i}^>| \given \mathcal{F}_n\bigr)  \biggr] \wedge \frac{\epsilon}{3} \biggr)\\
			&\leq \frac{3}{\epsilon} \sup_{P \in \mathcal{P}} \mathbb{E}_P \biggl( \biggl[ \frac{1}{n} \sum_{i=1}^n \E_P\bigl(| W_{n, i} |^{1+\delta} \given \mathcal{F}_n\bigr)^{\frac{1}{1+\delta}} \mathbb{P}_P\bigl(| W_{n, i} | > n \given \mathcal{F}_n\bigr)^{\frac{\delta}{1+\delta}}   \biggr] \wedge \frac{\epsilon}{3} \biggr)\\
			&\leq \frac{3}{\epsilon} \sup_{P \in \mathcal{P}} \mathbb{E}_P \biggl( \biggl[ \frac{1}{n^{1+\delta}} \sum_{i=1}^n \E_P\bigl(| W_{n, i} |^{1+\delta} \given \mathcal{F}_n\bigr)   \biggr] \wedge \frac{\epsilon}{3} \biggr).
		\end{align*}
		Finally, combining the above with \eqref{eq: lln decay} and Lemma~\ref{Lemma: bounded convergence theorem} yields the desired result.
	\end{proof}
\end{lemma}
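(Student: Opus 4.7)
The plan is to recenter and show that $S_n - \mu_{P,n} = n^{-1} \sum_{i=1}^n W_{n,i}$ with $W_{n,i} := X_{n,i} - \mu_{P,n}$ is $o_{\mathcal{P}}(1)$. The key structural identity to exploit is that, by definition of $\mu_{P,n}$, $\sum_{i=1}^n \E_P(W_{n,i} \given \mathcal{F}_n) = 0$, even though individual $\E_P(W_{n,i} \given \mathcal{F}_n)$ need not vanish. Because we only have a $(1+\delta)$ conditional moment, a direct conditional-Chebyshev argument is unavailable for $\delta < 1$, so the main device will be truncation at level $n$: set $W^<_{n,i} := W_{n,i} \ind_{\{|W_{n,i}| \leq n\}}$ and $W^>_{n,i} := W_{n,i} - W^<_{n,i}$, and decompose
\[
\frac{1}{n}\sum_{i=1}^n W_{n,i} = \underbrace{\frac{1}{n}\sum_{i=1}^n \bigl(W^<_{n,i} - \E_P(W^<_{n,i}\given \mathcal{F}_n)\bigr)}_{A_n} + \underbrace{\frac{1}{n}\sum_{i=1}^n W^>_{n,i}}_{B_n} + \underbrace{\frac{1}{n}\sum_{i=1}^n \E_P(W^<_{n,i}\given \mathcal{F}_n)}_{C_n}.
\]

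For each term I will apply Markov's inequality after capping the quantity inside the probability at $\epsilon/3$, then move the cap inside the conditional expectation via the tower property and conditional Jensen, thereby reducing everything to bounding uniform expectations of capped quantities — this is the standard route to uniform-in-$\mathcal{P}$ convergence and it pairs naturally with Lemma~\ref{Lemma: bounded convergence theorem}. For $A_n$, conditional independence (assumption (i)) kills cross-moments, so $\E_P(A_n^2\given \mathcal{F}_n) \leq n^{-2}\sum_{i=1}^n \E_P\bigl((W^<_{n,i})^2\given \mathcal{F}_n\bigr)$; the truncation then yields $\E_P((W^<_{n,i})^2\given \mathcal{F}_n) \lesssim n^{1-\delta} \E_P(|W_{n,i}|^{1+\delta}\given \mathcal{F}_n)$ (via the layer-cake representation together with conditional Markov), which combined with assumption (ii) gives $A_n = o_{\mathcal{P}}(1)$. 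For $B_n$, conditional Hölder gives $\E_P(|W^>_{n,i}|\given \mathcal{F}_n) \leq \E_P(|W_{n,i}|^{1+\delta}\given \mathcal{F}_n)^{1/(1+\delta)} \prob_P(|W_{n,i}| > n\given \mathcal{F}_n)^{\delta/(1+\delta)}$, and a further application of conditional Markov on the probability produces the bound $n^{-\delta} \E_P(|W_{n,i}|^{1+\delta}\given \mathcal{F}_n)$, so assumption (ii) again delivers $B_n = o_{\mathcal{P}}(1)$. The third term is handled by the identity $C_n = -n^{-1}\sum_{i=1}^n \E_P(W^>_{n,i}\given \mathcal{F}_n)$, which follows from $\sum_{i=1}^n \E_P(W_{n,i}\given \mathcal{F}_n) = 0$, and is then dominated by the same bound as $B_n$.

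To upgrade from conditional-expectation estimates to uniform probability statements, I will use the (by now familiar) min-truncation trick: an expression $|Y_n| \wedge c$ whose expectation vanishes uniformly also vanishes in $\sup_{P \in \mathcal{P}} \prob_P$, by the preceding lemmas of the appendix. The moment-propagation step (passing from $W_{n,i}$ back to $X_{n,i}$ via $C_r$-inequality) is harmless because $|W_{n,i}|^{1+\delta} \leq 2^\delta (|X_{n,i}|^{1+\delta} + |\mu_{P,n}|^{1+\delta})$ and conditional Jensen controls the second piece by the first.

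The main obstacle I expect is the truncated second-moment bound $\E_P((W^<_{n,i})^2\given \mathcal{F}_n) \lesssim n^{1-\delta} \E_P(|W_{n,i}|^{1+\delta}\given \mathcal{F}_n)$ at the boundary: the obvious constant $\tfrac{2}{1-\delta}$ coming from $\int_0^1 2u^{-\delta}\,\mathrm{d}u$ blows up as $\delta \uparrow 1$. I will therefore treat $\delta = 1$ as a separate (in fact simpler) case: there assumption (ii) reads $\sum_{i=1}^n \E_P(W_{n,i}^2 \given \mathcal{F}_n) = o_{\mathcal{P}}(n^2)$ directly, so truncation is unnecessary and a plain conditional-Chebyshev argument on $n^{-1} \sum_{i=1}^n (W_{n,i} - \E_P(W_{n,i}\given \mathcal{F}_n))$ — combined with the vanishing of $\sum_i \E_P(W_{n,i}\given \mathcal{F}_n)/n$ — closes the proof. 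The remaining bookkeeping is routine but tedious: care with the three-way split, with keeping all bounds uniform over $P \in \mathcal{P}$, and with the correct application of Lemma~\ref{Lemma: bounded convergence theorem} in its capped form.
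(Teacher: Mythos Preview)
Your proposal is correct and follows essentially the same route as the paper: recenter to $W_{n,i}=X_{n,i}-\mu_{P,n}$, truncate at level $n$, use the three-way split $A_n+B_n+C_n$, handle $A_n$ via conditional independence plus the layer-cake/Markov bound $\E_P((W^<_{n,i})^2\given\mathcal{F}_n)\lesssim n^{1-\delta}\E_P(|W_{n,i}|^{1+\delta}\given\mathcal{F}_n)$, handle $B_n$ and $C_n$ via conditional H\"older together with the identity $C_n=-n^{-1}\sum_i\E_P(W^>_{n,i}\given\mathcal{F}_n)$, and throughout use the capping trick with Lemma~\ref{Lemma: bounded convergence theorem} to make the convergence uniform in $P$. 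The only cosmetic difference is that you treat $\delta=1$ by dispensing with truncation entirely (direct conditional Chebyshev), whereas the paper keeps the same truncation framework and simply uses $\E_P((W^<_{n,i})^2\given\mathcal{F}_n)\leq\E_P(W_{n,i}^2\given\mathcal{F}_n)$ in that case.
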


\begin{lemma} \label{Lemma: linear regression}
Let $\mathcal{P}$ denote a family of distributions of $(Y, Z)$ taking values in $\mathbb{R} \times \mathbb{R}^d$. Define $\mbb{\Sigma} \equiv \mbb{\Sigma}_P:=\E_P(Z Z^{\top}) \in \mathbb{R}^{d \times d}$ and suppose that this is invertible for all $P \in \mathcal{P}$. Let $\mbb{\beta} \equiv \mbb\beta_P := \mbb{\Sigma}_P^{-1} \E_P(Z Y)$, $\varepsilon \equiv \varepsilon_P := Y - \mbb\beta_P^\top Z$ and $\mbb{\Theta} \equiv \mbb{\Theta}_P := \E(Z Z^{\top} \varepsilon^2)$. Suppose there exist $C, c, \delta > 0$ such that the following hold:
\begin{enumerate}[(i)]
	\item $\inf_{P \in \mathcal{P}} \min\{\lambda_{\min}(\mbb{\Theta}), \lambda_{\min}(\mbb{\Sigma}) \} \geq c$
	\item $\sup_{P \in \mathcal{P}} \max\bigl\{\E_P(\|Z \varepsilon \|_2^{2+\delta}), \E_P(\|Z \|_\infty^{2+\delta})\bigr\} \leq C$.
\end{enumerate}
Given independent copies  $(Y_1, Z_1), \dots, (Y_n, Z_n)$ of $(Y, Z)$, let $\mbb{\widehat{\beta}}$ denote the ordinary least squares estimator from regressing $Y$ on $Z$. Then,
\[
	\sqrt{n} \mbb{\Theta}^{-1/2} \mbb{\Sigma} (\mbb{\widehat{\beta}} - \mbb{\beta})
\]
converges uniformly to a standard $d$-variate Gaussian distribution.

\begin{proof}
Let $\mbb{\widehat{\Sigma}} \equiv (\mbb{\widehat{\Sigma}}_{jk})_{j,k=1}^d := n^{-1} \sum_{i=1}^n Z_i Z_i^\top$ and write $\mbb{\Sigma}_{jk}$ for the $(j,k)$th entry of $\mbb{\Sigma}$.  We first argue that 
\begin{equation}
	\label{Eq: linear regression sigma convergence}
	\| \mbb{\widehat{\Sigma}} - \mbb{\Sigma}_P \|_{\op} = o_{\mathcal{P}}(1).
\end{equation}
By the equivalence of finite-dimensional norms, it suffices to show that $\max_{j,k \in [d]} |\mbb{\widehat{\Sigma}}_{jk} - \mbb{\Sigma}_{jk} | = o_{\mathcal{P}}(1)$, which is equivalent to $\mbb{\widehat{\Sigma}}_{jk} - \mbb{\Sigma}_{jk} =  o_{\mathcal{P}}(1)$ for all $j,k \in [d]$. To show this final claim, let $Z_{ij}$ denote the $j$th component of $Z_i$.  Then 
\[
\mbb{\widehat{\Sigma}}_{jk} - \mbb{\Sigma}_{jk}  = \frac{1}{n} \sum_{i=1}^n \bigl\{Z_{ij} Z_{ik} - \E_P\bigl(Z_{ij}Z_{ik} \bigr)\bigr\}.
\]
Fix $j,k \in [d]$ and define the triangular array $X_{n, i} := Z_{ij}Z_{ik} - \E_P(Z_{ij}Z_{ik})$ for $i \in [n]$ and $n \in \mathbb{N}$.  We aim to apply Lemma~\ref{Lemma: conditional lln} to $(X_{n,i})_{n \in \mathbb{N},i \in [n]}$, where we condition on the trivial $\sigma$-algebra and have $\mu_{P,n} = 0$.  Now, condition~(i) of Lemma~\ref{Lemma: conditional lln} is satisfied by hypothesis, and for condition~(ii) we have by Jensen's inequality and the Cauchy--Schwarz inequality that 
\[
\frac{1}{n} \sum_{i=1}^n \E_P(|X_{n, i}|^{1+\delta/2}) \leq 2^{1+\delta/2} \E_P(|Z_{1j} Z_{1k}|^{1+\delta/2}) \leq 2^{1+\delta/2} \E_P(\|Z\|_\infty^{2+\delta}) \leq 2^{1+\delta/2} C.
\]
Thus \eqref{Eq: linear regression sigma convergence} follows.

We now argue that 
\begin{equation}
	\label{Eq: linear regression sigma_inv convergence}
	\| \mbb{\widehat{\Sigma}}^{-1} - \mbb{\Sigma}^{-1} \|_{\op} = o_{\mathcal{P}}(1).
\end{equation}
It follows immediately our assumption on $\lambda_{\min}(\mbb{\Sigma})$ by Lemma~\ref{Lemma: locally lipschitz matrix inversion} that for any $\epsilon > 0$, we have
\[
\pr_P(\| \mbb{\widehat{\Sigma}}^{-1} - \mbb{\Sigma}^{-1} \|_{\op} \geq \epsilon) \leq \pr_P(\| \mbb{\widehat{\Sigma}} - \mbb{\Sigma} \|_{\op} > c/2) + \pr_P(\| \mbb{\widehat{\Sigma}} - \mbb{\Sigma} \|_{\op} \geq c^2 \epsilon/2).
\]
Thus taking suprema over $\mathcal{P}$ and applying \eqref{Eq: linear regression sigma convergence}, we have shown \eqref{Eq: linear regression sigma_inv convergence}.

We now turn to proving the stated result. Defining $U_n := n^{-1/2}\sum_{i=1}^n  \mbb{\Theta}^{-1/2} Z_i \varepsilon_i$, we have 
\[
	\sqrt{n} \mbb{\Theta}^{-1/2} \mbb{\Sigma} (\mbb{\widehat{\beta}} - \mbb{\beta}) = \mbb{\Theta}^{-1/2} \mbb{\Sigma} \mbb{\widehat{\Sigma}}^{-1} \mbb{\Theta}^{1/2}  U_n.
\]
The summands in the definition of $U_n$ are mean zero with identity covariance matrix and they satisfy Lyapunov's condition since
\[
	\E \bigl( \| \mbb{\Theta}^{-1/2} Z \varepsilon\|_2^{2+\delta} \bigr) \leq \lambda_{\min}(\mbb{\Theta})^{-(1+\delta/2)} \E \bigl( \|  Z \varepsilon\|_2^{2+\delta}\bigr) \leq c^{-(1+\delta/2)} C.
\]
The Lindeberg--Feller central limit theorem \citep[][Proposition 2.27]{vaart1998} therefore yields that $U_n$ converges uniformly to a $d$-variate standard Gaussian. Combining this with a uniform version of Slutsky's theorem \citep[][Theorem 6.3]{bengs2019uniform} and \eqref{Eq: linear regression sigma_inv convergence} yields the desired result.
\end{proof}
\end{lemma}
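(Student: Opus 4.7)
The plan is to use the normal equations to write $\sqrt{n}(\mbb{\widehat{\beta}} - \mbb{\beta}) = \mbb{\widehat{\Sigma}}^{-1} U_n^\star$, where $\mbb{\widehat{\Sigma}} := n^{-1}\sum_{i=1}^n Z_iZ_i^\top$ and $U_n^\star := n^{-1/2}\sum_{i=1}^n Z_i\varepsilon_i$, exploiting $\E_P(Z\varepsilon_P) = \mbb{0}$, which holds by construction of $\mbb{\beta}_P$. Thus
\[
\sqrt{n}\, \mbb{\Theta}^{-1/2} \mbb{\Sigma} (\mbb{\widehat{\beta}} - \mbb{\beta}) = \bigl(\mbb{\Theta}^{-1/2} \mbb{\Sigma} \mbb{\widehat{\Sigma}}^{-1} \mbb{\Theta}^{1/2}\bigr) \, U_n, \qquad U_n := \mbb{\Theta}^{-1/2} U_n^\star.
\]
I would show (i) $U_n$ converges uniformly in distribution to $N_d(\mbb{0},\mbb{I}_d)$, and (ii) the matrix prefactor converges uniformly in probability to $\mbb{I}_d$, then combine via a uniform version of Slutsky's theorem (e.g.~Theorem~6.3 of Bengs and Holzmann, 2019).

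For (i), the summands $\mbb{\Theta}^{-1/2} Z_i \varepsilon_i$ are i.i.d.\ with mean zero and identity covariance by the definition of $\mbb{\Theta}_P$. Lyapunov's condition holds uniformly because
\[
\E_P\bigl(\|\mbb{\Theta}^{-1/2} Z \varepsilon\|_2^{2+\delta}\bigr) \leq \lambda_{\min}(\mbb{\Theta})^{-(1+\delta/2)} \E_P\bigl(\|Z\varepsilon\|_2^{2+\delta}\bigr) \leq c^{-(1+\delta/2)} C,
\]
by (i) and (ii). The multivariate Lindeberg--Feller theorem yields convergence in distribution along any sequence $(P_n) \subset \mathcal{P}$ (the moment bounds are inherited), and uniform convergence then follows from the sequential characterisation used in the proof of Lemma~\ref{Lemma: conditional clt} (i.e.~Lemma~1 of \citealp{kasy2019uniformity}).

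For (ii), by equivalence of norms on $\mathbb{R}^{d \times d}$ it suffices to show the entrywise uniform law of large numbers $\mbb{\widehat{\Sigma}}_{jk} - \mbb{\Sigma}_{jk} = o_{\mathcal{P}}(1)$ for each $(j,k)$. Applying Lemma~\ref{Lemma: conditional lln} with trivial conditioning to $X_{n,i} := Z_{ij}Z_{ik} - \E_P(Z_{1j}Z_{1k})$, the required moment bound is $\E_P(|X_{n,i}|^{1+\delta/2}) \leq 2^{1+\delta/2}\E_P(\|Z\|_\infty^{2+\delta}) \leq 2^{1+\delta/2} C$, using Jensen and Cauchy--Schwarz. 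Hence $\|\mbb{\widehat{\Sigma}} - \mbb{\Sigma}\|_{\op} = o_{\mathcal{P}}(1)$. To pass to the inverse I would invoke a locally Lipschitz perturbation bound for matrix inversion of the form $\|\mbb{\widehat{\Sigma}}^{-1} - \mbb{\Sigma}^{-1}\|_{\op} \leq 2c^{-2} \|\mbb{\widehat{\Sigma}} - \mbb{\Sigma}\|_{\op}$ on the event $\{\|\mbb{\widehat{\Sigma}} - \mbb{\Sigma}\|_{\op} \leq c/2\}$, using the uniform lower bound $\lambda_{\min}(\mbb{\Sigma}_P) \geq c$; since the complementary event has probability $o_{\mathcal{P}}(1)$, we obtain $\|\mbb{\widehat{\Sigma}}^{-1} - \mbb{\Sigma}^{-1}\|_{\op} = o_{\mathcal{P}}(1)$. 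Finally, the uniform operator-norm bounds $\|\mbb{\Theta}^{1/2}\|_{\op} \leq C^{1/2}$ (from $\E_P(\|Z\varepsilon\|_2^{2+\delta}) \leq C$ and Jensen) and $\|\mbb{\Theta}^{-1/2}\|_{\op} \leq c^{-1/2}$ control the conjugation, yielding $\mbb{\Theta}^{-1/2} \mbb{\Sigma} \mbb{\widehat{\Sigma}}^{-1} \mbb{\Theta}^{1/2} = \mbb{I}_d + o_{\mathcal{P}}(1)$. The main obstacle is the inversion step: matrix inversion is not globally Lipschitz, so without the uniform lower eigenvalue bound in assumption~(i) the convergence of $\mbb{\widehat{\Sigma}}^{-1}$ would fail to be uniform over $\mathcal{P}$; this assumption is what makes the local-Lipschitz event have uniformly negligible complement and thereby drives the whole argument.
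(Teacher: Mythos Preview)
Your proposal is correct and follows essentially the same route as the paper's proof: the same decomposition $\sqrt{n}\,\mbb{\Theta}^{-1/2}\mbb{\Sigma}(\mbb{\widehat{\beta}}-\mbb{\beta}) = \mbb{\Theta}^{-1/2}\mbb{\Sigma}\mbb{\widehat{\Sigma}}^{-1}\mbb{\Theta}^{1/2} U_n$, the same entrywise application of Lemma~\ref{Lemma: conditional lln} for $\|\mbb{\widehat{\Sigma}}-\mbb{\Sigma}\|_{\op}=o_{\mathcal{P}}(1)$, the same locally Lipschitz inversion step (which is exactly Lemma~\ref{Lemma: locally lipschitz matrix inversion}), the same Lyapunov bound $c^{-(1+\delta/2)}C$ for the CLT, and the uniform Slutsky combination. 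Your explicit remarks on bounding $\|\mbb{\Theta}^{\pm 1/2}\|_{\op}$ and on the sequential characterisation for uniform convergence are details the paper leaves implicit, but they do not constitute a different argument.
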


\subsection{Miscellaneous results} \label{Section: misc results}

\begin{proposition}
\label{Proposition: optimal f}
	Let $X, Y, Z$ be random variables with $Y \in \R$, $\E(Y^4) < \infty$ and $\Var(Y \given X, Z) >0$ almost surely. Then
	\begin{equation} \label{eq:oracle_min}
	\frac{\left(\E[\{Y - \E(Y \given Z)\} f(X, Z)]\right)^2}{\E[\{Y - \E(Y \given X, Z)\}^2 f(X, Z)^2] }
	\end{equation}
	is maximised over $f$ with  $0<\E\{f(X, Z)^4\} <\infty$ by
	\[
	f(X, Z) \propto \frac{\E(Y \given X, Z) - \E(Y \given Z)}{\Var(Y \given X, Z)},
	\]
	and up to positive scaling this is almost surely the unique maximiser.
\end{proposition}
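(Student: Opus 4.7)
The plan is to reduce the optimisation to a standard Cauchy--Schwarz problem. First I would use the tower property of conditional expectation to rewrite both the numerator and the denominator of \eqref{eq:oracle_min} purely in terms of $h(X,Z) := \E(Y \given X, Z) - \E(Y \given Z)$ and $v(X,Z) := \Var(Y \given X, Z)$. Conditioning on $(X, Z)$ in the numerator gives
\[
\E\bigl[\{Y - \E(Y \given Z)\} f(X,Z)\bigr] = \E\bigl[h(X,Z) f(X, Z)\bigr],
\]
and conditioning in the denominator yields $\E[\{Y - \E(Y \given X, Z)\}^2 f(X, Z)^2] = \E[v(X, Z) f(X, Z)^2]$. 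The objective thus becomes $\E[h f]^2 / \E[v f^2]$, which is scale-invariant in $f$ (so the ``up to positive scaling'' caveat is natural). Under $\E(Y^4) < \infty$ one has $\E(h^2) \leq 4\E(Y^2) < \infty$ and $\E(v^2) \leq \E(Y^4) < \infty$, so two applications of Cauchy--Schwarz show that both numerator and denominator are finite whenever $0 < \E(f^4) < \infty$, and the denominator is strictly positive because $v > 0$ almost surely forces $v f^2 = 0$ only when $f = 0$ almost surely.

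The key step is then a single Cauchy--Schwarz inequality. Since $v > 0$ almost surely, I would factor $h(X,Z) f(X,Z) = \{h(X,Z)/\sqrt{v(X,Z)}\} \cdot \{\sqrt{v(X,Z)} f(X,Z)\}$ and apply Cauchy--Schwarz in $L^2$ to obtain
\[
\E[h f]^2 = \bigl(\E\bigl[(h/\sqrt{v}) \cdot (\sqrt{v} f)\bigr]\bigr)^2 \leq \E\bigl[h^2/v\bigr] \cdot \E\bigl[v f^2\bigr].
\]
Dividing by $\E[v f^2] > 0$ shows that the ratio is bounded above by $\E[h^2/v]$. Equality in Cauchy--Schwarz in $L^2$ holds if and only if $\sqrt{v} f$ and $h/\sqrt{v}$ are almost surely proportional, equivalently $f = c \cdot h/v$ almost surely for some $c \in \mathbb{R}$. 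Taking any $c > 0$ makes the numerator non-negative, so $f \propto h/v$ attains the bound, and uniqueness up to positive scaling follows from the Cauchy--Schwarz equality case (modulo the trivial situation $h \equiv 0$, where the objective vanishes and the claim is vacuous).

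The main obstacle I expect is the careful handling of moment conditions. A priori, even if $\E(Y^4) < \infty$, the candidate optimiser $h/v$ need not satisfy $\E[(h/v)^4] < \infty$, because $v$ may be close to zero on sets of positive probability. I anticipate that the proof either implicitly imposes enough regularity (e.g.\ $\E[h^2/v] < \infty$ together with a fourth-moment bound on $h/v$) for the optimiser to lie in the admissible class, or addresses this by a truncation argument: set $f_M := (h/v)\mathbbm{1}\{|h/v| \leq M\}$, which is bounded and hence in the admissible class, check by monotone/dominated convergence that the ratio at $f_M$ approaches $\E[h^2/v]$ as $M \to \infty$, and combine with the Cauchy--Schwarz upper bound. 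Everything else in the proof is routine tower-property and Hilbert-space manipulation.
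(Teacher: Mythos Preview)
Your proposal is correct and follows essentially the same route as the paper: rewrite numerator and denominator via the tower property as $\E[hf]^2$ and $\E[vf^2]$, then apply Cauchy--Schwarz with the factorisation $hf = (h/\sqrt{v})(\sqrt{v}f)$ to obtain the sharp upper bound $\E[h^2/v]$ with equality iff $f \propto h/v$. In fact you are more careful than the paper, which does not address whether the candidate optimiser $h/v$ actually lies in the admissible class $\{f : 0 < \E f(X,Z)^4 < \infty\}$; your truncation remark is a genuine addition that the paper's proof simply omits.
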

\begin{proof}
The denominator of \eqref{eq:oracle_min} may be written as $\E \{\Var(Y \given X, Z) f(X, Z)^2\}$. Turning to the numerator, we have by the Cauchy--Schwarz inequality that
	\begin{align}
	\label{Eq:NumeratorBound}
		\bigl(\E[\{Y - \E(Y \given Z)\} & f(X, Z)]\bigr)^2 = \Bigl(\E[\{\E(Y \given X, Z) - \E(Y \given Z)\} f(X, Z)]\Bigr)^2 \nonumber \\
		&\leq \E \{\Var(Y \given X, Z) f(X, Z)^2\}  \,\, \E \biggl[\frac{\{\E(Y \given X, Z) - \E(Y \given Z)\}^2}{\Var(Y \given X, Z)}\biggr].
	\end{align}
Since the first factor in this final expression cancels with the denominator of~\eqref{eq:oracle_min}, and since we have equality in~\eqref{Eq:NumeratorBound} if and only if $f(X, Z) \propto \{\E(Y \given X, Z) - \E(Y \given Z)\} /\Var(Y \given X, Z)$ almost surely, the result follows.
\end{proof}

\begin{proposition}
    \label{Proposition: average of Z values}
    Let $Z_1,\ldots,Z_B$ be arbitrarily dependent standard normal random variables. For any $\alpha \in (0,1)$ and $B \in \mathbb{N}$, 
    \begin{equation*}
        \pr\biggl( \frac{1}{B} \sum_{b=1}^B Z_b \geq \frac{\varphi(z_{1-\alpha})}{\alpha}  \biggr) \leq \alpha,
    \end{equation*}
    where $\varphi$ denotes the density function of $N(0,1)$. Moreover, 
    \begin{equation*}
        \lim_{\alpha \rightarrow 0} \frac{\alpha^{-1}\varphi(z_{1-\alpha})}{z_{1-\alpha}} = 1.
    \end{equation*}
\begin{proof}
    Denoting $\bar{Z} := B^{-1}\sum_{b=1}^B Z_b$, observe that, for any $t,a \in \mathbb{R}$ with $t > a$,
    \begin{align*}
        \pr(\bar{Z} \geq t)  & =  \pr\bigl(\max\{\bar{Z} - a, 0\} \geq t - a\bigr) \\
        & \leq  \frac{\E\bigl(\max\{\bar{Z} - a, 0\} \bigr)}{t - a} \leq \frac{\E\bigl(\max\{Z_1 - a, 0\} \bigr)}{t - a},
    \end{align*}
    where the first inequality holds by Markov's inequality and the second by Jensen's inequality. Using the identity $\max\{x,0\} = (x+|x|) / 2$ for $x \in \mathbb{R}$, as well as the mean formula for a folded normal random variable, we compute the expectation above as
    \begin{equation*}
        \E\bigl(\max\{Z_1 - a, 0\} \bigr) = \varphi(a) + a\Phi(a) - a.
    \end{equation*}
    Therefore, taking $t := \alpha^{-1}\{\varphi(a) + a\Phi(a) - a\} + a > a$, we have
    \begin{equation*}
        \pr\biggl(\bar{Z} \geq \frac{\varphi(a) + a\Phi(a) - (1-\alpha)a}{\alpha} \biggr) \leq \alpha.
    \end{equation*}
    Now the first claim follows by observing that 
    \begin{equation*}
        \min_{a \in \mathbb{R}} \bigl\{ \varphi(a) + a\Phi(a) - (1-\alpha)a \bigr\} = \varphi(z_{1-\alpha}).
    \end{equation*}
For the second claim, we invoke the Mills ratio bounds
\[
\frac{z}{z^2+1} < \frac{1-\Phi(z)}{\varphi(z)} < \frac{1}{z}
\]
for $z > 0$.  Applying this bound at $z = z_{1 - \alpha}$ yields 
\[
1 < \frac{\alpha^{-1}\varphi(z_{1-\alpha})}{z_{1-\alpha}} < 1 + \frac{1}{z_{1-\alpha}^2},
\]
so the result follows.
\end{proof}
\end{proposition}
\begin{corollary} \label{Corollary: average of Z values}
    Let $(X_{P,n}^b)_{n \in \mathbb{N},b \in [B]}$ be sequences of real-valued random variables whose distributions are determined by $P \in \mathcal{P}$. Suppose that for each $b \in [B]$, 
    \begin{align} \label{Eq: asymptotic normality}
        \lim_{n \rightarrow \infty} \sup_{P \in \mathcal{P}}\sup_{x \in \mathbb{R}}|\pr_P(X_{P,n}^{b} \leq x) - \Phi(x)| =0.
    \end{align}
    Then for any fixed value of $\alpha \in (0,1)$,
   \begin{align*}
        \limsup_{n \rightarrow \infty} \sup_{P \in \mathcal{P}} \pr_P\biggl( \frac{1}{B} \sum_{b=1}^B X_{P,n}^{b} \geq \frac{\varphi(z_{1-\alpha})}{\alpha}  \biggr) \leq \alpha.
    \end{align*}
\begin{proof}
We aim to prove that for any $\epsilon >0$, there exists $N_\epsilon >0$ such that
\begin{align}
\label{Eq:Neps}
    \sup_{n \geq N_\epsilon}\sup_{P \in \mathcal{P}} \pr_P\bigl(\bar{X}_{P,n}  \geq \alpha^{-1} \varphi(z_{1-\alpha}) \bigr) \leq \alpha + \epsilon,
\end{align}
with $\bar{X}_{P,n}:=B^{-1} \sum_{b=1}^B X_{P,n}^b$. Given $\epsilon >0$, the asymptotic normality condition~\eqref{Eq: asymptotic normality} ensures the existence of $K_\epsilon > 0$ and $N_{\epsilon,1} \in \mathbb{N}$ such that $\mathcal{A}_{P,n,\epsilon}:=\{X_{P,n}^{1} - z_{1-\alpha} \leq K_\epsilon,\ldots, X_{P,n}^{B} - z_{1-\alpha} \leq K_\epsilon\}$ satisfies
    \begin{align*}
        \inf_{n \geq N_{\epsilon,1}}\inf_{P \in \mathcal{P}} \pr_P\bigl(\mathcal{A}_{P,n,\epsilon}\bigr) \geq 1 - \epsilon/2.
    \end{align*}
Then following the proof of Proposition~\ref{Proposition: average of Z values} with $t = \alpha^{-1}\varphi(z_{1-\alpha}) + z_{1-\alpha}$ and $a =z_{1-\alpha}$, we have
\begin{align*}
    & \sup_{n \geq N_{\epsilon,1}}\sup_{P \in \mathcal{P}} \pr_P\bigl(\bar{X}_{n,P}  \geq \alpha^{-1} \varphi(z_{1-\alpha}) \bigr) \\
    & \leq \sup_{n \geq N_{\epsilon,1}}\sup_{P \in \mathcal{P}}\frac{\E_P\bigl(\max\{X_{P,n}^1 - z_{1-\alpha}, 0\}\mathbbm{1}_{\{X_{P,n}^1 - z_{1 - \alpha} \leq K_{\epsilon}\}}\bigr)}{\alpha^{-1}\varphi(z_{1-\alpha})} + \sup_{n \geq N_{\epsilon,1}}\sup_{P \in \mathcal{P}} \pr_P\bigl(\mathcal{A}_{P,n,\epsilon}^c \bigr) \\
    & \leq \sup_{n \geq N_{\epsilon,1}}\sup_{P \in \mathcal{P}}\frac{\E_P\bigl(\max\{X_{P,n}^1 - z_{1-\alpha}, 0\}\ind_{\{X_{P,n}^1 - z_{1 - \alpha} \leq K_{\epsilon}\}}\bigr)}{\alpha^{-1}\varphi(z_{1-\alpha})} + \frac{\epsilon}{2}.
\end{align*}
Observe that $f: x \mapsto \max\{x-z_{1-\alpha},0\} \ind_{\{x - z_{1-\alpha} \leq K_\epsilon\}}$ is a bounded, upper semi-continuous function. Therefore, \citet[][Theorem 2.1 (5)]{bengs2019uniform} along with the asymptotic normality condition~\eqref{Eq: asymptotic normality} guarantees the existence of $N_{\epsilon,2}$ such that 
\begin{align*}
    \sup_{n \geq N_{\epsilon,2}}\sup_{P \in \mathcal{P}} & \frac{\E_P\bigl(\max\{X_{P,n}^1 - z_{1-\alpha}, 0\}\mathbbm{1}_{\{X_{P,n}^1 - z_{1-\alpha} \leq K_{\epsilon}\}}\bigr)}{\alpha^{-1}\varphi(z_{1-\alpha})}  \\
    & \leq \frac{\E\bigl(\max\{Z - z_{1-\alpha}, 0\}\ind_{\{Z - z_{1-\alpha}\leq K_{\epsilon}\}}\bigr)}{\alpha^{-1}\varphi(z_{1-\alpha})} + \frac{\epsilon}{2} \leq \alpha + \frac{\epsilon}{2},
\end{align*}
where $Z \sim N(0,1)$.  The claim~\eqref{Eq:Neps} therefore follows with $N_\epsilon := \max\{N_{\epsilon,1},N_{\epsilon,2}\}$.
\end{proof}
\end{corollary}
We can further develop a more refined threshold than the one obtained in Proposition~\ref{Proposition: average of Z values}. For $Z \sim N(0,1)$, $\alpha \in (0,1/2)$ and $B \geq 2$, define
\begin{align*}
    & H_{\alpha}(x) := (B-1)\Phi^{-1}\bigl(1 - \alpha + (B-1)x\bigr) + \Phi^{-1}(1-x) \quad \text{for $x \in [0,\alpha/B]$,}
\end{align*}
and 
\begin{align*}
    c_B(\alpha) := \min\biggl\{ c \in \bigl[0,\alpha/B\bigr] : \int_c^{\frac{\alpha}{B}} H_{\alpha}(t) \mathrm{d}t \geq \biggl( \frac{\alpha}{B} - c\biggr) H_{\alpha}(c) \biggr\}.
\end{align*}
Write
\begin{align*}
    q_{1-\alpha,B} := \begin{cases}
        B^{-1} H_{\alpha}\bigl(c_B(\alpha)\bigr) \quad & \text{if $c_B(\alpha) > 0$,} \\[.5em]
        \alpha^{-1}\varphi(z_{1-\alpha}) \quad & \text{if $c_B(\alpha) = 0$.}
    \end{cases} 
\end{align*}
Having this notation in place, \citet[][Corollary 3.7]{wang2013bounds} yields 
\begin{align*}
    \pr\biggl( \frac{1}{B} \sum_{b=1}^B Z_b > q_{1-\alpha,B} \biggr) \leq \alpha.
\end{align*}
Notably, according to \citet[][Corollary 3.7]{wang2013bounds}, this threshold is sharp in the sense that
\begin{align*}
    q_{1-\alpha,B} = \sup_{Z_b \sim N(0,1), b \in [B]} \inf \biggl\{s \in \mathbb{R} : \pr\biggl( \frac{1}{B} \sum_{b=1}^B Z_b \leq s\biggr) \geq 1-\alpha\biggr\}.
\end{align*}
It is also worth noting that we have $q_{1-\alpha,2} = z_{1-\alpha/2}$. 
Moreover, $\lim_{B \rightarrow \infty} q_{1-\alpha,B} = \alpha^{-1} \varphi(z_{1-\alpha})$, which illustrates that Proposition~\ref{Proposition: average of Z values} is not improvable for large $B$.

\begin{proposition} \label{Proposition: general type 1 error d sufficient}
Consider the setting of Theorem~\ref{Theorem: General Type I error control}, and suppose that Assumption~\ref{Assumption: general procedure} holds.
\begin{enumerate}[(i)]
	\item If $Y \independent X \given Z$, then \eqref{Eq: type I error abstract condition} is satisfied.
	\item If $\mhat$ is formed using a sample independent from $\mathcal{D}_1$, then \eqref{Eq: type I error abstract condition} is satisfied.
	\item If $\mhat$ is a linear smoother, then \eqref{Eq: type I error abstract condition} is satisfied.
\end{enumerate}
\begin{proof}
As in the main proofs in Section~\ref{Section: Proofs} of the supplementary material, we suppress dependence on $P$ in what follows.

(i) Under conditional independence, $R_{ij} = 0$ for $i \neq j$, so~\eqref{Eq: type I error abstract condition} is satisfied.

(ii) Define $w(z_1, z_2) := \E\bigl(\{\mhat(z_1)-m(z_1)\}\{\mhat(z_2)-m(z_2)\}\bigr)$ and $M_i := m(Z_i)-\mhat(Z_i)$. Note that since $(X_i, Z_i)_{i=1}^n \independent \mhat$, we have 
\[
    \E\bigl(M_iM_j \given (X_{i'}, Z_{i'})_{i'=1}^n \bigr) = w(Z_i, Z_j) = \E\bigl(M_iM_j \given (Z_{i'})_{i'=1}^n\bigr) 
\]
by, e.g.,~\citet[][Example~4.1.7]{durrett2019probability}. Thus $R_{ij}=0$ for $i \neq j$ and \eqref{Eq: type I error abstract condition} is satisfied.

(iii) It suffices to show that $\E\bigl( R_{ij} \xi_{i} \xi_{j} \given (Z_{i'})_{i'=1}^n, \fhat\bigr) = 0$ for $i \neq j$.  Note that when $\mhat(\cdot) = \sum_{k=1}^n \omega(Z_k, \cdot) Y_k$ is a linear smoother,  
\begin{align*}
	\E \bigl( \mhat(Z_i) \given (X_{i'}, Z_{i'})_{i'=1}^n \bigr) &=  \sum_{k=1}^n \omega(Z_k, Z_i) \E\bigl(Y_k \given (X_{i'}, Z_{i'})_{i'=1}^n \bigr) \\
	&= \sum_{k=1}^n \omega(Z_k, Z_i) \E\bigl(Y_k \given (Z_{i'})_{i'=1}^n \bigr) \\
	&= \E\bigl( \mhat(Z_i) \given  (Z_{i'})_{i'=1}^n \bigr).
\end{align*}
Based on this identity, it can be seen that 
\begin{align*}
	R_{ij} &= \E\bigl\{\mhat(Z_i)\mhat(Z_j) \given (X_{i'},Z_{i'})_{i'=1}^n\bigr\} - \E\bigl\{\mhat(Z_i)\mhat(Z_j) \given (Z_{i'})_{i'=1}^n\bigr\} \\
	&= \sum_{k=1}^n  \omega(Z_k,Z_i) \omega(Z_k,Z_j) \bigl\{ \E\bigl(Y_k^2 \given (X_{i'},Z_{i'})_{i'=1}^n\bigr) - \E\bigl(Y_k^2 \given (Z_{i'})_{i'=1}^n\bigr) \bigr\}  \\
	&\hspace{0.5cm}+  \sum_{1 \leq k \neq k' \leq n} \omega(Z_k,Z_i) \omega(Z_{k'},Z_j)  \bigl\{ \E\bigl(Y_kY_{k'} \given (X_{i'},Z_{i'})_{i'=1}^n\bigr) - \E\bigl(Y_kY_{k'} \given (Z_{i'})_{i'=1}^n\bigr) \bigr\} \\
	&= \sum_{k=1}^n  \omega(Z_k,Z_i) \omega(Z_k,Z_j) \bigl\{ \E(Y_k^2 \given X_k, Z_k) - \E(Y_k^2 \given Z_k) \bigr\}.
\end{align*}
Thus
\begin{align*}
	\E\bigl( R_{ij} \xi_{i} \xi_{j} &\given (Z_{i'})_{i'=1}^n, \fhat\bigr)\\
	&=  \sum_{k=1}^n  \omega(Z_k,Z_i) \omega(Z_k,Z_j) \E\Bigl[ \bigl\{ \E(Y_k^2 \given X_k, Z_k) - \E(Y_k^2 \given Z_k) \bigr\} \xi_{i} \xi_{j} \given (Z_{i'})_{i'=1}^n, \fhat \Bigr].
\end{align*}
When $i \neq j$ at least one of $i$ and $j$ differs from $k \in [n]$. Without loss of generality, assume $k \neq i$, so that 
\begin{align*}
\E\Bigl[ \bigl\{ &\E(Y_k^2 \given X_k, Z_k) - \E(Y_k^2 \given Z_k) \bigr\} \xi_{i} \xi_{j} \given (Z_{i'})_{i'=1}^n, \fhat \Bigr] \\[.5em]
	=~ & \E\Bigl[ \bigl\{ \E(Y_k^2 \given X_k, Z_k) - \E(Y_k^2 \given Z_k) \bigr\} \underbrace{\E(\xi_{i} \given Z_i, \fhat)}_{=0} \xi_{j} \given (Z_{i'})_{i'=1}^n, \fhat \Bigr] = 0.
\end{align*}
Therefore, we conclude that $\E\bigl( R_{ij} \xi_{i} \xi_{j} \given (Z_{i'})_{i'=1}^n, \fhat\bigr) = 0$ when $\mhat$ is a linear smoother and thus \eqref{Eq: type I error abstract condition} is satisfied.
\end{proof}
\end{proposition}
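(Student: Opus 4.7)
The plan is to verify, in each of the three cases, that the sum appearing inside condition~\eqref{Eq: type I error abstract condition} vanishes essentially exactly, so that the required $o_{\mathcal{P}_0}(1)$ bound becomes trivial. The pivotal quantity is $R_{ij} := \E(M_iM_j \mid (X_{i'},Z_{i'})_{i'=1}^n) - \E(M_iM_j \mid (Z_{i'})_{i'=1}^n)$ with $M_i := m(Z_i)-\mhat(Z_i)$, which measures how much extra information the $X$-values carry about $M_iM_j$ beyond what is already in $(Z_{i'})_{i'=1}^n$. In cases (i) and (ii) I would aim to show that $R_{ij}\equiv 0$; in case (iii) this stronger identity will fail, and instead I would prove $\E(R_{ij}\xi_i\xi_j \mid (Z_{i'})_{i'=1}^n,\fhat)=0$, with the crucial input being the construction-based identity $\E(\xi \mid Z,\fhat)=0$ that comes from $\xi = \fhat(X,Z) - m_{\fhat}(Z)$.

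For (i), the conditional independence $Y \independent X \mid Z$ implies that, given $(Z_{i'})_{i'=1}^n$, the response vector $(Y_{i'})$ (which together with the training algorithm determines $\mhat$) is independent of $(X_{i'})$. Since $M_iM_j$ is measurable with respect to $(Z_{i'})$, $(Y_{i'})$ and any extra training randomness, further conditioning on the $X$'s changes nothing and $R_{ij}=0$. For (ii), the assumption that $\mhat$ is built on an independent sample renders it an independent random function viewed from $(X_{i'},Z_{i'})_{i'=1}^n$; both conditional expectations defining $R_{ij}$ then collapse to the same deterministic kernel $w(Z_i,Z_j) := \E[\{\mhat(Z_i)-m(Z_i)\}\{\mhat(Z_j)-m(Z_j)\}]$ and cancel.

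Case (iii) is the most delicate, and I expect the measurability bookkeeping across the nested conditioning $\sigma$-algebras (training data, $\fhat$, $(Z_{i'})$, $(X_{i'})$) to be the main obstacle. I would expand $\mhat(\cdot)=\sum_k \omega(Z_k,\cdot)Y_k$ so that $\mhat(Z_i)\mhat(Z_j)$ becomes a double sum over $k,k'$. Under the null $\E(Y_k \mid X_k,Z_k)=m(Z_k)$, the off-diagonal ($k\neq k'$) cross-moments satisfy $\E(Y_kY_{k'} \mid X,Z)=m(Z_k)m(Z_{k'})$, which is already $(Z_{i'})$-measurable and therefore contributes nothing to $R_{ij}$. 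Only the diagonal $k=k'$ terms survive, leaving $R_{ij}$ as a weighted sum of differences $\E(Y_k^2 \mid X_k,Z_k)-\E(Y_k^2 \mid Z_k)$. For $i\neq j$, at least one of $i,j$ must differ from any given $k$; choosing such an index $\ell$ and factorising the remaining conditional expectation so that $\xi_\ell$ can be pulled out against an inner expectation over $X_\ell$, I would then invoke $\E(\xi_\ell \mid Z_\ell,\fhat)=0$ to annihilate each term of the sum, completing the argument.
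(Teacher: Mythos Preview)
Your proposal is correct and follows essentially the same approach as the paper: in (i) and (ii) you show $R_{ij}=0$ directly, and in (iii) you expand the linear smoother, reduce to the diagonal $k=k'$ terms via the null identity $\E(Y_k\mid X_k,Z_k)=m(Z_k)$, and then use $\E(\xi_\ell\mid Z_\ell,\fhat)=0$ for the index $\ell\in\{i,j\}\setminus\{k\}$ to annihilate each summand. The only cosmetic difference is that the paper first records the intermediate identity $\E(\mhat(Z_i)\mid (X_{i'},Z_{i'})_{i'})=\E(\mhat(Z_i)\mid (Z_{i'})_{i'})$ to reduce $R_{ij}$ to the product $\mhat(Z_i)\mhat(Z_j)$ before expanding, whereas you proceed straight to the double sum; the substance is identical.
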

Proposition~\ref{Prop: stability} below seeks to control an average product error between population residuals $\xi_i$ and regression errors $\mhat(Z_i) - m_P(Z_i)$.  To obtain the desired $o_{\mathcal{P}}(n^{-1/2})$ conclusion, a naive application of the Cauchy--Schwarz inequality would require stronger assumptions on the performance of our regression estimate than would be reasonable.  Sample splitting provides one way of restoring independence and allows the conclusion to hold, but the purpose of this result is to show that sample splitting is not needed provided that our regression estimate is \emph{sufficiently stable} in the sense that conditions~(ii) and (iii) below hold (in addition to other standard conditions that would be required for the conclusion under sample splitting).
\begin{proposition}
\label{Prop: stability}
Let $(\mathcal{F}_n)_{n \in \mathbb{N}}$ be a sequence of $\sigma$-algebras and let $(Y_i, Z_i, \xi_i)_{i \in [n]}$ be a sequence of random variables taking values in $\mathbb{R} \times \mathcal{Z} \times \mathbb{R}$ that are independent and identically distributed conditional on $\mathcal{F}_n$. Let $\mhat$ denote an estimate of the regression function $m_P: \mathcal{Z} \to \mathbb{R}$ given by $m_P(z) := \E_P(Y \given Z=z)$ formed by regressing $(Y_i)_{i \in [n]}$ on $(Z_i)_{i \in [n]}$. For indices $i_1, \dots, i_k \in [n]$, we define $\mhat^{-i_1, \dots, i_k}$ to be the estimate formed by regressing $Y$ on~$Z$ with the same algorithm as for $\mhat$ except we do not use the observations with indices $i_1, \dots, i_k$. Suppose that $\mathcal{P}$ is a class of distributions for which 
\begin{enumerate}[(i)]
  \item \label{Prop: stability, ass 1} $
    \frac{1}{n} \sum\limits_{i=1}^n \xi_i^2 \bigl(m_P(Z_i) - \mhat(Z_i)\bigr)^2 = o_{\mathcal{P}}(1).
  $
  \item \label{Prop: stability, ass 3} $
    \max_{j \in [n]} \frac{1}{n} \sum\limits_{\substack{i=1\\i \neq j}}^n \bigl(\mhat^{-j}(Z_i) - \mhat(Z_i)\bigr)^2 = o_{\mathcal{P}}(n^{-1}).
    $
  \item \label{Prop: stability, ass 4}$
    \max_{j \in [n]} \frac{1}{n} \sum\limits_{\substack{i=1\\i \neq j}}^n \bigl(\mhat^{-i, j}(Z_i) - \mhat^{-j}(Z_i)\bigr)^2 = o_{\mathcal{P}}(n^{-1}).
  $
  \item\label{Prop: stability, ass 5} For every $n \in \mathbb{N}$ and $i \in [n]$ we have $\E_P(\xi_i \given Z_i, \mathcal{F}_n) = 0$ for all $P \in \mathcal{P}$ and there exists a constant $C > 0$ such that $\sup_{P \in \mathcal{P}} \E_P(\xi_i^2) \leq C$. 
\end{enumerate}
Then 
\[
  \frac{1}{n} \sum_{i=1}^n \xi_i \bigl(m_P(Z_i) - \mhat(Z_i)\bigr) = o_{\mathcal{P}}(n^{-1/2}).
\]
\begin{proof}
As in the main proofs in Section~\ref{Section: Proofs}, we suppress dependence on $P$ in what follows. Assumption \eqref{Prop: stability, ass 4} implies that 
\[
\frac{1}{n} \sum\limits_{i=1}^{n-1} \bigl(\mhat^{-i, n}(Z_i) - \mhat^{-n}(Z_i)\bigr)^2 \leq \max_{j \in [n]} \frac{1}{n} \sum\limits_{\substack{i=1\\i \neq j}}^n \bigl(\mhat^{-i, j}(Z_i) - \mhat^{-j}(Z_i)\bigr)^2 = o_{\mathcal{P}}(n^{-1}).
\]
which, by re-indexing implies, 
\begin{equation}
\label{Prop: stability, ass 2} 
\frac{1}{n} \sum\limits_{i=1}^n \bigl(\mhat^{-i}(Z_i) - \mhat(Z_i)\bigr)^2 = o_{\mathcal{P}}(n^{-1}).
\end{equation}
By Markov's inequality and~\eqref{Prop: stability, ass 5} we have 
\begin{equation}
  \label{Eq: stability_xi^2}
  \frac{1}{n} \sum_{i=1}^n \xi_i^2 = O_{\mathcal{P}}(1).
\end{equation}
To prove the desired result, we write
\[
  \frac{1}{n} \sum_{i=1}^n \xi_i \bigl(m(Z_i) - \mhat(Z_i)\bigr) = \frac{1}{n} \sum_{i=1}^n \xi_i \bigl(m(Z_i) - \mhat^{-i}(Z_i)\bigr) + \frac{1}{n} \sum_{i=1}^n \xi_i \bigl(\mhat^{-i}(Z_i) - \mhat(Z_i)\bigr).
\]
By the Cauchy--Schwarz inequality, \eqref{Eq: stability_xi^2} and \eqref{Prop: stability, ass 2}, we have 
\begin{align*}
  \biggl|\frac{1}{n} \sum_{i=1}^n \xi_i \bigl(\mhat^{-i}(Z_i) - \mhat(Z_i)\bigr) \biggr| &\leq \biggl(\frac{1}{n} \sum_{i=1}^n \xi_i^2\biggr)^{1/2} \biggl(\frac{1}{n} \sum_{i=1}^n \bigl(\mhat^{-i}(Z_i) - \mhat(Z_i)\bigr)^2  \biggr)^{1/2}\\
  &= o_{\mathcal{P}}(n^{-1/2}).
\end{align*}
To deal with the first term in the initial decomposition, we write
\begin{align*}
  \biggl( \frac{1}{n} \sum_{i=1}^n \xi_i \bigl(m(Z_i) - \mhat^{-i}(Z_i)\bigr) \biggr)^2 &= \frac{1}{n^2} \sum_{i=1}^n \xi_i^2 \bigl(m(Z_i) - \mhat^{-i}(Z_i)\bigr)^2\\
  &\hspace{0.25cm}+ \frac{1}{n^2} \sum_{i \neq j}^n \xi_i \xi_j \bigl(m(Z_i) - \mhat^{-i}(Z_i)\bigr) \bigl(m(Z_j) - \mhat^{-j}(Z_j)\bigr).
\end{align*}
For the first term, we note that 
\begin{align*}
  &\frac{1}{n^2} \sum_{i=1}^n \xi_i^2 \bigl(m(Z_i) - \mhat^{-i}(Z_i)\bigr)^2 \leq \frac{2}{n^2} \sum_{i=1}^n \xi_i^2 \bigl\{\bigl(m(Z_i) - \mhat(Z_i)\bigr)^2 + \bigl(\mhat(Z_i) - \mhat^{-i}(Z_i)\bigr)^2 \bigr\}\\
  &\leq \frac{2}{n^2} \sum_{i=1}^n \xi_i^2 \bigl(m(Z_i) - \mhat(Z_i)\bigr)^2 +  2 \biggl(\frac{1}{n} \sum_{i=1}^n \xi_i^2 \biggr) \biggl(\frac{1}{n} \sum_{i=1}^n \bigl(\mhat(Z_i) - \mhat^{-i}(Z_i)\bigr)^2 \biggr) = o_{\mathcal{P}}(n^{-1})
\end{align*}
by \eqref{Eq: stability_xi^2}, \eqref{Prop: stability, ass 1} and \eqref{Prop: stability, ass 2}. For the second term, we write
\begin{align*}
\frac{1}{n^2} \sum_{i \neq j}^n \xi_i \xi_j &\bigl(m(Z_i) - \mhat^{-i}(Z_i)\bigr) \bigl(m(Z_j) - \mhat^{-j}(Z_j)\bigr)\\
&= \underbrace{\frac{1}{n^2} \sum_{i \neq j}^n \xi_i \xi_j \bigl(m(Z_i) - \mhat^{-i, j}(Z_i)\bigr) \bigl(m(Z_j) - \mhat^{-i, j}(Z_j)\bigr)}_{\RN{1}}\\
&+\underbrace{\frac{2}{n^2} \sum_{i \neq j}^n \xi_i \xi_j \bigl(\mhat^{-i, j}(Z_i) - \mhat^{-i}(Z_i)\bigr) \bigl(m(Z_j) - \mhat^{-i, j}(Z_j)\bigr)}_{\RN{2}}\\
&+\underbrace{\frac{1}{n^2} \sum_{i \neq j}^n \xi_i \xi_j \bigl(\mhat^{-i, j}(Z_i) - \mhat^{-j}(Z_i)\bigr) \bigl(\mhat^{-i, j}(Z_j) - \mhat^{-i}(Z_j)\bigr)}_{\RN{3}}\\
&+\underbrace{\frac{1}{n^2} \sum_{i \neq j}^n \xi_i \xi_j \bigl(\mhat^{-j}(Z_i) - \mhat(Z_i)\bigr) (\mhat^{-i}(Z_j) - \mhat(Z_j))}_{\RN{4}}\\
&+\underbrace{\frac{1}{n^2} \sum_{i \neq j}^n \xi_i \xi_j \bigl(\mhat(Z_i) - \mhat^{-i}(Z_i)\bigr) \bigl(\mhat(Z_j) - \mhat^{-j}(Z_j)\bigr)}_{\RN{5}}\\
&+\underbrace{\frac{2}{n^2} \sum_{i \neq j}^n \xi_i \xi_j \bigl(\mhat^{-i, j}(Z_i) - \mhat^{-j}(Z_i)\bigr) \bigl(\mhat^{-i}(Z_j) - \mhat(Z_j)\bigr)}_{\RN{6}}\\
&+\underbrace{\frac{2}{n^2} \sum_{i \neq j}^n \xi_i \xi_j \bigl(\mhat^{-i, j}(Z_i) - \mhat^{-j}(Z_i)\bigr) \bigl(\mhat(Z_j) - \mhat^{-j}(Z_j)\bigr)}_{\RN{7}}\\
&+\underbrace{\frac{2}{n^2} \sum_{i \neq j}^n \xi_i \xi_j \bigl(\mhat^{-j}(Z_i) - \mhat(Z_i)\bigr) \bigl(\mhat(Z_j) - \mhat^{-j}(Z_j)\bigr)}_{\RN{8}}.
\end{align*}
Using the fact that the triples $(Y_i, Z_i, \xi_i)_{i \in [n]}$ are conditionally independent given $\mathcal{F}_n$, together with~\eqref{Prop: stability, ass 5}, for each of the summands in $\RN{1}$ we have
\begin{align*}
&\E\biggl(\xi_i \xi_j \bigl(m(Z_i) - \mhat^{-i, j}(Z_i)\bigr) \bigl(m(Z_j) - \mhat^{-i, j}(Z_j)\bigr) \biggm| (Y_{i'})_{\substack{i'=1\\i' \neq i}}^n, (Z_{i'})_{i'=1}^n, \xi_j , \mathcal{F}_n \biggr)\\
&= \E\biggl(\xi_i \biggm| (Y_{i'})_{\substack{i'=1\\i' \neq i}}^n, (Z_{i'})_{i'=1}^n, \xi_j , \mathcal{F}_n\biggr) \xi_j \bigl(m(Z_i) - \mhat^{-i, j}(Z_i)\bigr) \bigl(m(Z_j) - \mhat^{-i, j}(Z_j)\bigr)\\
&= \E(\xi_i \given Z_i, \mathcal{F}_n) \xi_j \bigl(m(Z_i) - \mhat^{-i, j}(Z_i)\bigr) \bigl(m(Z_j) - \mhat^{-i, j}(Z_j)\bigr) = 0.
\end{align*}
Thus, 
\[
  \E\bigl(\RN{1} \given (Z_i)_{i=1}^n, \mathcal{F}_n\bigr) = 0.
\]
We can apply a similar argument to the summands of $\RN{2}$ and thus also conclude that
\[
  \E\bigl(\RN{2} \given (Z_i)_{i=1}^n, \mathcal{F}_n\bigr) = 0.
\]
By the Cauchy--Schwarz inequality, \eqref{Eq: stability_xi^2} and \eqref{Prop: stability, ass 4}, we have
\begin{align*}
  |\RN{3}| &\leq \frac{1}{n^2} \sum_{i \neq j}^n \xi_j^2 \bigl(\mhat^{-i, j}(Z_i) - \mhat^{-j}(Z_i)\bigr)^2\\
  & \leq \biggl(\frac{1}{n} \sum_{j=1}^n \xi_j^2 \biggr) \biggl(\max_{j \in [n]} \frac{1}{n} \sum_{\substack{i=1\\i \neq j}}^n \bigl(\mhat^{-i, j}(Z_i) - \mhat^{-j}(Z_i)\bigr)^2 \biggr)= o_\mathcal{P}(n^{-1}).
\end{align*}
By Cauchy--Schwarz, \eqref{Eq: stability_xi^2} and \eqref{Prop: stability, ass 3}, we have
\begin{align*}
  |\RN{4}| &\leq \frac{1}{n^2} \sum_{i \neq j}^n \xi_j^2 \bigl(\mhat^{-j}(Z_i) - \mhat(Z_i)\bigr)^2\\
  & \leq \biggl(\frac{1}{n} \sum_{j=1}^n \xi_j^2 \biggr) \biggl(\max_{j \in [n]} \frac{1}{n} \sum_{\substack{i=1\\i \neq j}}^n \bigl(\mhat^{-j}(Z_i) - \mhat(Z_i)\bigr)^2 \biggr)= o_\mathcal{P}(n^{-1}).
\end{align*}
By Cauchy--Schwarz, \eqref{Eq: stability_xi^2} and \eqref{Prop: stability, ass 2}, we have
\begin{align*}
  |\RN{5}| &\leq \frac{1}{n^2} \sum_{i \neq j}^n \xi_j^2 \bigl(\mhat(Z_i) - \mhat^{-i}(Z_i)\bigr)^2\\
  &\leq \biggl(\frac{1}{n} \sum_{j=1}^n \xi_j^2 \biggr) \biggl(\frac{1}{n}\sum_{i=1}^n \bigl(\mhat^{-i}(Z_i) - \mhat(Z_i)\bigr)^2 \biggr)= o_\mathcal{P}(n^{-1}).
\end{align*}
By Cauchy--Schwarz and the bounds above we have
\begin{align*}
  |\RN{6}| &\leq 2 \biggl(\frac{1}{n^2} \sum_{i \neq j}^n \xi_i^2 \bigl(\mhat^{-i}(Z_j) - \mhat(Z_j)\bigr)^2 \biggr)^{1/2} \biggl(\frac{1}{n^2} \sum_{i \neq j}^n \xi_j^2 (\mhat^{-i, j}(Z_i) - \mhat^{-j}(Z_i))^2 \biggr)^{1/2} \\
  &= o_{\mathcal{P}}(n^{-1}).
\end{align*}
Similarly,
\begin{align*}
  |\RN{7}| &\leq 2 \biggl(\frac{1}{n^2} \sum_{i \neq j}^n \xi_i^2 \bigl(\mhat(Z_j) - \mhat^{-j}(Z_j)\bigr)^2 \biggr)^{1/2} \biggl(\frac{1}{n^2} \sum_{i \neq j}^n \xi_j^2 \bigl(\mhat^{-i, j}(Z_i) - \mhat^{-j}(Z_i)\bigr)^2 \biggr)^{1/2} \\
  &= o_{\mathcal{P}}(n^{-1})
\end{align*}
and
\begin{align*}
  |\RN{8}| &\leq 2 \biggl( \frac{1}{n^2} \sum_{i \neq j}^n \xi_i^2 \bigl(\mhat(Z_j) - \mhat^{-j}(Z_j)\bigr)^2 \biggr)^{1/2} \biggl( \frac{1}{n^2} \sum_{i \neq j}^n \xi_j^2 \bigl(\mhat^{-j}(Z_i) - \mhat(Z_i)\bigr)^2 \biggr)^{1/2} \\
  &= o_{\mathcal{P}}(n^{-1}).
\end{align*}
Combining the above with Lemma~\ref{Lemma: convergence in probability from convergence of conditional expectations} yields the desired result.
\end{proof}
\end{proposition}

\begin{lemma}
	\label{Lemma: locally lipschitz matrix inversion}
	Let $\mbb{A}, \mbb{B} \in \mathbb{R}^{k \times k}$ be symmetric matrices and suppose that $\lambda_{\min}(\mbb{A}) \geq c$ and $\| \mbb{A}- \mbb{B} \|_{\op} \leq c/2$ for some $c > 0$. Then $\mbb{B}$ is invertible and
	\[
		\| \mbb{A}^{-1}- \mbb{B}^{-1} \|_{\op} \leq 2 c^{-2} \| \mbb{A}- \mbb{B} \|_{\op}.
	\]
	\begin{proof}
	By Weyl's inequality and our assumptions, we have 
	\[
	\lambda_{\min}(\mbb{B}) \geq \lambda_{\min}(\mbb{A}) - \| \mbb{A} - \mbb{B}\|_{\op} \geq c/2,
	\]
	so $\mbb{B}$ is invertible. We can therefore write
	\begin{align*}
	\| \mbb{A}^{-1} - \mbb{B}^{-1} \|_{\op} &=  \| \mbb{A}^{-1}(\mbb{B} - \mbb{A}) \mbb{B}^{-1} \|_{\op}\\
	&\leq \bigl(\| \mbb{A}^{-1} - \mbb{B}^{-1} \|_{\op} +  \|\mbb{A}^{-1} \|_{\op} \bigr) \| \mbb{A} - \mbb{B}\|_{\op} \| \mbb{A}^{-1} \|_{\op}.
	\end{align*}
	Moreover since $ \| \mbb{A} - \mbb{B}\|_{\op} \| \mbb{A}^{-1} \|_{\op} \leq 1/2$, we deduce that 
	\begin{align*}
		\| \mbb{A}^{-1} - \mbb{B}^{-1} \|_{\op} &\leq \frac{ \| \mbb{A} - \mbb{B}\|_{\op} \| \mbb{A}^{-1} \|_{\op}^2}{1 - \| \mbb{A} - \mbb{B}\|_{\op} \| \mbb{A}^{-1} \|_{\op}}\\
	&\leq 2 \| \mbb{A} - \mbb{B}\|_{\op} \| \mbb{A}^{-1} \|_{\op}^2 \leq 2 c^{-2}\| \mbb{A} - \mbb{B}\|_{\op},
	\end{align*}
	as required.
	\end{proof}
\end{lemma}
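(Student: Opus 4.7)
My plan is to proceed in two short stages: first establish invertibility of $\mbb{B}$ via an eigenvalue perturbation argument, and then obtain the quantitative Lipschitz bound via the standard algebraic identity $\mbb{A}^{-1} - \mbb{B}^{-1} = \mbb{A}^{-1}(\mbb{B} - \mbb{A})\mbb{B}^{-1}$, followed by controlling the two operator norms of the inverses.

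For invertibility, I would apply Weyl's inequality to the symmetric matrices $\mbb{A}$ and $\mbb{B}$: since $|\lambda_{\min}(\mbb{B}) - \lambda_{\min}(\mbb{A})| \leq \|\mbb{A} - \mbb{B}\|_{\op} \leq c/2$, we immediately conclude $\lambda_{\min}(\mbb{B}) \geq c/2 > 0$, so $\mbb{B}$ is invertible with $\|\mbb{B}^{-1}\|_{\op} \leq 2/c$. Likewise, $\|\mbb{A}^{-1}\|_{\op} \leq 1/c$ from the hypothesis on $\lambda_{\min}(\mbb{A})$.

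For the main bound, I would verify the identity $\mbb{A}^{-1} - \mbb{B}^{-1} = \mbb{A}^{-1}(\mbb{B} - \mbb{A})\mbb{B}^{-1}$ by multiplying out on the left by $\mbb{A}$ and on the right by $\mbb{B}$. Submultiplicativity of $\|\cdot\|_{\op}$ then yields
\[
\|\mbb{A}^{-1} - \mbb{B}^{-1}\|_{\op} \leq \|\mbb{A}^{-1}\|_{\op} \cdot \|\mbb{A} - \mbb{B}\|_{\op} \cdot \|\mbb{B}^{-1}\|_{\op} \leq \frac{1}{c} \cdot \|\mbb{A} - \mbb{B}\|_{\op} \cdot \frac{2}{c} = \frac{2}{c^2}\|\mbb{A} - \mbb{B}\|_{\op},
\]
which is precisely the stated inequality. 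Alternatively, one can bypass the explicit use of $\|\mbb{B}^{-1}\|_{\op} \leq 2/c$ by writing $\|\mbb{B}^{-1}\|_{\op} \leq \|\mbb{A}^{-1} - \mbb{B}^{-1}\|_{\op} + \|\mbb{A}^{-1}\|_{\op}$, substituting into the submultiplicative bound, and solving the resulting linear inequality for $\|\mbb{A}^{-1} - \mbb{B}^{-1}\|_{\op}$; using $\|\mbb{A} - \mbb{B}\|_{\op}\|\mbb{A}^{-1}\|_{\op} \leq 1/2$, this self-bounding approach also gives the same constant $2c^{-2}$.

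There is essentially no hard step here: the only mild subtlety is that Weyl's inequality requires symmetry (which is given), ensuring that a bound on $\|\mbb{A} - \mbb{B}\|_{\op}$ translates directly into a bound on the perturbation of the smallest eigenvalue. Once that is in hand, everything else is a two-line algebraic manipulation combined with submultiplicativity of the operator norm.
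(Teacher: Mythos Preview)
Your proposal is correct and uses essentially the same ingredients as the paper: Weyl's inequality for invertibility, the resolvent identity $\mbb{A}^{-1} - \mbb{B}^{-1} = \mbb{A}^{-1}(\mbb{B} - \mbb{A})\mbb{B}^{-1}$, and submultiplicativity. The only cosmetic difference is that the paper controls $\|\mbb{B}^{-1}\|_{\op}$ via the self-bounding route you describe as your alternative, whereas your primary route uses the more direct bound $\|\mbb{B}^{-1}\|_{\op} \leq 2/c$ from $\lambda_{\min}(\mbb{B}) \geq c/2$; both yield the same constant.
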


\begin{lemma}
\label{Lemma:lambdaminForIndependentXZ}
Consider the setting of Theorem~\ref{Theorem: Asymptotic normality of Spline} and assume that $X$ and $Z$ are independent for all $P \in \mathcal{P}_0$. Then \eqref{Eq:lambdamin} is satisfied for sufficiently small $c > 0$.
\begin{proof}
    Recalling the definitions of $V$ and $\mbb{1} \in \mathbb{R}^{K_X}$ from Proposition~\ref{Proposition: tensor product projection}, we note that for every $\mbb{v} \in \mathbb{R}^{K_Z}$, we can define $\mbb{w} := \mbb{1} \otimes \mbb{v} = (\mbb{I}_{K_Z} \otimes_{\mathrm{Kron}} \mbb{1}) \mbb{v} \in V$ so that
    \[
        \mbb{w}^\top \mbb{\phi}(X, Z) = \mbb{v}^\top \mbb{\phi}^Z(Z),
    \]
    by Proposition~\ref{Proposition: tensor product projection}. Therefore
    \[
        \mbb{w}^\top \mbb{\Lambda}_P \mbb{w} = \E_P\bigl(\Var_P(\mbb{v}^\top \mbb{\phi}^Z(Z) \given Z)\bigr) = 0.
    \]
    Since $\{x \in \mathbb{R}^{K_{XZ}}:\mbb{\Pi}x=x\} = V^\perp$ is $(K_{XZ} - K_Z)$-dimensional and $\mbb{\Lambda}_P$ is non-negative definite,  we conclude that 
    \[
        \widetilde{\lambda}_{\min}(\mbb{\Lambda}_P) = \lambda_{K_{XZ}-K_Z-1}(\mbb{\Lambda}_P),
    \]
    where $\lambda_k(\mbb{\Lambda}_P)$ denotes the $k$th largest eigenvalue of $\mbb{\Lambda}_P$.
    We can write
    \[
        \mbb{\Lambda}_P = \underbrace{\E_P\bigl(\mbb{\phi}(X, Z) \mbb{\phi}(X, Z)^\top\bigr)}_{\mbb{\Sigma}_P} - \underbrace{\E_P\bigl(\E_P(\mbb{\phi}(X, Z) \given Z)\E(\mbb{\phi}(X, Z) \given Z)^\top \bigr)}_{\mbb{\Gamma}_P}.
    \]
    Denote the Kronecker product by $\otimes_{\mathrm{Kron}}$ and note that for $\mbb{x} \in \mathbb{R}^{K_X}$ and $\mbb{z} \in \mathbb{R}^{K_Z}$, we have
    \[
       \mbb{x} \otimes \mbb{z} = (\mbb{I}_{K_Z} \otimes_{\mathrm{Kron}} \mbb{x}) \mbb{z}.
    \]
    Write $\mbb{A}_P := \mbb{I}_{K_Z} \otimes_{\mathrm{Kron}} \E_P\bigl(\mbb{\phi}^X(X)\bigr) \in \mathbb{R}^{K_XK_Z \times K_Z}$.  Then, since $X$ and $Z$ are independent, 
    \[
        \E_P\bigl(\mbb{\phi}(X, Z) \given Z\bigr) =  \E_P\bigl(\mbb{\phi}^X(X)\bigr) \otimes \mbb{\phi}^Z(Z)= \mbb{A}_P\mbb{\phi}^Z(Z).
    \]
    Defining $\mbb{\Sigma}_{Z, P}:= \E_P\bigl(\mbb{\phi}^Z(Z) \mbb{\phi}^Z(Z)^\top\bigr) \in \mathbb{R}^{K_Z \times K_Z}$, it follows that $\mbb{\Gamma}_P = \mbb{A}_P \mbb{\Sigma}_{Z, P} \mbb{A}_P^\top$, so we deduce that $\mathrm{rank}(\mbb{\Gamma}_P) \leq \mathrm{rank}(\mbb{\Sigma}_{Z, P}) \leq K_Z$.  Hence, by Weyl's inequality, 
    \begin{align*}
        &\lambda_{K_{XZ}-K_Z-1}(\mbb{\Lambda}_P) \geq \lambda_{K_{XZ}}(\mbb{\Sigma}_P) +  \lambda_{K_{XZ}-K_Z-1}(-\mbb{\Gamma}_P) \\
        &\hspace{1cm}= \lambda_{K_{XZ}}(\mbb{\Sigma}_P) - \lambda_{K_Z+1}(\mbb{\Gamma}_P) = \lambda_{K_{XZ}}(\mbb{\Sigma}_P)
         \geq  c_s(r)^d K_{XZ}^{-1}\inf_{(x,z) \in [0,1]^d} p_P(x,z),
    \end{align*}
    by Proposition~\ref{Proposition: b-spline properties}(d). This proves the desired claim.
    \end{proof}
\end{lemma}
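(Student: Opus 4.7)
The plan is to exploit two structural facts: the kernel of $\mbb{\Lambda}_P$ always contains the subspace $V := \{\mbb{1} \otimes \mbb{v} : \mbb{v} \in \mathbb{R}^{K_Z}\}$ coming from the partition-of-unity property of $\mbb{\phi}^X$, and under independence of $X$ and $Z$ the conditional-mean term of $\mbb{\Lambda}_P$ drops rank enough for a Weyl's-inequality argument to go through. First I would verify that every $\mbb{w} = \mbb{1} \otimes \mbb{v} \in V$ satisfies $\mbb{w}^\top \mbb{\phi}(X,Z) = \mbb{v}^\top \mbb{\phi}^Z(Z)$ by Proposition~\ref{Proposition: b-spline properties}(a), so $\mbb{w}^\top \mbb{\Lambda}_P \mbb{w} = \E_P\bigl(\Var_P(\mbb{v}^\top \mbb{\phi}^Z(Z) \given Z)\bigr) = 0$. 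Since $V$ is $K_Z$-dimensional and equals $\{\mbb{x} : \mbb{\Pi}\mbb{x} = 0\}$, and $\mbb{\Lambda}_P$ is positive semi-definite, the Courant--Fischer min-max principle identifies $\tilde{\lambda}_{\min}(\mbb{\Lambda}_P)$ with the $(K_{XZ}-K_Z)$-th largest eigenvalue of $\mbb{\Lambda}_P$.

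Next I would write the standard decomposition $\mbb{\Lambda}_P = \mbb{\Sigma}_P - \mbb{\Gamma}_P$, where $\mbb{\Sigma}_P := \E_P\bigl(\mbb{\phi}(X,Z)\mbb{\phi}(X,Z)^\top\bigr)$ and $\mbb{\Gamma}_P := \E_P\bigl(\E_P(\mbb{\phi}(X,Z) \given Z)\, \E_P(\mbb{\phi}(X,Z) \given Z)^\top\bigr)$. Under independence, $\E_P(\mbb{\phi}(X,Z) \given Z) = \E_P(\mbb{\phi}^X(X)) \otimes \mbb{\phi}^Z(Z)$; writing this as $\mbb{A}_P \mbb{\phi}^Z(Z)$ with $\mbb{A}_P := \mbb{I}_{K_Z} \otimes_{\mathrm{Kron}} \E_P(\mbb{\phi}^X(X)) \in \mathbb{R}^{K_{XZ} \times K_Z}$, I obtain $\mbb{\Gamma}_P = \mbb{A}_P \mbb{\Sigma}_{Z,P} \mbb{A}_P^\top$, which has rank at most $K_Z$.

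Then Weyl's inequality gives $\lambda_{K_{XZ}-K_Z}(\mbb{\Lambda}_P) \geq \lambda_{K_{XZ}}(\mbb{\Sigma}_P) - \lambda_{K_Z+1}(\mbb{\Gamma}_P)$, and the rank bound forces $\lambda_{K_Z+1}(\mbb{\Gamma}_P) = 0$. It remains to lower bound $\lambda_{\min}(\mbb{\Sigma}_P)$ by a constant multiple of $K_{XZ}^{-1}$, which follows directly from Proposition~\ref{Proposition: b-spline properties}(d) applied with the density $p_P$ of $(X,Z)$, whose uniform lower bound $c$ on $[0,1]^d$ is provided by Assumption~\ref{Assumption: Nonparametric models}(b). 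Choosing the constant in \eqref{Eq:lambdamin} as $c \cdot c_s(r)^d$ (with $c_s(r)$ the constant from the B-spline proposition) then yields the claim uniformly over $\mathcal{P}_0$.

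The main obstacle is purely algebraic: correctly identifying that the min-max problem in \eqref{Eq:lambdamin} reduces to an eigenvalue on the orthogonal complement of the zero-eigenspace $V$, and carefully unwinding the Kronecker structure of the conditional-expectation vector so that the rank bound on $\mbb{\Gamma}_P$ is visible. Once these two observations are in place, the remainder is a routine application of Weyl's inequality together with a previously established spline basis fact.
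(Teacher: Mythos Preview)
Your proposal is correct and follows essentially the same approach as the paper's proof: identify $V$ as a null subspace of $\mbb{\Lambda}_P$, decompose $\mbb{\Lambda}_P = \mbb{\Sigma}_P - \mbb{\Gamma}_P$, use independence to show $\mbb{\Gamma}_P$ has rank at most $K_Z$, and apply Weyl's inequality together with Proposition~\ref{Proposition: b-spline properties}(d). Your eigenvalue index $\lambda_{K_{XZ}-K_Z}$ is in fact the natural one (the paper writes $\lambda_{K_{XZ}-K_Z-1}$, a harmless off-by-one since $\lambda_{K_Z+1}(\mbb{\Gamma}_P)=\lambda_{K_Z+2}(\mbb{\Gamma}_P)=0$ anyway).
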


\begin{corollary}
	\label{Cor:PainfulAnalysis}
	Consider the setting of Proposition~\ref{Proposition: asymptotic power expression}. Assume that $\beta = \lambda n^{-1/2}$ and denote $r := n_2/n_1$. Then, given any $\delta > 0$, we can choose $\lambda_0 \equiv \lambda_0(\alpha,\delta,\sigma_{\xi},\sigma_{\varepsilon\xi}) > 0$ and $r_0 \equiv r_0(\lambda_0,\delta,\sigma_\beta) > 0$ such that 
	\[
		\psi < \frac{1}{2} + \delta,
	\]
	for all $\lambda \geq \lambda_0$ and $r \in (0,r_0]$.  Further, given $\delta > 0$, we can choose $\lambda_1 \equiv \lambda_1(\alpha,\delta,\sigma_{\xi},\sigma_{\varepsilon\xi}) > 0$ and $r_1 \equiv r_1(\lambda_1,\delta,\sigma_\beta) > 0$ such that 
	\[
		\psi < \alpha + \delta
	\]
	for all $\lambda \in (0, \lambda_1]$ and $r \geq r_1$.
	\begin{proof}
	To prove the first claim, note that, for $r < 1/2$, 
	\[
		\psi \leq \Phi\biggl( \frac{\lambda r^{1/2}}{\sigma_{\beta}} \biggr) + \Phi\biggl(z_\alpha- \frac{\lambda \sigma_{\xi}^2}{\sqrt{2}\sigma_{\varepsilon\xi}} \biggr).
	\]
	We can now choose $\lambda_0 \equiv \lambda_0(\alpha,\delta,\sigma_{\xi},\sigma_{\varepsilon\xi}) > 0$ large enough that the second term is at most $\delta/2$ for $\lambda \geq \lambda_0$ and then choose $r_0 \equiv r_0(\lambda_0,\delta,\sigma_\beta) > 0$ small enough that the first term is less than $1/2 + \delta/2$ for $r \in (0,r_0]$. 

	To prove the second claim, note that 
	\[
		\psi \leq \Phi\biggl(z_\alpha+ \frac{\lambda \sigma_{\xi}^2}{\sigma_{\varepsilon\xi}} \biggr) + \Phi\biggl( -\frac{\lambda r^{1/2}}{\sigma_{\beta}} \biggr).
	\]
	We can now choose $\lambda_1 \equiv \lambda_1(\alpha,\delta,\sigma_{\xi},\sigma_{\varepsilon\xi}) > 0$ small enough that the first term is at most $\delta/2 + \alpha$ for $\lambda \in (0, \lambda_1]$ and then choose $r_1 \equiv r_1(\lambda_1,\delta,\sigma_\beta) > 0$ large enough that the second term is less than $\delta/2$ for $r \geq r_1$.
	\end{proof}

\end{corollary}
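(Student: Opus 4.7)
The plan is to exploit the simple product form of the asymptotic power expression $\psi$ from Proposition~\ref{Proposition: asymptotic power expression}, which (by the form previewed in Proposition~\ref{Proposition: OLS power} and the inequalities displayed in the proof of this corollary) is a sum of two products of the form $\Phi(\pm \lambda r^{1/2}/\sigma_\beta)\,\Phi(z_\alpha \pm c(r)\lambda \sigma_\xi^2/\sigma_{\varepsilon\xi})$. The key observation is that in each product, exactly one factor depends on $r$ and the other on $\lambda$ alone (after appropriate simplification using $r<1/2$ or $r \geq 1$), so we may bound one factor by the trivial inequality $\Phi(\cdot) \leq 1$ and then tune $\lambda$ and $r$ sequentially.

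For the first claim, I would first note that when $r < 1/2$, the factor $c(r)$ appearing in the denominator of the ``good'' Gaussian term is bounded below by $1/\sqrt{2}$, so after dropping the $\Phi$-factors that multiply to give at most $1$, one obtains the bound
\[
\psi \leq \Phi\!\left(\frac{\lambda r^{1/2}}{\sigma_\beta}\right) + \Phi\!\left(z_\alpha - \frac{\lambda \sigma_\xi^2}{\sqrt{2}\,\sigma_{\varepsilon\xi}}\right).
\]
Now the second term depends only on $\lambda$ and tends to $0$ as $\lambda \to \infty$, so I choose $\lambda_0 = \lambda_0(\alpha,\delta,\sigma_\xi,\sigma_{\varepsilon\xi})$ large enough that it is at most $\delta/2$ for all $\lambda \geq \lambda_0$. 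With $\lambda_0$ fixed, the first term depends on the product $\lambda r^{1/2}$, which I drive below $1/2 + \delta/2$ by picking $r_0 = r_0(\lambda_0,\delta,\sigma_\beta)$ sufficiently small (and below $1/2$); note the dependence of $r_0$ on $\lambda_0$ is essential because for larger $\lambda$ we need smaller $r$ to make $\lambda r^{1/2}$ small.

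For the second claim, the symmetric observation is that when $r$ is large, the ``bad'' Gaussian factor $\Phi(\lambda r^{1/2}/\sigma_\beta)$ is close to $1$, while $\Phi(-\lambda r^{1/2}/\sigma_\beta)$ is tiny. Bounding the other factors by $1$ gives
\[
\psi \leq \Phi\!\left(z_\alpha + \frac{\lambda \sigma_\xi^2}{\sigma_{\varepsilon\xi}}\right) + \Phi\!\left(-\frac{\lambda r^{1/2}}{\sigma_\beta}\right).
\]
The first term depends only on $\lambda$ and tends to $\alpha$ as $\lambda \downarrow 0$; I therefore choose $\lambda_1 = \lambda_1(\alpha,\delta,\sigma_\xi,\sigma_{\varepsilon\xi})$ small enough that it is below $\alpha + \delta/2$ on $(0,\lambda_1]$. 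Having fixed $\lambda_1$, I then choose $r_1 = r_1(\lambda_1,\delta,\sigma_\beta)$ large enough that the second term is at most $\delta/2$.

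There is no real obstacle here: the only mild subtlety is making sure the dependence chain $\lambda_0 \to r_0$ (respectively $\lambda_1 \to r_1$) is in the correct order, since $r_0$ has to shrink as $\lambda_0$ grows to keep $\lambda r^{1/2}/\sigma_\beta$ small, and $r_1$ has to grow as $\lambda_1$ shrinks to keep $\lambda r^{1/2}/\sigma_\beta$ large. The argument is otherwise a routine two-step calibration using monotonicity of $\Phi$ and the trivial bound $\Phi \leq 1$.
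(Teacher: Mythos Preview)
Your proposal is correct and follows essentially the same approach as the paper: you derive the same two upper bounds on $\psi$ (by dropping the complementary $\Phi$-factors using $\Phi\le 1$) and then calibrate $\lambda$ first and $r$ second in each case, exactly as the paper does. The additional explanatory remarks about why the dependence chain $\lambda_0\to r_0$ (resp.\ $\lambda_1\to r_1$) must go in that order are accurate and only make the argument more transparent.
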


\section{A discussion of the test of \texorpdfstring{\cite{williamson2020unified}}{Williamson et al}} \label{Section: a discussion of Williamson et al}
\label{Section: linear williamson theoretical comparison}
Like our proposal, the test proposed by \cite{williamson2020unified} relies on sample splitting.  However, their test suffers from two issues as we describe below. To this end, we start by formalising their testing procedure. First split the data $\mathcal{D} = \{(X_i,Y_i,Z_i)\}_{i=1}^{2n}$ randomly into $\mathcal{D}_1$ and $\mathcal{D}_2$ both of size $n$ and let $\mathcal{I}_1$ and $\mathcal{I}_2$ denote the corresponding data indices. We write the sample mean of $Y$ for each split as $\overline{Y}_1 := n^{-1} \sum_{i\in \mathcal{I}_1} Y_i$ and $\overline{Y}_2 := n^{-1} \sum_{i\in \mathcal{I}_2} Y_i$ respectively. Recall the definitions of $g$ and $m$ from Section~\ref{sec:motivation} and let $\ghat$ and $\mhat$ denote generic estimators of these, where $\ghat$ is constructed on $\mathcal{D}_1$ and $\mhat$ is constructed on $\mathcal{D}_2$. For notational convenience, we define $\mu_0 := \E_P(Y)$, $\sigma_Y^2 := \Var_P(Y)$, $\tau_{xz,0} := \E_P\bigl\{\bigl(g(X, Z) - \mu_0\bigr)^2\bigr\}$ and $\tau_{z,0} := \E_P\bigl\{\bigl(m(Z) - \mu_0\bigr)^2\bigr\}$. Let us further define
\begin{align*}
	\widehat v_1 &:= \frac{ \frac{1}{n} \sum_{i \in \mathcal{I}_1} (Y_i - \overline{Y}_1)^2  -  \frac{1}{n} \sum_{i \in \mathcal{I}_1} \{Y_i - \ghat(X_i,Z_i) \}^2 }{\frac{1}{n} \sum_{i \in \mathcal{I}_1} (Y_i - \overline{Y}_1)^2}, \\
	\widehat v_2 &:= \frac{ \frac{1}{n} \sum_{i \in \mathcal{I}_2} (Y_i - \overline{Y}_2)^2  -  \frac{1}{n} \sum_{i \in \mathcal{I}_2} (Y_i - \mhat(Z_i) )^2 }{\frac{1}{n} \sum_{i \in \mathcal{I}_2} (Y_i - \overline{Y}_2)^2},
\end{align*}
and denote their population counterparts by 
\begin{align*}
	v_1 := \frac{\tau_{xz,0}}{\sigma_Y^2} \quad \text{and} \quad v_2 :=  \frac{\tau_{z,0}}{\sigma_Y^2}.
\end{align*}
As shown in  \citet[][Lemma~1]{williamson2019nonparametric}, the influence functions of $v_1$ and $v_2$ are given by
\[
	\varphi_{1}(x,y,z) := \frac{2\{y - g(x,z) \} \{ g(x,z)  - \mu_0 \} + \{ g(x,z) - \mu_0\}^2}{\sigma_Y^2}  - \tau_{xz,0} \biggl\{ \frac{y - \E_P(Y)}{\sigma_Y^2} \biggr\}^2 
	\]
	and
	\[
	\varphi_{2}(y,z) := \frac{2\{y - m(z) \} \{ m(z)  - \mu_0 \} + \{ m(z) - \mu_0\}^2}{\sigma_Y^2}  - \tau_{z,0} \biggl\{ \frac{y - \E_P(Y)}{\sigma_Y^2} \biggr\}^2		
\]
respectively.  Finally, by letting $\widehat{\eta}_1$ and $\widehat{\eta}_2$ be consistent estimators of $\eta_1 := \E_P (\varphi_1(X,Y,Z)^2)$ and $\eta_2 := \E_P(\varphi_2(Y,Z)^2)$, the test statistic proposed by \citet{williamson2020unified} is given as
\begin{align*}
	T_{\mathrm{W}} := \frac{\widehat{v}_1 - \widehat{v}_2}{\sqrt{n^{-1} (\widehat{\eta}_1 + \widehat{\eta}_2)}}.
\end{align*} 
The test statistic $T_{\mathrm{W}}$ is calibrated based on a normal approximation and the null of $\tau_P = 0$ is rejected if $T_{\mathrm{W}}  > z_{1-\alpha}$. Having specified the test function, we are now ready to describe the issues mentioned above.  

\vskip 1em

\subsection{Lack of power} \label{Section: Lack of power}
We shall see that the \citet{williamson2020unified} test has the asymptotic power equal to its size whenever $\sqrt{n} \tau_P \rightarrow 0$, under some regularity conditions. This property is true even for the simple linear model where the optimal detection boundary is known to be $\tau_P \asymp n^{-1}$. To see this, suppose that the assumptions of \citet[][Theorem~1]{williamson2019nonparametric} are satisfied for $\widehat{v}_1$ and $\widehat{v}_2$. That is, $\widehat{v}_1$ and $\widehat{v}_2$ are asymptotically linear with influence functions $\varphi_{1}$ and $\varphi_{2}$, respectively, so that
\[
	 \widehat v_1 - v_1= \frac{1}{n} \sum_{i \in \mathcal{I}_1} \varphi_1(X_i,Y_i,Z_i) + o_P\bigl(n^{-1/2}\bigr) 
	\]
and
\[
\widehat v_2 - v_2 = \frac{1}{n} \sum_{i \in \mathcal{I}_2} \varphi_2(Y_i,Z_i) + o_P\bigl(n^{-1/2}\bigr). 
\]
The asymptotic validity of the approach of \cite{williamson2020unified} comes from the fact that the individual influence functions $\varphi_{1}$ and $\varphi_{2}$ are not necessarily degenerate under the null. In particular, when $\eta_1$ and~$\eta_2$ are non-zero, the central limit theorem guarantees that $T_{\mathrm{W}}$ converges in distribution to $N(0,1)$ under the null (where $v_1 = v_2$). Similarly, we can also establish the asymptotic normality of $T_{\mathrm{W}}$ under the alternative in the case where $\eta_1$ and $\eta_2$ are non-zero. This asymptotic normality allows us to describe the asymptotic power expression of the given test. More formally, the central limit theorem yields
\begin{align*}
	\sqrt{n}(\widehat v_1 - v_1) \convD N(0,  \eta_1) \quad \text{and} \quad \sqrt{n}(\widehat v_2 - v_2) \convD N(0, \eta_2).
\end{align*}
Hence, by Slutsky's theorem and the independence of $\widehat{v}_1$ and $\widehat{v}_2$, we conclude that
\begin{align*}
	\frac{(\widehat v_1 - \widehat v_2) - (v_1 - v_2)}{\sqrt{n^{-1}( \widehat{\eta}_1 + \widehat{\eta}_2)}} \convD N(0,1).
\end{align*}
This shows that 
\begin{align*}
	\mathbb{P}_P (T_{\mathrm{W}} > z_{1-\alpha}) \rightarrow \Phi \biggl( z_\alpha + \frac{\sqrt{n} \tau_P}{\sigma_Y^2\sqrt{\eta_1 + \eta_2}} \biggr),
\end{align*}
where we have used the fact that $v_1 - v_2 = \tau_P / \sigma_Y^2$. Therefore, when $\sigma_Y^2, \eta_1$ and $\eta_2$ are strictly bounded below by some positive constant, the power converges to the nominal level $\alpha$ whenever $\sqrt{n} \tau_P \rightarrow 0$.

\vskip 1em

\subsection{Asymptotic validity} \label{Section: Asymptotic validity}
The previous argument hinges on the condition that $\eta_1$ and $\eta_2$ are non-zero. As acknowledged by \cite{williamson2020unified}, the asymptotic validity of their test is no longer guaranteed when $\eta_1$ and $\eta_2$ are zero. We illustrate this by considering a specific example. 

Consider a simple linear model where $Y = \beta_0 + \beta_1 X + \beta_2 Z + \varepsilon$ and $(X,Z,\varepsilon)^\top$ follows a multivariate normal distribution with zero mean and identity covariance matrix. Assume that $\beta_1 = \beta_2 = 0$. In this scenario, $\varphi_1$ and $\varphi_2$ are the zero functions on their respective domains, so $\eta_1 = \eta_2 = 0$.  Letting $(\widehat \beta_0, \widehat \beta_1, \widehat \beta_1)$ denote the least squares estimator of $(\beta_0,\beta_1, \beta_2)$ based on $\mathcal{D}_1$, and letting $F_{k_1,k_2}$ denote the $F$-distribution with $(k_1,k_2)$ degrees of freedom, we have
\begin{align*}
	\widehat v_1  =  \frac{ \frac{1}{n} \sum_{i \in \mathcal{I}_1} (Y_i - \overline{Y}_1)^2  -  \frac{1}{n} \sum_{i \in \mathcal{I}_1} (Y_i - \widehat \beta_0 - \widehat \beta_1 X_i - \widehat \beta_2 Z_i )^2 }{\frac{1}{n} \sum_{i \in \mathcal{I}_1} (Y_i - \overline{Y}_1)^2} \sim \frac{2}{n-1} F_{2,n-1}.   
\end{align*} 
Similarly, writing $(\widetilde{\beta}_0, \widetilde{\beta}_2)$ for the least squares estimator of $(\beta_0,\beta_2)$ based on $\mathcal{D}_2$, we have
\begin{align*}
	\widehat v_2 = \frac{ \frac{1}{n} \sum_{i \in \mathcal{I}_2} (Y_i - \overline{Y}_2)^2  -  \frac{1}{n} \sum_{i \in \mathcal{I}_2} (Y_i - \widetilde{\beta}_0 -\widetilde{\beta}_2 Z_i )^2 }{\frac{1}{n} \sum_{i \in \mathcal{I}_2} (Y_i - \overline{Y}_2)^2}  \sim \frac{1}{n-1} F_{1,n-1}.
\end{align*}
Since $F_{2,n-1} \convD \chi_2^2/2$ and $F_{1,n-1} \convD \chi_1^2$ where $\chi_k^2$ denotes the chi-square distribution with $k$ degrees of freedom, we observe that
\begin{align*}
	n  (\widehat v_1 - \widehat v_2) \convD 2 U  - 2 V,
\end{align*}
where $U$ and $V$ are independent (due to the sample splitting) with $U \sim \chi_2^2$ and $V \sim \chi_1^2$.  It turns out that the denominator of $T_{\mathrm{W}}$ can also affect the limiting behaviour of $T_{\mathrm{W}}$ non-trivially in this degenerate situation, and the exact form of the limiting distribution relies on the choice of $\widehat{\eta}_1$ and $\widehat{\eta}_2$. Nonetheless, we can still argue that the limiting distribution is not Gaussian. To see this, note that  
\begin{align*}
	\mathbb{P}_P(T_{\mathrm{W}} < 0) \rightarrow \mathbb{P}_P( U  - V < 0) 
	=  \mathbb{P}_P (F_{2,1} < 1/2) \neq \Phi(0).
\end{align*}
Therefore, when $\eta_1 = \eta_2 = 0$, the test based on $T_{\mathrm{W}}$ can be either conservative or anti-conservative depending on the choice of $\alpha$. 

We remark that our result in Section~\ref{Section: Lack of power}
 did not assume linear models, and it also applies to the practical approach suggested by \cite{williamson2020unified} using cross-fitting.  Here, on the other hand, we consider a linear model because it allows us to show explicitly that the asymptotic distribution of $T_W$ is non-Gaussian, so the test is not asymptotically valid.
As acknowledged in \cite{williamson2020unified}, we believe that the same validity issue arises when nonparametric procedures are considered, potentially using cross-fitting. However, the argument becomes unnecessarily complicated so we do not pursue this direction here.

\subsection{Linear model comparison}
\label{Section: linear model sim}
To provide an empirical comparison of the local power properties of the PCM with the approach considered in \citet{williamson2019nonparametric, williamson2020unified} and the more conventional $F$-test with robust standard error \citep{white1980} (as implemented in the R package \texttt{lmtest} \citep{lmtest}), we consider the following setup where $Z$ and $\xi$ are independent $N_5(0, \mbb I)$ random vectors, $\varepsilon \sim N(0, 1)$  independently of $Z$ and $\xi$, $\mbb{\beta} = (1, 2, 3, 4, 5)/\sqrt{n}$ and
\begin{align*}
X &= Z  + \xi,\\
Y &= \mbb{\beta}^\top X  + 2\Bigl(\bigl(1+e^{-3X_1} \bigr)^{-1} + \bigl(1+e^{-3Z_1} \bigr)^{-1} \Bigr) \varepsilon.
\end{align*}
We simulate $n \in \{100, 400, 1600, 6400\}$ observations from the above model. All regressions are performed using OLS, except $\vhat$ which uses a random forest and we only apply Algorithm~\ref{Algorithm: PCM} for simplicity (rather than the derandomised version). We apply the \texttt{wgsc} in two different ways. The test labelled \texttt{wgsc} follows \citet[][Algorithm~3]{williamson2020unified} as in the rest of the paper. The test labelled \texttt{wgsc\_no\_x} is still employs sample-splitting to compute the test but no cross-fitting is done to compute the regression estimators. The resulting test remains valid as the Donsker conditions are satisfied.

The results can be seen in Figure~\ref{fig:linear rates}.
They confirm our theoretical observations in Section~\ref{Section: linear williamson theoretical comparison} on the power properties of the PCM and \texttt{wgsc} tests in this linear model setting.  
\begin{figure}
    \centering
    \includegraphics[scale=0.44]{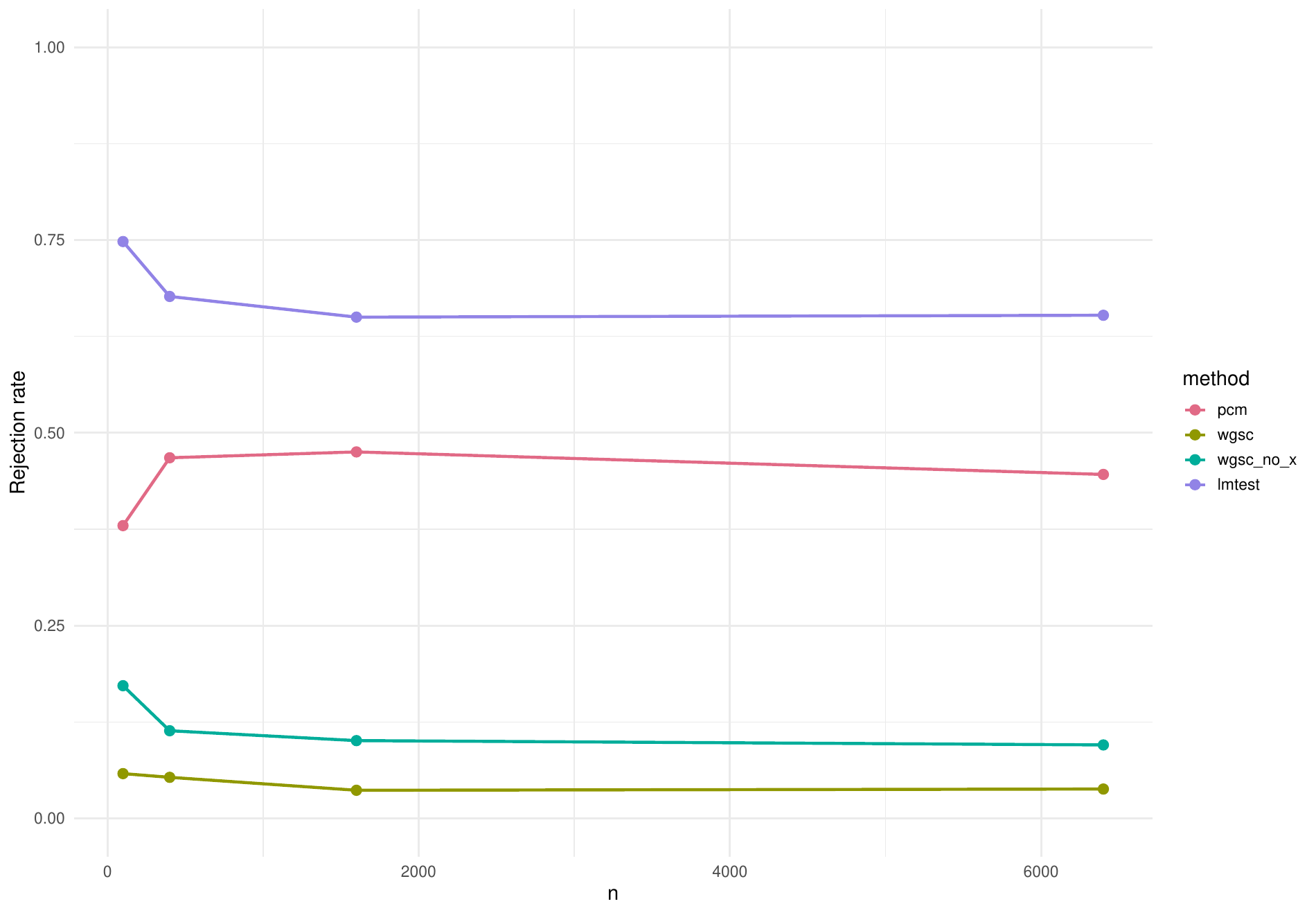}
    \caption{Power in the alternative settings considered in Section~\ref{Section: linear model sim} for nominal 5\%-level tests.}
    \label{fig:linear rates}
\end{figure}

\section{Splines} \label{Section: Splines}
This section is a self-contained description of spline spaces and some of their properties relevant for the spline regressions in Section~\ref{Section: Series estimators}. The definitions given here are not standard in the spline literature, in the sense that they are less general than the usual definitions, but they suffice for the purposes of regression with splines. One particular simplification that we will adhere to throughout is to restrict attention to splines with equi-spaced knots that are defined on the unit hypercube.

We start by considering function spaces of piecewise polynomials with adjustable degrees of smoothness and give a definition of uniform B-splines.

\begin{definition}  \leavevmode \normalfont 
	\label{definition: univariate splines}
Let $N \in \mathbb{N}_0$, and let $\Delta = (\Delta_\ell)_{\ell=0}^{N+1}$ be the knots of an equi-spaced partition of $[0, 1]$, with $\Delta_0:=0$ and $\Delta_{N+1} := 1$.  For $r \in \mathbb{N}$, define the \emph{spline space} $\mathcal{S}_{r, N}$ to be the set of functions $f:[0,1] \to \mathbb{R}$, where the restriction of $f$ to $[\Delta_{\ell-1}, \Delta_{\ell}]$ is a polynomial of degree at most $r-1$ for $\ell \in [N+1]$ and where $f$ is $(r-2)$-times continuously differentiable when $r \geq 2$ (we interpret this as meaning `continuous' when $r = 2$). We say that $r$ is the \emph{order} of $\mathcal{S}_{r, N}$.  Define the vector $t = (t_1,\ldots,t_{N+2r}) \in [0, 1]^{N+2r}$ by 
\[
t := (\underbrace{\Delta_0, \dots, \Delta_0}_{r}, \Delta_1, \dots, \Delta_{N} \underbrace{\Delta_{N+1}, \dots, \Delta_{N+1}}_{r}).
\] 
For $s \in [r]$ and $k \in [N+2r-s]$, define the functions $B_{k, s, N}$ recursively for $x \in [0,1)$ by
\[
B_{k, 1, N}(x) := \mathbbm{1}_{[t_k,t_{k+1})}(x), 
\]
and, for $s \in \{2,\ldots,r\}$,
\begin{align*}
B_{k, s, N}(x) &:= \frac{x-t_k}{t_{k+s-1}-t_k}B_{k, s-1, N}(x) + \frac{t_{k+s}-x}{t_{k+s}- t_{k+1}}B_{k+1, s-1,N}(x) 
\end{align*}
(with the convention that $0/0:=0$).  We also define $B_{k,s,N}(1) := \lim_{x \nearrow 1} B_{k,s,N}(x)$.  The $K := N+r$ functions $B_{1,r,N},\ldots,B_{K,r,N}$ are called \emph{B-splines}.
\end{definition}
It is standard in the spline literature to parametrise spline spaces in terms of the order $r$ of the polynomials rather than the degree ($r-1$). The Curry--Schoenberg theorem gives a relationship between the two definitions above. 
\begin{proposition}[Curry--Schoenberg]
The set of B-splines $\{B_{k, r, N}\}_{k=1}^K$ is a basis for $\mathcal{S}_{r,N}$.
\begin{proof}
	See \citet[][Theorem 4.13]{schumaker2007}.
\end{proof}
\end{proposition}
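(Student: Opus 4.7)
The plan is to establish three claims, from which the theorem follows immediately: (i) each $B_{k,r,N}$ actually belongs to $\mathcal{S}_{r,N}$; (ii) $\dim \mathcal{S}_{r,N} = K = N+r$; and (iii) the $K$ functions $B_{1,r,N}, \ldots, B_{K,r,N}$ are linearly independent.

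For (i), I would induct on the order $s$. The base case $s=1$ is immediate since each $B_{k,1,N}$ is an indicator of a half-open interval, hence piecewise constant. For the inductive step, the recursion writes $B_{k,s,N}$ as a sum of two products of linear functions with $B_{\cdot,s-1,N}$. Using the inductive hypothesis, each summand is piecewise polynomial of degree at most $s-1$, so the sum is as well. The delicate point is verifying $C^{s-2}$ smoothness across each interior knot $\Delta_\ell$: one checks directly from the recursion, using the fact that the linear coefficients vanish at the appropriate endpoints of the supports of $B_{k,s-1,N}$ and $B_{k+1,s-1,N}$, that the one-sided derivatives up to order $s-2$ match at $\Delta_\ell$.

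For (ii), I would argue by a constraint count. The space of piecewise polynomials on $[0,1]$ of degree at most $r-1$ with breakpoints $\Delta_1, \ldots, \Delta_N$ and no smoothness imposed has dimension $(N+1)r$. Requiring $C^{r-2}$-smoothness at each of the $N$ interior knots imposes, at each knot, exactly $r-1$ linear constraints (matching of $f, f', \ldots, f^{(r-2)}$ from the left and from the right). These $N(r-1)$ constraints are linearly independent because the constraints at distinct knots act on disjoint polynomial pieces, and the $r-1$ constraints at a single knot separately pin down $r-1$ coefficients of the difference of the left and right polynomials in a Taylor-basis representation about that knot. Hence $\dim \mathcal{S}_{r,N} = (N+1)r - N(r-1) = N+r = K$.

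For (iii), I would use the local support property of B-splines, namely that $B_{k,r,N}$ vanishes outside $[t_k, t_{k+r}]$; this itself follows by a simple induction on $s$ in the recursion. Suppose $\sum_{k=1}^{K} c_k B_{k,r,N} \equiv 0$. On each fixed subinterval $[\Delta_\ell, \Delta_{\ell+1}]$, only the $r$ B-splines whose support contains this subinterval contribute, and I would show that these $r$ restricted B-splines are linearly independent on the subinterval because they span the full $r$-dimensional space of polynomials of degree at most $r-1$ on the subinterval. The spanning statement is a Marsden-type identity, which I would prove inductively from the B-spline recursion by expressing the monomials $1, x, \ldots, x^{r-1}$ as explicit linear combinations of B-splines on the interval. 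This forces all $c_k$ to vanish, one subinterval at a time.

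The main obstacle I anticipate is the Marsden-type identity in step (iii), which requires careful algebraic bookkeeping with divided differences or truncated power functions. A more elementary alternative would be a direct induction on the number of subintervals on which the linear combination is non-trivial: the leftmost B-spline with a non-zero coefficient is the only one that does not vanish identically on some leftmost nonempty subinterval, which, combined with an inductive step that peels off terms from the left, forces all coefficients to vanish. Either route completes the proof.
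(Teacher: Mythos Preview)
Your proposal is correct and follows the standard textbook argument; the paper itself does not give a proof at all but simply cites \citet[Theorem~4.13]{schumaker2007}, whose proof proceeds along essentially the same lines you outline (membership in the spline space, dimension count, linear independence via local support and Marsden's identity). So there is nothing substantive to compare: you have supplied the argument that the paper defers to a reference.

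One small caution on your alternative route for (iii): the peeling argument as you state it does not quite work without further input. With the boundary knots repeated $r$ times, all of $B_{1,r,N},\ldots,B_{r,r,N}$ are nonzero on the first subinterval $[0,\Delta_1]$, so it is not true that the leftmost B-spline with nonzero coefficient is the \emph{only} one contributing on some leftmost subinterval. To make the peeling work you still need to know that the $r$ B-splines supported on a given subinterval are linearly independent there, which is exactly what your Marsden-identity argument establishes. A genuine left-to-right peel can be done instead by evaluating successive derivatives at $x=0$ (using that $B_{1,r,N}(0)=1$ while $B_{k,r,N}(0)=0$ for $k\geq 2$, and similarly for higher derivatives), but this is no simpler than the Marsden route. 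Either way, your main line of argument is sound.
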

It is worth noting some properties of B-splines and the spline space $\mathcal{S}_{r,N}$.
\begin{proposition}
\label{Proposition: univariate spline properties}
\begin{enumerate}[(a)]
	\item The B-splines $\{B_{k, r, N}\}_{k=1}^K$are non-negative and form a partition of unity; i.e., 
	\[
		\sum_{k=1}^K B_{k, r, N}(x) = 1.
	\]
	for all $x \in [0, 1]$.
	\item For any $f\in \mathcal{S}_{r, N}$ of the form $f(x)=\sum_{k=1}^K \beta_k B_{k, r, N}(x)$ with $\mbb{\beta}=(\beta_1, \dots, \beta_K) \in \mathbb{R}^K$, there exists $c_s(r) > 0$, depending only on $r$, such that 
	\begin{equation}
	    \label{Eq:deBoor}
		c_s(r) K^{-1/p} \| \mbb{\beta} \|_p  \leq \| f \|_p \leq 2^{1/p} K^{-1/p} \| \mbb{\beta} \|_p .
	\end{equation}
	for all $p \in [1, \infty]$.  In particular, 
	\[
	c_s(r) K^{-1/p}  \leq \| B_{k, r, N} \|_p \leq 2^{1/p} K^{-1/p}
	\]
	for all $k \in [K]$.
\end{enumerate}
\begin{proof}
\emph{(a)} This follows from Equations (4.5) and (4.10) of \citet{boor76}. 

\medskip

\emph{(b)} The conclusion of \citet[][Theorem 5.2]{boor76} yields the existence of $c_s(r) > 0$ such that 
\begin{equation}
	\label{eq:de boor spline bound}
c_s(r) (N+1)^{-1/p} \| \mbb{\beta} \|_p \leq \| f \|_p \leq r^{-1/p} (N+1)^{-1/p} \| \mbb{\beta} \|_p.
\end{equation}
But $(N+1)^{-1/p} \geq K^{-1/p}$ since $K=N+r$, yielding the lower bound in~\eqref{Eq:deBoor}. For the upper bound in~\eqref{Eq:deBoor}, we note that 
\begin{equation}
\label{Eq:UnivariateKBound}
K \leq 2 \max(N, r) \leq 2 (N+1) r
\end{equation}
and rearranging yields the desired result.
\end{proof}
\end{proposition}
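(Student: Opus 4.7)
My plan is to prove both parts of Proposition~\ref{Proposition: univariate spline properties} largely by induction on the spline order, together with classical B-spline identities that I would cite from \citet{boor76} where convenient. The two parts are essentially self-contained statements about the basis functions $B_{k,r,N}$ generated by the Cox--de Boor recursion, so most of the work is in verifying the recursive structure preserves non-negativity, partition of unity, and local support.

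For part~(a), I would prove non-negativity and the partition of unity property simultaneously by induction on the order $s$. The base case $s=1$ is immediate: $B_{k,1,N} = \mathbbm{1}_{[t_k,t_{k+1})} \geq 0$, and since the knots $t_1 \leq \cdots \leq t_{N+2r}$ cover $[0,1)$ with the intervals $[t_k,t_{k+1})$ forming a disjoint union (collapsing to the empty set when $t_k = t_{k+1}$), we get $\sum_k B_{k,1,N}(x) = 1$ for $x \in [0,1)$. For the inductive step, the recursion writes $B_{k,s,N}(x) = \alpha_k(x) B_{k,s-1,N}(x) + \beta_{k+1}(x) B_{k+1,s-1,N}(x)$ with $\alpha_k(x) := (x-t_k)/(t_{k+s-1}-t_k)$ and $\beta_{k+1}(x) := (t_{k+s}-x)/(t_{k+s}-t_{k+1})$, both of which are non-negative on the supports of the respective lower-order B-splines (and we use the convention $0/0:=0$ whenever knots coincide). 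Summing over $k$ and reindexing the second sum via $k \mapsto k-1$ gives $\sum_k \{\alpha_k(x) + \beta_k(x)\} B_{k,s-1,N}(x)$, and the identity $\alpha_k(x) + \beta_k(x) = 1$ (whenever $t_{k+s-1} > t_k$) together with the inductive hypothesis closes the induction. The endpoint $x=1$ is then handled by continuity, using the limiting definition in Definition~\ref{definition: univariate splines}.

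For part~(b), I would split into the upper and lower bounds. For the upper bound, convexity of $t \mapsto |t|^p$ combined with the partition of unity from part~(a) yields $|f(x)|^p \leq \sum_k |\beta_k|^p B_{k,r,N}(x)$ pointwise; integrating and using that each B-spline has $\|B_{k,r,N}\|_1 = (t_{k+r}-t_k)/r \leq 1/(N+1)$ gives $\|f\|_p^p \leq (N+1)^{-1} \|\mbb{\beta}\|_p^p$, which translates to the claimed $2^{1/p} K^{-1/p}$ rate after invoking $K \leq 2(N+1)r$ with an adjusted constant. The lower bound is the harder direction: it is the quantitative stability estimate of de Boor, and the cleanest route is to cite \citet[Theorem~5.2]{boor76}, which provides the existence of a constant $c_s(r)$ depending only on $r$ such that $c_s(r) (N+1)^{-1/p} \|\mbb{\beta}\|_p \leq \|f\|_p$. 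Translating from $(N+1)^{-1/p}$ to $K^{-1/p}$ is immediate from $K = N+r \geq N+1$. The ``in particular'' statement then follows by applying the double inequality to $\mbb{\beta} = \mbb{e}_k$, the $k$th standard basis vector.

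The main obstacle is the lower bound in part~(b), which is not elementary and requires either constructing a uniformly bounded dual basis (via de Boor's quasi-interpolants) or equivalently exhibiting linear functionals $\lambda_k: \mathcal{S}_{r,N} \to \mathbb{R}$ with $\lambda_k(B_{j,r,N}) = \delta_{kj}$ and $|\lambda_k(f)| \lesssim \|f\|_\infty$ locally on the support of $B_{k,r,N}$. Rather than re-derive this (which would require a non-trivial dual functional argument), I would defer to the classical reference and spend the remaining effort on keeping track of the $r$-dependence of the constants so that the final bound is stated in terms of $K$ rather than $N+1$.
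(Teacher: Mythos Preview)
Your approach matches the paper's on the substantive step: both defer the lower bound in (b) to de Boor's stability theorem (his Theorem~5.2) and then convert from $(N+1)^{-1/p}$ to $K^{-1/p}$ via $K=N+r$. Where you go further is in supplying elementary arguments for (a) (induction on the order through the Cox--de Boor recursion) and for the upper bound in (b) (Jensen's inequality against the partition of unity), whereas the paper simply cites de Boor for all of these. Those additions are correct and standard.

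One point needs tightening. Your Jensen route gives $\|f\|_p^p \leq \sum_k |\beta_k|^p \|B_{k,r,N}\|_1$, and bounding $\|B_{k,r,N}\|_1 \leq 1/(N+1)$ followed by $K \leq 2r(N+1)$ delivers only $(2r)^{1/p}K^{-1/p}$, not the stated $2^{1/p}K^{-1/p}$; your ``adjusted constant'' would then depend on $r$, which the proposition does not permit. The paper recovers the $2^{1/p}$ constant by invoking the sharper upper bound $\|f\|_p \leq r^{-1/p}(N+1)^{-1/p}\|\mbb{\beta}\|_p$ from de Boor's Theorem~5.2 before applying $K \leq 2r(N+1)$. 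Your own argument can also be repaired without further citation: since $t_{k+r}-t_k \leq \min\{r/(N+1),\,1\}$, you have $\|B_{k,r,N}\|_1 = (t_{k+r}-t_k)/r \leq \min\{1/(N+1),\,1/r\} = 1/\max(N+1,r) \leq 2/K$, using $K = N+r \leq 2\max(N+1,r)$. Plugging this directly into the Jensen bound yields the stated $2^{1/p}K^{-1/p}$ constant.
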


We will require splines on $[0, 1]^d$ instead of just $[0,1]$, and to that end we tensorise our earlier spline constructions.
\begin{definition}  \leavevmode \normalfont 
	\label{definition: tensor splines}
Recall Definition~\ref{definition: univariate splines}. Let $d \in \mathbb{N}$ and define the \emph{$d$-tensor spline space}
\[
\mathcal{S}_{r, N}^d := \biggl\{ f: [0, 1]^d \to \mathbb{R}, f(x_1, \dots, x_d)=\prod_{j=1}^d f_j(x_j) : f_j \in \mathcal{S}_{r, N} \ \forall j \in [d] \biggr\}.
\]
Let $\otimes$ denote the vectorised outer product operator, so that $x \otimes y := \mathrm{vec}(xy^\top)$ for Euclidean vectors $x$ and $y$, where $\mathrm{vec}$ denotes the vectorisation operator.  Let $\mbb{B}_{r, N}: [0,1] \rightarrow \mathbb{R}^{N+r}$ have $k$th component function $B_{k,r,N}$, so that $\mbb{B}_{r, N}(x) = \bigl(B_{1,r,N}(x),\ldots,B_{N+r,r,N}(x)\bigr)^\top$.  Now redefine $K := (N+r)^d$; since $\otimes$ is associative, we may define tensor-basis functions $\mbb{\phi} \equiv \mbb{\phi}_{r,N}:[0,1]^d \rightarrow \mathbb{R}^K$ by
\[
\mbb{\phi}(x_1, \dots, x_d) \equiv \bigl(\phi_1(x_1,\ldots,x_d),\ldots,\phi_K(x_1,\ldots,x_d)\bigr)^\top := \mbb{B}_{r, N}(x_1) \otimes \cdots \otimes \mbb{B}_{r, N}(x_d).
\]
\end{definition}
By properties of the tensor product and the Curry--Schoenberg theorem, the collection $\{\phi_k\}_{k=1}^K$ forms a basis for the $d$-tensor spline space $\mathcal{S}_{r,N}^d$ under the usual pointwise addition and scalar multiplication operations, and we refer to it as the \emph{$d$-tensor B-spline basis of $\mathcal{S}_{r,N}^d$}.  In our subsequent asymptotic results, the first of which is Lemma~\ref{lemma: rudelson lln}, we will think of $d$ and $r$ as fixed, but allow $N$ (and consequently $K$) to depend on $n$.  Proposition~\ref{Proposition: b-spline properties} below shows that the properties of univariate B-splines given in Proposition~\ref{Proposition: univariate spline properties} carry over to the $d$-tensor splines.
\begin{proposition}
	\label{Proposition: b-spline properties}
\begin{enumerate}[(a)]
	\item The basis functions $\{\phi_k\}_{k=1}^K$ are non-negative and form a  partition of unity.
	\item For any $f \in \mathcal{S}^d_{r, N}$ of the form $f(x)=\sum_{k=1}^K \beta_k \phi_k(x)$ where $\mbb{\beta}=(\beta_1, \dots, \beta_K) \in \mathbb{R}^K$, and for any $p \in [1, \infty]$,
	\[
		c_s(r)^d K^{-1/p} \| \mbb{\beta} \|_p \leq \| f \|_p \leq 2^{d/p} K^{-1/p} \| \mbb{\beta} \|_p,
	\]
	where $c_s(r) > 0$ is taken from Proposition~\ref{Proposition: univariate spline properties}(b). 
	In particular, for all $k \in [K]$,
	\[
		c_s(r)^d K^{-1/p} \leq \| \phi_k \|_p \leq 2^{d/p}  K^{-1/p}.
	\]
	\item For any $Z$ with distribution $P$ on $[0,1]^d$, the matrix $\mbb{\Sigma}_P := \E_P\bigl( \mbb{\phi}(Z) \mbb{\phi}(Z)^\top \bigr) \in \mathbb{R}^{K \times K}$ satisfies 
	\[
		\lambda_{\min}(\mbb{\Sigma}_P) \leq K^{-1}
	\]
	and
	\[
		\lambda_{\max}(\mbb{\Sigma}_P) \geq K^{-2}.
	\]
	\item Now suppose that $P$ is absolutely continuous with respect to Lebesgue measure on $[0,1]^d$ with density $p$.  If $C := \sup_{z \in [0,1]^d} p(z) < \infty$, then
	\[
		\lambda_{\max}\bigl(\mbb{\Sigma}_P \bigr) \leq C 2^d K^{-1}.
	\]
	If instead $c :=\inf_{z \in [0,1]^d} p(z) > 0$, then
	\[
		\lambda_{\min}\bigl(\mbb{\Sigma}_P \bigr) \geq c c_s(r)^d K^{-1},
	\]
	where $c_s(r) > 0$ is taken from (b).
\end{enumerate}
\begin{proof}
\emph{(a)} This follows from Proposition~\ref{Proposition: univariate spline properties}\emph{(a)} and the definition of the $d$-tensor B-spline basis. 

\medskip

\emph{(b)} 
We will only prove the case $d=2$, since the full result will then follow by induction on $d$.  For the lower bound, we can write 
\[
f(x_1, x_2) = \sum_{k=1}^{\sqrt{K}} \sum_{\ell=1}^{\sqrt{K}} \beta_{k\ell} B_{k, r, N}(x_1) B_{\ell, r, N}(x_2) =:
\sum_{k=1}^{\sqrt{K}} \gamma_k(x_2)B_{k,r,N}(x_1).
\]
For $p \in [1, \infty)$, we have by using \eqref{eq:de boor spline bound} twice that
\begin{align*}
\|f\|_p^p &= \int_0^1 \int_0^1 \biggl|\sum_{k=1}^{\sqrt{K}} \gamma_k(x_2) B_{k, r, N}(x_1) \biggr|^p \, \mathrm{d}x_1 \, \mathrm{d}x_2 \geq \frac{c_s(r)^p}{N+1} \int_0^1 \sum_{k=1}^{\sqrt{K}} |\gamma_k(x_2)|^p  \, \mathrm{d}x_2\\
 &\geq \frac{c_s(r)^{2p}}{(N+1)^2} \sum_{k=1}^{\sqrt{K}} \sum_{\ell=1}^{\sqrt{K}} |\beta_{k\ell}|^p = \frac{c_s(r)^{2p}}{(N+1)^2} \|\mbb{\beta}\|^p_p \geq \frac{c_s(r)^{2p}}{K^2} \|\mbb{\beta}\|^p_p,
\end{align*}
as desired. For $p=\infty$, we have by a similar argument that
\begin{align*}
\|f\|_\infty &= \sup_{x_1, x_2 \in [0, 1]} \biggl| \sum_{k=1}^{\sqrt{K}} \gamma_k(x_2) B_{k, r, N}(x_1) \biggr| \geq c_s(r) \sup_{x_2 \in [0,1]} \max_{k \in [\sqrt{K}]} | \gamma_k(x_2)| \\
&\geq c_s(r)^2 \max_{k, \ell \in [\sqrt{K}]} |\beta_{k\ell}| = c_s(r)^2 \| \mbb{\beta} \|_\infty,
\end{align*}
again as desired. 
For the upper bound, we argue very similarly, and use the fact that, with $K$ redefined as $(N+r)^d$, we have $K \leq 2^d(N+1)^dr^d$ by~\eqref{Eq:UnivariateKBound}.

\medskip

\emph{(c)} Note that 
\[
K \lambda_{\min}(\mbb{\Sigma}_P) \leq \tr(\mbb{\Sigma}_P) = \E\biggl( \sum_{k=1}^K \phi_k^2(Z) \biggr) \leq \E\biggl( \biggl\{\sum_{k=1}^K \phi_k(Z) \biggr\}^2 \biggr) = 1,
\]
and by Cauchy--Schwarz,
\[
K \lambda_{\max}(\mbb{\Sigma}_P) \geq \tr(\mbb{\Sigma}_P) = \E\biggl( \sum_{k=1}^K \phi_k^2(Z) \biggr) \geq \frac{1}{K} \E\biggl( \biggl\{ \sum_{k=1}^K \phi_k(Z) \biggr\}^2 \biggr) = \frac{1}{K}.
\]

\medskip

\emph{(d)} We have
\begin{align*}
\lambda_{\max}(\mbb{\Sigma}_P) &= \sup_{\mbb{\beta} \in \mathbb{R}^K: \| \mbb{\beta} \|_2 = 1} \mbb{\beta}^\top \mbb{\Sigma}_P \mbb{\beta} = \sup_{\mbb{\beta} \in \mathbb{R}^K: \| \mbb{\beta} \|_2 = 1} \E\biggl(  \biggl\{ \sum_{k=1}^K \beta_k \phi_k(Z) \biggr\}^2 \biggr)\\
&\leq C \sup_{\mbb{\beta} \in \mathbb{R}^K: \| \mbb{\beta} \|_2 = 1} \biggl\| \sum_{k=1}^K \beta_k \phi_k \biggr\|_2^2 \leq \frac{2^dC}{K},
\end{align*}
where the final inequality uses~\emph{(b)}.  By a similar argument,
\begin{align*}
	\lambda_{\min}(\mbb{\Sigma}_P) &= \inf_{\mbb{\beta} \in \mathbb{R}^K: \| \mbb{\beta} \|_2 = 1} \mbb{\beta}^\top \mbb{\Sigma}_P \mbb{\beta} = \inf_{\mbb{\beta} \in \mathbb{R}^K: \| \mbb{\beta} \|_2 = 1} \E\biggl(  \biggl\{ \sum_{k=1}^K \beta_k \phi_k(Z) \biggr\}^2 \biggr) \\
	&\geq c \inf_{\mbb{\beta} \in \mathbb{R}^K: \| \mbb{\beta} \|_2 = 1} \biggl\| \sum_{k=1}^K \beta_k \phi_k \biggr\|_2^2 \geq \frac{c}{c_s(r)^d K},
\end{align*}
as required.
\end{proof}
\end{proposition}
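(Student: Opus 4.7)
The plan is to prove the four parts in order, leveraging the univariate results in Proposition~\ref{Proposition: univariate spline properties} and exploiting the tensor product structure of the basis $\mbb{\phi}$. For part~(a), since each $\phi_k$ is a product of univariate B-splines $B_{k_j,r,N}$, non-negativity is immediate from the corresponding univariate statement. For the partition of unity property, I would write $\sum_k \phi_k(x_1,\ldots,x_d) = \prod_{j=1}^d \bigl(\sum_{k_j=1}^{N+r} B_{k_j,r,N}(x_j)\bigr) = 1$, using the distributive property of the tensor product followed by the univariate partition of unity.

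The main obstacle will be part~(b), which requires bootstrapping the univariate norm equivalence~\eqref{eq:de boor spline bound} to the tensor case without losing constants in the exponent. I would proceed by induction on $d$, and for the inductive step (say $d=2$ for concreteness), I would group the double sum by fixing one variable at a time: writing $f(x_1,x_2) = \sum_{k} \gamma_k(x_2) B_{k,r,N}(x_1)$ where $\gamma_k(x_2) := \sum_\ell \beta_{k\ell} B_{\ell,r,N}(x_2)$, applying the univariate bound in $x_1$ inside an $L^p$ integral over $x_2$, then applying the univariate bound again to each $\gamma_k$. A care point is tracking the $(N+1)^{-1/p}$ versus $K^{-1/p}$ prefactors: the lower bound follows from $(N+1)^{-d} \geq K^{-d}$, while for the upper bound I would use $K \leq 2^d(N+1)^d r^d$ (iterating~\eqref{Eq:UnivariateKBound}) to absorb the mismatch into the factor $2^{d/p}$. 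The $p=\infty$ case needs a parallel argument with suprema replacing $L^p$-integrals.

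For part~(c), the key identity is $\sum_k \phi_k(Z)^2 \leq \bigl(\sum_k \phi_k(Z)\bigr)^2 = 1$ (using non-negativity from (a) and partition of unity), which when combined with $\tr(\mbb{\Sigma}_P) = \E\bigl(\sum_k \phi_k(Z)^2\bigr) \geq K\lambda_{\min}(\mbb{\Sigma}_P)$ yields $\lambda_{\min}(\mbb{\Sigma}_P) \leq K^{-1}$. For the lower bound on $\lambda_{\max}$, I would instead apply Cauchy--Schwarz in the reverse direction: $\sum_k \phi_k(Z)^2 \geq K^{-1}\bigl(\sum_k \phi_k(Z)\bigr)^2 = K^{-1}$, and combine with $\tr(\mbb{\Sigma}_P) \leq K\lambda_{\max}(\mbb{\Sigma}_P)$.

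Part~(d) follows by a Rayleigh quotient argument once part~(b) is in hand. For a unit vector $\mbb{\beta} \in \mathbb{R}^K$, I would write $\mbb{\beta}^\top \mbb{\Sigma}_P \mbb{\beta} = \E\bigl(\{\sum_k \beta_k \phi_k(Z)\}^2\bigr)$; under the density upper bound this is at most $C \| \sum_k \beta_k \phi_k\|_2^2$, which by part~(b) with $p=2$ is at most $C \cdot 2^d K^{-1} \|\mbb{\beta}\|_2^2 = 2^d C/K$. The matching lower bound under the density lower bound uses $\mbb{\beta}^\top \mbb{\Sigma}_P \mbb{\beta} \geq c\|\sum_k \beta_k \phi_k\|_2^2 \geq c \cdot c_s(r)^d K^{-1} \|\mbb{\beta}\|_2^2$, again by part~(b). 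The technical subtlety throughout is keeping track of dimension-dependent constants so that the bounds scale as $K^{-1}$ rather than with larger powers of $N$ or $r$.
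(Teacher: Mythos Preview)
Your proposal is correct and follows essentially the same route as the paper's proof: part~(a) via the tensor product of the univariate partition of unity, part~(b) by induction on $d$ with the $d=2$ case handled by grouping coefficients and applying the univariate de Boor bound~\eqref{eq:de boor spline bound} twice (together with the bound $K \leq 2^d(N+1)^dr^d$ for the upper inequality), part~(c) via the trace bounds and Cauchy--Schwarz exactly as you describe, and part~(d) via the Rayleigh quotient combined with part~(b) at $p=2$. There is nothing materially different between your outline and the paper's argument.
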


We will now argue that splines are strong approximators over classes of sufficiently smooth functions, as defined by H\"older smoothness:
\begin{definition} \leavevmode \normalfont 
	\label{Definition: Holder class}
	Given a multi-index $\mbb{\alpha} =(\alpha_1, \dots, \alpha_d) \in \mathbb{N}_0^d$ with $|\mbb{\alpha}| := \sum_{j=1}^d \alpha_j$ and an $|\mbb{\alpha}|$-times differentiable function $f:[0,1]^d \rightarrow \mathbb{R}$, we define
	\[
	D^{\mbb{\alpha}} f:= \frac{\partial^{\alpha_1}}{\partial x_1^{\alpha_1}} \dots \frac{\partial^{\alpha_d}}{\partial x_d^{\alpha_d}} f.
	\]
	For $s > 0$, write $s_0 := \ceil{s} - 1$ and define $\mathcal{H}_s \equiv \mathcal{H}_s^d$ to be the set $f: [0,1]^d \to \mathbb{R}$ that are $s_0$-times differentiable and that satisfy
	\begin{equation}
	\label{Eq:Holder1}
	\max_{\mbb{\alpha} \in \mathbb{N}_0^d: |\mbb{\alpha}| = s_0} |D^{\mbb{\alpha}} f(x) - D^{\mbb{\alpha}}f(\tilde{x})| \leq C \| x - \tilde{x} \|_2^{s-s_0} \quad \forall x, \tilde{x} \in [0,1]^d
	\end{equation}
	and
	\begin{equation}
	\label{Eq:Holder2}
	\max_{\mbb{\alpha} \in \mathbb{N}_0^d: |\mbb{\alpha}| = s_0} \| D^{\mbb{\alpha}} f \|_\infty \leq C
	\end{equation}
	for some $C > 0$.  If $f \in \mathcal{H}_s$, then the infimum of the set of $C > 0$ for which both~\eqref{Eq:Holder1} and~\eqref{Eq:Holder2} hold is called the $s$-Hölder norm, and is denoted by $\|f\|_{\mathcal{H}_s}$.
\end{definition}
The following basic result shows that given normed space of real-valued functions on $[0,1]^d$ containing $\mathcal{S}_{r,N}^d$, we can find a best approximant within $\mathcal{S}_{r,N}^d$.
\begin{lemma}
\label{Lemma:BestApproximant}
Let $(\mathcal{V},\|\cdot\|)$ denote a normed space of real-valued functions on $[0,1]^d$ that contains $\mathcal{S}_{r,N}^d$ as a subspace.  Then given any $f \in \mathcal{V}$, there exists $f^* \in \mathcal{S}_{r,N}^d$ such that $\|f - f^*\| = \inf_{g \in \mathcal{S}_{r,N}^d} \|f - g\|$.  If $\|\cdot\|$ is strictly convex, then this best approximant is unique.
\end{lemma}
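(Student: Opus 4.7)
The plan is to exploit the fact that $\mathcal{S}_{r,N}^d$ is a finite-dimensional subspace of $\mathcal{V}$ (of dimension $K=(N+r)^d$ by the Curry--Schoenberg theorem and the tensor construction in Definition~\ref{definition: tensor splines}), and then to invoke the standard approximation-theoretic fact that a continuous coercive function on a finite-dimensional normed space attains its infimum. Uniqueness will follow from a midpoint argument, applied using the characterisation of strict convexity via equality in the triangle inequality.

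For existence, I would set $d_* := \inf_{g \in \mathcal{S}_{r,N}^d} \|f-g\|$ and note that $d_* \leq \|f - 0\| = \|f\|$. By the reverse triangle inequality, any $g \in \mathcal{S}_{r,N}^d$ with $\|f-g\| \leq \|f\| + 1$ satisfies $\|g\| \leq 2\|f\| + 1$. Hence the infimum in the definition of $d_*$ may be taken over the closed ball $B_R := \{g \in \mathcal{S}_{r,N}^d : \|g\| \leq R\}$ with $R := 2\|f\|+1$. Since $\mathcal{S}_{r,N}^d$ is finite-dimensional, all norms on it are equivalent and $B_R$ is compact. The continuity of $g \mapsto \|f-g\|$ (again from the reverse triangle inequality) then guarantees that the infimum is attained at some $f^* \in B_R \subseteq \mathcal{S}_{r,N}^d$, as required.

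For uniqueness under strict convexity, suppose $g_1, g_2 \in \mathcal{S}_{r,N}^d$ both attain the infimum, so that $\|f-g_1\| = \|f-g_2\| = d_*$. The midpoint $g_* := \tfrac{1}{2}(g_1+g_2)$ lies in $\mathcal{S}_{r,N}^d$ by linearity, and the triangle inequality yields
\[
d_* \leq \|f - g_*\| = \tfrac{1}{2}\bigl\|(f-g_1) + (f-g_2)\bigr\| \leq \tfrac{1}{2}\bigl(\|f-g_1\| + \|f-g_2\|\bigr) = d_*.
\]
Equality must therefore hold throughout. If $d_* = 0$, this forces $f = g_1 = g_2$ directly. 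Otherwise both $f-g_1$ and $f-g_2$ are nonzero vectors of equal norm whose sum achieves the upper triangle bound, and strict convexity of $\|\cdot\|$ then forces $f-g_1 = f-g_2$, giving $g_1 = g_2$.

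The result is essentially a textbook application of compactness plus strict convexity, and I do not anticipate a substantive obstacle; the only point requiring mild care is to ensure that the closed ball $B_R$ really is compact, which is handled by noting that any choice of (finite) basis — e.g.~the tensor B-spline basis $\mbb{\phi}$ of Definition~\ref{definition: tensor splines} — provides a linear isomorphism to $\mathbb{R}^K$, after which the Heine--Borel theorem and equivalence of norms on finite-dimensional spaces apply.
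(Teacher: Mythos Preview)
Your proof is correct and follows essentially the same approach as the paper: the paper simply cites \citet[][Theorems~1.2 and~2.4]{powell1981} for existence via finite-dimensionality and uniqueness via strict convexity of the norm, while you unpack these standard arguments directly. The compactness argument for existence and the midpoint argument for uniqueness are exactly what those cited results encode.
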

\begin{proof}
Since $\mathcal{S}_{r,N}^d$ is a finite-dimensional subspace of $\mathcal{V}$, \citet[][Theorem 1.2]{powell1981} guarantees the existence of the best approximant $f^*$.  The uniqueness property follows from \citet[][Theorem 2.4]{powell1981} since $\mathcal{S}_{r,N}^d$ is convex.  
\end{proof}

	The approximation properties of splines over Hölder smoothness classes are characterised below.
\begin{proposition}
	\label{Proposition: spline approximation}
	Suppose $f \in \mathcal{H}_s^d$. Then there exists $C(d, r) > 0$ and $f^* \in \mathcal{S}_{r,N}^d$ such that
	\begin{equation}
	\label{Eq:InfinityNormApproximantBound}
	\| f - f^*\|_\infty \leq \frac{C(d, r)}{(N+1)^{\min(s, r)}} \|f\|_{\mathcal{H}_s} \leq \frac{C(d, r)}{(2rK)^{\min(s,r)/d}} \|f\|_{\mathcal{H}_s}.
	\end{equation}
	\begin{proof}
	Given $g:[0,1]^d \rightarrow \mathbb{R}$, $j \in [d]$, $h > 0$ and $k \in \mathbb{N}$, we define the \emph{$k$th forward difference of $g$ in coordinate $j$ with spacing $h$ at $x$} by
	\[
	\Delta_{j,h}^k g(x) := \sum_{\ell=0}^k (-1)^{k-\ell} \binom{k}{\ell} g(x + \ell he_j),
	\]
	where $e_j \in \mathbb{R}^d$ denotes the $j$th standard basis vector.  The \emph{$k$th modulus of smoothness of $g$ in coordinate $j$ of radius $t \in (0,1/k]$} is then defined as
	\[
	\omega_j^k(g;t) := \sup_{h \in [0,t]} \sup_{x \in [0,1-kh]} |\Delta_{j,h}^k g(x)|.
	\]
	By Lemma~\ref{Lemma:BestApproximant} and \citet[][Theorem 12.8]{schumaker2007}, there exists $f^* \in \mathcal{S}_{r,N}^d$ such that 
	\[
	\| f - f^* \|_\infty \leq C'(d, r) \sum_{j=1}^d \omega_j^r\bigl(f; 1/(N+1)\bigr),
	\]
	for some $C'(d,r) > 0$ depending only on $d$ and $r$.
	First consider the case $r \geq s$, and recall the notation $s_0 := \lceil s \rceil - 1$.   By \citet[][(2.119) and (2.117) in Theorem~2.59]{schumaker2007},
	\begin{align*}
	\omega_j^r\bigl(f; 1/(N+1)\bigr) &\leq \frac{1}{(N+1)^{s_0}} \omega_j^{r-s_0}\bigl(D^{s_0 e_j} f; 1/(N+1)\bigr)\\
	 &\leq \frac{2^{r-s_0-1}}{(N+1)^{s_0}} \omega_j^1\bigl(D^{s_0 e_j} f; 1/(N+1)\bigr) \leq \frac{2^{r-1}}{(N+1)^s}\| f \|_{\mathcal{H}_s}.
	\end{align*}
	On the other hand, if $r < s$, then by \citet[][(2.120) in Theorem~2.59]{schumaker2007},
	\[
	\omega_j^r\bigl(f; 1/(N+1)\bigr) \leq \frac{1}{(N+1)^r} \| D^{r e_j} f \|_\infty \leq  \frac{1}{(N+1)^r} \| f \|_{\mathcal{H}_s} .
	\]
	Combining these bounds yields the first inequality in~\eqref{Eq:InfinityNormApproximantBound} with $C(d,r) := 2^{r-1}dC'(d,r)$.  The final bound again follows from the fact that $K \leq 2^d(N+1)^dr^d$. 
	\end{proof}
\end{proposition}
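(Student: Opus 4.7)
The plan is to reduce the multivariate approximation problem to univariate spline approximation in each coordinate, using classical Jackson-type inequalities together with estimates relating moduli of smoothness to the H\"older norm. First, I would invoke Lemma~\ref{Lemma:BestApproximant} with $(\mathcal{V},\|\cdot\|) = (C([0,1]^d),\|\cdot\|_\infty)$ to guarantee the existence of a best $L_\infty$-approximant $f^* \in \mathcal{S}_{r,N}^d$. The main analytic tool would be a Jackson-type theorem for tensor-product splines (a standard result I expect to find in Schumaker's book), which bounds $\|f - f^*\|_\infty$ by a constant $C'(d,r)$ times the sum $\sum_{j=1}^d \omega_j^r\bigl(f; 1/(N+1)\bigr)$ of univariate $r$th moduli of smoothness in each coordinate direction.

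Next, I would bound each $\omega_j^r\bigl(f;1/(N+1)\bigr)$ by $(N+1)^{-\min(s,r)} \|f\|_{\mathcal{H}_s}$, with a case split. When $r < s$, the function $f$ is $r$-times differentiable in each coordinate, and the $r$th forward difference can be controlled directly by $(N+1)^{-r} \|D^{r e_j} f\|_\infty \leq (N+1)^{-r} \|f\|_{\mathcal{H}_s}$. When $r \geq s$, set $s_0 := \lceil s \rceil - 1$ and peel off $s_0$ differences, each contributing a factor $1/(N+1)$, via an identity of the form $\omega_j^k(g;t) \leq t^\ell \omega_j^{k-\ell}(D^{\ell e_j} g; t)$; the residual $(r-s_0)$th modulus of $D^{s_0 e_j} f$ is controlled through $\omega_j^1(h;t) \leq 2\|h\|_\infty$ and, crucially, the H\"older condition $\max_{|\mbb{\alpha}| = s_0} |D^{\mbb{\alpha}}f(x) - D^{\mbb{\alpha}}f(\tilde x)| \leq \|f\|_{\mathcal{H}_s} \|x - \tilde x\|_2^{s-s_0}$ to extract the fractional $(N+1)^{-(s-s_0)}$ factor.

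Combining these bounds yields the first inequality in~\eqref{Eq:InfinityNormApproximantBound}. The second inequality is then purely arithmetic: using $K = (N+r)^d$ and the elementary estimate $K \leq 2^d r^d (N+1)^d$ (already exploited in the proof of Proposition~\ref{Proposition: b-spline properties}(b)), we get $(N+1)^{-1} \leq (2r)^{1/d} K^{-1/d}$, hence $(N+1)^{-\min(s,r)} \leq (2r)^{\min(s,r)/d} K^{-\min(s,r)/d}$, which absorbs into the constant $C(d,r)$.

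The principal obstacle is locating and correctly invoking the right Jackson-type inequality for tensor-product spline approximation in sup-norm that produces a sum of coordinate-wise univariate moduli of smoothness, rather than a genuinely multivariate modulus that would be much harder to bound under the H\"older hypothesis. The second subtle point is the fractional smoothness case ($s \notin \mathbb{N}$), where I must carefully juggle the inequalities $\omega^k(g;t) \leq t^\ell \omega^{k-\ell}(D^{\ell e_j} g;t)$ and $\omega^{k-\ell}(h;t) \leq 2^{k-\ell-1}\omega^1(h;t)$ so that precisely $s_0$ derivatives are pulled out, leaving a single first-order modulus to which the H\"older condition applies with exponent $s - s_0 \in (0,1]$. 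Beyond this, everything reduces to routine computation.
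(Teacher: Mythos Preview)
Your proposal is correct and follows essentially the same route as the paper's proof: invoke the Jackson-type bound from Schumaker (Theorem~12.8) to reduce to coordinate-wise $r$th moduli of smoothness, then split into the cases $r<s$ and $r\geq s$, in the latter peeling off $s_0$ derivatives via $\omega^k(g;t)\leq t^\ell\omega^{k-\ell}(D^{\ell e_j}g;t)$, reducing to $\omega^1$ via $\omega^{k}\leq 2^{k-1}\omega^1$, and finally applying the H\"older condition to extract the fractional $(N+1)^{-(s-s_0)}$ factor; the arithmetic for the second inequality is exactly as you describe. The only slip is the remark that ``$\omega_j^1(h;t)\leq 2\|h\|_\infty$'' is what controls the residual modulus---that inequality alone would lose the fractional power, but you correctly identify the H\"older bound as the actual mechanism, so this is cosmetic.
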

Our next result provides a way of translating properties between population least squares approximants and supremum norm approximants.  
\begin{proposition}
	\label{Proposition: spline least squares approximation}
Let $Z$ be a random vector taking values in $[0,1]^d$, and let $\mathcal{F}$ be a class of functions $f: [0, 1]^d \to \mathbb{R}$ with $\mathbb{E}\bigl(f(Z)^2\bigr) < \infty$ for all $f \in \mathcal{F}$.  Then for each $f \in \mathcal{F}$, there exists a unique $f^\dagger \in \mathcal{S}_{r, N}^d$ such that
\[
\E\bigl(\{f(Z) - f^\dagger(Z)\}^2\bigr) = \inf_{g \in \mathcal{S}^d_{r, N}} \E\bigl(\{f(Z) - g(Z)\}^2\bigr).
\]
Now fix $f \in \mathcal{F}$ and $f^* \in \mathcal{S}_{r,N}^d$.  Further, suppose that $Z$ has a density $p$ with respect to Lebesgue measure on $[0,1]^d$ satisfying $c := \inf_{z \in [0,1]^d} p(z) > 0$ and $C := \sup_{z \in [0,1]^d} p(z) < \infty$.  Then there exists $M(c, C, d, r) > 0$ such that
\[
\| f - f^\dagger \|_\infty \leq M(c, C, d, r)\|f-f^*\|_\infty.
\]
\begin{proof}
Let $P$ denote the distribution of $Z$, and let $L_2(P)$ denote the normed space of equivalence classes of measurable functions $g:[0,1]^d \rightarrow \mathbb{R}$ satisfying
\[
	\|g\|_{2,P} := \bigl\{\mathbb{E}\bigl(g(Z)^2\bigr)\bigr\}^{1/2} < \infty
\] 
under the binary relation where $g \sim g^\circ$ if $g(Z)=g^\circ(Z)$ almost surely\footnote{We do not distinguish between a function with finite $\|\cdot\|_{2,P}$ norm and its equivalence class in what follows.}.  The existence of the unique $f^\dagger \in \mathcal{S}_{r,N}^d$ follows from  Lemma~\ref{Lemma:BestApproximant} since the $L_2(P)$ norm is strictly convex.

Now define $\tilde{g} := f - f^*$, so the unique $L_2(P)$-best approximant $\tilde{g}^\dagger$ to $\tilde{g}$ in $\mathcal{S}_{r,N}^d$ is given by $\tilde{g}^\dagger = f^\dagger - f^*$.
We now verify that Conditions~A.1,~A.2 and~A.3 of \citet[][Theorem A.1]{huang2003} hold.  Condition A.1 is satisfied by our hypotheses on $c, C$; Condition A.2 holds since the knots of the splines in $\mathcal{S}^d_{r, N}$ are equi-spaced; and Condition A.3 is satisfied by Proposition~\ref{Proposition: b-spline properties}(b), where we again use the bounds $(N+1)^d \leq K \leq 2^d(N+1)^dr^d$.  Thus, \citet[][Theorem A.1]{huang2003} yields the existence of $M'(c, C, d, r) > 0$ such that 
\[
\|\tilde{g}^\dagger \|_\infty \leq M'(c, C, d, r) \|\tilde{g}\|_\infty.
\]
We conclude that
\[
\| f - f^\dagger \|_\infty \leq \| \tilde{g} \|_\infty  + \| \tilde{g}^\dagger \|_\infty \leq \bigl(1+M'(c, C, d, r)\bigr) \| \tilde{g} \|_\infty,
\]
so the desired result holds with $M(c, C, d, r) := 1 + M'(c, C, d, r)$.
\end{proof}
\end{proposition}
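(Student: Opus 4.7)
The plan is to handle the two assertions in sequence. For the existence and uniqueness of the $L_2(P)$-best approximant $f^\dagger$, I would apply Lemma~\ref{Lemma:BestApproximant} to the normed space $\bigl(L_2(P), \|\cdot\|_{2,P}\bigr)$ where $\|g\|_{2,P} := \{\mathbb{E}(g(Z)^2)\}^{1/2}$, working with equivalence classes of functions modulo $P$-almost sure equality. Since the $L_2(P)$ norm is induced by an inner product it is strictly convex, and $\mathcal{S}_{r,N}^d$ is a finite-dimensional (hence closed) subspace of $L_2(P)$, so Lemma~\ref{Lemma:BestApproximant} immediately yields both existence and uniqueness of $f^\dagger$.

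For the sup-norm comparison bound, my strategy is to pass to a translation-invariant formulation. Set $\tilde{g} := f - f^*$, and observe that the $L_2(P)$-best approximant $\tilde{g}^\dagger \in \mathcal{S}_{r,N}^d$ to $\tilde{g}$ is exactly $f^\dagger - f^*$, since $f^* \in \mathcal{S}_{r,N}^d$ and the best-approximation problem is translation-equivariant under elements of the spline space. The desired inequality then reduces to showing $\|\tilde{g}^\dagger\|_\infty \lesssim \|\tilde{g}\|_\infty$ with a constant depending only on $c, C, d, r$, because the triangle inequality gives $\|f - f^\dagger\|_\infty \leq \|\tilde{g}\|_\infty + \|\tilde{g}^\dagger\|_\infty$.

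To establish this latter sup-norm comparison, I would invoke Theorem A.1 of \citet{huang2003}, which bounds the supremum norm of the $L_2(P)$-projection of a function onto a linear span of basis functions by the supremum norm of the function itself, under three conditions on the design and basis. Condition~A.1 (density bounded above and below) is exactly our hypothesis on $p$; Condition~A.2 (regularity of the knot spacing) holds by our equi-spaced construction in Definition~\ref{definition: univariate splines}; and Condition~A.3 (equivalence of the $L_\infty$ norm of a spline with a suitably normalised $\ell_\infty$ norm of its coefficient vector) follows from Proposition~\ref{Proposition: b-spline properties}(b) together with the bounds $(N+1)^d \leq K \leq 2^d(N+1)^d r^d$ relating the number of basis functions to the knot count.

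The only delicate point will be verifying Condition~A.3 in the precise form required by Huang's theorem; once that is in place, the theorem delivers a constant $M'(c,C,d,r) > 0$ with $\|\tilde{g}^\dagger\|_\infty \leq M'(c,C,d,r)\|\tilde{g}\|_\infty$, and the result follows with $M(c,C,d,r) := 1 + M'(c,C,d,r)$. The rest of the argument is bookkeeping: confirming that the basis properties established earlier match what Huang needs, and confirming that the pseudoinverse formulation in that reference agrees with the $L_2(P)$ orthogonal projection we are using.
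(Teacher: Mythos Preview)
Your proposal is correct and follows essentially the same approach as the paper's proof: existence/uniqueness via Lemma~\ref{Lemma:BestApproximant} and strict convexity of the $L_2(P)$ norm, the translation $\tilde{g} = f - f^*$ so that $\tilde{g}^\dagger = f^\dagger - f^*$, verification of Conditions~A.1--A.3 of \citet[][Theorem~A.1]{huang2003} using the density bounds, equi-spaced knots, and Proposition~\ref{Proposition: b-spline properties}(b), and the concluding triangle inequality to obtain $M = 1 + M'$.
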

Lemma~\ref{lemma: rudelson lln} below will ensure that, provided $K$ increases slightly slower than $n$ (so that $K \log(K)/n \rightarrow 0$), performing ordinary least squares with the $d$-tensor B-spline basis will yield consistent estimators. 
\begin{lemma}
	\label{lemma: rudelson lln}
Let $\mathcal{P}$ denote a family of distributions for a random vector $Z$ taking values in $[0,1]^d$, and let $(Z_n)_{n \in \mathbb{N}}$ be a sequence of independent and identically distributed copies of $X$.  Recall the notation $\mbb{\phi} = (\phi_1,\ldots,\phi_K)^\top$, where $\{\phi_k\}_{k=1}^K$ denotes the $d$-tensor B-spline basis of $\mathcal{S}_{r,N}^d$.  For $P \in \mathcal{P}$, define $\mbb{\Sigma}_P := \E_P\bigl(\mbb{\phi}(Z) \mbb{\phi}(Z)^\top\bigr) \in \mathbb{R}^{K \times K}$ and $\mbb{\widehat{\Sigma}} := n^{-1} \sum_{i=1}^n \mbb{\phi}(Z_i) \mbb{\phi}(Z_i)^\top$.  Suppose that each $P \in \mathcal{P}$ is absolutely continuous with respect to Lebesgue measure on $[0,1]^d$, with corresponding density $p_P$ satisfying $C := \sup_{P \in \mathcal{P}} \sup_{z \in [0,1]^d} p_P(z) < \infty$.  Then
\[
K \bigl\| \mbb{\widehat{\Sigma}} - \mbb{\Sigma}_P \bigr\|_{\op} = O_{\mathcal{P}}\biggl(\frac{K \log (eK)}{n} + \sqrt{\frac{K \log (eK)}{n}}\biggr).
\]
If, in addition, $c := \inf_{P \in \mathcal{P}} \inf_{z \in [0, 1]^d} p_P(z) > 0$ and $K \log (K) /n \to 0$, then
\[
K^{-1} \bigl\| \mbb{\widehat{\Sigma}}^{-1} - \mbb{\Sigma}_P^{-1} \bigr\|_{\op} = O_{\mathcal{P}}\biggl(\frac{K \log (eK)}{n} + \sqrt{\frac{K \log (eK)}{n}}\biggr),
\]
and
\begin{equation}
	\label{Eq: sigma operator norm}
\|\mbb{\widehat{\Sigma}} \|_{\op} = O_{\mathcal{P}}(K^{-1}), \quad  \|\mbb{\widehat{\Sigma}}^{-1} \|_{\op} = O_{\mathcal{P}}(K).
\end{equation}
\begin{proof}
For the first claim, by Markov's inequality, it suffices to show that 
\[
\sup_{P \in \mathcal{P}} K \E_P\bigl(\bigl\| \mbb{\widehat{\Sigma}}  - \mbb{\Sigma}_P \bigr\|_{\op} \bigr) = O\biggl(\frac{K \log (eK)}{n} + \sqrt{\frac{K \log (eK)}{n}}\biggr)
\]
as $n \to \infty$. By the Rudelson law of large numbers for matrices \citep[][Lemma 6.2]{belloni2015some} (and Chebyshev's inequality when $K=1$), there exists a universal constant $C_* > 0$ such that 
\begin{align*}
\sup_{P \in \mathcal{P}} K \E_P\bigl( \bigl\| \mbb{\widehat{\Sigma}}  - \mbb{\Sigma}_P \bigr\|_{\op} \bigr) &\leq \frac{C_* K \log(eK)}{n} + C_* \sqrt{\frac{K^2 \log (eK)}{n}} \sup_{P \in \mathcal{P}} \sqrt{\| \mbb{\Sigma}_P \|_{\op}} \\
&\leq \frac{C_* K \log(eK)}{n} + C_* \sqrt{C 2^d} \sqrt{\frac{K \log (eK)}{n}},
\end{align*}
since $\|\mbb{\phi}(Z) \|_2 \leq \|\mbb{\phi}(Z) \|_1 = 1$ and $\| \mbb{\Sigma}_P \|_{\op} \leq C 2^d K^{-1}$, by Proposition~\ref{Proposition: b-spline properties}(a) and~(d) respectively.

For the second claim, note first that $K \lambda_{\min}(\mbb{\Sigma}_P) \geq c c_s(r) =: b > 0$ by Proposition~\ref{Proposition: b-spline properties}(d). 
From the first claim of the lemma and the hypothesis that $K \log(K)/n \rightarrow 0$, given $\epsilon > 0$, we can choose $n_0 \in \mathbb{N}$ large enough that 
\[
\sup_{P \in \mathcal{P}} \pr_P\biggl(K \| \mbb{\widehat{\Sigma}} - \mbb{\Sigma}_P \|_{\op} \geq \frac{b}{2} \biggr) \leq \frac{\epsilon}{2}
\]
for $n \geq n_0$.  Then, by another application of the first claim of the lemma, by increasing $n_0$ if necessary, we can find $M_0 > 0$ such that 
\[
\sup_{P \in \mathcal{P}} \pr_P\biggl(K \| \mbb{\widehat{\Sigma}} - \mbb{\Sigma}_P \|_{\op} \geq \frac{b^2M}{2}\biggl\{\frac{K \log (eK)}{n} + \sqrt{\frac{K \log (eK)}{n}}\biggr\}\biggr) \leq \frac{\epsilon}{2}
\]
for all $n \geq n_0$ and $M \geq M_0$.  It follows by Lemma~\ref{Lemma: locally lipschitz matrix inversion} that for $n \geq n_0$ and $M \geq M_0$, we have 
\begin{align*}
\sup_{P \in \mathcal{P}} \pr_P\biggl(&\frac{1}{K} \| \mbb{\widehat{\Sigma}}^{-1} - \mbb{\Sigma}_P^{-1} \|_{\op} \geq M\biggl\{\frac{K \log (eK)}{n} + \sqrt{\frac{K \log (eK)}{n}}\biggr\}\biggr) \\
&\leq \sup_{P \in \mathcal{P}} \pr_P\biggl(K \| \mbb{\widehat{\Sigma}} - \mbb{\Sigma}_P \|_{\op} \geq \frac{b^2M}{2}\biggl\{\frac{K \log (eK)}{n} + \sqrt{\frac{K \log (eK)}{n}}\biggr\}\biggr)\\
&\hspace{1cm}+ \sup_{P \in \mathcal{P}} \pr_P\biggl(K \| \mbb{\widehat{\Sigma}} - \mbb{\Sigma}_P \|_{\op} > \frac{b}{2} \biggr) \leq \epsilon,
\end{align*}
which establishes the second claim.

Finally, by the first part of Proposition~\ref{Proposition: b-spline properties}(d),
\[
K\|\mbb{\widehat{\Sigma}} \|_{\op} \leq K\| \mbb{\widehat{\Sigma}} - \mbb{\Sigma}_P \|_{\op} + K\|\mbb{\Sigma}_P\|_{\op} = O_{\mathcal{P}}(1),
\]
and by the second part of Proposition~\ref{Proposition: b-spline properties}(d),
\[
K^{-1}\|\mbb{\widehat{\Sigma}}^{-1} \|_{\op}  \leq K^{-1}\| \mbb{\widehat{\Sigma}}^{-1} - \mbb{\Sigma}_P^{-1} \|_{\op} + K^{-1}\|\mbb{\Sigma}_P^{-1}\|_{\op} = O_{\mathcal{P}}(1),
\]
as required.
\end{proof}
\end{lemma}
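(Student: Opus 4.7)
The plan is to view $\mbb{\widehat{\Sigma}} - \mbb{\Sigma}_P$ as a sum of $n$ independent mean-zero random matrices $X_i := n^{-1}\{\mbb{\phi}(Z_i)\mbb{\phi}(Z_i)^\top - \mbb{\Sigma}_P\}$ and apply a matrix concentration inequality of Rudelson/Bernstein type. The key structural input is that the partition of unity property (Proposition~\ref{Proposition: b-spline properties}(a)) yields $\|\mbb{\phi}(z)\|_2 \leq \|\mbb{\phi}(z)\|_1 = 1$ for every $z \in [0,1]^d$, so that $\|\mbb{\phi}(Z_i)\mbb{\phi}(Z_i)^\top\|_{\op} \leq 1$ almost surely, giving a deterministic uniform bound on the summands. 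Moreover the density upper bound $C$ together with Proposition~\ref{Proposition: b-spline properties}(d) gives $\|\mbb{\Sigma}_P\|_{\op} \leq C2^d K^{-1}$, which will control the variance proxy that enters the Bernstein-type bound.

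For the first claim, I would invoke a matrix concentration inequality (e.g.\ a non-commutative Khintchine/Rudelson bound, or the matrix Bernstein inequality) to obtain
\[
\E_P\bigl(\|\mbb{\widehat{\Sigma}}-\mbb{\Sigma}_P\|_{\op}\bigr) \lesssim \frac{\log(eK)}{n} + \sqrt{\frac{\|\mbb{\Sigma}_P\|_{\op}\log(eK)}{n}} \lesssim \frac{\log(eK)}{n} + \sqrt{\frac{\log(eK)}{nK}},
\]
where the implicit constant is absolute (or depends only on $d$). Multiplying through by $K$ recovers the advertised rate. Converting this expectation bound into a uniform-in-$P$ statement of order $O_{\mathcal{P}}(\cdot)$ is then a routine application of Markov's inequality, as in the usage of Lemma~\ref{Lemma: unconditionalisation via Markov} elsewhere in the paper. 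The only minor subtlety here is packaging the case $K=1$ separately (handled by Chebyshev's inequality), which the paper's proof also notes.

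For the second claim, the plan is to combine Lemma~\ref{Lemma: locally lipschitz matrix inversion} with the uniform lower bound $\lambda_{\min}(\mbb{\Sigma}_P) \geq cc_s(r)^d K^{-1} =: bK^{-1}$ from Proposition~\ref{Proposition: b-spline properties}(d). Given $\epsilon > 0$, the first claim together with $K\log K/n \to 0$ lets me choose $n_0$ so large that $\mathbb{P}_P(\|\mbb{\widehat{\Sigma}}-\mbb{\Sigma}_P\|_{\op} \geq b/(2K)) < \epsilon/2$ uniformly in $P \in \mathcal{P}$ for $n \geq n_0$. On the complementary event the hypothesis of Lemma~\ref{Lemma: locally lipschitz matrix inversion} is met with $c$ replaced by $bK^{-1}$, yielding
\[
\|\mbb{\widehat{\Sigma}}^{-1}-\mbb{\Sigma}_P^{-1}\|_{\op} \leq \frac{2K^2}{b^2}\|\mbb{\widehat{\Sigma}}-\mbb{\Sigma}_P\|_{\op}.
\]
A second application of the first claim then upgrades this to the stated $K^{-1}\|\mbb{\widehat{\Sigma}}^{-1}-\mbb{\Sigma}_P^{-1}\|_{\op} = O_{\mathcal{P}}\bigl(K\log(eK)/n + \sqrt{K\log(eK)/n}\bigr)$ bound, again uniformly over $\mathcal{P}$. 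The operator-norm bounds \eqref{Eq: sigma operator norm} then follow by the triangle inequality: $\|\mbb{\widehat{\Sigma}}\|_{\op} \leq \|\mbb{\widehat{\Sigma}}-\mbb{\Sigma}_P\|_{\op} + \|\mbb{\Sigma}_P\|_{\op}$, together with the two-sided eigenvalue bounds from Proposition~\ref{Proposition: b-spline properties}(d), and analogously for $\mbb{\widehat{\Sigma}}^{-1}$.

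The main obstacle I anticipate is getting the expectation bound in the correct form with a constant independent of $P$: a naive matrix Bernstein inequality would give a bound in terms of $\|\mbb{\Sigma}_P\|_{\op}$ which a priori depends on $P$, and I must be careful that the $\sqrt{K\log(eK)/n}$ term (rather than a larger term) genuinely emerges after using the uniform bound $\|\mbb{\Sigma}_P\|_{\op} \leq C2^d/K$. Everything else is relatively mechanical bookkeeping around the two applications of the matrix concentration bound and the deterministic Lipschitz inversion step.
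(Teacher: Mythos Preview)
Your proposal is correct and follows essentially the same route as the paper: apply a Rudelson-type matrix concentration bound using $\|\mbb{\phi}(z)\|_2 \leq 1$ and $\|\mbb{\Sigma}_P\|_{\op} \leq C2^dK^{-1}$, convert to $O_{\mathcal{P}}$ via Markov, then derive the inverse bound from Lemma~\ref{Lemma: locally lipschitz matrix inversion} on the high-probability event where $K\|\mbb{\widehat{\Sigma}}-\mbb{\Sigma}_P\|_{\op} \leq b/2$, and finish with the triangle inequality plus Proposition~\ref{Proposition: b-spline properties}(d). The ``main obstacle'' you flag is not really one, since the bound $\|\mbb{\Sigma}_P\|_{\op} \leq C2^d/K$ is already uniform in $P$ by Proposition~\ref{Proposition: b-spline properties}(d).
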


Proposition~\ref{Proposition: general spline regression} below provides estimation and both in-sample and out-of-sample prediction bounds for spline regression.  It is based on \citet[][Theorem~4.1]{belloni2015some}, but here we control the errors in a uniform fashion over a family of distributions, and those authors did not require in-sample bounds.
\begin{proposition}
	\label{Proposition: general spline regression}
	Let $\mathcal{P}$ be a family of distributions of $(Y, Z)$ on $\mathbb{R} \times [0, 1]^d$ with regression function $f_P$ given by $f_P(z) := \E_P(Y \given Z=z)$, and let $(Y_1, Z_1), \dots, (Y_n, Z_n)$ be independent and identically distributed copies of $(Y, Z)$. Suppose that
	\begin{enumerate}[(i)]
	\item The $L_2(P)$-best approximant $f_P^\dagger$ of $f_P$ in $\mathcal{S}_{r, N}^d$ satisfies 
		\[
			\sup_{P \in \mathcal{P}} \| f_P	- f_P^\dagger \|_\infty = O(K^{-\zeta}),
		\]
		for some $\zeta \equiv \zeta(d, r) > 0$.
		\item Each $P \in \mathcal{P}$ is absolutely continuous with respect to Lebesgue measure on $[0,1]^d$, with corresponding density $p_P$ satisfying $C := \sup_{P \in \mathcal{P}} \sup_{z \in [0,1]^d} p_P(z) < \infty$ and $c := \inf_{P \in \mathcal{P}} \inf_{z \in [0, 1]^d} p_P(z) > 0$.
		\item There exists a positive sequence $(\sigma_n^2)_{n \in \mathbb{N}}$ such that $\Var_P(Y \given Z) \leq \sigma_n^2$ for all $P \in \mathcal{P}$.
	\end{enumerate} 
	Let $\mbb{\phi}$ denote the $d$-tensor B-spline basis of $\mathcal{S}^d_{r, N}$ and let $\mbb{\widehat{\beta}}$ denote the ordinary least squares estimate from regressing $Y_1,\ldots,Y_n$ onto $\mbb{\phi}(Z_1),\ldots,\mbb{\phi}(Z_n)$.  Assume that $K \log(K) / n \to 0$.  Then
	\[
		\frac{1}{n} \sum_{i=1}^n \bigl(f_P(Z_i)-\mbb{\widehat{\beta}}^\top \mbb{\phi}(Z_i)\bigr)^2 = O_{\mathcal{P}}\bigl(K^{-2\zeta} + \sigma_n^2 K/n \bigr).
	\]
	Letting $\mbb{\beta}_P \in \mathbb{R}^K$ be the unique solution to $f_P^\dagger(z) = \mbb{\beta}_P^\top \mbb{\phi}(z)$, we have under the same assumptions that
	\[		\| \widehat{\mbb{\beta}} - \mbb{\beta}_P \|_2^2 = O_{\mathcal{P}}\bigl(K^{-(2\zeta-2)}/n + \sigma_n^2 K^2/n\bigr).
	\]
	Further, if $(Y^*, Z^*)$ is a new observation of $(Y, Z)$ independent of the original sample, then
	\[
		\E_P \Bigl( \bigl\{ f_P(Z^*)-\mbb{\widehat{\beta}}^\top \mbb{\phi}(Z^*) \bigr\}^2  \bigm| \widehat{\mbb{\beta}} \Bigr) = O_{\mathcal{P}}\bigl(K^{-2\zeta} + \sigma_n^2 K/n\bigr).
	\]
	\begin{proof}
		Let $\mbb{\widehat{\Sigma}} := \frac{1}{n} \sum_{i=1}^n \mbb{\phi}(Z_i) \mbb{\phi}(Z_i)^\top$ and for $i \in [n]$, let $h_i:= f_P(Z_i) - f_P^\dagger(Z_i)$ and $\varepsilon_i := Y_i - f_P(Z_i)$. Then, recalling that $f_P^\dagger(z) = \mbb{\beta}_P^\top \mbb{\phi}(z)$, we have
		\begin{align}
			\nonumber
			&\frac{1}{n} \sum_{i=1}^n \bigl(f_P(Z_i) - \mbb{\widehat{\beta}}^\top \mbb{\phi}(Z_i)\bigr)^2 \leq 2 \| f_P - f_P^\dagger \|_\infty^2 +  2 (\mbb{\widehat{\beta}}-\mbb{\beta}_P)^\top \mbb{\widehat{\Sigma}} (\mbb{\widehat{\beta}}-\mbb{\beta}_P)\\ \nonumber
			&= 2 \| f_P - f_P^\dagger \|_\infty^2 +  2 \biggl( \frac{1}{n} \sum_{i=1}^n (h_i + \varepsilon_i) \mbb{\phi}(Z_i) \biggr)^\top \mbb{\widehat{\Sigma}}^{-1} \mbb{\widehat{\Sigma}} \mbb{\widehat{\Sigma}}^{-1}  \biggl( \frac{1}{n} \sum_{i=1}^n (h_i + \varepsilon_i) \mbb{\phi}(Z_i) \biggr) \\
			&\leq 2 \| f_P - f_P^\dagger \|_\infty^2 +  2 \| \mbb{\widehat{\Sigma}} \|_{\op} \bigl\| \mbb{\widehat{\Sigma}}^{-1} \bigr\|_{\op} \biggl\| \mbb{\widehat{\Sigma}}^{-1/2} \biggl( \frac{1}{n} \sum_{i=1}^n (h_i + \varepsilon_i) \mbb{\phi}(Z_i) \biggr) \biggr\|_2^2.			\label{Eq: spline mse bound}
		\end{align}
		Now $\| \mbb{\widehat{\Sigma}} \|_{\op} \bigl\| \mbb{\widehat{\Sigma}}^{-1} \bigr\|_{\op} = O_\mathcal{P}(1)$ by~\eqref{Eq: sigma operator norm} in Lemma~\ref{lemma: rudelson lln}.  Moreover, 
		\begin{align*}
		\biggl\| \mbb{\widehat{\Sigma}}^{-1/2} \biggl( \frac{1}{n} \sum_{i=1}^n (h_i + \varepsilon_i) \mbb{\phi}(Z_i) \biggr) \biggr\|_2^2 &\leq 2 \underbrace{\biggl\| \mbb{\widehat{\Sigma}}^{-1/2} \biggl( \frac{1}{n} \sum_{i=1}^n h_i \mbb{\phi}(Z_i) \biggr) \biggr\|_2^2}_{\RN{1}_n}\\
		&\hspace{1cm}+ 2 \underbrace{\biggl\| \mbb{\widehat{\Sigma}}^{-1/2} \biggl( \frac{1}{n} \sum_{i=1}^n \varepsilon_i \mbb{\phi}(Z_i) \biggr) \biggr\|_2^2}_{\RN{2}_n}.
		\end{align*}
		To deal with $\RN{1}_n$, let $\mbb{\Sigma}_P:= \E_P\bigl(\mbb{\phi}(Z)\mbb{\phi}(Z)^\top\bigr) \in \mathbb{R}^{K \times K}$, and note that
	\begin{equation}
		\label{eq: ls estimator mean}
		\begin{aligned}
		\E_P\bigl(f_P^\dagger(Z) \mbb{\phi}(Z)^\top \bigr) &= 
		\E_P\bigl(\mbb{\beta}_P^\top \mbb{\phi}(Z)\mbb{\phi}(Z)^\top \bigr) \\
		&= \E_P\bigl(f_P(Z)\mbb{\phi}(Z)^\top \bigr) \mbb{\Sigma}^{-1}_P \mathbb{E}_P\bigl(\mbb{\phi}(Z) \mbb{\phi}(Z)^\top\bigr) = \E_P\bigl(f_P(Z)\mbb{\phi}(Z)^\top\bigr).
		\end{aligned}
	\end{equation}
	It follows that
	\begin{align}
	\label{Eq: tighter projection bound}
	\sup_{P \in \mathcal{P}}\E_P\biggl(\biggl\| \frac{1}{n} \sum_{i=1}^n h_i \mbb{\phi}(Z_i) \biggr\|_2^2 \biggr) &= \frac{1}{n} \sup_{P \in \mathcal{P}} \E_P\bigl\{\tr\bigl((h_1)^2 \mbb{\phi}(Z_1) \mbb{\phi}(Z_1)^{\top}\bigr)\bigr\} \nonumber\\
	&\leq \frac{1}{n} \sup_{P \in \mathcal{P}} \|f_P-f_P^\dagger\|_{\infty}^2 \tr(\mbb{\Sigma}_P),
	\end{align}
so
\begin{equation}
\label{Eq: spline regression bias term}
|\RN{1}_n| \leq \bigl\|\mbb{\widehat{\Sigma}}^{-1}\bigr\|_{\mathrm{op}}\biggl\| \frac{1}{n} \sum_{i=1}^n h_i \mbb{\phi}(Z_i) \biggr\|_2^2 = O_\mathcal{P}\bigl(K^{-(2\zeta-1)}/n\bigr)
\end{equation}
by our assumption on $\sup_{P \in \mathcal{P}} \|f_P-f_P^\dagger\|_{\infty}$, Proposition~\ref{Proposition: b-spline properties}(d),~\eqref{Eq: sigma operator norm} in Lemma~\ref{lemma: rudelson lln} and Lemma~\ref{Lemma: unconditionalisation via Markov}. 
		To deal with $\RN{2}_n$, we note that $\varepsilon_1, \dots, \varepsilon_n$ are conditionally independent given $Z_1, \dots, Z_n$, so 
		\begin{equation}
			\label{Eq: spline regression variance term}
			\E_P(\RN{2}_n \given Z_1, \dots, Z_n) = \frac{1}{n^2} \tr\biggl( \mbb{\widehat{\Sigma}}^{-1} \sum_{i=1}^n \E(\varepsilon_i^2 \given Z_i) \mbb{\phi}(Z_i) \mbb{\phi}(Z_i)^\top \biggr) \leq \frac{\sigma_n^2}{n} \tr\bigl( \mbb{\widehat{\Sigma}}^{-1} \mbb{\widehat{\Sigma}} \bigr) \leq \frac{\sigma_n^2 K}{n}. 
		\end{equation}

		Putting things together, since $\|f_P-f_P^\dagger\|_\infty^2 = O(K^{-2\zeta})$ by assumption, we have 
		\[
		\frac{1}{n} \sum_{i=1}^n \bigl(f_P(Z_i) - \mbb{\widehat{\beta}}^\top \mbb{\phi}(Z_i)\bigr)^2 = O_\mathcal{P}\bigl(K^{-2\zeta} + \sigma_n^2 K/n\bigr),
		\]
		as desired.
		
		For the second claim, observe that
		\begin{align*}
			&\|\mbb{\widehat{\beta}} - \mbb{\beta}_P\|_2^2 = \biggl\| \mbb{\widehat{\Sigma}}^{-1} \biggl( \frac{1}{n} \sum_{i=1}^n (h_i + \varepsilon_i) \mbb{\phi}(Z_i) \biggr) \biggr\|_2^2\\
			&\leq \bigl\| \mbb{\widehat{\Sigma}}^{-1} \bigr\|_{\op} \biggl\| \mbb{\widehat{\Sigma}}^{-1/2} \biggl( \frac{1}{n} \sum_{i=1}^n (h_i + \varepsilon_i) \mbb{\phi}(Z_i) \biggr) \biggr\|_2^2 = O_\mathcal{P}\bigl(K^{-(2\zeta-2)}/n + \sigma_n^2 K^2/n\bigr),
		\end{align*}
		by~\eqref{Eq: sigma operator norm} in Lemma~\ref{lemma: rudelson lln} and our results above. 

		Finally, we have following the argument in \eqref{Eq: spline mse bound}, we have
		\begin{align*}
		\E_P \Bigl( \bigl\{ f_P(Z^*)-\mbb{\widehat{\beta}}^\top \mbb{\phi}(Z^*) \bigr\}^2  \bigm| \widehat{\mbb{\beta}} \Bigr) &\leq 2 \| f_P - f_P^\dagger \|_\infty^2 +  2 \bigl\| \mbb{\Sigma}_P \bigr\|_{\op} \bigl\|\mbb{\widehat{\beta}}-\mbb{\beta}_P\bigr\|^2\\
		&= O_{\mathcal{P}}\bigl(K^{-2\zeta} + \sigma_n^2K/n\bigr), 
		\end{align*}
		by Proposition~\ref{Proposition: b-spline properties}(d) and the second claim of the proposition.
	\end{proof}
\end{proposition}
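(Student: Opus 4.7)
The plan is to follow the classical bias--variance decomposition for linear least squares, specialised to the B-spline basis, and to extract the rates from the eigenvalue bounds in Proposition~\ref{Proposition: b-spline properties} together with the uniform operator-norm concentration in Lemma~\ref{lemma: rudelson lln}. Write $Y_i = f_P(Z_i) + \varepsilon_i$ with $\varepsilon_i := Y_i - f_P(Z_i)$, and $h_i := f_P(Z_i) - f_P^\dagger(Z_i)$. Since $f_P^\dagger(z) = \mbb{\beta}_P^\top \mbb{\phi}(z)$ and $\widehat{\mbb{\beta}} = \mbb{\widehat{\Sigma}}^{-1}\bigl(n^{-1}\sum_i Y_i \mbb{\phi}(Z_i)\bigr)$, the normal equations give
\[
\widehat{\mbb{\beta}} - \mbb{\beta}_P = \mbb{\widehat{\Sigma}}^{-1}\biggl(\frac{1}{n}\sum_{i=1}^n (h_i+\varepsilon_i)\mbb{\phi}(Z_i)\biggr).
\]
This identity drives all three claims.

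For the in-sample MSE, the first step is the elementary inequality $(a+b)^2 \le 2a^2+2b^2$ applied to $f_P(Z_i) - \mbb{\widehat{\beta}}^\top\mbb{\phi}(Z_i) = h_i - (\widehat{\mbb{\beta}}-\mbb{\beta}_P)^\top\mbb{\phi}(Z_i)$. The first piece is bounded deterministically by $\|f_P-f_P^\dagger\|_\infty^2 = O(K^{-2\zeta})$ by hypothesis~(i). The second piece equals $(\widehat{\mbb{\beta}}-\mbb{\beta}_P)^\top\mbb{\widehat{\Sigma}}(\widehat{\mbb{\beta}}-\mbb{\beta}_P)$, which I split into a bias and a noise contribution $\mathrm{I}_n$ and $\mathrm{II}_n$ (obtained by inserting $h_i$-only and $\varepsilon_i$-only parts) after using $\|\mbb{\widehat{\Sigma}}\|_{\op}\|\mbb{\widehat{\Sigma}}^{-1}\|_{\op} = O_{\mathcal{P}}(1)$ from~\eqref{Eq: sigma operator norm}. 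For $\mathrm{I}_n$, the crucial observation is that $f_P^\dagger$ is the $L_2(P)$ projection, so $\E_P\bigl(h(Z)\mbb{\phi}(Z)\bigr) = 0$; thus
\[
\E_P\biggl(\biggl\|\frac{1}{n}\sum_{i=1}^n h_i\mbb{\phi}(Z_i)\biggr\|_2^2\biggr) = \frac{1}{n}\E_P\bigl(h(Z)^2\|\mbb{\phi}(Z)\|_2^2\bigr) \le \frac{1}{n}\|f_P-f_P^\dagger\|_\infty^2 \,\tr(\mbb{\Sigma}_P),
\]
and combining $\tr(\mbb{\Sigma}_P) \le 1$ with $\|\mbb{\widehat{\Sigma}}^{-1}\|_{\op} = O_{\mathcal{P}}(K)$ and Lemma~\ref{Lemma: unconditionalisation via Markov} yields $\mathrm{I}_n = O_{\mathcal{P}}(K^{-(2\zeta-1)}/n)$, which is dominated by $K^{-2\zeta}$ under the growth condition $K\log K/n \to 0$. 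For $\mathrm{II}_n$, I condition on $Z_1,\ldots,Z_n$ and use independence of $\varepsilon_i$ given $Z_i$ and the bound $\Var_P(Y\given Z) \le \sigma_n^2$ to get $\E_P(\mathrm{II}_n \given Z_1,\ldots,Z_n) \le \sigma_n^2 K/n$, after which Lemma~\ref{Lemma: unconditionalisation via Markov} delivers the uniform $O_{\mathcal{P}}$ bound.

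For the parameter estimation bound, I rewrite $\|\widehat{\mbb{\beta}}-\mbb{\beta}_P\|_2^2 \le \|\mbb{\widehat{\Sigma}}^{-1}\|_{\op}\cdot\bigl\|\mbb{\widehat{\Sigma}}^{-1/2}\bigl(n^{-1}\sum_i(h_i+\varepsilon_i)\mbb{\phi}(Z_i)\bigr)\bigr\|_2^2$; the quadratic form is exactly what appeared above, and the extra factor $\|\mbb{\widehat{\Sigma}}^{-1}\|_{\op} = O_{\mathcal{P}}(K)$ produces the stated rate $O_{\mathcal{P}}(K^{-(2\zeta-2)}/n + \sigma_n^2 K^2/n)$. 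For the out-of-sample claim, I use the same $2a^2+2b^2$ split as for the in-sample case, replacing $(\widehat{\mbb{\beta}}-\mbb{\beta}_P)^\top\mbb{\widehat{\Sigma}}(\widehat{\mbb{\beta}}-\mbb{\beta}_P)$ by $(\widehat{\mbb{\beta}}-\mbb{\beta}_P)^\top\mbb{\Sigma}_P(\widehat{\mbb{\beta}}-\mbb{\beta}_P)$, the latter arising precisely because $\E_P[\mbb{\phi}(Z^*)\mbb{\phi}(Z^*)^\top\mid\widehat{\mbb{\beta}}] = \mbb{\Sigma}_P$ by independence of $Z^*$ from the training data; combining the $\|\widehat{\mbb{\beta}}-\mbb{\beta}_P\|_2^2$ bound with $\|\mbb{\Sigma}_P\|_{\op} \le C 2^d K^{-1}$ from Proposition~\ref{Proposition: b-spline properties}(d) gives the final $K^{-1}$ saving that recovers the in-sample rate.

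The only mildly delicate point is that the bias term $\mathrm{I}_n$ genuinely needs the orthogonality property of $f_P^\dagger$ (not just $\|f_P-f_P^\dagger\|_\infty$), so that $n^{-1}\sum_i h_i\mbb{\phi}(Z_i)$ behaves like an empirical average of a mean-zero vector; otherwise the naive bound $\|h\|_\infty^2\cdot\|\mbb{\widehat{\Sigma}}^{-1}\|_{\op}\cdot K/n$ would cost an extra factor $K$. Provided this is handled via~\eqref{eq: ls estimator mean} (i.e.\ $\E_P(f_P^\dagger(Z)\mbb{\phi}(Z)) = \E_P(f_P(Z)\mbb{\phi}(Z))$, exactly as in~\eqref{Eq: tighter projection bound}), all three conclusions follow by routine applications of Lemmas~\ref{lemma: rudelson lln} and~\ref{Lemma: unconditionalisation via Markov}. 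Uniformity across $\mathcal{P}$ is preserved at each step because every inequality used is either deterministic (H\"older-like) or depends only on the constants $c$, $C$ and $\sigma_n^2$ controlled uniformly by assumptions~(ii) and~(iii).
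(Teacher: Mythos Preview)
Your proposal is correct and follows essentially the same approach as the paper's proof: the same bias--variance decomposition via the normal equations, the same use of the orthogonality $\E_P\bigl(h(Z)\mbb{\phi}(Z)\bigr)=0$ to sharpen the bias term (exactly as in~\eqref{eq: ls estimator mean} and~\eqref{Eq: tighter projection bound}), the same conditioning argument for the variance term, and the same reductions for the coefficient and out-of-sample claims via $\|\mbb{\widehat{\Sigma}}^{-1}\|_{\op}$ and $\|\mbb{\Sigma}_P\|_{\op}$ respectively. You even flag the one genuinely subtle point---that the tighter bias bound requires the $L_2(P)$-projection property of $f_P^\dagger$---which the paper handles identically.
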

Under standard smoothness assumptions, we can derive the following consequence of Proposition~\ref{Proposition: general spline regression}:
\begin{corollary}
	\label{Corollary: spline regression}
 Let $\mathcal{P}$ be a family of distributions of $(Y, Z)$ on $\mathbb{R} \times [0, 1]^d$, and let $f_P$ denote the regression function given by $f_P(z):=\E_P(Y \given Z=z)$ for $P \in \mathcal{P}$.  Suppose there exist $C,c,s > 0$ such that \begin{enumerate}[(i)]
		\item $f_P \in \mathcal{H}_s$ with $\|f_P\|_{\mathcal{H}_s} \leq C$ for all $P \in \mathcal{P}$.
  		\item Each $P \in \mathcal{P}$ is absolutely continuous with respect to Lebesgue measure on $[0,1]^d$, with corresponding density $p_P$ satisfying 
		\[
			\sup_{P \in \mathcal{P}} \sup_{z \in [0,1]^d} p_P(z) \leq C \quad \text{and} \quad \inf_{P \in \mathcal{P}} \inf_{z \in [0, 1]^d} p_P(z) \geq c. 
		\]
		\item $\Var_P(Y \given Z) \leq C$ for all $P \in \mathcal{P}$.
	\end{enumerate} 
	If $K\log(K)/n \rightarrow 0$, then the conclusions of Proposition~\ref{Proposition: general spline regression} hold for any $r \geq s$ with $\zeta = s/d$ and $\sigma_n^2 = C$.
	\begin{proof}
	Under Assumptions~\emph{(i)} and~\emph{(ii)} of the corollary, we have that Assumption~\emph{(i)} of Proposition~\ref{Proposition: general spline regression} holds with $\zeta = s/d$ when $r \geq s$ by Propositions~\ref{Proposition: spline approximation} and \ref{Proposition: spline least squares approximation}.  Assumptions~\emph{(ii)} and~\emph{(iii)} of Proposition~\ref{Proposition: general spline regression} also hold by hypothesis with $\sigma_n^2 = C$, so the conclusion follows. 
	\end{proof}
\end{corollary}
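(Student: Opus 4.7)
The corollary reduces to verifying that the three hypotheses of Proposition~\ref{Proposition: general spline regression} hold uniformly over $\mathcal{P}$ with the stated parameter choices $\zeta = s/d$ and $\sigma_n^2 = C$, after which the three conclusions transfer verbatim.  The plan is therefore a bookkeeping exercise whose only substantive step is controlling the sup-norm error of the $L_2(P)$-best spline approximant uniformly in $P$.

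First I would dispatch the easy hypotheses.  Hypothesis~(ii) of Proposition~\ref{Proposition: general spline regression} is literally hypothesis~(ii) of the corollary, with the same constants $c$ and $C$.  Hypothesis~(iii) of the proposition asks for a sequence $(\sigma_n^2)$ with $\Var_P(Y\mid Z)\leq \sigma_n^2$ for all $P\in\mathcal{P}$; the constant sequence $\sigma_n^2 \equiv C$ suffices by hypothesis~(iii) of the corollary.

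The remaining step is to show $\sup_{P\in\mathcal{P}}\|f_P-f_P^{\dagger}\|_{\infty}=O(K^{-s/d})$, where $f_P^{\dagger}$ denotes the $L_2(P)$-best approximant of $f_P$ in $\mathcal{S}_{r,N}^d$.  I would apply Proposition~\ref{Proposition: spline approximation} to $f_P\in\mathcal{H}_s^d$: since $r\geq s$, that result yields some $f_P^{*}\in\mathcal{S}_{r,N}^d$ with
\[
\|f_P-f_P^{*}\|_{\infty} \leq \frac{C(d,r)}{(2rK)^{s/d}}\|f_P\|_{\mathcal{H}_s} \leq \frac{C(d,r)\cdot C}{(2rK)^{s/d}},
\]
where the final bound is uniform in $P$ thanks to hypothesis~(i) of the corollary.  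Next I would apply Proposition~\ref{Proposition: spline least squares approximation}, whose density assumptions are met because of hypothesis~(ii) of the corollary and whose constant $M(c,C,d,r)$ depends only on $c,C,d,r$ (and not on $P$).  This yields
\[
\|f_P-f_P^{\dagger}\|_{\infty}\leq M(c,C,d,r)\|f_P-f_P^{*}\|_{\infty}=O(K^{-s/d}),
\]
uniformly over $\mathcal{P}$, which is hypothesis~(i) of Proposition~\ref{Proposition: general spline regression} with $\zeta=s/d$.  The three conclusions of Proposition~\ref{Proposition: general spline regression} then immediately give the three conclusions of the corollary.

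The only conceptual point worth flagging is uniformity: the key reason all estimates can be made uniform over $\mathcal{P}$ is that the constants produced by Propositions~\ref{Proposition: spline approximation} and~\ref{Proposition: spline least squares approximation} depend only on $d,r$ and on the density bounds $c,C$, while the Hölder norm is uniformly bounded by $C$ under hypothesis~(i).  There is no real obstacle beyond threading these uniformities through, and the condition $K\log K/n\to 0$ carries over unchanged from the proposition.
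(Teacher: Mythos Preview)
Your proposal is correct and follows essentially the same approach as the paper: verify hypotheses~(ii) and~(iii) of Proposition~\ref{Proposition: general spline regression} directly, then obtain hypothesis~(i) with $\zeta=s/d$ by combining Proposition~\ref{Proposition: spline approximation} (sup-norm spline approximant) with Proposition~\ref{Proposition: spline least squares approximation} (transferring to the $L_2(P)$-best approximant).  Your additional remarks on uniformity in $P$ of the constants simply make explicit what the paper's one-line proof leaves implicit.
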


Now suppose that $\mbb{\phi}^X$ and $\mbb{\phi}^Z$ are the $d_X$- and $d_Z$-tensor B-spline bases of $\mathcal{S}_{r,N_X}^{d_X}$ and $\mathcal{S}_{r,N_Z}^{d_Z}$ respectively.  It will be convenient to have the following decomposition of functions in the span of $\mbb{\phi}^X \otimes \mbb{\phi}^Z$.

\begin{proposition} \label{Proposition: tensor product projection}
Let $\mbb{\phi} := \mbb{\phi}^X \otimes \mbb{\phi}^Z$, and let $K_X := (N_X + r)^{d_X}$, $K_Z:= (N_Z + r)^{d_Z}$ and $K_{XZ} := K_{XZ}$. Let $V$ denote the subspace of $R^{K_{XZ}}$ given by $V := \{\mbb{1} \otimes \mbb{v} : \mbb{v} \in \mathbb{R}^{K_Z} \}$ where $\mbb{1}$ denotes the vector of ones in $\mathbb{R}^{K_X}$.  Let $V^\perp$ denote the orthogonal complement of $V$ in $\mathbb{R}^{K_{XZ}}$, and let $\mbb{\Pi}:\mathbb{R}^{K_{XZ}} \rightarrow V^\perp$ denote the projection onto $V^\perp$.  Let $\mbb{\beta} = (\beta_1,\ldots,\beta_{K_{XZ}})^\top \in \mathbb{R}^{K_{XZ}}$, and define $f:[0,1]^{d_X+d_Z} \rightarrow \mathbb{R}$ by $f(x, z) := \mbb{\phi}(x, z)^\top \mbb{\beta}$.  Then, writing $\bar{\mbb{\beta}} = (\bar{\beta}_1,\ldots,\bar{\beta}_{K_Z})^\top \in \mathbb{R}^{K_Z}$, where $\bar{\beta}_k := K_X^{-1}\sum_{\ell=1}^{K_X} \beta_{(k-1)K_X+\ell}$, we have $(\mbb{I}-\mbb{\Pi})\mbb{\beta} = \mbb{1} \otimes \bar{\mbb{\beta}}$ and
\begin{equation}
\label{Eq:fdecomp}
f(x,z) = \mbb{\phi}(x, z)^\top \mbb{\Pi}\mbb{\beta} + \mbb{\phi}^Z(z)^\top \bar{\mbb{\beta}}.
\end{equation}
Moreover, $\|\mbb{\Pi}\mbb{\beta}\|_\infty \leq 2\|\mbb{\beta}\|_\infty$.
\begin{proof}
	We claim that $V^\perp = \bigl\{\mbb{u} = (\mbb{u}_1^\top,\ldots,\mbb{u}_{K_Z}^\top)^\top \in \mathbb{R}^{K_{XZ}}: \mbb{1}^\top \mbb{u}_k = 0 \ \forall k \in [K_Z]\bigr\}$.  To see this, note that if 
	$\mbb{u} = (\mbb{u}_1^\top,\ldots,\mbb{u}_{K_Z}^\top)^\top \in \mathbb{R}^{K_{XZ}}$ satisfies $\mbb{1}^\top \mbb{u}_k = 0$ for all $k \in [K_Z]$ and $\mbb{1} \otimes \mbb{v} \in V$ for some $\mbb{v} = (v_1,\ldots,v_{K_Z})^\top \in \mathbb{R}^{K_Z}$, then
	\[
	(\mbb{1} \otimes \mbb{v})^\top \mbb{u} = \sum_{k=1}^{K_Z} v_k (\mbb{1}^\top \mbb{u}_k) = 0,
	\]
	which establishes our claim.   We can therefore write
	\[
	\mbb{\beta} = \mbb{1} \otimes \bar{\mbb{\beta}} + \mbb{\beta} - (\mbb{1} \otimes \bar{\mbb{\beta}}),
	\]
	where $\mbb{1} \otimes \bar{\mbb{\beta}} \in V$ and $\mbb{\beta} - (\mbb{1} \otimes \bar{\mbb{\beta}}) \in V^\perp$, so $(\mbb{I}-\mbb{\Pi})\mbb{\beta} = \mbb{1} \otimes \bar{\mbb{\beta}}$ and $\mbb{\Pi}\mbb{\beta} = \mbb{\beta} - (\mbb{1} \otimes\bar{\mbb{\beta}})$.  Hence,
	\begin{align*}
	\mbb{\phi}(x, z)^\top (\mbb{I}-\mbb{\Pi})\mbb{\beta} &= \mathrm{vec}\bigl(\mbb{\phi}^X(x)\mbb{\phi}^Z(z)^\top\bigr)^\top \mathrm{vec}(\mbb{1}\bar{\mbb{\beta}}^\top) = \sum_{\ell=1}^{K_X} \sum_{k=1}^{K_Z} \mbb{\phi}^X_\ell(x) \mbb{\phi}^Z_k(z) \bar{\beta}_k\\
	&= \mbb{\phi}^Z(z)^\top \bar{\mbb{\beta}},
	\end{align*}
by Proposition~\ref{Proposition: b-spline properties}(a), from which~\eqref{Eq:fdecomp} follows.  Finally,  $\|\mbb{\Pi}\mbb{\beta}\|_\infty \leq \|\mbb{\beta}\|_\infty + \|\mbb{1} \otimes \bar{\mbb{\beta}}\|_\infty \leq 2\|\mbb{\beta}\|_\infty$.
\end{proof}
\end{proposition}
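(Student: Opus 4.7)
The plan is to work directly from the definitions of the vec convention $\mbb{u}\otimes\mbb{v}=\mathrm{vec}(\mbb{u}\mbb{v}^\top)$ and the indexing $\bar\beta_k=K_X^{-1}\sum_{\ell=1}^{K_X}\beta_{(k-1)K_X+\ell}$. Under this convention, $\mbb{\beta}\in\mathbb{R}^{K_{XZ}}$ with entry $(k-1)K_X+\ell$ corresponds naturally to entry $(\ell,k)$ of a matrix $\mbb{B}\in\mathbb{R}^{K_X\times K_Z}$ with $\mathrm{vec}(\mbb{B})=\mbb{\beta}$, and $\bar\beta_k$ is the average of the $k$th column of $\mbb{B}$. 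The vector $\mbb{1}\otimes\mbb{v}$ is then the vectorisation of $\mbb{1}\mbb{v}^\top$, so its $(k-1)K_X+\ell$ entry equals $v_k$ (constant in $\ell$). The crucial observation that drives everything is the partition-of-unity identity $\sum_{\ell=1}^{K_X}\phi^X_\ell(x)=1$ from Proposition~\ref{Proposition: b-spline properties}(a).

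First I would establish the characterisation
\[
V^\perp=\Bigl\{\mbb{u}\in\mathbb{R}^{K_{XZ}}:\sum_{\ell=1}^{K_X}u_{(k-1)K_X+\ell}=0\text{ for all }k\in[K_Z]\Bigr\}.
\]
This follows by computing $(\mbb{1}\otimes\mbb{v})^\top\mbb{u}=\sum_{k=1}^{K_Z}v_k\sum_{\ell=1}^{K_X}u_{(k-1)K_X+\ell}$ and requiring this to vanish for every $\mbb{v}\in\mathbb{R}^{K_Z}$. Given this characterisation, I would write the candidate decomposition $\mbb{\beta}=(\mbb{1}\otimes\bar{\mbb{\beta}})+(\mbb{\beta}-\mbb{1}\otimes\bar{\mbb{\beta}})$, observe that the first summand is in $V$ by definition, and check directly that the second lies in $V^\perp$: its $(k-1)K_X+\ell$ entry is $\beta_{(k-1)K_X+\ell}-\bar\beta_k$, whose sum over $\ell\in[K_X]$ is $K_X\bar\beta_k-K_X\bar\beta_k=0$. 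Uniqueness of orthogonal decomposition then yields $(\mbb{I}-\mbb{\Pi})\mbb{\beta}=\mbb{1}\otimes\bar{\mbb{\beta}}$ and $\mbb{\Pi}\mbb{\beta}=\mbb{\beta}-\mbb{1}\otimes\bar{\mbb{\beta}}$.

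The functional identity then falls out by expanding $f(x,z)=\mbb{\phi}(x,z)^\top\mbb{\Pi}\mbb{\beta}+\mbb{\phi}(x,z)^\top(\mbb{1}\otimes\bar{\mbb{\beta}})$ and computing the second term: since $\mbb{\phi}(x,z)$ has entry $(k-1)K_X+\ell$ equal to $\phi^X_\ell(x)\phi^Z_k(z)$,
\[
\mbb{\phi}(x,z)^\top(\mbb{1}\otimes\bar{\mbb{\beta}})=\sum_{k=1}^{K_Z}\phi^Z_k(z)\bar\beta_k\sum_{\ell=1}^{K_X}\phi^X_\ell(x)=\mbb{\phi}^Z(z)^\top\bar{\mbb{\beta}},
\]
using the partition-of-unity property in the last step. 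Finally, the $\ell_\infty$ bound follows from the triangle inequality $\|\mbb{\Pi}\mbb{\beta}\|_\infty\le\|\mbb{\beta}\|_\infty+\|\mbb{1}\otimes\bar{\mbb{\beta}}\|_\infty$ together with the estimate $\|\mbb{1}\otimes\bar{\mbb{\beta}}\|_\infty=\max_k|\bar\beta_k|\le\max_k K_X^{-1}\sum_\ell|\beta_{(k-1)K_X+\ell}|\le\|\mbb{\beta}\|_\infty$.

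No step is genuinely hard; the only place one can slip is in the bookkeeping around the vec/Kronecker indexing convention, so I would be explicit up front about how entries of $\mbb{\beta}$, $\mbb{\phi}(x,z)$, and $\mbb{1}\otimes\mbb{v}$ are arranged. Everything else is algebra, and the proof is essentially three short calculations: characterising $V^\perp$, verifying orthogonality of the proposed decomposition, and invoking $\sum_\ell\phi^X_\ell\equiv1$ to collapse the projection onto $V$ into a pure function of $z$.
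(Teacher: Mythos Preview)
Your proposal is correct and follows essentially the same route as the paper's proof: characterise $V^\perp$ via the block-sum-zero condition, verify that $\mbb{\beta}-\mbb{1}\otimes\bar{\mbb{\beta}}$ lies in $V^\perp$ so that the projection is as claimed, collapse $\mbb{\phi}(x,z)^\top(\mbb{1}\otimes\bar{\mbb{\beta}})$ using the partition-of-unity property of the $X$-basis, and finish with the triangle inequality for the $\ell_\infty$ bound. Your write-up is if anything slightly more careful about the indexing and about explicitly checking the block sums vanish.
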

Our next two lemmas will be used in the proof of Proposition~\ref{Proposition: spline on spline regression}, which is the analogue of Corollary~\ref{Corollary: spline regression} for a key setting for us, namely where our response variable for spline regression consists of fitted values from an earlier spline regression.
\begin{lemma}\label{Lemma: Regression of basis functions}
	Let $P$ denote a distribution of $(X, Z)$ on $[0,1]^{d_X} \times [0, 1]^{d_Z}$ that is absolutely continuous with respect to Lebesgue measure, and let $p_{X|Z}$ denote the conditional density of $X$ given $Z$. Assume that $p_{X|Z}(x \given \cdot) \in \mathcal{H}^{d_Z}_s$ for every $x \in [0,1]^{d_X}$ and that there exists $C > 0$ such that 
	\[
		\sup_{x \in [0, 1]^{d_X}} \| p_{X|Z}(x \given \cdot) \|_{\mathcal{H}_s} \leq C.
	\]
	Let $\mbb{\phi} = (\phi_1,\ldots,\phi_K)^\top$ denote the $d_X$-tensor B-spline basis of $\mathcal{S}_{r,N}^{d_X}$, let $\mbb{\beta} = (\beta_1,\ldots, \beta_{K})^\top \in \mathbb{R}^K$ and define $g:[0,1]^{d_Z} \rightarrow \mathbb{R}$ by
	\begin{align*}
		g(z) := \mbb{\beta}^\top \E\bigl(\mbb{\phi}(X) \given Z = z\bigr) = \sum_{k=1}^{K} \beta_k \E\bigl(\phi_{k}(X) \given Z=z\bigr).
	\end{align*}
	Then $g \in \mathcal{H}_s^{d_Z}$ and $\| g \|_{\mathcal{H}_s} \leq C \| \mbb{\beta} \|_\infty$.
	\begin{proof}
		Repeated application of \citet[][Theorem 6.28]{klenke2020} allows us to interchange derivatives and integrals such that for any multi-index $\mbb{\alpha} = (\alpha_1,\ldots,\alpha_{d_Z})^\top \in \mathbb{N}_0^{d_Z}$ with $|\mbb{\alpha}| \leq \ceil{s} - 1 =: s_0$, we have
		\begin{align*}
			D^{\mbb{\alpha}} g(z) =  \sum_{k=1}^{K} \beta_k  \int_{[0,1]^{d_X}}   \phi_{k}(x) \cdot D^{\mbb{\alpha}} p_{X|Z}(x \given z) \, \mathrm{d}x. 
		\end{align*}
		Thus, by Hölder's inequality and the fact that the $\{\phi_k\}_{k=1}^K$ are non-negative and form a partition of unity by Proposition~\ref{Proposition: b-spline properties}(a), we have
		\[
			\| D^{\mbb{\alpha}} g \|_\infty \leq \| \mbb{\beta}\|_\infty \sup_{z \in [0, 1]^{d_Z}}  \int_{[0,1]^{d_X}}   \sum_{k=1}^{K} \phi_{k}(x) \bigl| D^{\mbb{\alpha}} p_{X|Z}(x \given z) \bigr| \, \mathrm{d}x \leq C \| \mbb{\beta}\|_\infty .
		\]
		By a similar argument, when $|\mbb{\alpha}| = s_0$, we have
		\begin{align*}
			\big|D^{\mbb{\alpha}} g(z) - D^{\mbb{\alpha}} g(z')\big| &\leq \| \mbb{\beta}\|_\infty \int_{[0,1]^{d_X}}   \sum_{k=1}^{K} \phi_{k}(x) \bigl| D^{\mbb{\alpha}} p_{X|Z}(x \given z) - D^{\mbb{\alpha}} p_{X|Z}(x \given z') \bigr| \, \mathrm{d}x \\
			&\leq C \|\mbb{\beta}\|_{\infty} \|z - z'\|_2^{s - s_0},
		\end{align*}
		for all $z,z' \in [0,1]^{d_Z}$, as required.
	\end{proof}
\end{lemma}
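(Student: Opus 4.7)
The plan is to express $g$ via the conditional density, transfer all $z$-derivatives onto $p_{X|Z}$, and then use the partition-of-unity property of the B-spline basis to pull out a factor of $C \| \mbb{\beta}\|_\infty$ uniformly over $z$. First I would write
\[
g(z) = \sum_{k=1}^K \beta_k \int_{[0,1]^{d_X}} \phi_k(x) \, p_{X|Z}(x \given z) \, \mathrm{d}x,
\]
which is valid since $P$ has a density. The bulk of the work is to interchange differentiation in $z$ with the integration in $x$: for any multi-index $\mbb{\alpha} \in \mathbb{N}_0^{d_Z}$ with $|\mbb{\alpha}| \leq s_0 := \lceil s \rceil -1$, the assumption $\sup_{x} \| p_{X|Z}(x \given \cdot )\|_{\mathcal{H}_s} \leq C$ gives a uniform (in $x$ and $z$) bound on $|D^{\mbb{\alpha}} p_{X|Z}(x \given z)|$ by $C$, so dominated convergence on $[0,1]^{d_X}$ lets us pass derivatives inside one at a time, yielding
\[
D^{\mbb{\alpha}} g(z) = \sum_{k=1}^K \beta_k \int_{[0,1]^{d_X}} \phi_k(x)\, D^{\mbb{\alpha}} p_{X|Z}(x \given z) \, \mathrm{d}x.
\]

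Next I would bound $\|D^{\mbb{\alpha}} g\|_\infty$ for $|\mbb{\alpha}| \leq s_0$. Using $|\beta_k| \leq \|\mbb{\beta}\|_\infty$ and Proposition~\ref{Proposition: b-spline properties}(a), i.e.~$\phi_k \geq 0$ and $\sum_k \phi_k(x) = 1$, I would estimate
\[
|D^{\mbb{\alpha}} g(z)| \leq \|\mbb{\beta}\|_\infty \int_{[0,1]^{d_X}} \biggl(\sum_{k=1}^K \phi_k(x)\biggr) |D^{\mbb{\alpha}} p_{X|Z}(x \given z)| \, \mathrm{d}x \leq C \|\mbb{\beta}\|_\infty,
\]
uniformly in $z$, since the Hölder-norm assumption bounds $|D^{\mbb{\alpha}} p_{X|Z}(x \given z)|$ by $C$.

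For the top-order H\"older continuity, I would apply the same partition-of-unity trick with $|\mbb{\alpha}| = s_0$: for any $z, z' \in [0,1]^{d_Z}$,
\[
|D^{\mbb{\alpha}} g(z) - D^{\mbb{\alpha}} g(z')| \leq \|\mbb{\beta}\|_\infty \int_{[0,1]^{d_X}} \sum_{k=1}^K \phi_k(x)\, \bigl|D^{\mbb{\alpha}} p_{X|Z}(x \given z) - D^{\mbb{\alpha}} p_{X|Z}(x \given z')\bigr| \, \mathrm{d}x,
\]
and the Hölder-norm assumption on $p_{X|Z}(x \given \cdot)$ gives the pointwise (in $x$) bound $C \|z-z'\|_2^{s-s_0}$, which survives integration. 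Combining these two displays shows $g \in \mathcal{H}_s^{d_Z}$ with $\|g\|_{\mathcal{H}_s} \leq C \|\mbb{\beta}\|_\infty$.

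The only genuine technical obstacle is justifying the differentiation under the integral sign; everything else is linearity together with the partition of unity. Since the H\"older hypothesis gives uniform (in $x$) bounds on all partial derivatives of $p_{X|Z}(x \given \cdot)$ of order at most $s_0$, the standard dominated-convergence criterion for differentiating under the integral (e.g.~\citet[][Theorem~6.28]{klenke2020}) applies iteratively, so this step is routine.
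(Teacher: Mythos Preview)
Your proposal is correct and follows essentially the same approach as the paper's proof: differentiate under the integral sign (both you and the paper cite \citet[][Theorem~6.28]{klenke2020}), then bound both the sup-norm of the derivatives and the top-order H\"older increment by pulling out $\|\mbb{\beta}\|_\infty$ and collapsing $\sum_k \phi_k(x)$ to $1$ via the partition-of-unity property.
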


\begin{lemma}\label{Lemma: spline on spline approximation}
	 Let $\{\phi_k\}_{k=1}^K$ denote the uniform $d$-tensor B-spline basis of $\mathcal{S}^d_{r, N}$.  For $k \in [K]$, suppose that $h_k:[0,1]^d \rightarrow \mathbb{R}$ can be written as $h_k = s_k + r_k$ where $s_k \in \mathcal{S}^d_{r, N}$ and $r_k \in \mathcal{H}_s^d$, and let $M := \max_{k \in [K]} \|r_k\|_{\mathcal{H}_s}$. Define $m:[0,1]^d \rightarrow \mathbb{R}$ by
	\[
		m(z) := \sum_{k=1}^K g_k(z) \phi_k(z).
	\]
	Then there exist $C(d, r) > 0$ and $m^* \in \mathcal{S}_{2r-1, N}^d$ such that 
	\[
		\| m - m^* \|_\infty \leq \frac{M C(d, r)}{(2rK)^{\min(s,r)/d}}.
	\]
	\begin{proof}
		Let 
		\[
			\widetilde{\mathcal{S}} := \mathrm{span}\bigl( \{\phi_k(z)\phi_{\ell}(z) \}_{k, \ell \in [K]} \bigr) \subseteq \mathcal{S}^d_{2r-1, N}.
		\]
		For $k \in [K]$, let $r_k^*$ denote a supremum norm approximant to $r_k$ in $\mathcal{S}_{r,N}^d$ (see Proposition~\ref{Proposition: spline approximation}), so that $m^* := \sum_{k=1}^K (s_k + r_k^*) \phi_k \in \widetilde{\mathcal{S}}$.  Then by H\"older's inequality, Proposition~\ref{Proposition: b-spline properties}(a) and  Proposition~\ref{Proposition: spline approximation}, we have
		\[
		\| m - m^* \|_\infty = \biggl\| \sum_{k=1}^K (r_k-r_k^*) \phi_k  \biggr\|_\infty \leq \max_{k \in K} \|r_k- r_k^*\|_\infty \leq \frac{M C(d, r)}{(2rK)^{\min(s,r)/d}},
		\]
		as desired.
	\end{proof}
\end{lemma}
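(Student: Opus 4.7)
The plan is to build $m^*$ by leaving the spline parts $s_k$ untouched and only approximating the smooth residuals $r_k$ in the spline space $\mathcal{S}_{r,N}^d$. Concretely, for each $k \in [K]$ I would invoke Proposition~\ref{Proposition: spline approximation} to obtain some $r_k^* \in \mathcal{S}_{r,N}^d$ with $\|r_k - r_k^*\|_\infty \leq C(d,r)(2rK)^{-\min(s,r)/d}\|r_k\|_{\mathcal{H}_s}$, and then set
\[
m^* := \sum_{k=1}^K (s_k + r_k^*)\,\phi_k.
\]

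The next step is to verify that $m^* \in \mathcal{S}_{2r-1,N}^d$. The cleanest route is to observe that each summand $(s_k + r_k^*)\phi_k$ is a product of two elements of $\mathcal{S}_{r,N}^d$; on each subrectangle of the fixed equi-spaced partition of $[0,1]^d$ such a product is polynomial of degree at most $2(r-1) = 2r-2$ in each coordinate, and global $C^{r-2}$ regularity of both factors gives $C^{r-2}$ regularity of the product, which is at least the $C^{2r-3}$ smoothness required by $\mathcal{S}_{2r-1,N}^d$ when $r \geq 2$ (and the case $r=1$ is trivial). Equivalently, one can just note that $\mathrm{span}(\{\phi_k\phi_\ell\}_{k,\ell \in [K]})$ is contained in $\mathcal{S}_{2r-1,N}^d$, and $m^*$ lies in this span.

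The error bound then falls out from the partition-of-unity property. Since $s_k$ is unchanged, $m - m^* = \sum_{k=1}^K (r_k - r_k^*)\phi_k$, and using non-negativity of the $\phi_k$ together with Proposition~\ref{Proposition: b-spline properties}(a),
\[
\|m - m^*\|_\infty \leq \max_{k \in [K]} \|r_k - r_k^*\|_\infty \cdot \Bigl\|\sum_{k=1}^K \phi_k\Bigr\|_\infty = \max_{k \in [K]} \|r_k - r_k^*\|_\infty.
\]
Plugging in the per-$k$ approximation rate from Proposition~\ref{Proposition: spline approximation} and bounding $\|r_k\|_{\mathcal{H}_s} \leq M$ uniformly in $k$ yields the claimed bound.

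The only step requiring care is the inclusion $\mathrm{span}(\{\phi_k\phi_\ell\}) \subseteq \mathcal{S}_{2r-1,N}^d$; everything else is a direct combination of Proposition~\ref{Proposition: spline approximation} with the partition-of-unity identity from Proposition~\ref{Proposition: b-spline properties}(a), and the crucial advantage of the construction is that, because $\sum_k \phi_k \equiv 1$, the sum of $K$ approximation errors collapses to a single maximum, so there is no $K$-dependent loss in the final rate.
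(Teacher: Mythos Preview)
Your approach matches the paper's exactly: approximate each $r_k$ by some $r_k^* \in \mathcal{S}_{r,N}^d$ via Proposition~\ref{Proposition: spline approximation}, set $m^* = \sum_k (s_k + r_k^*)\phi_k$, and control $\|m - m^*\|_\infty$ through the partition-of-unity property of Proposition~\ref{Proposition: b-spline properties}(a).

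One concrete slip: your smoothness argument for $m^* \in \mathcal{S}_{2r-1,N}^d$ has the inequality reversed. You write that $C^{r-2}$ regularity of the product ``is at least the $C^{2r-3}$ smoothness required by $\mathcal{S}_{2r-1,N}^d$ when $r \geq 2$,'' but for $r \geq 2$ one has $r-2 \leq 2r-3$, so $C^{r-2}$ is a \emph{weaker} condition than $C^{2r-3}$, not a stronger one. (Concretely, with $r=2$ the square of the hat function $B_{2,2,1}$ is piecewise quadratic but only $C^0$ at the interior knot, with one-sided derivatives $\pm 4$, so it is not $C^1$.) The paper itself simply asserts the inclusion $\widetilde{\mathcal{S}} = \mathrm{span}(\{\phi_k\phi_\ell\}) \subseteq \mathcal{S}_{2r-1,N}^d$ without offering a smoothness count, and your alternative phrasing ``one can just note that $\mathrm{span}(\{\phi_k\phi_\ell\})$ is contained in $\mathcal{S}_{2r-1,N}^d$'' mirrors this; but the explicit regularity argument you supply is not valid and should be dropped.
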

We are now in a position to state our main result on the performance of the spline on spline regression procedure.
\begin{proposition}
	\label{Proposition: spline on spline regression}
Let $r \in \mathbb{N}$, let $d = d_X + d_Z$ and let $\mbb{\phi}$ denote the $d$-tensor B-spline basis of $\mathcal{S}_{r, N}^d$.  Let $\mathcal{P}$ be a family of distributions of $(X, Z)$ on $[0, 1]^{d_X} \times [0, 1]^{d_Z}$.  Suppose that each $P \in \mathcal{P}$ is absolutely continuous with respect to Lebesgue measure on $[0,1]^d$.  Further, suppose that there exist $C,c > 0$ such that:
\begin{enumerate}[(i)]
	\item There exists $s \in (0,r]$ such that the conditional density $p_{X|Z, P}$ of $X$ given $Z$, satisfies $p_{X|Z, P}(x|\cdot) \in \mathcal{H}_s^{d_Z}$ for every $x \in [0, 1]^{d_X}$ and $\sup_{x \in [0, 1]^{d_X}} \|p_{X|Z, P}(x \given \cdot) \|_{\mathcal{H}_s} \leq C$. 
	\item For every $P \in \mathcal{P}$, the density $p_{Z, P}$ of $Z$ satisfies 	
	\[
		\sup_{P \in \mathcal{P}} \sup_{z \in [0,1]^d} p_{Z,P}(z) \leq C \quad \text{and} \quad \inf_{P \in \mathcal{P}} \inf_{z \in [0, 1]^d} p_{Z,P}(z) \geq c. 
	\]
\end{enumerate}
Let $(X_1, Z_1), \dots, (X_n, Z_n)$ be independent and identically distributed copies of $(X, Z)$.    For $n \in \mathbb{N}$, let $\mbb{\beta} \equiv \mbb{\beta}_n \in \mathbb{R}^{K_{XZ}}$ satisfy $\|\mbb{\Pi}\mbb{\beta}\|_\infty = O(1)$ where $\mbb{\Pi}$ is defined in Proposition~\ref{Proposition: tensor product projection}, and  define $f_n \in \mathcal{S}^d_{r, N}$ by $f_n(x, z) := \mbb{\beta}^\top \mbb{\phi}(x,z)$.  Further, define $g_{P,n}:[0,1]^{d_Z} \rightarrow \mathbb{R}$ by $g_{P,n}(z) := \mathbb{E}_P\bigl(f_n(X,Z) \given Z=z\bigr)$, let $\mbb{\psi}$ denote the $d_Z$-tensor B-spline basis of $\mathcal{S}_{2r-1,N}^{d_Z}$ and let $\widetilde{K}_Z := (2r-1+N)^{d_Z}$.  Let $Y_i := f_n(X_i, Z_i)$ for $i\in [n]$, and let $\mbb{\widehat{\theta}}$ denote the ordinary least squares estimate from regressing $Y_1,\ldots,Y_n$ onto $\mbb{\psi}(Z_1),\ldots,\mbb{\psi}(Z_n)$.  If $\widetilde{K}_Z \log (\widetilde{K}_Z) / n \to 0$, then
	\[
		\frac{1}{n} \sum_{i=1}^n \bigl(g_{P,n}(Z_i)-\mbb{\widehat{\theta}}^\top \mbb{\psi}(Z_i)\bigr)^2 = O_{\mathcal{P}}\bigl(\|\mbb{\Pi} \mbb{\beta}\|_\infty^2 \{\widetilde{K}_Z^{-2s/d_Z} + \widetilde{K}_Z/n \} \bigr).
	\]
	  Letting $\mbb{\theta}_P \in \mathbb{R}^{\widetilde{K}_Z}$ be the unique solution to $g_{P,n}^\dagger(z) = \mbb{\theta}_P^\top \mbb{\psi}(z)$, we have under the same assumptions that
	\[		\| \widehat{\mbb{\theta}} - \mbb{\theta}_P \|_2^2 = O_{\mathcal{P}}\bigl( \|\mbb{\Pi} \mbb{\beta}\|_\infty^2 \widetilde{K}_Z^2/n\bigr).
	\]
	Finally, if $(X^*, Z^*)$ is a new observation of $(X, Z)$ independent of the original sample, then
	\[
		\E_P \Bigl( \bigl\{ g_{P,n}(Z^*)-\mbb{\widehat{\theta}}^\top \mbb{\phi}(Z^*) \bigr\}^2  \bigm| \widehat{\mbb{\theta}} \Bigr) = O_{\mathcal{P}}\bigl(\|\mbb{\Pi} \mbb{\beta}\|_\infty^2 \{\widetilde{K}_Z^{-2s/d_Z} +  \widetilde{K}_Z/n \}\bigr).
	\]
\begin{proof}
We check the conditions of Proposition~\ref{Proposition: general spline regression} with $\mathcal{P}$ in that result taken to be the set of distributions of $(Y_1,Z_1)$.   To this end, let $\mbb{\phi}^Z$ and $\mbb{\phi}^X$ denote the $d_Z$- and $d_X$-tensor B-spline bases of $\mathcal{S}_{r, N}^{d_Z}$ and $\mathcal{S}_{r, N}^{d_X}$, respectively, so that $\mbb{\phi}(x, z)= \mbb{\phi}^X(x) \otimes \mbb{\phi}^Z(z)$. By Proposition~\ref{Proposition: tensor product projection}, we can write
\[
f_n(x, z) = \mbb{\phi}(x,z)^\top \mbb{\Pi}\mbb{\beta} + \mbb{\phi}^Z(z)^\top \bar{\mbb{\beta}}.
\]
Thus,
\[
g_{P, n}(z) = \sum_{k=1}^{K_Z} \phi_k^Z(z) \biggl[ \sum_{\ell=1}^{K_X} (\mbb{\Pi}\mbb{\beta})_{(k-1)K_X + \ell} \E(\phi^X_\ell(X) \given Z=z) + \bar{\beta}_k \biggr] =:  \sum_{k=1}^{K_Z} \phi_k^Z(z) h_k(z).
\]
By Lemma~\ref{Lemma: Regression of basis functions} and Assumption~\emph{(i)}, we have for every $k \in [K_Z]$ that the function $r_k:[0,1]^{d_Z} \rightarrow \mathbb{R}$ given by
\[
r_k(z) := \sum_{\ell=1}^{K_X} (\mbb{\Pi}\mbb{\beta})_{(k-1)K_X + \ell} \E(\phi^X_\ell(X) \given Z=z) 
\]
belongs to $\mathcal{H}_s^{d_Z}$ with $\|r_k\|_{\mathcal{H}_s} \leq C \|\mbb{\Pi} \mbb{\beta} \|_{\infty}$.  Since the constant function $z \mapsto \bar{\beta}_k$ belongs to $\mathcal{S}_{r,N}^{d_Z}$, we deduce from Proposition~\ref{Proposition: spline least squares approximation} and Lemma~\ref{Lemma: spline on spline approximation} that the $L_2(\mathcal{P})$-best approximant $g_{P,n}^\dagger$ to $g_{P,n}$ in $\mathcal{S}_{2r-1,N}^{d_Z}$ satisfies for each $P \in \mathcal{P}$ that
\begin{align*}
\| g_{P,n}	- g_{P,n}^\dagger \|_\infty &\leq M(C,c,d_Z,2r-1)\| g_{P,n}	- g_{P,n}^* \|_\infty \leq \frac{M(C,c,d_Z,2r-1)C \|\mbb{\Pi} \mbb{\beta} \|_{\infty}}{(2rK_Z)^{s/d_Z}} \\
&\leq \frac{2^{d_Z}M(C,c,d_Z,2r-1)C \|\mbb{\Pi} \mbb{\beta} \|_{\infty}}{(2r\widetilde{K}_Z)^{s/d_Z}}.
\end{align*}
Thus, Assumption~(i) of Proposition~\ref{Proposition: general spline regression} is satisfied with $\zeta = s/d_Z$.  Assumption~(ii) of Proposition~\ref{Proposition: general spline regression} is true by hypothesis, and Assumption~(iii) of Proposition~\ref{Proposition: general spline regression} holds with $\sigma_n = \| \mbb{\Pi} \mbb{\beta} \|_\infty$ since
\[
	\Var(Y_1 \given Z_1) = \Var(f_n(X, Z) \given Z) = \Var(\mbb{\phi}(X,Z)^\top \mbb{\Pi}\mbb{\beta} \given Z) \leq \|\mbb{\phi}(x,z)^\top \mbb{\Pi}\mbb{\beta}\|_\infty^2 \leq \| \mbb{\Pi} \mbb{\beta} \|_\infty^2
\]
by Proposition~\ref{Proposition: b-spline properties}(b).  The conclusions therefore follow from Proposition~\ref{Proposition: general spline regression}.
\end{proof}
\end{proposition}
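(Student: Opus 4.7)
\textbf{Proof proposal for Proposition~\ref{Proposition: spline on spline regression}.}

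The plan is to reduce the claim to a direct application of Proposition~\ref{Proposition: general spline regression} applied to the synthetic regression problem of $Y_i = f_n(X_i, Z_i)$ onto the basis $\mbb{\psi}$ on $[0,1]^{d_Z}$. To invoke that result, I need to identify three ingredients: a supremum-norm bound on $\|g_{P,n} - g_{P,n}^\dagger\|_\infty$ (with $g_{P,n}^\dagger$ the $L_2(P)$-best approximant of $g_{P,n}$ in $\mathcal{S}_{2r-1,N}^{d_Z}$), a uniform bound on $\Var_P(Y_1 \given Z_1)$, and the density conditions on $Z$, which are handed to me directly by Assumption~(ii).

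For the approximation rate, I first apply Proposition~\ref{Proposition: tensor product projection} to rewrite $f_n(x,z) = \mbb{\phi}(x,z)^\top \mbb{\Pi}\mbb{\beta} + \mbb{\phi}^Z(z)^\top \bar{\mbb{\beta}}$, which, after taking conditional expectations in $X$ given $Z=z$, yields $g_{P,n}(z) = \sum_{k \in [K_Z]} \phi_k^Z(z)\bigl(r_k(z) + \bar{\beta}_k\bigr)$ with
\[
r_k(z) = \sum_{\ell \in [K_X]} (\mbb{\Pi}\mbb{\beta})_{(k-1)K_X + \ell}\, \E_P\bigl(\phi_\ell^X(X) \given Z = z\bigr).
\]
Lemma~\ref{Lemma: Regression of basis functions}, combined with Assumption~(i), gives $r_k \in \mathcal{H}_s^{d_Z}$ with $\|r_k\|_{\mathcal{H}_s} \leq C\|\mbb{\Pi}\mbb{\beta}\|_\infty$.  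Since each constant $\bar{\beta}_k$ sits in $\mathcal{S}_{r,N}^{d_Z}$, Lemma~\ref{Lemma: spline on spline approximation} provides a supremum-norm approximant $g_{P,n}^*$ in $\mathcal{S}_{2r-1,N}^{d_Z}$ satisfying $\|g_{P,n} - g_{P,n}^*\|_\infty \lesssim \|\mbb{\Pi}\mbb{\beta}\|_\infty\, K_Z^{-s/d_Z}$. Proposition~\ref{Proposition: spline least squares approximation} then upgrades this bound to the corresponding one for the $L_2(P)$-best approximant $g_{P,n}^\dagger$, so Assumption~(i) of Proposition~\ref{Proposition: general spline regression} is satisfied with $\zeta = s/d_Z$ (since $\widetilde{K}_Z \asymp K_Z$).

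For the variance, I use that $\Var_P(Y_1 \given Z_1) = \Var_P\bigl(\mbb{\phi}(X,Z)^\top \mbb{\Pi}\mbb{\beta}\, \big|\, Z\bigr) \leq \bigl\|\mbb{\phi}(x,z)^\top \mbb{\Pi}\mbb{\beta}\bigr\|_\infty^2 \leq \|\mbb{\Pi}\mbb{\beta}\|_\infty^2$ by Proposition~\ref{Proposition: b-spline properties}(b) applied with $p = \infty$ (together with the partition-of-unity property to absorb the $\mbb{\phi}^Z(z)^\top \bar{\mbb{\beta}}$ summand, which is a function of $Z$ alone). Thus Assumption~(iii) of Proposition~\ref{Proposition: general spline regression} holds with $\sigma_n = \|\mbb{\Pi}\mbb{\beta}\|_\infty$. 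Plugging these parameters, together with the basis dimension $\widetilde{K}_Z$, into Proposition~\ref{Proposition: general spline regression} yields the three stated bounds.

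The main subtlety is the use of the order $2r-1$ space rather than the order $r$ space: because $g_{P,n}$ is a product of an element of $\mathcal{S}_{r,N}^{d_Z}$ (the factor $\phi_k^Z$) and an H\"older function (the factor $r_k + \bar{\beta}_k$), any approximant built from these pieces lives naturally in $\mathcal{S}_{2r-1,N}^{d_Z}$; this is precisely what Lemma~\ref{Lemma: spline on spline approximation} delivers. The other point requiring care is that Proposition~\ref{Proposition: spline least squares approximation} presumes a lower-bounded density for $Z$ to translate a supremum-norm approximation into an $L_2(P)$-best approximation supremum bound, which is supplied by Assumption~(ii). Once both points are in place, the argument is essentially bookkeeping to match the parameters $(\zeta, \sigma_n^2, K)$ in Proposition~\ref{Proposition: general spline regression}.
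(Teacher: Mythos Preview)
Your proposal is correct and follows essentially the same approach as the paper's proof: reduce to Proposition~\ref{Proposition: general spline regression} by using Proposition~\ref{Proposition: tensor product projection} to decompose $f_n$, Lemma~\ref{Lemma: Regression of basis functions} to control the H\"older norm of each $r_k$, Lemma~\ref{Lemma: spline on spline approximation} together with Proposition~\ref{Proposition: spline least squares approximation} to obtain the approximation rate $\zeta = s/d_Z$, and Proposition~\ref{Proposition: b-spline properties}(b) to bound the conditional variance by $\|\mbb{\Pi}\mbb{\beta}\|_\infty^2$. Your explanatory remarks on why the order $2r-1$ space is needed and why the lower density bound matters are accurate and match the paper's reasoning.
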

Finally in this section, we present two results that control two different out-of-sample product errors in a sharper way than would be obtained via a naive application of the Cauchy--Schwarz inequality.  The first can be regarded as a restated and uniform version of Theorem~8 and Lemma~A5 in \citet{newey2018cross}.
\begin{proposition}\label{Proposition: product error}
	Let $\mathcal{P}$ be a family of distributions of $(X, Y, Z)$ on $\mathbb{R} \times \mathbb{R} \times [0, 1]^d$ with corresponding regression functions $f_P,g_P:[0,1]^d \rightarrow \mathbb{R}$ given by $f_P(z):=\E_P(Y \given Z=z)$ and $g_P(z):=\E_P(X \given Z=z)$ satisfying: 
	\begin{enumerate}[(i)]
	\item  There exist $\zeta_f(d, r) \equiv \zeta_f > 0$ and $\zeta_g(d,r) \equiv \zeta_g > 0$ such that
		\[
			\sup_{P \in \mathcal{P}} \|f_P-f_P^\dagger\|_\infty =  O(K^{-\zeta_f}), \quad \sup_{P \in \mathcal{P}}  \|g_P-g_P^\dagger\|_\infty = O(K^{-\zeta_g}),
		\]
		where $f_P^\dagger$ and $g_P^\dagger$ denote the $L_2(P)$-best approximants of $f_P$ and $g_P$ respectively in $\mathcal{S}^d_{r, N}$. 
  		\item Each $P \in \mathcal{P}$ is absolutely continuous with respect to Lebesgue measure on $[0,1]^d$, with corresponding density $p_P$ satisfying $C := \sup_{P \in \mathcal{P}} \sup_{z \in [0,1]^d} p_P(z) < \infty$ and $c := \inf_{P \in \mathcal{P}} \inf_{z \in [0, 1]^d} p_P(z) > 0$.
  		\item There exists a positive sequence $(\sigma_n^2)_{n \in \mathbb{N}}$ such that $\max\bigl\{\Var(Y \given Z), \Var(X \given Z) \bigr\} \leq \sigma_n^2 = O(1)$.
	\end{enumerate}
	Now suppose we are given three independent samples $(X^f_i, Y^f_i, Z^f_i)_{i=1}^n$, $(X^g_i, Y^g_i, Z^g_i)_{i=1}^n$ and $(X_i, Y_i, Z_i)_{i=1}^n$, each consisting of $n$ independent and identically distributed copies of $(X, Y, Z)$.  Let $\mbb{\phi}$ denote the $d$-tensor B-spline basis of $\mathcal{S}^d_{r, N}$.  Let $\mbb{\widehat{\beta}}_f$ and $\mbb{\widehat{\beta}}_g$ denote the ordinary least squares estimates from regressing $Y_1^f,\ldots,Y_n^f$ onto $\mbb{\phi}(Z_1^f),\ldots,\mbb{\phi}(Z_n^f)$ and $X_1^g,\ldots,X_n^g$ onto $\mbb{\phi}(Z_1^g),\ldots,\mbb{\phi}(Z_n^g)$ respectively.  Define fitted regression functions $\fhat$ and $\ghat$ by $\fhat(z) = \mbb{\widehat{\beta}}_f^\top \mbb{\phi}(z)$ and $\ghat(z) = \mbb{\widehat{\beta}}_g^\top \mbb{\phi}(z)$ respectively.  If $K \log(K) / n \to 0$, then
	\begin{align*}
		\frac{1}{n} \sum_{i=1}^n \bigl\{\fhat(Z_i) - f_P(Z_i)\bigr\}&\bigl\{\ghat(Z_i) - g_P(Z_i)\bigr\}\\
		&= O_{\mathcal{P}}\biggl(K^{-(\zeta_f+\zeta_g)} + \frac{K^{1/2}}{n} + \frac{K^{2-\max(\zeta_f,\zeta_g)} \log K}{n^2}\biggr).
	\end{align*}
\begin{proof}
	Suppose without loss of generality that $\zeta_f \geq \zeta_g$.  Define $\mbb{\beta}_{P, f}, \mbb{\beta}_{P, g} \in \mathbb{R}^K$ so that $f_P^\dagger(z) = \mbb{\beta}_{P, f}^\top \mbb{\phi}(z)$ and $g_P^\dagger(z) = \mbb{\beta}_{P, g}^\top \mbb{\phi}(z)$.  We start with the decomposition  
	\begin{align*}
		\frac{1}{n} \sum_{i=1}^n \bigl\{\fhat(Z_i) - f_P(Z_i)\bigr\}\bigl\{\ghat(Z_i) - g_P(Z_i)\bigr\} &= \underbrace{\frac{1}{n} \sum_{i=1}^n \{f_P^\dagger(Z_i) - f_P(Z_i)\}\{g_P^\dagger(Z_i) - g_P(Z_i)\}}_{\RN{1}_n}\\
		&+ \underbrace{\frac{1}{n} \sum_{i=1}^n \bigl\{\fhat(Z_i) - f_P^\dagger(Z_i)\bigr\}\bigl\{g_P^\dagger(Z_i) - g_P(Z_i)\bigr\}}_{\RN{2}_n^f}\\
		&+ \underbrace{\frac{1}{n} \sum_{i=1}^n \bigl\{f_P^\dagger(Z_i) - f_P(Z_i)\bigr\}\bigl\{\ghat(Z_i) - g_P^\dagger(Z_i)\bigr\}}_{\RN{2}_n^g}\\
		&+ \underbrace{\frac{1}{n} \sum_{i=1}^n \bigl\{\fhat(Z_i) - f_P^\dagger(Z_i)\bigr\}\bigl\{\ghat(Z_i) - g_P^\dagger(Z_i)\bigr\}}_{\RN{3}_n}.
	\end{align*}
	By Assumption~\emph{(i)},
	\[
		\sup_{P \in \mathcal{P}}|\RN{1}_n| \leq \sup_{P \in \mathcal{P}} \|f_P-f_P^\dagger\|_\infty \| g_P-g_P^\dagger \|_\infty = O\bigl(K^{-(\zeta_f+\zeta_g)}\bigr).
	\]
Using~\eqref{eq: ls estimator mean} in the proof of Proposition~\ref{Proposition: general spline regression} (with $g_P$ and $g_P^\dagger$ in place of $f_P$ and $f_P^\dagger$ there), we deduce that 
	\begin{align*}
		\E_P\bigl((\RN{2}_n^f)^2 \given \fhat\bigr) &= \frac{1}{n} \bigl(\mbb{\widehat{\beta}}_f - \mbb{\beta}_{P, f}\bigr)^\top  \E_P\bigl( \{g_P^\dagger(Z) - g_P(Z)\}^2 \mbb{\phi}(Z)\mbb{\phi}(Z)^\top\bigr) \bigl(\mbb{\widehat{\beta}}_f - \mbb{\beta}_{P, f}\bigr)\\
		&\leq \frac{1}{n} \|g_P^\dagger - g_P\|_\infty^2 \| \mbb{\Sigma}_P \|_{\op} \|\mbb{\widehat{\beta}}_f - \mbb{\beta}_{P, f}\|_2^2 = O_{\mathcal{P}}\bigl( K^{-(2\zeta_g-1)}n^{-2}\bigr),
	\end{align*}
	by our hypothesis on $\|g_P^\dagger - g_P\|_\infty$, Proposition~\ref{Proposition: b-spline properties}(d) and Proposition~\ref{Proposition: general spline regression}. Thus by Lemma~\ref{Lemma: unconditionalisation via Markov},
	\[
	\RN{2}_n^f = O_{\mathcal{P}}(K^{-(\zeta_g-1/2)} n^{-1}) = O_{\mathcal{P}}(K^{1/2} n^{-1}).
	\]
	Similarly,
	\[
	\RN{2}_n^g = O_{\mathcal{P}}(K^{1/2} n^{-1}).
	\]
	To deal with the $\RN{3}_n$ term, define $\mbb{\widehat{\Sigma}} := n^{-1} \sum_{i=1}^n \mbb{\phi}(Z_i)\mbb{\phi}(Z_i)^\top$, as well as $\mbb{\widehat{\Sigma}}_f := n^{-1} \sum_{i=1}^n \mbb{\phi}(Z_i^f)\mbb{\phi}(Z_i^f)^\top$ and $\mbb{\widehat{\Sigma}}_g := n^{-1} \sum_{i=1}^n \mbb{\phi}(Z_i^g)\mbb{\phi}(Z_i^g)^\top$.  For $i \in [n]$, let $\varepsilon_i^f:= Y_i^f - f_P(Z_i^f)$, $\varepsilon_i^g:=X_i^g - g_P(Z_i^g)$, $h_i^f :=  f_P(Z_i^f) - f_P^\dagger(Z_i^f)$ and $h_i^g :=  g_P(Z_i^g) - g_P^\dagger(Z_i^g)$.  We write
	\begin{align*}
		\RN{3}_n &= (\mbb{\widehat{\beta}}_f - \mbb{\beta}_{P, f})^\top \mbb{\widehat{\Sigma}} (\mbb{\widehat{\beta}}_g - \mbb{\beta}_{P, g}) \\
		&= \underbrace{\biggl(\frac{1}{n} \sum_{i=1}^n \varepsilon_i^f \mbb{\phi}(Z_i^f)^\top \biggr) \mbb{\widehat{\Sigma}}_f^{-1}  \mbb{\widehat{\Sigma}} (\mbb{\widehat{\beta}}_g - \mbb{\beta}_{P, g})}_{\RN{3}_n^{(1)}}\\
		&\hspace{3cm}+ \underbrace{\biggl(\frac{1}{n} \sum_{i=1}^n h_i^f \mbb{\phi}(Z_i^f)^\top \biggr) \mbb{\widehat{\Sigma}}_f^{-1} \mbb{\widehat{\Sigma}} \mbb{\widehat{\Sigma}}_g^{-1} \biggl(\frac{1}{n} \sum_{i=1}^n h_i^g \mbb{\phi}(Z_i^g) \biggr)}_{\RN{3}_n^{(2)}} \\
		&\hspace{3cm}+ \underbrace{\biggl(\frac{1}{n} \sum_{i=1}^n h_i^f \mbb{\phi}(Z_i^f)^\top \biggr) \mbb{\widehat{\Sigma}}_f^{-1} \mbb{\widehat{\Sigma}} \mbb{\widehat{\Sigma}}_g^{-1} \biggl(\frac{1}{n} \sum_{i=1}^n \varepsilon_i^g \mbb{\phi}(Z_i^g) \biggr)}_{\RN{3}_n^{(3)}}.
	\end{align*}
	To deal with the $\RN{3}_n^{(1)}$ term, we have using the fact that $\E_P(\varepsilon_1^f \given Z_1^f) = 0$ and $\var_P(Y_1^f \given Z_1^f) = \E\bigl((\varepsilon_1^f)^2 \given Z_1^f\bigr) \leq \sigma_n^2$ that
	\begin{align*}
		\E_P\bigl( (\RN{3}_n^{(1)})^2 \given \mbb{\widehat{\beta}}_g, (Z_i, Z_i^f)_{i=1}^n\bigr)  &\leq \frac{\sigma_n^2}{n}  (\mbb{\widehat{\beta}}_g - \mbb{\beta}_{P, g})^\top \mbb{\widehat{\Sigma}}  \mbb{\widehat{\Sigma}}_f^{-1}   \mbb{\widehat{\Sigma}}_f \mbb{\widehat{\Sigma}}_f^{-1}  \mbb{\widehat{\Sigma}} (\mbb{\widehat{\beta}}_g - \mbb{\beta}_{P, g})\\
		&\leq \frac{\sigma_n^2}{n} \|\mbb{\widehat{\Sigma}}\|_{\op}^2   \|\mbb{\widehat{\Sigma}}_f^{-1} \|_{\op} \|\mbb{\widehat{\beta}}_g - \mbb{\beta}_{P, g} \|_2^2 = O_{\mathcal{P}}(Kn^{-2}),
	\end{align*}
	by our hypothesis on $\sigma_n^2$,~\eqref{Eq: sigma operator norm} in Lemma~\ref{lemma: rudelson lln} and Proposition~\ref{Proposition: general spline regression}.   Hence, by another application of Lemma~\ref{Lemma: unconditionalisation via Markov},
	\[
	\RN{3}_n^{(1)} = O_{\mathcal{P}}(K^{1/2} n^{-1}).
	\]
	To deal with the $\RN{3}_n^{(2)}$ term, by the Cauchy--Schwarz inequality,
	\[
		|\RN{3}_n^{(2)} | \leq \bigl\| \mbb{\widehat{\Sigma}}_f^{-1/2} \mbb{\widehat{\Sigma}} \mbb{\widehat{\Sigma}}_g^{-1/2}  \bigr\|_{\op} \biggl\| \mbb{\widehat{
			\Sigma}}_f^{-1/2} \frac{1}{n} \sum_{i=1}^n h_i^f \mbb{\phi}(Z_i^f) \biggr\|_2 \biggl\|\mbb{\widehat{\Sigma}}_g^{-1/2} \frac{1}{n} \sum_{i=1}^n h_i^g \mbb{\phi}(Z_i^g) \biggr\|_2.
	\]
	By \eqref{Eq: sigma operator norm} in Lemma~\ref{lemma: rudelson lln}, 
	\[
		\bigl\| \mbb{\widehat{\Sigma}}_f^{-1/2} \mbb{\widehat{\Sigma}} \mbb{\widehat{\Sigma}}_g^{-1/2}  \bigr\|_{\op} = O_{\mathcal{P}}(1).
	\]
    The same argument as in \eqref{Eq: spline regression bias term} in the proof of Proposition~\ref{Proposition: general spline regression} now yields that 
    \[
    \RN{3}_n^{(2)} = O_{\mathcal{P}}(K^{-(\zeta_f+\zeta_g-1)}n^{-1}) = O_{\mathcal{P}}(K^{-(\zeta_f+\zeta_g)}).
    \]
	To deal with the $\RN{3}_n^{(3)}$ term we write
	\begin{align*}
	\RN{3}_n^{(3)} &= \underbrace{\biggl(\frac{1}{n} \sum_{i=1}^n h_i^f \mbb{\phi}(Z_i^f)^\top \biggr) \mbb{\Sigma}_P^{-1}  \mbb{\widehat{\Sigma}} \mbb{\widehat{\Sigma}}_g^{-1} \biggl(\frac{1}{n} \sum_{i=1}^n \varepsilon_i^g \mbb{\phi}(Z_i^g) \biggr)}_{\RN{3}_n^{(3, 1)}}\\
	&+ \underbrace{\biggl(\frac{1}{n} \sum_{i=1}^n h_i^f \mbb{\phi}(Z_i^f)^\top \biggr) (\mbb{\widehat{\Sigma}}_f^{-1} - \mbb{\Sigma}_P^{-1})  \mbb{\widehat{\Sigma}} \mbb{\Sigma}_P^{-1} \biggl(\frac{1}{n} \sum_{i=1}^n \varepsilon_i^g \mbb{\phi}(Z_i^g) \biggr)}_{\RN{3}_n^{(3, 2)}}\\
	&+ \underbrace{\biggl(\frac{1}{n} \sum_{i=1}^n h_i^f \mbb{\phi}(Z_i^f)^\top \biggr)(\mbb{\widehat{\Sigma}}_f^{-1} - \mbb{\Sigma}_P^{-1}) \mbb{\widehat{\Sigma}} (\mbb{\widehat{\Sigma}}_g^{-1} - \mbb{\Sigma}_P^{-1}) \biggl(\frac{1}{n} \sum_{i=1}^n \varepsilon_i^g \mbb{\phi}(Z_i^g) \biggr)}_{\RN{3}_n^{(3, 3)}}.
	\end{align*}
	For the first term, we have by an argument similar to the $\RN{3}_n^{(1)}$ term that 
	\begin{align*}
	\E_P\bigl((\RN{3}_n^{(3, 1)})^2 \given (Z_i, Z_i^f, Z_i^g)_{i=1}^n\bigr) &\leq \frac{\sigma_n^2}{n} \biggl\| \frac{1}{n} \sum_{i=1}^n h_i^f \mbb{\phi}(Z_i^f) \biggr\|_2^2 \bigl\| \mbb{\Sigma}_P^{-1} \mbb{\widehat{\Sigma}} \mbb{\widehat{\Sigma}}_g^{-1} \mbb{\widehat{\Sigma}}_g \mbb{\widehat{\Sigma}}_g^{-1} \mbb{\widehat{\Sigma}} \mbb{\Sigma}_P^{-1} \bigr\|_{\op}.
	\end{align*}
	By Lemma~\ref{Lemma: unconditionalisation via Markov}, our assumption on $\sigma_n$,~\eqref{Eq: tighter projection bound} in the proof of Proposition~\ref{Proposition: general spline regression}, Proposition~\ref{Proposition: b-spline properties}(d) and \eqref{Eq: sigma operator norm} in the proof of Lemma~\ref{lemma: rudelson lln}, we therefore have 
	\[
		\RN{3}_n^{(3, 1)} = O_{\mathcal{P}}(K^{1/2-\zeta_f}n^{-1}) = O_{\mathcal{P}}(K^{1/2}n^{-1}).
	\]
	Similarly,
\begin{align*}
		\E_P\bigl((\RN{3}_n^{(3, 2)})^2 \given &(Z_i, Z_i^f, Z_i^g)_{i=1}^n\bigr) \\
		&\leq \frac{\sigma_n^2}{n} \biggl\| \frac{1}{n} \sum_{i=1}^n h_i^f \mbb{\phi}(Z_i^f) \biggr\|_2^2 \bigl\| \mbb{\Sigma}_P^{-1} - \mbb{\widehat{\Sigma}}_f^{-1} \bigr\|_{\op}^2 \bigl\|  \mbb{\widehat{\Sigma}}  \mbb{\Sigma}_P^{-1} \mbb{\widehat{\Sigma}}_g
		\mbb{\Sigma}_P^{-1}
		\mbb{\widehat{\Sigma}} \bigr\|_{\op}.
	\end{align*}
	Hence, by the same arguments as for $\RN{3}_n^{(3, 1)}$, together with  the second result in Lemma~\ref{lemma: rudelson lln},
	\[
	\RN{3}_n^{(3, 2)} = O_{\mathcal{P}}\biggl(\frac{K^{-(\zeta_f-1)} \log^{1/2}(eK)}{n^{3/2}}\biggr) = O_{\mathcal{P}}(K^{1/2}n^{-1}).
	\]
	Finally, by the Cauchy--Schwarz inequality, we have 
	\begin{align*}
		&\bigl|\RN{3}_n^{(3, 3)} \bigr| \leq \|\mbb{\widehat{\Sigma}}\|_{\op} \|\mbb{\widehat{\Sigma}}_f^{-1} - \mbb{\Sigma}_P^{-1} \|_{\op} \bigl\| \mbb{\widehat{\Sigma}}_g^{-1} - \mbb{\Sigma}_P^{-1} \bigr\|_{\op} \biggl\| \frac{1}{n} \sum_{i=1}^n \varepsilon_i^g \mbb{\phi}(Z_i^g) \biggr\|_2 \biggl\| \frac{1}{n} \sum_{i=1}^n h_i^f \mbb{\phi}(Z_i^f) \biggr\|_2,
	\end{align*}
	so
	\[
	\RN{3}_n^{(3, 3)} = O_{\mathcal{P}}\biggl(\frac{K^{2-\zeta_f}\log (eK)}{n^2}\biggr) = O_{\mathcal{P}}\biggl(K^{-(\zeta_f+\zeta_g)} + \frac{K^{2 - \max(\zeta_f,\zeta_g)}\log K}{n^2}\biggr)
	\]
	from our previous bounds.	The result follows.
\end{proof}
\end{proposition}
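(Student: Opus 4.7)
The plan is to exploit the independence of the three samples to get sharper bounds than a naive Cauchy--Schwarz on the empirical product error. I would first decompose each factor around its $L_2(P)$-best approximant in the spline space: writing $f_P^\dagger = \mbb{\beta}_{P,f}^\top \mbb{\phi}$ and $g_P^\dagger = \mbb{\beta}_{P,g}^\top \mbb{\phi}$, the product error splits into four pieces — a bias/bias term, two mixed bias/variance terms, and an estimator/estimator term. The bias/bias piece is bounded in sup norm by $\|f_P - f_P^\dagger\|_\infty \|g_P - g_P^\dagger\|_\infty = O(K^{-(\zeta_f+\zeta_g)})$ using Assumption~(i). For each of the two mixed terms, I would compute the conditional second moment given the fitted function on one side (say $\fhat$) and all covariates on the evaluation sample, bringing in $\|g_P - g_P^\dagger\|_\infty^2 \|\mbb{\Sigma}_P\|_{\op} \|\mbb{\widehat{\beta}}_f - \mbb{\beta}_{P,f}\|_2^2$, and then invoke the second-moment bound on $\|\mbb{\widehat{\beta}}_f - \mbb{\beta}_{P,f}\|_2^2$ from Proposition~\ref{Proposition: general spline regression} together with Lemma~\ref{Lemma: unconditionalisation via Markov} to get an $O_{\mathcal{P}}(K^{1/2}/n)$ bound.

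The bulk of the work lies in the estimator/estimator term $(\mbb{\widehat{\beta}}_f - \mbb{\beta}_{P,f})^\top \mbb{\widehat{\Sigma}} (\mbb{\widehat{\beta}}_g - \mbb{\beta}_{P,g})$, where $\mbb{\widehat{\Sigma}}$ is the Gram matrix on the evaluation sample. Writing $\mbb{\widehat{\beta}}_f - \mbb{\beta}_{P,f} = \mbb{\widehat{\Sigma}}_f^{-1}\bigl(n^{-1}\sum_i h_i^f \mbb{\phi}(Z_i^f) + n^{-1}\sum_i \varepsilon_i^f \mbb{\phi}(Z_i^f)\bigr)$ with $h_i^f := f_P(Z_i^f) - f_P^\dagger(Z_i^f)$ and $\varepsilon_i^f := Y_i^f - f_P(Z_i^f)$, and similarly on the $g$-side, I would expand into four sub-terms. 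The pure $\varepsilon^f/\varepsilon^g$ type contributions would be handled by taking conditional second moments, using that the errors have zero conditional mean given the covariates and the other sample, and that $\|\mbb{\widehat{\Sigma}}_f^{-1} \mbb{\widehat{\Sigma}} \mbb{\widehat{\Sigma}}_g^{-1}\|_{\op}$ and related operator norms are $O_{\mathcal{P}}(K^{-1})$ by Lemma~\ref{lemma: rudelson lln}. The $h^f/h^g$ sub-term factorises through $\|\mbb{\widehat{\Sigma}}_f^{-1/2}\mbb{\widehat{\Sigma}}\mbb{\widehat{\Sigma}}_g^{-1/2}\|_{\op} \cdot \|\mbb{\widehat{\Sigma}}_f^{-1/2} n^{-1}\sum h_i^f \mbb{\phi}(Z_i^f)\|_2 \cdot \|\mbb{\widehat{\Sigma}}_g^{-1/2} n^{-1}\sum h_i^g \mbb{\phi}(Z_i^g)\|_2$, and the second-moment bound~\eqref{Eq: tighter projection bound} from the proof of Proposition~\ref{Proposition: general spline regression} on each factor would yield $O_{\mathcal{P}}(K^{-(\zeta_f+\zeta_g)})$.

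The main obstacle is the mixed $h^f/\varepsilon^g$ (and symmetric $\varepsilon^f/h^g$) sub-term, since a naive Cauchy--Schwarz splitting loses a factor of $K^{1/2}$ and gives $O_{\mathcal{P}}(K/n)$ rather than $O_{\mathcal{P}}(K^{1/2}/n)$. To recover the sharper rate one must exploit the fact that the $h^f$-factor, which would otherwise have mean zero if $\mbb{\widehat{\Sigma}}_f$ were replaced by $\mbb{\Sigma}_P$ (by~\eqref{eq: ls estimator mean}), is correlated with $\mbb{\widehat{\Sigma}}_f^{-1}$ through the same sample. I would therefore add and subtract $\mbb{\Sigma}_P^{-1}$ for $\mbb{\widehat{\Sigma}}_f^{-1}$ (and symmetrically for $\mbb{\widehat{\Sigma}}_g^{-1}$), splitting the sub-term into three pieces: a ``leading'' piece with $\mbb{\Sigma}_P^{-1}$ on the $\fhat$-side, whose conditional variance given the covariates reduces to $\sigma_n^2/n$ times a controlled quadratic form; two ``first-order correction'' pieces involving $\mbb{\widehat{\Sigma}}_f^{-1} - \mbb{\Sigma}_P^{-1}$ controlled by the second bound of Lemma~\ref{lemma: rudelson lln} giving the $K\log K / n^{1/2}$ improvement; and a ``second-order'' piece with both differences, bounded by Cauchy--Schwarz to yield the residual $K^{2-\max(\zeta_f,\zeta_g)}\log(K)/n^2$ term in the stated rate.

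Combining all pieces via the triangle inequality and absorbing lower-order contributions into the three named rates completes the argument. The routine steps are repeated applications of Lemma~\ref{Lemma: unconditionalisation via Markov}, Proposition~\ref{Proposition: b-spline properties}(d) and the two claims of Lemma~\ref{lemma: rudelson lln}; the only conceptually delicate part is the three-way decomposition in the previous paragraph that extracts the cross-fitting benefit from the mixed $h$/$\varepsilon$ sub-term.
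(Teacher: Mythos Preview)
Your proposal is correct and follows essentially the same approach as the paper's proof. The only organizational difference is that you expand the estimator/estimator term $(\mbb{\widehat{\beta}}_f-\mbb{\beta}_{P,f})^\top\mbb{\widehat{\Sigma}}(\mbb{\widehat{\beta}}_g-\mbb{\beta}_{P,g})$ symmetrically into four sub-terms ($\varepsilon^f/\varepsilon^g$, $\varepsilon^f/h^g$, $h^f/\varepsilon^g$, $h^f/h^g$), whereas the paper keeps $(\mbb{\widehat{\beta}}_g-\mbb{\beta}_{P,g})$ intact against the $\varepsilon^f$-part and only splits the $g$-side inside the $h^f$-part, yielding three sub-terms; similarly, for the delicate $h^f/\varepsilon^g$ piece the paper's add-and-subtract of $\mbb{\Sigma}_P^{-1}$ is nested (three pieces) rather than fully symmetric. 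Both organizations lead to the same bounds via the same tools (Proposition~\ref{Proposition: general spline regression}, Proposition~\ref{Proposition: b-spline properties}(d), both claims of Lemma~\ref{lemma: rudelson lln}, and Lemma~\ref{Lemma: unconditionalisation via Markov}).
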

Our second and final result controls a different type of product error and is loosely based on Theorem~8 and Corollary~9 in a working version of \citet{ichimura2015influence}.
\begin{proposition}\label{proposition: error times smooth function}
	Let $\mathcal{P}$ be a family of distributions of $(Y, Z)$ on $\mathbb{R} \times [0, 1]^d$ with corresponding regression function $f_P:[0,1]^d \rightarrow \mathbb{R}$ given by $f_P(z):=\E_P(Y \given Z=z)$.  Further, let $(g_P)_{P \in \mathcal{P}}$ be a family of functions from $[0, 1]^d$ to $\mathbb{R}$ with $\rho_P := \E_P\bigl(g_P(Z)^2\bigr) < \infty$ and $\inf_{P \in \mathcal{P}} \rho_P > 0$.  Suppose that 
	\begin{enumerate}[(i)]
	\item  There exist $\zeta_f(d, r) \equiv \zeta_f > 0$ and $\zeta_g(d,r) \equiv \zeta_g > 0$ such that
		\[
			\sup_{P \in \mathcal{P}} \|f_P-f_P^\dagger\|_\infty =  O(K^{-\zeta_f}), \quad \sup_{P \in \mathcal{P}}  \|g_P-g_P^\dagger\|_\infty = O(K^{-\zeta_g}),
		\]
		where $f_P^\dagger$ and $g_P^\dagger$ denote the $L_2(P)$-best approximants of $f_P$ and $g_P$ respectively in $\mathcal{S}^d_{r, N}$. 
  		\item Each $P \in \mathcal{P}$ is absolutely continuous with respect to Lebesgue measure on $[0,1]^d$, with corresponding density $p_P$ satisfying $C := \sup_{P \in \mathcal{P}} \sup_{z \in [0,1]^d} p_P(z) < \infty$ and $c := \inf_{P \in \mathcal{P}} \inf_{z \in [0, 1]^d} p_P(z) > 0$.
  		\item There exists a positive sequence $(\sigma_n^2)_{n \in \mathbb{N}}$ such that $\Var(Y \given Z) \leq \sigma_n^2 = O(1)$.
	\end{enumerate}
	Now suppose we are given two independent samples $(Y^f_i, Z^f_i)_{i=1}^n$ and $(Y_i, Z_i)_{i=1}^n$, each consisting of $n$ independent and identically distributed copies of $(Y, Z)$.  Let $\mbb{\phi}$ denote the $d$-tensor B-spline basis of $\mathcal{S}^d_{r, N}$.  Let $\mbb{\widehat{\beta}}$ denote the ordinary least squares estimate from regressing $Y_1^f,\ldots,Y_n^f$ onto $\mbb{\phi}(Z_1^f),\ldots,\mbb{\phi}(Z_n^f)$.  Define the fitted regression function $\fhat$ by $\fhat(z) = \mbb{\widehat{\beta}}^\top \mbb{\phi}(z)$.  If $K \log(K) / n \to 0$, then
	\begin{align*}
		\frac{1}{n} \sum_{i=1}^n g_P(Z_i) &\bigl\{\fhat(Z_i) - f_P(Z_i)\bigr\} \\
		&= O_{\mathcal{P}}\bigl(K^{-(\zeta_f+\zeta_g)}  + K^{-(\zeta_g-1/2)} n^{-1} + \rho_P^{1/2} n^{-1/2} \{1 + K^{-(\zeta_f-1/2)} \} \bigr).
	\end{align*}
	\begin{proof}
	Define $\mbb{\beta}_{P, f}, \mbb{\beta}_{P, g} \in \mathbb{R}^K$ so that $f_P^\dagger(z) = \mbb{\beta}_{P, f}^\top \mbb{\phi}(z)$ and $g_P^\dagger(z) = \mbb{\beta}_{P, g}^\top \mbb{\phi}(z)$.  By Proposition~\ref{Proposition: b-spline properties}(b),
	\[
	\|\mbb{\beta}_{P, g}\|_2 \leq K^{1/2} c_s(r)^{-d} \{ \| g_P^\dagger - g_P\|_\infty + \|g_P\|_2\} \leq K^{1/2} c_s(r)^{-d} \{ \| g_P^\dagger - g_P\|_\infty + \rho_P^{1/2} c^{-1/2} \}.
	\]
	Thus, by~\emph{(i)}, 
	\begin{equation}
		\label{Eq: beta_g bound}
		\|\mbb{\beta}_{P, g}\|_2 = O_{\mathcal{P}}(K^{-(\zeta_g-1/2)} + \rho_P^{1/2}K^{1/2}).
	\end{equation}
	We can write 
	\begin{align*}
		&\frac{1}{n} \sum_{i=1}^n g_P(Z_i) \bigl\{\fhat(Z_i) \!-\! f_P(Z_i)\bigr\} \\
		= &\underbrace{\frac{1}{n} \sum_{i=1}^n \{g_P(Z_i)\!-\!g_P^\dagger(Z_i) \}  \{f_P^\dagger(Z_i) \!-\! f_P(Z_i)\} }_{\RN{1}_n} + \underbrace{\frac{1}{n} \sum_{i=1}^n g_P^\dagger(Z_i) \{f_P^\dagger(Z_i) \!-\! f_P(Z_i)\} }_{\RN{2}_n}\\
		&\hspace{1.0cm}+ \underbrace{\frac{1}{n} \sum_{i=1}^n\{g_P(Z_i) - g_P^\dagger(Z_i)\} \{\fhat(Z_i) - f_P^\dagger(Z_i)\}}_{\RN{3}_n} + \underbrace{\frac{1}{n} \sum_{i=1}^n g_P^\dagger(Z_i) \{\fhat(Z_i) - f_P^\dagger(Z_i)\}}_{\RN{4}_n}.
	\end{align*}
	By assumption~\emph{(i)} again,
	\[
		\sup_{P \in \mathcal{P}}|\RN{1}_n| \leq \sup_{P \in \mathcal{P}} \|f_P-f_P^\dagger\|_\infty \| g_P-g_P^\dagger \|_\infty = O\bigl(K^{-(\zeta_f+\zeta_g)}\bigr).
	\]
	From~\eqref{eq: ls estimator mean} in the proof of Proposition~\ref{Proposition: general spline regression},
	\[
	\E_P\bigl(g_P^\dagger(Z)\{f_P(Z)- f_P^\dagger(Z)\}\bigr) = \E_P\bigl(\{f_P(Z)- f_P^\dagger(Z)\}\mbb{\phi}(Z)^\top \mbb{\beta}_{P,g}\bigr) = 0,
	\]
	and therefore 
	\begin{align*}
	\E_P(\RN{2}_n^2) &= \frac{1}{n} \mbb{\beta}_{P,g}^\top \E_P\bigl(\{f_P(Z)- f_P^\dagger(Z)\}^2 \mbb{\phi}(Z) \mbb{\phi}(Z)^\top \bigr) \mbb{\beta}_{P, g} \\
	&\leq \frac{1}{n} \|f_P-f_P^\dagger \|_\infty^2 \| \mbb{\beta}_{P,g} \|_2^2 \| \mbb{\Sigma}_P \|_{\op}.
	\end{align*}
	Thus
	\[
	    \RN{2}_n = O_{\mathcal{P}}\bigl(K^{-(\zeta_f-\zeta_g)}n^{-1/2} + \rho_P^{1/2}K^{-\zeta_f}n^{-1/2} \bigr) = O_{\mathcal{P}}\bigl(K^{-\zeta_f-\zeta_g} +  \rho_P^{1/2}n^{-1/2}\bigr).
	\]
	The same argument as for the $\RN{2}_n^f$ term in the proof of Proposition~\ref{Proposition: product error} yields that 
	\[
	\RN{3}_n = O_{\mathcal{P}}\bigl(K^{-(\zeta_g-1/2)} n^{-1}\bigr).
	\]
	To deal with $\RN{4}_n$, we write, with quantities defined as in the proof of Proposition~\ref{Proposition: product error},
	\begin{align*}
	\RN{4}_n &= \mbb{\beta}_{P, g}^\top \mbb{\widehat{\Sigma}} (\mbb{\widehat{\beta}}- \mbb{\beta}_{P ,f}) \\
	&= \underbrace{\mbb{\beta}_{P, g}^\top \mbb{\widehat{\Sigma}} \mbb{\widehat{\Sigma}}_f^{-1} \biggl(\frac{1}{n} \sum_{i=1}^n \varepsilon_i^f \mbb{\phi}(Z_i^f) \biggr)}_{\RN{4}_n^{(1)}} + \underbrace{\mbb{\beta}_{P, g}^\top \mbb{\widehat{\Sigma}} \mbb{\widehat{\Sigma}}_f^{-1} \biggl(\frac{1}{n} \sum_{i=1}^n h_i^f \mbb{\phi}(Z_i^f) \biggr)}_{\RN{4}_n^{(2)}}.
	\end{align*}
	Since $\E_P\bigl(\varepsilon_1^f \given Z_1^f\bigr) = 0$ and $\var_P\bigl(Y_1^f \given Z_1^f\bigr) = \E_P\bigl((\varepsilon_1^f)^2 \given Z_1^f\bigr) \leq \sigma_n^2$, we have
	\begin{align*}
	\E_P\bigl((\RN{4}_n^{(1)})^2 \given (Z_i, Z_i^f)_{i=1}^n\bigr) &\leq \frac{\sigma_n^2}{n} \mbb{\beta}_{P, g}^\top \mbb{\widehat{\Sigma}} \mbb{\widehat{\Sigma}}_f^{-1} \mbb{\widehat{\Sigma}}_f \mbb{\widehat{\Sigma}}_f^{-1} \mbb{\widehat{\Sigma}} \mbb{\beta}_{P, g} \\
	&\leq \frac{\sigma_n^2}{n} \| \mbb{\widehat{\Sigma}} \mbb{\widehat{\Sigma}}_f^{-1} \mbb{\widehat{\Sigma}}_f \mbb{\widehat{\Sigma}}_f^{-1} \mbb{\widehat{\Sigma}} \|_{\op} \|\mbb{\beta}_{P, g}\|_2^2,
	\end{align*}
	so
	\[
	\RN{4}_n^{(1)} = O_{\mathcal{P}}\bigl((K^{-\zeta_g} + \rho_P^{1/2})n^{-1/2}\bigr) = O_{\mathcal{P}}(\rho_P^{1/2}n^{-1/2}).
	\]
	Finally, by the Cauchy--Schwarz inequality, 
	\begin{align*}
	|\RN{4}_n^{(2)}| &\leq \| \mbb{\beta}_{P, g}\|_2 \|\mbb{\widehat{\Sigma}} \mbb{\widehat{\Sigma}}_f^{-1} \|_{\op} \biggl\|\frac{1}{n} \sum_{i=1}^n h_i^f \mbb{\phi}(Z_i^f)\biggr\|_2 \\
	&= O_{\mathcal{P}}(K^{-(\zeta_f+\zeta_g)+1/2}n^{-1/2} + \rho_P^{1/2} K^{-(\zeta_f-1/2)}n^{-1/2} ) \\
	&= O_{\mathcal{P}}\bigl(K^{-(\zeta_f+\zeta_g)} + \rho_P^{1/2} K^{-(\zeta_f-1/2)}n^{-1/2}\bigr).
	\end{align*}
	The result follows.
	\end{proof}
\end{proposition}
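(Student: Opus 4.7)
The plan is to decompose the cross-product error by adding and subtracting the $L_2(P)$-best approximants $f_P^\dagger$ and $g_P^\dagger$ and then to control each of the resulting four terms separately using a combination of H\"older's inequality, the orthogonality of the best approximant, and sample-split conditioning. Specifically, writing $g_P = g_P^\dagger + (g_P - g_P^\dagger)$ and $\fhat - f_P = (\fhat - f_P^\dagger) - (f_P - f_P^\dagger)$, I would split the sum into the four terms $\mathrm{I}_n,\mathrm{II}_n,\mathrm{III}_n,\mathrm{IV}_n$ corresponding to the four cross-products. The first term $\mathrm{I}_n$ is a pure approximation-bias $\times$ approximation-bias product, controlled deterministically by $\|f_P - f_P^\dagger\|_\infty \|g_P - g_P^\dagger\|_\infty = O(K^{-(\zeta_f + \zeta_g)})$ under Assumption~(i).

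For $\mathrm{II}_n = n^{-1}\sum g_P^\dagger(Z_i)\{f_P^\dagger(Z_i)-f_P(Z_i)\}$, the key observation is that by the orthogonality property~\eqref{eq: ls estimator mean} for the $L_2(P)$-best approximant, each summand has mean zero under $P$; hence a second-moment computation yields $\E_P(\mathrm{II}_n^2) \leq n^{-1}\|f_P - f_P^\dagger\|_\infty^2\|\mbb{\beta}_{P,g}\|_2^2\|\mbb{\Sigma}_P\|_{\mathrm{op}}$. Combining this with the bound $\|\mbb{\beta}_{P,g}\|_2 = O_{\mathcal{P}}(K^{-(\zeta_g - 1/2)} + \rho_P^{1/2}K^{1/2})$ (obtained from Proposition~\ref{Proposition: b-spline properties}(b) together with Assumption~(i)) and $\|\mbb{\Sigma}_P\|_{\mathrm{op}} = O(K^{-1})$ from Proposition~\ref{Proposition: b-spline properties}(d), followed by Lemma~\ref{Lemma: unconditionalisation via Markov}, gives $\mathrm{II}_n = O_{\mathcal{P}}(K^{-(\zeta_f+\zeta_g)} + \rho_P^{1/2}n^{-1/2})$. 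Term $\mathrm{III}_n$ is handled analogously to the $\RN{2}_n^f$ term in the proof of Proposition~\ref{Proposition: product error}, yielding the contribution $O_{\mathcal{P}}(K^{-(\zeta_g - 1/2)}n^{-1})$.

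The main obstacle is the term $\mathrm{IV}_n = n^{-1}\sum g_P^\dagger(Z_i)\{\fhat(Z_i) - f_P^\dagger(Z_i)\} = \mbb{\beta}_{P,g}^\top \mbb{\widehat{\Sigma}}(\mbb{\widehat{\beta}} - \mbb{\beta}_{P,f})$, where the unwanted dependence on the unknown size of $\|\mbb{\beta}_{P,g}\|_2$ (which scales like $\rho_P^{1/2}K^{1/2}$) could in principle be amplified. To control this I would expand $\mbb{\widehat{\beta}} - \mbb{\beta}_{P,f} = \mbb{\widehat{\Sigma}}_f^{-1}(n^{-1}\sum (\varepsilon_i^f + h_i^f)\mbb{\phi}(Z_i^f))$, using the sample-split independence between the $(Z_i)$ and $(Z_i^f, Y_i^f)$ samples, and split $\mathrm{IV}_n = \mathrm{IV}_n^{(1)} + \mathrm{IV}_n^{(2)}$ into noise and approximation-bias components. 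For $\mathrm{IV}_n^{(1)}$, conditioning on $(Z_i, Z_i^f)_{i=1}^n$ and using $\E_P(\varepsilon_1^f \mid Z_1^f) = 0$ together with $\|\mbb{\widehat{\Sigma}}\mbb{\widehat{\Sigma}}_f^{-1}\|_{\mathrm{op}} = O_{\mathcal{P}}(1)$ from~\eqref{Eq: sigma operator norm} of Lemma~\ref{lemma: rudelson lln} yields $\E_P\bigl((\mathrm{IV}_n^{(1)})^2 \mid (Z_i, Z_i^f)_{i=1}^n\bigr) \leq \sigma_n^2 n^{-1} \|\mbb{\beta}_{P,g}\|_2^2 \|\mbb{\widehat{\Sigma}}\mbb{\widehat{\Sigma}}_f^{-1}\|_{\mathrm{op}}^2$, and hence $\mathrm{IV}_n^{(1)} = O_{\mathcal{P}}(\rho_P^{1/2}n^{-1/2})$, where crucially the $\rho_P^{1/2}K^{1/2}$ factor in $\|\mbb{\beta}_{P,g}\|_2$ is compensated by the $K^{-1}$ from $\|\mbb{\widehat{\Sigma}}\|_{\mathrm{op}}$ inside $\|\mbb{\widehat{\Sigma}}\mbb{\widehat{\Sigma}}_f^{-1}\|_{\mathrm{op}}$. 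For $\mathrm{IV}_n^{(2)}$, Cauchy--Schwarz gives $|\mathrm{IV}_n^{(2)}| \leq \|\mbb{\beta}_{P,g}\|_2 \|\mbb{\widehat{\Sigma}}\mbb{\widehat{\Sigma}}_f^{-1}\|_{\mathrm{op}} \|n^{-1}\sum h_i^f \mbb{\phi}(Z_i^f)\|_2$; using~\eqref{Eq: tighter projection bound} from the proof of Proposition~\ref{Proposition: general spline regression} to bound the last factor by $O_{\mathcal{P}}(K^{-(\zeta_f - 1/2)}n^{-1/2})$ and applying Lemma~\ref{Lemma: product rule} yields $\mathrm{IV}_n^{(2)} = O_{\mathcal{P}}(K^{-(\zeta_f + \zeta_g)} + \rho_P^{1/2}K^{-(\zeta_f - 1/2)}n^{-1/2})$. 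Summing the four contributions produces exactly the stated rate.
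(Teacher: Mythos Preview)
Your proposal follows the same four-term decomposition and the same handling of each term as the paper's proof, including the orthogonality argument for $\mathrm{II}_n$, the reference to the $\RN{2}_n^f$-type bound for $\mathrm{III}_n$, and the noise/bias split of $\mathrm{IV}_n$. One small correction is needed in the bound for $\mathrm{IV}_n^{(1)}$: the conditional second moment is $\sigma_n^2 n^{-1}\,\mbb{\beta}_{P,g}^\top \mbb{\widehat{\Sigma}}\mbb{\widehat{\Sigma}}_f^{-1}\mbb{\widehat{\Sigma}}\,\mbb{\beta}_{P,g} \leq \sigma_n^2 n^{-1}\|\mbb{\widehat{\Sigma}}\mbb{\widehat{\Sigma}}_f^{-1}\mbb{\widehat{\Sigma}}\|_{\op}\|\mbb{\beta}_{P,g}\|_2^2$, not $\sigma_n^2 n^{-1}\|\mbb{\widehat{\Sigma}}\mbb{\widehat{\Sigma}}_f^{-1}\|_{\op}^2\|\mbb{\beta}_{P,g}\|_2^2$; the point is that $\|\mbb{\widehat{\Sigma}}\mbb{\widehat{\Sigma}}_f^{-1}\mbb{\widehat{\Sigma}}\|_{\op} = O_{\mathcal{P}}(K^{-1})$ (via $\|\mbb{\widehat{\Sigma}}\|_{\op}^2\|\mbb{\widehat{\Sigma}}_f^{-1}\|_{\op}$ and~\eqref{Eq: sigma operator norm}), whereas $\|\mbb{\widehat{\Sigma}}\mbb{\widehat{\Sigma}}_f^{-1}\|_{\op}^2$ is only $O_{\mathcal{P}}(1)$ and would leave an uncompensated factor of $K^{1/2}$ from $\|\mbb{\beta}_{P,g}\|_2$.
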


\section{Univariate linear model analysis} \label{Section: full linear analysis}
In this section we give a more detailed analysis of the setting considered in Section~\ref{Section: Linear models linear projection}. In contrast to the remainder of this paper, we let $\mathcal{D}_1$ contain $n_1$ observations and $\mathcal{D}_2$ contain $n_2$ observations, and we let $2n=n_1+n_2$ for this subsection only.  All limiting statements in this section are interpreted as $\min\{n_1, n_2\} \to 0$. This will facilitate a discussion of the effect of the splitting ratio on the power, and to compare the power of the proposed test more precisely with existing methods. 
To simplify our analysis, we set $\vhat \equiv 1$. We now formally write down the assumption required for the main result of this section (Proposition~\ref{Proposition: asymptotic power expression}).
\begin{assumption} \label{Assumption: univariate X} \normalfont
	Suppose that the family $\mathcal{P}$ of joint distributions $P$ of $(X,Y,Z)$ satisfies the linear model~(\ref{Eq: linear model with univariate X}). Let $\mbb{\eta}_P$ and $\mbb{\theta}_P$ denote the population least squares projections of $X$ on $Z$ and $Y$ on $Z$, respectively. Let $\widehat{\beta}$, $\widehat{\mbb{\eta}}$ and $\mbb{\widehat{\theta}}$ denote estimators of $\beta_P$, $\mbb{\eta}_P$ and $\mbb{\theta}_P$, respectively, where $\widehat{\beta}$ is trained on $\mathcal{D}_2$ and $\widehat{\mbb{\eta}}$ and $\mbb{\widehat{\theta}}$ are trained on $\mathcal{D}_1$.  Assume that $\sigma_{\beta_P}^2 := \lim_{n_2 \rightarrow \infty} \mathrm{Var}_P \bigl(\sqrt{n_2}(\widehat{\beta} - \beta_P)\bigr)$ exists in $(0,\infty)$, and suppose that $\widehat{\beta}$, $\mbb{\widehat{\theta}}$ and $\mbb{\widehat{\eta}}$ satisfy
	\begin{align} 
		\sup_{P \in \mathcal{P}}\sup_{t \in \mathbb{R}} \Big| \prob_{P}\big(\sqrt{n_2} \sigma_{\beta_P}^{-1}(\widehat{\beta} - \beta_P)  \leq t \big) - \Phi(t) \Big| &= o(1), \label{Eq: assumption 1 univariate-X linear model} \\
		\sqrt{n_1} \| \mbb{\widehat{\theta}} - \mbb{\theta}_P\|_2 \cdot  \bigg\| \frac{1}{n_1} \sum_{i \in \mathcal{I}_1} (X_i - \mbb{\eta}_P^\top Z_i) Z_i \bigg\|_2 &= o_{\mathcal{P}}(1), \label{Eq: assumption 2 univariate-X linear model}  \\
		\sqrt{n_1} \| \mbb{\widehat{\eta}} - \mbb{\eta}_P\|_2 \cdot \bigg\| \frac{1}{n_1} \sum_{i \in \mathcal{I}_1} (Y_i - \mbb{\theta}_P^\top Z_i) Z_i \bigg\|_2 &= o_{\mathcal{P}}(1), \label{Eq: assumption 3 univariate-X linear model} \\
		\sqrt{n_1} \| \mbb{\widehat{\eta}} - \mbb{\eta}_P\|_2 \cdot  \| \mbb{\widehat{\theta}} - \mbb{\theta}_P\|_2 \cdot  \bigg\| \frac{1}{n_1} \sum_{i \in \mathcal{I}_1} Z_i Z_i^\top \bigg\|_{\mathrm{op}} &= o_{\mathcal{P}}(1), \label{Eq: assumption 4 univariate-X linear model} \\
		\| \mbb{\widehat{\theta}} - \mbb{\theta}_P\|_2^2 \cdot \frac{1}{n_1} \sum_{i \in \mathcal{I}_1} (X_i - \mbb{\eta}_P^\top Z_i)^2 \|Z_i\|_2^2 &= o_\mathcal{P}(1), \label{Eq: assumption 6 univariate-X linear model}\\
	 \| \mbb{\widehat{\eta}} - \mbb{\eta}_P\|_2^2 \cdot \frac{1}{n_1} \sum_{i \in \mathcal{I}_1} (Y_i - \mbb{\theta}_P^\top Z_i)^2 \|Z_i\|_2^2 &= o_\mathcal{P}(1), \label{Eq: assumption 5 univariate-X linear model}\\
		\| \mbb{\widehat{\eta}} - \mbb{\eta}_P\|_2^2 \cdot \| \mbb{\widehat{\theta}} - \mbb{\theta}_P\|_2^2 \cdot \frac{1}{n_1} \sum_{i \in \mathcal{I}_1} \|Z_i\|_2^4 &= o_\mathcal{P}(1). \label{Eq: assumption 7 univariate-X linear model}
	\end{align}
\end{assumption}

In Section~\ref{Section: Linear models linear projection} we consider a simpler but less general assumption (Assumption~\ref{Assumption: OLS}) that suffices for the analysis when the estimators are OLS estimators. These more general assumptions allow for settings where alternate estimators are used or the dimension is allowed to increase with $n$.

As mentioned before, we set the estimated weight function $\vhat \equiv 1$ in this analysis, which yields $L_{i} = \widehat{\beta}\bigl(Y_i - \mbb{\widehat{\theta}}^\top Z_i\bigr)\bigl(X_i - \mbb{\widehat{\eta}}^\top Z_i\bigr) =: \widehat{\beta} R_{i}$ for $i=1,\ldots,n_1$. The resulting PCM statistic is
\begin{align*} 
	T = \sign(\widehat{\beta}) \frac{\frac{1}{\sqrt{n_1}} \sum_{i \in \mathcal{I}_1} R_{i} }{\sqrt{\frac{1}{n_1} \sum_{i \in \mathcal{I}_1} R_{i}^2 - \bigl( \frac{1}{n_1} \sum_{i \in \mathcal{I}_1} R_{i} \bigr)^2}}.
\end{align*}
To simplify our presentation, we write, $\xi_P := X - \mbb{\eta}_P^\top Z$, $\varepsilon_P := Y - \mbb{\theta}_P^\top Z$, $\sigma_{P,\xi}^2 := \Var_P(\xi_P)$ and $\sigma_{P, \varepsilon\xi}^2 := \var_P(\varepsilon_P \xi_P)$. The following result provides asymptotic size and power expressions for the PCM test in this context.
\begin{proposition} \label{Proposition: asymptotic power expression}
	Suppose that $\mathcal{P}$ is a family of distributions $P$ of $(X,Y,Z)$ for which the estimators $\widehat{\beta}$, $\mbb{\widehat{\eta}}$ and $\mbb{\widehat{\theta}}$ satisfy Assumption~\ref{Assumption: univariate X}. In addition, assume that there exist $c, C, \delta>0$ such that $\sigma_{P,\varepsilon\xi}^2 > c$ and $\E_P \bigl\{\bigl|\varepsilon_P \xi_P \bigr|^{2+\delta}\bigr\} \leq C$ for all $P \in \mathcal{P}$ and $n \in \mathbb{N}$. Then, by letting
	\begin{align*}
		\psi_{P,\alpha,n} := \Phi\biggl( \frac{\sqrt{n_2}\beta_P}{\sigma_{\beta_P}} \biggr) \Phi\biggl( z_\alpha + \frac{\sqrt{n_1} \beta_P \sigma_{P,\xi}^2}{\sigma_{P,\varepsilon\xi}} \biggr) + \Phi\biggl( -\frac{\sqrt{n_2}\beta_P}{\sigma_{\beta_P}} \biggr) \Phi\biggl( z_\alpha - \frac{\sqrt{n_1} \beta_P \sigma_{P,\xi}^2}{\sigma_{P,\varepsilon\xi}} \biggr),
	\end{align*}	
	the power of the PCM test satisfies
	\begin{align*}
		\sup_{P \in \mathcal{P}} \big| \prob_{P}(T > z_{1-\alpha}) - \psi_{P,\alpha,n} \big| \rightarrow 0, \ \text{as $\min\{n_1,n_2\} \rightarrow\infty$.}
	\end{align*}
	Furthermore, when $\alpha < 1/2$, we have $\psi_{P,\alpha,n} \geq \alpha$ and, when $\sigma_{P,\xi}^2/\sigma_{P,\varepsilon\xi} >0$, equality holds if and only if $\beta_P = 0$. 
\end{proposition}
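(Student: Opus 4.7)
The plan is to extract from the test statistic on $\mathcal{D}_1$ a uniform asymptotic representation that separates the deterministic signal $\beta_P\sigma_{P,\xi}^2$ from mean-zero Gaussian fluctuations, then to decouple the $\sign(\widehat{\beta})$ factor using the independence of the two splits together with~\eqref{Eq: assumption 1 univariate-X linear model}, and finally to establish the monotonicity of $\psi_{P,\alpha,n}$ by a direct algebraic manipulation.

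First I would write $R_i = \bigl(\varepsilon_{P,i} + (\mbb{\theta}_P - \mbb{\widehat{\theta}})^{\top} Z_i\bigr)\bigl(\xi_{P,i} + (\mbb{\eta}_P - \mbb{\widehat{\eta}})^{\top} Z_i\bigr)$ and expand the product. By the Cauchy--Schwarz inequality, the three cross terms contribute $o_{\mathcal{P}}(1)$ to $n_1^{-1/2}\sum_{i\in\mathcal{I}_1} R_i$ as a consequence of~\eqref{Eq: assumption 2 univariate-X linear model}--\eqref{Eq: assumption 4 univariate-X linear model}, and similarly their contributions to $n_1^{-1}\sum_{i\in\mathcal{I}_1} R_i^2$ are $o_{\mathcal{P}}(1)$ by~\eqref{Eq: assumption 6 univariate-X linear model}--\eqref{Eq: assumption 7 univariate-X linear model}. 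The identity $\mbb{\theta}_P = \mbb{\gamma}_P + \beta_P \mbb{\eta}_P$, immediate from~\eqref{Eq: linear model with univariate X} and $\E_P(\zeta_P \given X, Z) = 0$, gives $\varepsilon_P = \beta_P \xi_P + \zeta_P$, so that $\E_P(\varepsilon_P \xi_P) = \beta_P\sigma_{P,\xi}^2$ and $\Var_P(\varepsilon_P\xi_P) = \sigma_{P,\varepsilon\xi}^2$. Combined with $\sup_{P \in \mathcal{P}} \E_P(|\varepsilon_P\xi_P|^{2+\delta}) \leq C$, the Lyapunov condition is verified and Lemma~\ref{Lemma: conditional clt} (with trivial conditioning) yields
\[
\frac{1}{\sqrt{n_1}} \sum_{i \in \mathcal{I}_1} R_i = \sqrt{n_1}\,\beta_P\sigma_{P,\xi}^2 + \sigma_{P,\varepsilon\xi}\,\mathcal{Z}_n + o_{\mathcal{P}}(1),
\]
where $\mathcal{Z}_n$ is $\sigma(\mathcal{D}_1)$-measurable and converges uniformly in distribution to $N(0,1)$. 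An analogous application of Lemma~\ref{Lemma: conditional lln} to $R_i^2$, together with the above, gives $\frac{1}{n_1}\sum_{i \in \mathcal{I}_1}R_i^2 - \bigl(\frac{1}{n_1}\sum_{i\in\mathcal{I}_1}R_i\bigr)^2 = \sigma_{P,\varepsilon\xi}^2 + o_{\mathcal{P}}(1)$.

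A uniform Slutsky argument then produces
\[
T = \sign(\widehat{\beta})\biggl[\mathcal{Z}_n + \frac{\sqrt{n_1}\,\beta_P\sigma_{P,\xi}^2}{\sigma_{P,\varepsilon\xi}}\biggr] + o_{\mathcal{P}}(1).
\]
Since $\widehat{\beta}$ is constructed from $\mathcal{D}_2$, it is independent of $\mathcal{Z}_n$. Evaluating~\eqref{Eq: assumption 1 univariate-X linear model} at $t = -\sqrt{n_2}\beta_P/\sigma_{\beta_P}$, and at slightly shifted values to control the atom, yields $\mathbb{P}_P(\widehat{\beta} > 0) = \Phi(\sqrt{n_2}\beta_P/\sigma_{\beta_P}) + o(1)$ and $\mathbb{P}_P(\widehat{\beta} = 0) = o(1)$, both uniformly over $\mathcal{P}$. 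Conditioning on $\sign(\widehat{\beta})$ and applying the uniform convergence of $\mathcal{Z}_n$ in each of the two branches delivers the desired uniform approximation by $\psi_{P,\alpha,n}$.

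The main obstacle is the monotonicity claim. Setting $a := \sqrt{n_2}\beta_P/\sigma_{\beta_P}$ and $\mu := \sqrt{n_1}\beta_P\sigma_{P,\xi}^2/\sigma_{P,\varepsilon\xi}$, so that $a$ and $\mu$ share the sign of $\beta_P$, the identity $\Phi(a)+\Phi(-a) = 1$ rearranges $\psi_{P,\alpha,n}$ into
\[
\psi_{P,\alpha,n} - \alpha = g(\mu) + \bigl[\Phi(a) - \tfrac{1}{2}\bigr]\bigl[\Phi(z_\alpha + \mu) - \Phi(z_\alpha - \mu)\bigr],
\]
where $g(\mu) := \tfrac{1}{2}[\Phi(z_\alpha + \mu) + \Phi(z_\alpha - \mu)] - \alpha$ satisfies $g(0) = 0$. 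Since $\alpha < 1/2$ forces $z_\alpha < 0$, the identity $(z_\alpha + \mu)^2 - (z_\alpha - \mu)^2 = 4\mu z_\alpha$ shows that $g'(\mu) = \tfrac{1}{2}\bigl[\phi(z_\alpha + \mu) - \phi(z_\alpha - \mu)\bigr]$ has the same sign as $\mu$, so $g(\mu) \geq 0$ with equality iff $\mu = 0$. The two bracketed factors in the second summand also share the sign of $\beta_P$, making that summand non-negative and zero iff $\beta_P = 0$. Under the assumption $\sigma_{P,\xi}^2/\sigma_{P,\varepsilon\xi} > 0$, we have $\mu = 0 \Leftrightarrow \beta_P = 0$, completing the characterisation of equality.
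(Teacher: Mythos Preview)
Your overall strategy---decompose $R_i$, use~\eqref{Eq: assumption 2 univariate-X linear model}--\eqref{Eq: assumption 7 univariate-X linear model} to kill the cross terms in numerator and denominator, invoke a uniform CLT for the main term, then split on $\sign(\widehat{\beta})$---is exactly the paper's. Your treatment of the inequality $\psi_{P,\alpha,n} \geq \alpha$ is also correct and essentially equivalent to the paper's (both reduce to the convexity of $\mu \mapsto \tfrac{1}{2}\{\Phi(z_\alpha+\mu)+\Phi(z_\alpha-\mu)\}$ together with the non-negativity of the symmetry-breaking term).

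There is, however, a genuine gap at the step where you invoke ``a uniform Slutsky argument'' to conclude
\[
T = \sign(\widehat{\beta})\Bigl[\mathcal{Z}_n + \frac{\sqrt{n_1}\,\beta_P\sigma_{P,\xi}^2}{\sigma_{P,\varepsilon\xi}}\Bigr] + o_{\mathcal{P}}(1).
\]
Writing $s_\beta := \sqrt{n_1}\beta_P\sigma_{P,\xi}^2/\sigma_{P,\varepsilon\xi}$, what you actually obtain from the denominator analysis is the \emph{multiplicative} representation
\[
T = \sign(\widehat{\beta})\bigl(\mathcal{Z}_n + s_\beta\bigr)(1+V_n) + o_{\mathcal{P}}(1), \qquad V_n = o_{\mathcal{P}}(1),
\]
and expanding this produces a remainder $\sign(\widehat{\beta})\,s_\beta V_n$ that is \emph{not} $o_{\mathcal{P}}(1)$ uniformly over $\mathcal{P}$, because $|s_\beta|$ is unbounded in $P$. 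Consequently the additive $o_{\mathcal{P}}(1)$ you claim does not follow, and the subsequent step---absorbing an additive error into the Lipschitz continuity of $\Phi$---is not available in this form.

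The paper confronts exactly this difficulty. It keeps the multiplicative form and, after conditioning on the sign of $\widehat{\beta}$, rewrites the rejection event as
\[
\bigl\{T_R > z_{1-\alpha} - s_\beta - V_n(T_R + s_\beta)\bigr\},
\]
so that the threshold is perturbed by at most $\epsilon(1+|s_\beta|)$ on $\{|V_n|<\epsilon, |V_nT_R|<\epsilon\}$. The resulting discrepancy with $\Phi(z_\alpha+s_\beta)$ is then controlled uniformly in $s_\beta$ by taking the minimum of the Lipschitz bound $(2\pi)^{-1/2}\epsilon(1+|s_\beta|)$ and the two Gaussian tail probabilities $\Phi(z_{1-\alpha}-s_\beta)$, $\Phi(z_\alpha+s_\beta+\epsilon(1+|s_\beta|))$; a short case analysis (according to the sign of $\beta$ and whether $|s_\beta|$ is large) shows this minimum tends to $0$ as $\epsilon \downarrow 0$, uniformly over $\mathcal{P}$. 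Without an argument of this type---trading the growing Lipschitz error against the decaying Gaussian tail when $|s_\beta|$ is large---the uniform statement over all of $\mathcal{P}$ does not go through.
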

Proposition~\ref{Proposition: asymptotic power expression} confirms that under Assumption~\ref{Assumption: univariate X} and the given moment conditions, our proposed test is asymptotically valid uniformly over the null hypothesis $\mathcal{P}_{0} := \{P \in \mathcal{P}: \beta_P =0\}$. In terms of splitting ratio, a consequence of Proposition~\ref{Proposition: asymptotic power expression}, as stated formally in Corollary~\ref{Cor:PainfulAnalysis} is that in this linear model setting one cannot hope to achieve high power against a local alternative where $\target_P \asymp n^{-1}$ unless $n_1 \asymp n_2$.  
While limited to the linear model, this result nevertheless instills confidence in our choice of balanced splitting ratio, and also reveals that the choice of splitting ratio that maximises the asymptotic power depends on the underlying (unknown) parameters. For this reason, we consider $n_1=n_2$ by default for simplicity.

For the specific class of linear alternatives considered in Proposition~\ref{Proposition: asymptotic power expression}, the asymptotic power of the GCM test~\citep{shah2020hardness} without sample splitting is
\begin{align*}
	\Phi \Biggr( z_{\alpha/2}  + \frac{\sqrt{n_1 + n_2}\beta_P \sigma_{P,\xi}^2}{\sigma_{P,\varepsilon\xi}} \Biggr) + \Phi \Biggr(z_{\alpha/2}  - \frac{\sqrt{n_1 + n_2} \beta_P \sigma_{P,\xi}^2}{\sigma_{P,\varepsilon\xi}} \Biggr).
\end{align*}
Comparing this expression with $\psi_{P,\alpha,n}$, one can see that the GCM test is typically more powerful than the proposed test, but only by a constant factor when $n_1 \asymp n_2$. However, as mentioned earlier, the proposed test can have power against broader alternatives than the GCM test depending on the choice of projection. In comparison with the tests of \cite{williamson2020unified} and \cite{dai2021}, the proposed test achieves higher power. In particular, their tests become powerless whenever $\sqrt{n}\target_P \rightarrow 0$, which is true for both parametric and nonparametric settings. Moreover, as pointed out by \cite{williamson2020unified} and further demonstrated in Section~\ref{Section: misc results} of the supplementary material, their tests via sample splitting may not control the Type I error when $(X,Y,Z)$ are mutually independent. In contrast, our approach does not suffer from this issue and can be powerful even when $\sqrt{n}\target_P \rightarrow 0$, as demonstrated by Proposition~\ref{Proposition: asymptotic power expression}. In the next subsection, we provide the proof of Proposition~\ref{Proposition: asymptotic power expression}.

\subsection{Proof of Proposition~\ref{Proposition: asymptotic power expression}}
Throughout this proof we suppress $P$ subscripts from the notation as in the proofs in Section~\ref{Section: Proofs}. As $L_{i} = \widehat{\beta}\bigl(Y_i - \mbb{\widehat{\theta}}^\top Z_i\bigr)\bigl(X_i - \mbb{\widehat{\eta}}^\top Z_i\bigr) =: \widehat{\beta} R_{i}$ for $i \in [n_1]$, recall that our test statistic is
\begin{align} \label{Eq: linear test statistic}
	T = \sign(\widehat{\beta}) \frac{\frac{1}{\sqrt{n_1}} \sum_{i=1}^{n_1} R_{i} }{\sqrt{\frac{1}{n_1} \sum_{i=1}^{n_1} R_{i}^2 - \bigl( \frac{1}{n_1} \sum_{i=1}^{n_1} R_{i} \bigr)^2}}.
\end{align}
Let
\begin{align*}
	T_{R}:= \frac{\frac{1}{\sqrt{n_1}} \sum_{i=1}^{n_1} (R_{i} - \beta \sigma_{\xi}^2)}{\sigma_{\varepsilon\xi}},
\end{align*}
and for now suppose that the following approximations hold:
\begin{equation} \label{Eq: normality of T_R}
	\sup_{P \in \mathcal{P}}\sup_{t \in \mathbb{R}} \big| \prob\big(T_{R} \leq t \big) - \Phi(t) \big| = o(1) 
\end{equation}
and
\begin{equation}
	\label{Eq: convergence of ratio}
	\Bigg\{\frac{\sigma_{\varepsilon\xi}^2}{\frac{1}{n_1} \sum_{i=1}^{n_1} R_{i}^2 -  \big( \frac{1}{n_1} \sum_{i=1}^{n_1} R_{i}\big)^2}  \Bigg\}^{1/2} = 1 +  o_\mathcal{P}(1).
\end{equation}
Then, by~(\ref{Eq: convergence of ratio}), we have
\begin{align*}
	T &= \biggl\{\sign(\widehat{\beta}) T_{R} + \sign(\widehat{\beta}) \frac{\sqrt{n_1} \beta \sigma_{\xi}^2}{\sigma_{\varepsilon\xi}}\biggr\}(1 + V_n),
\end{align*}
where $V_n$ is a remainder term satisfying $V_n = o_{\mathcal{P}}(1)$.
Define for brevity $s_{\beta} := \sqrt{n_1} \beta \sigma_{\xi}^2/ \sigma_{\varepsilon\xi}$. Now, since $(R_{i})_{i=1}^{n_1}$ and $\sign(\widehat{\beta})$ are formed on independent data and are thus independent, we have
\begin{align*}
	&\sup_{P \in \mathcal{P}} | \pr(T > z_{1-\alpha}) - \psi_{\alpha,n} | \\
	&= \sup_{P \in \mathcal{P}} \Bigl| \pr \Bigl(\sign(\widehat{\beta}) T_{R}  > z_{1-\alpha} - \sign(\widehat{\beta}) s_{\beta} - \sign(\widehat{\beta}) V_n (T_{R}  + s_{\beta})  \Bigr) - \psi_{\alpha,n} \Bigr|\\
	&\leq \sup_{P \in \mathcal{P}} \biggl| \pr(\widehat{\beta} > 0) \pr \Bigl(T_{R}  > z_{1-\alpha} -  s_{\beta} - V_n (T_{R}  + s_{\beta}) \Bigr) - \Phi\biggl( \frac{\sqrt{n_2} \beta}{\sigma_{\beta}} \biggr) \Phi\bigl( z_\alpha +  s_{\beta} \bigr) \biggr| \\
	&\hspace{0.5cm}+ \sup_{P \in \mathcal{P}} \biggl| \pr(\widehat{\beta} < 0) \pr \Bigl(-T_{R} > z_{1-\alpha} +  s_{\beta} + V_n (T_{R}  + s_{\beta}) \Bigr) - \Phi\biggl( \frac{-\sqrt{n_2} \beta}{\sigma_{\beta}} \biggr) \Phi\bigl( z_\alpha - s_{\beta} \bigr) \biggr| \\
	&\hspace{11.5cm} + \sup_{P \in \mathcal{P}} \pr(\widehat{\beta}=0).
\end{align*}
The last term here is $o(1)$ by~\eqref{Eq: assumption 1 univariate-X linear model}.  The first two terms are dealt with similarly, so we only show how to argue that the first term is $o(1)$. To this end, we have 
\begin{align*}
	\sup_{P \in \mathcal{P}} \biggl| \pr(\widehat{\beta} > 0) \pr \Bigl(T_{R}  > z_{1-\alpha} -  s_{\beta} &- V_n (T_{R}  + s_{\beta}) \Bigr) - \Phi\biggl( \frac{\sqrt{n_2} \beta}{\sigma_{\beta}} \biggr) \Phi\bigl( z_\alpha + s_{\beta} \bigr) \biggr| \\
	\leq &\sup_{P \in \mathcal{P}} \biggl| \pr(\widehat{\beta} > 0) - \Phi\biggl( \frac{\sqrt{n_2} \beta}{\sigma_{\beta}} \biggr)\biggr| \\
	+ &  \sup_{P \in \mathcal{P}} \Bigl|  \pr \Bigl(T_{R}  > z_{1-\alpha} -  s_{\beta} - V_n (T_{R}  + s_{\beta}) \Bigr)  - \Phi\bigl( z_\alpha +s_{\beta} \bigr) \Bigr|.
\end{align*}
The first term above is $o(1)$ by \eqref{Eq: assumption 1 univariate-X linear model}. To deal with the second term, for an arbitrary $\epsilon \in (0,1)$, write
\begin{align*}
	\sup_{P \in \mathcal{P}} &\Bigl|  \pr \Bigl(T_{R}  > z_{1-\alpha} -  s_{\beta} - V_n (T_R  + s_{\beta}) \Bigr)  - \Phi\bigl( z_\alpha +s_{\beta} \bigr) \Bigr| \\
	& \leq  \sup_{P \in \mathcal{P}} \Bigl|  \pr \Bigl(T_{R}  > z_{1-\alpha} -  s_{\beta} - V_n (T_R  + s_{\beta}), \, |V_n T_{R}| < \epsilon, \, |V_n| < \epsilon \Bigr)  - \Phi\bigl( z_\alpha +s_{\beta} \bigr) \Bigr|  \\
	&\hspace{6cm}+ \sup_{P \in \mathcal{P}} \pr_P (|V_n| \geq \epsilon) + \sup_{P \in \mathcal{P}} \pr_P (|V_n T_{R}| \geq \epsilon).
\end{align*}
Since $V_n = o_{\mathcal{P}}(1)$ by \eqref{Eq: convergence of ratio} and $V_n T_{R} = o_{\mathcal{P}}(1)$ by Lemma~\ref{Lemma: product of o and O} and \eqref{Eq: normality of T_R}, the last two terms are $o(1)$.  Moreover,
\begin{align*}
	 \sup_{P \in \mathcal{P}} \Bigl|  \pr \Bigl(T_{R}  > &z_{1-\alpha} -  s_{\beta} - V_n (T_{R}  + s_{\beta}), \, |V_n T_{R}| < \epsilon, \, |V_n| < \epsilon \Bigr)  - \Phi\bigl( z_\alpha +s_{\beta} \bigr) \Bigr| \\
	 &\leq \sup_{P \in \mathcal{P}} \max \Bigl\{ \pr \Bigl(T_{R}  > z_{1-\alpha} -  s_{\beta} - \epsilon(1  + |s_{\beta}|) \Bigr)  - \Phi\bigl( z_\alpha +s_{\beta} \bigr), \\
	 &\hspace{4cm} \Phi\bigl( z_\alpha +s_{\beta} \bigr) -  \pr \Bigl(T_{R}  > z_{1-\alpha} -  s_{\beta} + \epsilon(1  +  |s_{\beta}|) \Bigr)   \Bigr\} \\
	 &\leq \underbrace{\sup_{P \in \mathcal{P}}  \Bigl| \pr \Bigl(T_{R}  > z_{1-\alpha} -  s_{\beta} - \epsilon(1  + |s_{\beta}|) \Bigr)  - \Phi\bigl( z_\alpha +s_{\beta} \bigr) \Bigr|}_{\RN{1}_n} + \\
	 &\hspace{3cm} \underbrace{\sup_{P \in \mathcal{P}}  \Bigl| \Phi\bigl( z_\alpha +s_{\beta} \bigr) -  \pr \Bigl(T_{R}  > z_{1-\alpha} -  s_{\beta} + \epsilon(1  +  |s_{\beta}|) \Bigr) \Bigr|}_{\RN{2}_n}.
\end{align*}
We only show that $\RN{1}_n$ is $o(1)$ as $\RN{2}_n$ can be handled similarly. Now letting $W \sim N(0,1)$, we have 
\begin{align*}
	\RN{1}_n &= \sup_{P \in \mathcal{P}} \Bigl|  \pr \Bigl(T_{R}  > z_{1-\alpha} -  s_{\beta} - \epsilon(1  +  |s_{\beta}|) \Bigr)  - \pr\bigl( W > z_{1-\alpha} - s_{\beta} \bigr) \Bigr| \\
	&\leq \sup_{P \in \mathcal{P}}  \Bigl|  \pr \Bigl(T_{R}  > z_{1-\alpha} -  s_{\beta} - \epsilon(1  +  |s_{\beta}|) \Bigr)  - \pr\bigl( W > z_{1-\alpha} -  s_{\beta} - \epsilon(1  +  |s_{\beta}|) \bigr) \Bigr| \\
	&\hspace{5cm}+ \sup_{P \in \mathcal{P}} \Bigl|  \pr \Bigl(W  \in \bigl(z_{1-\alpha} -  s_{\beta} - \epsilon(1  +  |s_{\beta}|), z_{1-\alpha} -  s_{\beta}\bigr] \Bigr). 
\end{align*}
By the asymptotic normality of $T_{R}$ in \eqref{Eq: normality of T_R}, the first term is $o(1)$. On the other hand, for the second term
\begin{align*}
	\sup_{P \in \mathcal{P}} \Bigl|  &\pr \Bigl(W  \in \bigl(z_{1-\alpha} -  s_{\beta} - \epsilon(1  +  |s_{\beta}|), z_{1-\alpha} -  s_{\beta}\bigr]  \\
	&\leq \sup_{P \in \mathcal{P}} \min \biggl\{ \frac{1}{\sqrt{2\pi}} (1 + |s_{\beta}|)\epsilon, \, \Phi\bigl( z_{1-\alpha} - s_{\beta} \bigr), \,  \Phi\bigl( z_{\alpha} +  s_{\beta} + \epsilon(1  +  |s_{\beta}|)\bigr)  \biggr\}.
\end{align*}
We now analyse the upper bound depending on the sign of $\beta$. 

\medskip

\noindent \textbf{Case (i)} Suppose that $\beta > 0$ and $\Phi(z_{1-\alpha} - s_{\beta}) \leq \epsilon/\sqrt{2\pi}$. Then 
\begin{align*}
	  \min \biggl\{ \frac{1}{\sqrt{2\pi}} (1 + |s_{\beta}|)\epsilon, \, \Phi\bigl( z_{1-\alpha} - s_{\beta} \bigr), \,  \Phi\bigl( z_{\alpha} +  s_{\beta} + \epsilon(1  +  |s_{\beta}|) \bigr)  \biggr\} 
	 \leq \frac{\epsilon}{\sqrt{2\pi}}.
\end{align*}
On the other hand, if $\Phi(z_{1-\alpha} - s_{\beta}) > \epsilon/\sqrt{2\pi}$, then $0 < s_{\beta} < z_{1-\alpha} - \Phi^{-1}\bigl(\frac{\epsilon}{\sqrt{2\pi}}\bigr)$. Thus
\begin{align*}
	\min \biggl\{ \frac{1}{\sqrt{2\pi}} (1 + |s_{\beta}|)\epsilon, \, \Phi\bigl( z_{1-\alpha} - s_{\beta} \bigr)&, \,  \Phi\bigl( z_{\alpha} +  s_{\beta} + \epsilon(1  +  |s_{\beta}|) \bigr)  \biggr\}  \\
	\leq  & \frac{\epsilon}{\sqrt{2\pi}}\biggl\{1 + z_{1-\alpha} - \Phi^{-1}\biggl(\frac{\epsilon}{\sqrt{2\pi}}\biggr)\biggr\}.
\end{align*}

\medskip

\noindent \textbf{Case (ii)} Next suppose that $\beta < 0$ and $\Phi\bigl( z_{\alpha} +  s_{\beta} + \epsilon(1  +  |s_{\beta}|) \bigr) \leq \epsilon/\sqrt{2\pi}$. Then 
\begin{align*}
	\min \biggl\{ \frac{1}{\sqrt{2\pi}} (1 + |s_{\beta}|)\epsilon, \, \Phi\bigl( z_{1-\alpha} - s_{\beta} \bigr), \,  \Phi\bigl( z_{\alpha} +  s_{\beta} + \epsilon(1  +  |s_{\beta}|) \bigr) \biggr\}  \leq  \frac{\epsilon}{\sqrt{2\pi}}.
\end{align*}
On the other hand, if $\Phi\bigl( z_{\alpha} +  s_{\beta} + \epsilon(1  +  |s_{\beta}|) \bigr) > \epsilon/\sqrt{2\pi}$, then
\begin{align*}
	\frac{1}{1-\epsilon} \biggl\{\Phi^{-1}\biggl(\frac{\epsilon}{\sqrt{2\pi}}\biggr) - z_{\alpha} - \epsilon \biggr\} < s_{\beta} < 0.
\end{align*}
Thus
\begin{align*}
	\min \biggl\{ \frac{1}{\sqrt{2\pi}} (1 + |s_{\beta}|)\epsilon, \, \Phi\bigl( z_{1-\alpha} - s_{\beta} \bigr), \,  &\Phi\bigl( z_{\alpha} +  s_{\beta} + \epsilon(1  +  |s_{\beta}|) \bigr)  \biggr\}  \\
	\leq  & \frac{\epsilon}{\sqrt{2\pi}}\biggl[1 + \frac{1}{1-\epsilon} \biggl\{z_{\alpha} + \epsilon -\Phi^{-1}\biggl(\frac{\epsilon}{\sqrt{2\pi}}\biggr) \biggr\} \biggr].
\end{align*}

\medskip

\noindent \textbf{Case (iii)} When $\beta =0$, we have 
\begin{align*}
	\min \biggl\{ \frac{1}{\sqrt{2\pi}} (1 + |s_{\beta}|)\epsilon, \, \Phi\bigl( z_{1-\alpha} - s_{\beta} \bigr), \,  \Phi\bigl( z_{\alpha} +  s_{\beta} + \epsilon(1  +  |s_{\beta}|) \bigr)  \biggr\}  \leq  \frac{\epsilon}{\sqrt{2\pi}}.
\end{align*}
Combining the previous results we have for every $\beta \in \mathbb{R}$ that
\begin{align*}
	 \sup_{P \in \mathcal{P}} &\min \biggl\{ \frac{1}{\sqrt{2\pi}} (1 + |s_{\beta}|)\epsilon, \, \Phi\bigl( z_{1-\alpha} - s_{\beta} \bigr), \,  \Phi\bigl( z_{\alpha} +  s_{\beta} + \epsilon(1  +  |s_{\beta}|) \bigr)  \biggr\}  \\
	 &\leq \frac{\epsilon}{\sqrt{2\pi}} \max \biggl\{ 1, \,  1 + z_{1-\alpha} - \Phi^{-1} \biggl(\frac{\epsilon}{\sqrt{2\pi}}\biggr),\,  1 + \frac{1}{1-\epsilon} \biggl[z_{\alpha} + \epsilon -\Phi^{-1}\biggl(\frac{\epsilon}{\sqrt{2\pi}}\biggr) \biggr] \biggr\}.
\end{align*}
We further note that the bound $\mathbb{P}(W \geq x) \leq (1/2) \cdot e^{-x^2/2}$ for $x \geq 0$ gives for $\epsilon < (1/2) \cdot \sqrt{2\pi}$ that
\begin{align*}
	-\epsilon \Phi^{-1}\biggl(\frac{\epsilon}{\sqrt{2\pi}}\biggr) \leq \epsilon \sqrt{2 \log \biggl(\frac{1}{2(1 - \epsilon/\sqrt{2\pi})}}\biggr) \rightarrow 0,
\end{align*}
as $\epsilon \rightarrow 0$. We deduce that $\RN{1}_n = o(1)$, so the first claim of the proposition will follow once we establish~\eqref{Eq: normality of T_R} and~\eqref{Eq: convergence of ratio}. 

For the claim~\eqref{Eq: normality of T_R}, consider the decomposition 
\begin{align*}
	T_{R} &=  \underbrace{\frac{\sigma_{\varepsilon\xi}^{-1}}{\sqrt{n_1}} \sum_{i=1}^{n_1}\bigl\{(Y_i - \mbb{\theta}^\top Z_i )(X_i - {\mbb{\eta}}^\top Z_i) - \beta \sigma_{\xi}^2\bigr\}}_{T^{(1)}} \\
	&- \underbrace{\frac{\sigma_{\varepsilon\xi}^{-1}}{\sqrt{n_1}} \sum_{i=1}^{n_1} (Y_i - \mbb{\theta}^\top Z_i) ({\mbb{\widehat{\eta}} - \mbb{\eta}})^\top Z_i}_{T^{(2)}} - \underbrace{\frac{\sigma_{\varepsilon\xi}^{-1}}{\sqrt{n_1}} \sum_{i=1}^{n_1} (\mbb{\widehat{\theta}} - \mbb{\theta})^\top Z_i  (X_i - \mbb{\eta}^\top Z_i)}_{T^{(3)}}\\
	&+ \underbrace{\frac{\sigma_{\varepsilon\xi}^{-1}}{\sqrt{n_1}}  \sum_{i=1}^{n_1} (\mbb{\widehat{\theta}} - \mbb{\theta})^\top Z_i  Z_i^\top ( {\mbb{\widehat{\eta}} - \mbb{\eta}})}_{T^{(4)}} .
\end{align*}
By the assumption that $\E \bigl\{\bigl|(Y - \mbb{\theta}^\top Z)(X - \mbb{\eta}^\top Z)\bigr|^{2+\delta}\bigr\} \leq C$, \citet[][Lemma~18]{shah2020hardness} yields that
\begin{align*}
	\sup_{P \in \mathcal{P}}\sup_{t \in \mathbb{R}} \big| \prob\big(T^{(1)} \leq t \big) - \Phi(t) \big| \rightarrow 0. 
\end{align*}
Moreover,
\[
|T^{(2)}| \leq c \sqrt{n_1} \| \mbb{\widehat{\eta}} - \mbb{\eta} \|_2 \biggl\| \frac{1}{n_1} \sum_{i=1}^{n_1} (Y_i - \mbb{\theta}^\top Z_i)Z_i \biggr\|_2 = o_\mathcal{P}(1),
\]
by Cauchy--Schwarz, the assumption that $\sigma_{P,\varepsilon\xi} > c$ and \eqref{Eq: assumption 3 univariate-X linear model}.  We can argue similarly that $T^{(3)} = o_{\mathcal{P}}(1)$ using \eqref{Eq: assumption 2 univariate-X linear model}. Finally,
\[
|T^{(4)}| \leq c \sqrt{n_1} \| \mbb{\widehat{\eta}} - \mbb{\eta} \|_2 \| \mbb{\widehat{\theta}} - \mbb{\theta} \|_2 \biggl\| \frac{1}{n_1} \sum_{i=1}^{n_1} Z_i Z_i^{\top}  \biggr\|_{\mathrm{op}} = o_\mathcal{P}(1)
\]
by similar arguments as above and~\eqref{Eq: assumption 4 univariate-X linear model}. Combining the above with the uniform version of Slutsky's theorem, we have the desired claim~\eqref{Eq: normality of T_R}. \\
To prove \eqref{Eq: convergence of ratio}, we let $\widetilde{R}_{n,i} := R_{n,i} - \beta \sigma_{\xi}^2$ for $i \in [n_1]$ and note that
\begin{align*}
	\frac{1}{n_1} \sum_{i=1}^{n_1} R_{i}^2 - \biggl( \frac{1}{n_1} \sum_{i=1}^{n_1} R_{i}\biggr)^2 
	= \frac{1}{n_1} \sum_{i=1}^{n_1} \widetilde{R}_{n,i}^2 - \biggl( \frac{1}{n_1} \sum_{i=1}^{n_1} \widetilde{R}_{n,i} \biggr)^2 = \frac{1}{n_1} \sum_{i=1}^{n_1} \widetilde{R}_{n,i}^2  + o_\mathcal{P}(1),
\end{align*}
where the second equality follows from the proof of~\eqref{Eq: normality of T_R} above. To ease the notation further, for $i \in [n_1]$, we write  
\begin{align*}
	\widetilde{R}_{n,i} &= \underbrace{(Y_i - \mbb{\theta}^\top Z_i)(X_i - \mbb{\eta}^\top Z_i ) - \beta \sigma_{\xi}^2}_{\RN{1}_{i}} - \underbrace{ (\mbb{\widehat{\theta}} - \mbb{\theta})^\top Z_i(X_i - \mbb{\eta}^\top Z_i)}_{\RN{2}_{i}} \\
	&\hspace{3cm}- \underbrace{(Y_i - \mbb{\theta}^\top Z_i) ( \mbb{\widehat{\eta}} - {\mbb{\eta}})^\top Z_i }_{\RN{3}_{i}} + \underbrace{(\mbb{\widehat{\theta}} - \mbb{\theta})^\top Z_i Z_i^\top  ( \mbb{\widehat{\eta}} - {\mbb{\eta}})}_{\RN{4}_{i}} .
\end{align*}
Then
\begin{align*}
	\frac{1}{n_1} \sum_{i=1}^{n_1} \widetilde{R}_{n,i}^2 &= \frac{1}{n_1} \sum_{i=1}^{n_1} \RN{1}_{i}^2 +  \frac{1}{n_1} \sum_{i=1}^{n_1} \RN{2}_{i}^2 +  \frac{1}{n_1} \sum_{i=1}^{n_1} \RN{3}_{i}^2 +  \frac{1}{n_1} \sum_{i=1}^{n_1} \RN{4}_{i}^2 \\
	&\hspace{2cm}- \frac{2}{n_1} \sum_{i=1}^{n_1} \RN{1}_{i} \RN{2}_{i} - \frac{2}{n_1} \sum_{i=1}^{n_1} \RN{1}_{i} \RN{3}_{i} +  \frac{2}{n_1} \sum_{i=1}^{n_1} \RN{1}_{i} \RN{4}_{i} \\
	&\hspace{2cm}+  \frac{2}{n_1} \sum_{i=1}^{n_1} \RN{2}_{i} \RN{3}_{i} -  \frac{2}{n_1} \sum_{i=1}^{n_1} \RN{2}_{i} \RN{4}_{i} -  \frac{2}{n_1} \sum_{i=1}^{n_1} \RN{3}_{i} \RN{4}_{i}.
\end{align*}
By the assumption that $\E \bigl\{\bigl|(Y - \mbb{\theta}^\top Z)(X - \mbb{\eta}^\top Z)\bigr|^{2+\delta}\bigr\} \leq C$, \citet[][Lemma~19]{shah2020hardness} yields that $\sigma_{\varepsilon\xi}^{-2} n_1^{-1}\sum_{i=1}^{n_1} \RN{1}_{i}^2  = 1 + o_\mathcal{P}(1)$.  Moreover, by Cauchy--Schwarz,
\begin{align*}
	\frac{1}{n_1} \sum_{i=1}^{n_1} \RN{2}_{i}^2 \leq  \|\mbb{\widehat{\theta}} - \mbb{\theta}\|_2^2 \cdot \frac{1}{n_1} \sum_{i=1}^{n_1} (X_i - \mbb{\eta}^\top Z_i)^2 \|Z_i\|_2^2,
\end{align*}
so \eqref{Eq: assumption 6 univariate-X linear model} together with $\sigma_{\varepsilon\xi}^2 > c$ implies that $\frac{\sigma_{\varepsilon\xi}^{-2}}{n_1} \sum_{i=1}^{n_1} \RN{2}_{i}^2 = o_\mathcal{P}(1)$.  Similarly, 
\[
\frac{\sigma_{\varepsilon\xi}^{-2}}{n_1} \sum_{i=1}^{n_1} \RN{3}_{i}^2 = o_\mathcal{P}(1)
\]
by~\eqref{Eq: assumption 5 univariate-X linear model}. By two applications of Cauchy--Schwarz, we have 
\[
\frac{1}{n_1} \sum_{i=1}^{n_1} \RN{4}_{i}^2 \leq  \| \mbb{\widehat{\theta}} - \mbb{\theta}\|_2^2 \| \mbb{\widehat{\eta}} - \mbb{\eta}\|_2^2
\cdot \frac{1}{n_1} \sum_{i=1}^{n_1} \| Z_i \|_2^4,  
\]
so~\eqref{Eq: assumption 7 univariate-X linear model} combined with the lower bound on $\sigma_{\varepsilon\xi}^2$ yields that $\frac{\sigma_{\varepsilon\xi}^{-2}}{n_1} \sum_{i=1}^{n_1} \RN{4}_{i}^2 = o_\mathcal{P}(1)$. Turning to the cross-product terms, by Cauchy--Schwarz and the previous analysis,
\begin{align*}
	\biggl| \frac{\sigma_{\varepsilon\xi}^{-2}}{n_1} \sum_{i=1}^{n_1} \RN{1}_{i} \RN{2}_{i} \biggr| \leq \sigma_{\varepsilon\xi}^{-2} \biggl(\frac{1}{n_1} \sum_{i=1}^{n_1} \RN{1}_{i}^2 \biggr)^{1/2} \biggl( \frac{1}{n_1} \sum_{i=1}^{n_1} \RN{2}_{i}^2 \biggr)^{1/2} = o_\mathcal{P}(1).
\end{align*}
The other terms can be similarly analysed and shown to be $o_\mathcal{P}(1)$. We have thus established by the uniform version of Slutsky's theorem that 
\begin{align*}
	\frac{1}{\sigma_{\varepsilon\xi}^2}\biggl\{\frac{1}{n_1} \sum_{i=1}^{n_1} R_{i}^2 - \biggl( \frac{1}{n_1} \sum_{i=1}^{n_1} R_{i}\biggr)^2 \biggr\} = 1 + o_\mathcal{P}(1).
\end{align*}
Finally,~\eqref{Eq: convergence of ratio} follows by the above result combined with Lemma~\ref{Lemma: uniform convergence in probability under continuous transformation}. This completes the proof of the first claim in Proposition~\ref{Proposition: asymptotic power expression}. \\
To prove the second claim, let us assume that $\beta \geq 0$ (the case $\beta < 0$ can be handled very similarly), and denote
\begin{align*}
	\psi_{\alpha,n} &= \underbrace{\Phi\biggl( \frac{\sqrt{n_2}\beta}{\sigma_{\beta}} \biggr)}_{V(\beta)} \cdot \underbrace{\Phi\biggl( z_\alpha + \frac{\sqrt{n_1} \beta \sigma_{\xi}^2}{\sigma_{\varepsilon\xi}} \biggr)}_{W_1(\beta)} + \underbrace{\Phi\biggl( -\frac{\sqrt{n_2}\beta}{\sigma_{\beta}} \biggr)}_{1-V(\beta)} \cdot \underbrace{\Phi\biggl( z_\alpha - \frac{\sqrt{n_1} \beta \sigma_{\xi}^2}{\sigma_{\varepsilon\xi}} \biggr)}_{W_2(\beta)} \\[.5em]
	&= W_2(\beta) + V(\beta)\{W_1(\beta) - W_2(\beta)\}.
\end{align*}
Then $ V(\beta) \geq 1/2$ and $W_1(\beta) - W_2(\beta) \geq 0$, so $\psi_{\alpha,n} \geq W_2(\beta) + \{W_1(\beta) - W_2(\beta)\}/2 = W_1(\beta)/2 + W_2(\beta)/2$. \\
Next observe that the function $\delta \mapsto \Phi(z_\alpha + \delta)/2 +  \Phi(z_\alpha - \delta)/2$ is continuous on $\mathbb{R}$, and when $\alpha < 1/2$, it is decreasing when $\delta < 0$ and increasing when $\delta > 0$. It follows that 
\begin{align*}
	\psi_{\alpha,n} \geq \frac{1}{2} \Phi\biggl( z_\alpha + \frac{\sqrt{n_1} \beta \sigma_{\xi}^2}{\sigma_{\varepsilon\xi}} \biggr) + \frac{1}{2}\Phi\biggl( z_\alpha - \frac{\sqrt{n_1} \beta \sigma_{\xi}^2}{\sigma_{\varepsilon\xi}} \biggr) \geq \alpha,
\end{align*}
and when $\sigma_{\xi}^2/\sigma_{\varepsilon\xi} > 0$, we have equality in both inequalities if and only if $\beta = 0$.

\section{Additional simulations: GAMs with binary responses}
\label{Section: additive models binary}
Here we consider settings similar to those considered in Section~\ref{Section: additive models}, but with $Y$ binary. Our null settings use
\[
\prob(Y = 1 \given X, Z) = \mathrm{expit}(\sin(2\pi Z_1)),
\]
and we consider three alternative settings mirroring those in Section~\ref{Section: additive models}:
\begin{enumerate}
	\item $\prob(Y = 1 \given X, Z) = \mathrm{expit}(\sin(2\pi Z_1)  + 0.25X^2)$,
	\item $\prob(Y = 1 \given X, Z) = \mathrm{expit}(\sin(2\pi Z_1)  + 0.5X^2)$,
	\item $\prob(Y = 1 \given X, Z) = \mathrm{expit}(\sin(2\pi Z_1)  + 0.5X^2 Z_2)$.
\end{enumerate}

For all regressions with binary responses, we fit a binomial generalised additive model with logistic link, and we use additive models for all other regressions; we use the implementations in the R package \texttt{mgcv} \citep{wood2017}.  We do not fit a new regression model for $\widetilde{v}$ but instead utilize that $Y$ is binary and set $\widetilde{v}(x,z) := \ghat(x, z)(1-\ghat(x, z))$. The additive models are tuned as in Section~\ref{Section: additive models} while the binomial additive models use half as many basis functions i.e.\ $\lfloor (N-1)/(2d) \rfloor$ (where $N$ and $d$ are the number of observations and predictors on which the model is trained, respectively) to avoid issues with convergence of the generalised additive model fits.
The results can be seen in Figure~\ref{fig:gam binary comparison} and are broadly in line with those in Section~\ref{Section: additive models} with the PCM performing favourably though being powerless in Setting 3 with pure interactions (as to be expected), and \texttt{wgsc} and most notably \texttt{gam} not maintaining Type~I error control.
\begin{figure}
    \centering
    \includegraphics[scale=0.44]{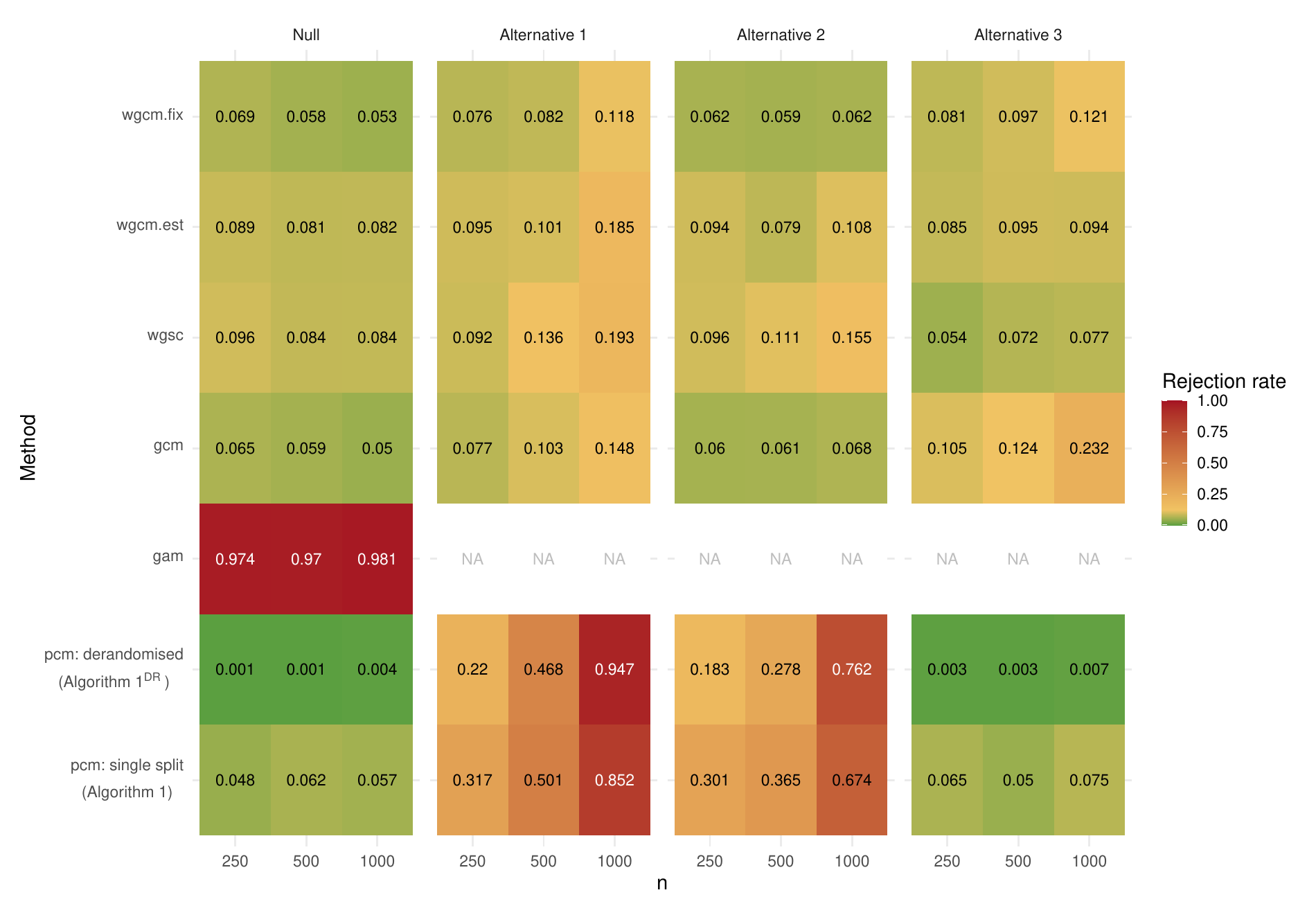}
    \caption{Rejection rates in the various settings considered in Section~\ref{Section: additive models binary} for nominal 5\%-level tests.}
    \label{fig:gam binary comparison}
\end{figure}

\end{document}